\newtheorem{theorem}{Theorem}[subsection]
\theoremstyle{plain}
\else \newtheorem{theorem}{Theorem} \fi
\else \newtheorem{maintheorem}[theorem]{Main Theorem} \fi
\else \newtheorem{lemma}[theorem]{Lemma} \fi
\else \newtheorem{proposition}[theorem]{Proposition} \fi
\else \newtheorem{corollary}[theorem]{Corollary} \fi
\else \newtheorem{question}[theorem]{Question} \fi
\else \newtheorem{observation}[theorem]{Observation} \fi
\newtheorem*{theorem*}{Theorem}
\newtheorem*{lemma*}{Lemma}
\newtheorem*{theoremschema*}{Theorem Schema}
\newtheorem*{proposition*}{Proposition}
\newtheorem*{conjecture*}{Conjecture}
\newtheorem*{question*}{Question}
\newtheorem*{definition*}{Definition}
\theoremstyle{definition}
\else \newtheorem{definition}[theorem]{Definition} \fi
\theoremstyle{remark}
\newtheorem*{remark*}{Remark}
\newcounter{my_enumerate_counter}
\newcommand\comment[1]{}
\newcommand\Crm{\mathrm{C}}
\newcommand\Jrm{\mathrm{J}}
\newcommand\Lrm{\mathrm{L}}
\newcommand\Trm{\mathrm{T}}
\newcommand\Vrm{\mathrm{V}}
\newcommand\Zsf{\mathsf{Z}}
\newcommand\Acal{\mathcal{A}}
\newcommand\Dcal{\mathcal{D}}
\newcommand\Ical{\mathcal{I}}
\newcommand\Jcal{\mathcal{J}}
\newcommand\Mcal{\mathcal{M}}
\newcommand\Pcal{\mathcal{P}}
\newcommand\Xcal{\mathcal{X}}
\newcommand\Ycal{\mathcal{Y}}
\newcommand\Pbb{\mathbb{P}}
\newcommand{\dom}{\operatorname{dom}}
  \newcommand{\alt}{\operatorname{alt}}
\newcommand\Ext{\operatorname{Ext}}
\newcommand{\forces}{\Vdash}
\newcommand\axiom{\mathsf}
\newcommand\CH{\axiom{CH}}
\newcommand\KP{\axiom{KP}}
\newcommand\ZF{\axiom{ZF}}
\newcommand\ZFC{\axiom{ZFC}}
\newcommand\ZFCm{\axiom{ZFC}^-}
\newcommand\GB{\axiom{GB}}
\newcommand\ETR{\axiom{ETR}}
\newcommand\PCA{\Pi_1^1\text{-}\axiom{CA}}
\newcommand\PnCA[1]{\Pi_{#1}^1\text{-}\axiom{CA}}
\newcommand\SnCA[1]{\Sigma_{#1}^1\text{-}\axiom{CA}}
\newcommand\SnAC[1]{\Sigma_{#1}^1\text{-}\axiom{AC}}
\newcommand\SnCC[1]{\Sigma_{#1}^1\text{-}\axiom{CC}}
\newcommand\CC{\axiom{CC}}
\newcommand\KM{\axiom{KM}}
\newcommand\KMCC{\axiom{KMCC}}
\newcommand\ZFCmi{\ZFCm_{\mathrm I}}
\newcommand\AC{\axiom{AC}}
\newcommand\PA{\axiom{PA}}
\newcommand\ACA{\axiom{ACA}}
\newcommand\ATR{\axiom{ATR}}
\newcommand\class{\mathrm}
\newcommand\HOD{\class{HOD}}
\newcommand\Ord{\class{Ord}}
\newcommand\Adm{\class{Adm}}
\newcommand\Add{\class{Add}}
\newcommand{\seq}[1]{{\left\langle #1 \right\rangle}}
\renewcommand{\epsilon}{\varepsilon}
\newcommand\mand{\textrm{ and }}
\newcommand\card[1]{\left\lvert #1 \right\rvert}
\newcommand\rest{\upharpoonright}
\newcommand\powerset{\Pcal}
\newcommand\comp{\circ}
\newcommand{\godel}[1]{\ulcorner#1\urcorner}
\newcommand\Def{\operatorname{Def}}
  \newcommand\Form{\axiom{Form}}
\newcommand{\impl}{\Rightarrow}
\renewcommand{\iff}{\Leftrightarrow}
\renewcommand{\phi}{\varphi}
\newcommand\SOR{\axiom{SOR}}
\newcommand\ZFCmik[1]{\axiom{ZFC}^-_{\mathrm{I},#1}}
\newcommand\Class{\mathrm{Class}}
\newcommand{\MinMod}{\mathrm{MinMod}}
\DeclareMathOperator{\TC}{TC}
\title{Non-tightness in class theory and second-order arithmetic}
\author{Alfredo Roque Freire}
\address[Alfredo Roque Freire]{University of Aveiro\\
Center for Research \& Development in Mathematics and Applications - CIDMA\\
Campus Universitário de Santiago 3810-193 Aveiro\\
Portugal}
\email{alfrfreire@gmail.com}
\author{Kameryn J. Williams}
\address[Kameryn J. Williams]{
Sam Houston State University \\
Department of Mathematics and Statistics \\
Box 2206 \\
Huntsville, TX 77341-2206 \\
USA}
\email{kameryn.j.w@shsu.edu}
\urladdr{http://kamerynjw.net}
\thanks{We thank the anonymous referee for their comments. This research was supported by FCT through CIDMA and projects
UIDB/04106/2020 and UIDP/04106/2020.}
\date{\today}
\begin{document}

\maketitle

\begin{abstract}
A theory $T$ is tight if different deductively closed extensions of $T$ (in the same language) cannot be bi-interpretable. Many well-studied foundational theories are tight, including $\PA$ \cite{Visser2006}, $\ZF$, $\Zsf_2$, and $\KM$ \cite{enayat2017variations}. In this article we extend Enayat's investigations to subsystems of these latter two theories. We prove that restricting the Comprehension schema of $\Zsf_2$ and $\KM$ gives non-tight theories. Specifically, we show that $\GB$ and $\ACA_0$ each admit different bi-interpretable extensions, and the same holds for their extensions by adding $\Sigma^1_k$-Comprehension, for $k \ge 1$. These results provide evidence that tightness characterizes $\Zsf_2$ and $\KM$ in a minimal way.

%The tightness property is shared by PA, Z$_2$, ZF and KM -- i.e. two different extensions of those are never bi-interpretable. In this article, we investigate whether there are tight subsystems of KM or Z$_2$ (question posed by Enayat in \cite{enayat2017variations}). The theories we have in mind are those $S_n$ obtained by adding $\Sigma_n$ levels of class comprehension to GB (we deal similarly with Z$_2$ subsystems by dropping the axiom of infinity in GB). We prove that, for each of these theories, there are non-isomorphic bi-interpretable models. We will subsequently show that this strategy works uniformly if we have available in our theories a notion of definability for classes. This gives us the non-tightness result for the KM and Z$_2$ subsystems. In this way, we demonstrate that in fact tightness characterizes the complete second-order extensions to arithmetic and set theory in a minimal way.

\end{abstract}

\section{Introduction}

%The subject of this article is bi-interpretations of class theories. In this regard, we should find that, while no two different models of $\KM$ are bi-interpretable, bi-interpretations are common among models of $\KM$ subsystems. 
%The first statement is a result by Enayat \cite{enayat2017variations} and the second is what we intend to establish in the following sections. In this introduction, we briefly discuss the main definitions and motivations.
%\footnote{
%A comprehensive discussion of interpretation is found in sections 2 to 4 of \cite{bwst2020}. In a nutshell, if $L$ is a language with relation $R$, an interpretation $I$ of $L$ in a language $L'$ is a pair $(U^I, R^I)$ where $U^I$ and $R^I$ are formulas in $L'$ ($U^I$ stands for the universe of interpretation and $R^I$ is the interpretation of the relation $R$). An interpreted formula $\varphi^I$ is the result of (i) replacing all occurrences of $R$ for $R^I$, (ii) restricting all quantifiers to $U^I$ (e.g. $(\forall x \psi)^I = \forall x (U^I(x) \to \psi^I)$) and (iii) adding condition $U^I(z)$ for free variables occurring in $\varphi$. We say that $I$ interprets $T$ in $T'$ if $T' \vdash \varphi^I$ for every formula $\varphi$ in $T$.}

It is well known that set theories like $\ZF$ and class theories like $\GB$ or $\KM$ are capable of interpreting many of their extensions as theories. 
For instance, $\ZF$ interprets $\ZFC + \CH$ via the constructible universe, or one may use the Boolean ultrapower construction over the notion of forcing $\Add(\omega, \omega_2)$ to produce an interpretation of $\ZF + \lnot\CH$ in $\ZF$ (see. \cite{HamkinsSeaboldBooleanUltrapowers}). Accordingly, $\ZF + \CH$ and $\ZF + \lnot\CH$ are mutually interpretable.
Mutual interpretability yields equiconsistency results but for many set-theoretical purposes it is a coarse notion of equivalence.
The issue is that we may lose information. For an example where this loss is severe, consider a model of $\ZFC + \neg \CH$ with a measurable cardinal. Carry out the constructible universe interpretation of $\ZF + \CH$ in this model followed by the Boolean ultrapower interpretation. This produces a model of $\ZFC + \neg \CH$ again, but the model cannot have a measurable cardinal. (Because as a forcing extension of $\Lrm$ it will miss, for example, $0^\sharp$.) Even in less severe cases information is lost---the boolean ultrapower interpretation produces ill-founded models, so this two-step interpretation destroys information about well-foundedness.

\begin{figure}[H]\label{informationLoss}
\begin{center}
\tikzset{every picture/.style={line width=0.75pt}} %set default line width to 0.75pt        
\begin{tikzpicture}[x=0.75pt,y=0.75pt,yscale=-1,xscale=1]
	%uncomment if require: \path (0,300); %set diagram left start at 0, and has height of 300

	%Shape: Ellipse [id:dp2878615271092435] 
	\draw  [fill={rgb, 255:red, 245; green, 166; blue, 35 }  ,fill opacity=0.5 ] (132.97,131.16) .. controls (132.97,96.73) and (143.72,68.81) .. (156.97,68.81) .. controls (170.22,68.81) and (180.96,96.73) .. (180.96,131.16) .. controls (180.96,165.59) and (170.22,193.5) .. (156.97,193.5) .. controls (143.72,193.5) and (132.97,165.59) .. (132.97,131.16) -- cycle ;
	%Shape: Ellipse [id:dp554417647551694] 
	\draw  [fill={rgb, 255:red, 74; green, 144; blue, 226 }  ,fill opacity=0.5 ] (273,131.16) .. controls (273,96.73) and (283.74,68.81) .. (296.99,68.81) .. controls (310.24,68.81) and (320.99,96.73) .. (320.99,131.16) .. controls (320.99,165.59) and (310.24,193.5) .. (296.99,193.5) .. controls (283.74,193.5) and (273,165.59) .. (273,131.16) -- cycle ;
	%Shape: Ellipse [id:dp8045243889174647] 
	\draw  [fill={rgb, 255:red, 74; green, 144; blue, 226 }  ,fill opacity=0.7 ] (141.63,162.33) .. controls (141.63,145.11) and (148.49,131.16) .. (156.97,131.16) .. controls (165.44,131.16) and (172.31,145.11) .. (172.31,162.33) .. controls (172.31,179.54) and (165.44,193.5) .. (156.97,193.5) .. controls (148.49,193.5) and (141.63,179.54) .. (141.63,162.33) -- cycle ;
	%Shape: Ellipse [id:dp7278393997927706] 
	\draw  [fill={rgb, 255:red, 245; green, 166; blue, 35 }  ,fill opacity=0.7 ] (281.65,110.31) .. controls (281.65,87.39) and (288.52,68.81) .. (296.99,68.81) .. controls (305.47,68.81) and (312.33,87.39) .. (312.33,110.31) .. controls (312.33,133.23) and (305.47,151.81) .. (296.99,151.81) .. controls (288.52,151.81) and (281.65,133.23) .. (281.65,110.31) -- cycle ;
	%Straight Lines [id:da009874774175012702] 
	\draw [fill={rgb, 255:red, 74; green, 144; blue, 226 }  ,fill opacity=1 ] [dash pattern={on 0.84pt off 2.51pt}]  (156.97,131.16) -- (296.99,68.81) ;
	\draw [shift={(231.55,97.95)}, rotate = 156] [fill={rgb, 255:red, 0; green, 0; blue, 0 }  ][line width=0.08]  [draw opacity=0] (10.72,-5.15) -- (0,0) -- (10.72,5.15) -- (7.12,0) -- cycle    ;
	%Straight Lines [id:da4262510812542257] 
	\draw  [dash pattern={on 0.84pt off 2.51pt}]  (156.97,193.5) -- (296.99,193.5) ;
	\draw [shift={(231.98,193.5)}, rotate = 180] [fill={rgb, 255:red, 0; green, 0; blue, 0 }  ][line width=0.08]  [draw opacity=0] (10.72,-5.15) -- (0,0) -- (10.72,5.15) -- (7.12,0) -- cycle    ;
	%Straight Lines [id:da4926795668535455] 
	\draw  [dash pattern={on 0.84pt off 2.51pt}]  (156.97,68.81) -- (296.99,68.81) ;
	\draw [shift={(220.48,68.81)}, rotate = 0] [fill={rgb, 255:red, 0; green, 0; blue, 0 }  ][line width=0.08]  [draw opacity=0] (10.72,-5.15) -- (0,0) -- (10.72,5.15) -- (7.12,0) -- cycle    ;
	%Straight Lines [id:da3706986051933825] 
	\draw  [dash pattern={on 0.84pt off 2.51pt}]  (156.97,193.5) -- (296.99,151.81) ;
	\draw [shift={(220.75,174.51)}, rotate = 343.42] [fill={rgb, 255:red, 0; green, 0; blue, 0 }  ][line width=0.08]  [draw opacity=0] (10.72,-5.15) -- (0,0) -- (10.72,5.15) -- (7.12,0) -- cycle    ;
	%Curve Lines [id:da004433659752528962] 
	\draw    (77.12,154.95) .. controls (97.16,134.91) and (134.59,141.31) .. (151.25,164.06) ;
	\draw [shift={(152.25,165.48)}, rotate = 235.98] [color={rgb, 255:red, 0; green, 0; blue, 0 }  ][line width=0.75]    (10.93,-3.29) .. controls (6.95,-1.4) and (3.31,-0.3) .. (0,0) .. controls (3.31,0.3) and (6.95,1.4) .. (10.93,3.29)   ;
	%Curve Lines [id:da6809570897558193] 
	\draw    (366.61,97.53) .. controls (364.29,74.28) and (318.46,73.17) .. (299.04,102.46) ;
	\draw [shift={(298.17,103.82)}, rotate = 301.61] [color={rgb, 255:red, 0; green, 0; blue, 0 }  ][line width=0.75]    (10.93,-3.29) .. controls (6.95,-1.4) and (3.31,-0.3) .. (0,0) .. controls (3.31,0.3) and (6.95,1.4) .. (10.93,3.29)   ;

	% Text Node
	\draw (27.01,157.89) node [anchor=north west][inner sep=0.75pt]  [font=\scriptsize] [align=left] {interpreted models};
	% Text Node
	\draw (340.51,100.47) node [anchor=north west][inner sep=0.75pt]  [font=\scriptsize] [align=left] {interpreted models};
	% Text Node
	\draw (118.04,45.97) node [anchor=north west][inner sep=0.75pt]  [font=\footnotesize] [align=left] {Models of $\ZF + \CH$};
	% Text Node
	\draw (251.69,45.87) node [anchor=north west][inner sep=0.75pt]  [font=\footnotesize] [align=left] {Models of $\displaystyle\ZF + \neg \CH$};
\end{tikzpicture}
\end{center}
\caption{The $\Lrm$ and Boolean ultrapower interpretations}
\end{figure}
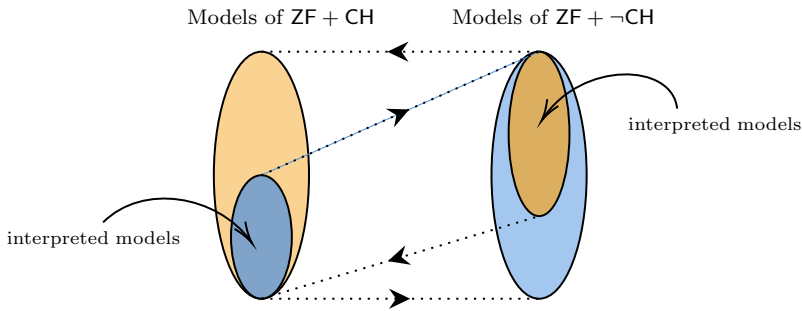

To avoid this limitation and properly establish an equivalence between theories, we need a bi-interpretation.\footnote{
Interpretations were formally introduced by Tarski in \cite{tarski1953} to deal with undecidable theories. Arguably, we can say that interpretations were introduced informally in the study of the consistency of alternative axioms of geometry in the second half of the XIX century (see \cite[p. 260]{hodges1993model}). Feferman studied interpretations themselves as mathematical objects in \cite{feferman1960, feferman1962}. Finally, bi-interpretations appear as faithful interpretations in \cite{feferman19621} even though their definition still do not encompass the full extent of what we now call bi-interpretations.}
Precisely, a \emph{bi-interpretation} between two theories $T_1$ and $T_2$ are interpretations $\Ical$ and $\Jcal$, respectively of $T_1$ in $T_2$ and of $T_2$ in $T_1$, so that there are definable functions $f$ and $g$ in $T_1$ and $T_2$ such that for all $\varphi$ and $\psi$
$$T_1 \vdash \varphi(x_1, x_2, \ldots, x_n) \leftrightarrow \varphi^{\Ical^\Jcal}(f(x_1), f(x_2), \ldots, f(x_n))$$
And 
$$T_2 \vdash \psi(x_1, x_2, \ldots, x_k) \leftrightarrow \psi^{\Jcal^\Ical}(g(x_1), g(x_2), \ldots, g(x_k)).$$
One can also view bi-interpretations on the level of individual models, saying that models $M_1$ and $M_2$ are bi-interpretable. One should view the theory definition as a uniform version of the model definition, saying that every model of the theory $T_1$ is bi-interpretable with a model of the theory $T_2$, and vice versa, always using the same choice of interpretations.

%Model theoretically, interpretations are recipes for building a model of the interpreted theory from a model of the interpreting theory. If $\Ical = (U^\Ical, R^\Ical)$ is an interpretation of the language $L_1$ into $L_2$ and $\Mcal$ is a $L_2$-model, then we produce the $L_1$-model $\Ncal = \Ical^\Mcal$ with domain $N = \{x \mid \Mcal \models U^\Ical(x)\}$ and relation $R^\Ncal = \{(x,y) \mid x,y \in N \land \Mcal \models R^\Ical(x, y)\}$. When $\Mcal$ is a model of $T_2$ and $I$ is an interpretation of a theory $T_1$ in $T_2$, $\Ical^\Mcal$ will be a model of $T_1$. If additionally we have the interpretation $\Jcal$ of $T_2$ into $T_1$ and that the pair $\Ical$ and $\Jcal$ form a bi-interpretation, there will be functions $f$ and $g$ definable in $T_2$ and $T_1$ such that $f$ is an isomorphism from $\Mcal$ to $\Jcal^{\Ical^\Mcal}$ and $g$ an isomorphism from $\Ncal$ to $\Ical^{\Jcal^\Ncal}.$\footnote{We recommend seeing Figure 3 of \cite[p. 4]{bwst2020} that illustrates the model theoretic bi-interpretation.}

%In contrast to the information loss seen in Figure \ref{informationLoss}, bi-interpretations leave no model behind. 
%% I like this "leave no model behind" line, but I couldn't find a way to keep it in, so I'm leaving it commented in case we find a better way to phrase it --KJW

Notably, theories such as PA and ZF cannot fix one model up to isomorphism, i.e. they are not categorical. We can however say that the second order versions of PA and ZF are respectively categorical (Dedekind in \cite{Dedekind1965}) and quasi-categorical (Zermelo in \cite{zermelo1930grenzzahlen}). 
These categoricity results assume that second order quantifiers indeed range over all subsets of a given domain. Thus it requires us to attribute to the mathematician a not substantiated ability to fix the meaning of second order concepts.\footnote{See \cite{button2016structure, putnam1980models} for a detailed philosophical analysis of the categoricity results.}.
In this context, the semantic notion of bi-interpretation can be understood as a weak form of `sameness' allowing for a weaker form of categoricity that do not rely on an arbitrary reference to the fullness of second order quantifiers.\footnote{Alternatively to considering bi-interpretations, one may consider a single model with two versions of the same theory (e.g. two separate symbols for $\in$). Proving that the two versions are always isomorphic amounts to what is call Internal Categoricity. This concept is nonetheless limited in scope as it uses the axiom-schemes of the theory to allow for the two models to be related (e.g. separation satisfied by each model should include formulas with the alternative symbol for membership). This concept was introduced by Parson \cite{parsons1990structuralist} with respect to arithmetic. Väänänen and Wang in \cite{vaananen2012second, vaananen2015internal, vaananen2020tracing} studied the internal categoricity in set theory and further advanced the topic in recent years.}  
Instead of asserting that two models have isomorphic ontologies, a bi-interpretation equates the expressible ontology of possibly different models.

In pursuing this form of categority for set and class theories, one should examine which theories do not admit bi-interpretable models.
Indeed, the interpretations of $\ZF + \CH$ and $\ZF + \neg\CH$ given above lose information. But, are there different interpretations that do not have this problem, and instead give a bi-interpretation? 
As demonstrated by Enayat \cite{enayat2017variations}, any two bi-interpreted models of $\ZF$ are isomorphic. Consequently, no two different extensions of $\ZF$ are bi-interpretable, and so the answer is negative. 
While a theory like $\ZF$ has many models due to the incompleteness phenomenon, in a sense we cannot have ``too many''. This property of $\ZF$ was first investigated by Visser in \cite{Visser2006} with respect to arithmetic and later named by Enayat as tightness.\footnote{Philosophically, as bi-interpretation deals with expressible ontology, the fact that different models of ZF are never bi-interpretable suggests that universalist set theorists `living' in different universes can only assert to fully understand each other if both believe the other is wrong about their own intuitions. Not only the other is wrong about the statement that their model is the model of set theory, but also that their own intuitions about the structure of their models are wrong. A detailed analysis of this dynamic can be found in \cite{unreducibleFreire}.}

\begin{definition*}
A theory $T$ is \emph{tight} if every two bi-interpretable extensions of $T$ in the same language as $T$ have the same deductive closure.
\end{definition*}

\begin{definition*}
A theory $T$ is \emph{semantically tight} if every two bi-interpretable models of $T$ are isomorphic.\footnote{
Enayat works with a stronger notion he calls solid.
Consider $N$ is a $M$-definable model and that $N$ has a definable copy $\overline{M}$ of $M$. Using the same interpretation, $\overline{M}$ obtains a definable copy $\overline{N}$ of $N$. Saying that $M$ and $N$ are bi-interpretable amounts to (i) there is a $M$-definable isomorphism from $M$ to $\overline{M}$ and (ii) there is a $N$-definable isomorphism from $N$ to $\overline{N}$. If we can prove that models $M$ and $N$ of a theory $T$ are isomorphic without assuming (ii), we can say that $T$ is not only semantically tight but also solid.}
\end{definition*}

It is evident that every semantically tight theory is also tight.

\begin{theorem*}[Enayat]
$\ZF$, $\KM$, $\Zsf_2$ are tight and semantically tight.\footnote{Enayat proved that $\ZF$, $\KM$, $\Zsf_2$ are tight \cite{enayat2017variations}; Visser proved that $\PA$ is tight \cite{Visser2006}. Visser and Friedman also proved the $\ZF$ case in an unpublished work (see note 1 of \cite{enayat2017variations}). Indeed, Enayat shows the stronger result that all these theories are solid. Note, however, that the results in this paper concern non-tightness and hence will trivially imply non-solidity.
For a brief exposition on this, we recommend Hamkins's blog post \cite{biinterpretationPost}.}
\end{theorem*}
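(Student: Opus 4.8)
Since a semantically tight theory is tight, it suffices to prove semantic tightness for each of $\ZF$, $\KM$, and $\Zsf_2$. The plan is to isolate a \emph{rigidity} phenomenon. Suppose $M$ is a model of one of these theories and $\Ical, \Jcal$ witness that $M$ is bi-interpretable with a model $M'$ of the same theory. Because the two theories coincide, reading $\Jcal$ inside $M$ produces an $M$-definable copy of a model of the theory, and reading $\Ical$ inside that copy produces an $M$-definable copy $\widehat M$ of $M$, which comes equipped with an $M$-definable isomorphism $f\colon M \to \widehat M$ (this $f$ is part of the data of a bi-interpretation). I would show that this data is forced to be trivial --- $\widehat M = M$ and $f = \id$ --- and then chase the diagram: the symmetric $M'$-definable isomorphism $g$, together with triviality of both round trips, shows that $\Ical$ and $\Jcal$ are mutually inverse up to definable isomorphism, so $M \cong M'$.

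For $\ZF$ the rigidity step runs as follows. First, $\widehat M$ is $M$-definably isomorphic to $M$, hence internally well-founded, extensional, and set-like; so, working inside $M$ (which has enough of $\ZF$), the definable Mostowski collapse replaces $\widehat M$ by a genuine transitive class $N \subseteq M$ carrying the restriction of the true membership relation to $N$, and replaces $f$ by an $M$-definable isomorphism $f\colon M \to N$. Now apply the Foundation scheme ($\in$-induction for formulas, with parameters) to ``$f(x) = x$'': if $f(y) = y$ for all $y \in x$, then because $N$ is transitive the elements of $f(x)$ are precisely $\{\, f(y) : y \in x \,\} = \{\, y : y \in x \,\} = x$, so $f(x)$ and $x$ have the same elements and Extensionality gives $f(x) = x$. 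Hence $f = \id$ and $N = \ran f = M$.

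The cases of $\KM$ and $\Zsf_2$ follow the same template. For a $\KM$ model $(M, \Ccal)$ one runs the argument on the first-order part $M$ using the Foundation scheme (now for formulas mentioning class parameters), concludes that the isomorphism is the identity on $M$, and observes that classes, being objects of the second sort over the points of $M$, are then carried across unchanged; forming the collapse and running the induction at the level of classes is where $\KM$-Comprehension is used. For $\Zsf_2$ there is no ambient well-founded relation, so the role of Foundation is played by ordinary induction in the $\PA$-core: the round-trip copy of the numbers is an $M$-definable discretely ordered semiring $\Nbb'$ that is $M$-definably isomorphic to the standard $\Nbb$, and induction forces that isomorphism to be the canonical one; the second-order parts then agree because a set of numbers is determined by its members. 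This last layer is essentially Visser's tightness argument for $\PA$ transported into $\Zsf_2$.

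The main obstacle is not the induction but the reduction of an arbitrary interpretation to an honest inner model: an interpretation may quotient by a definable equivalence relation and need not exhibit its output as a substructure of $M$ at all, so the step ``replace $\widehat M$ by a transitive class $N$'' carries the real weight, and it is exactly here that internal well-foundedness --- the Foundation scheme for $\ZF$ and $\KM$, and the corresponding first-order induction for $\Zsf_2$ --- is indispensable. A second point requiring care is the concluding diagram chase: one must check that the two definable isomorphisms $f$ and $g$ cohere sufficiently to yield an actual isomorphism $M \cong M'$, rather than merely a definable embedding in each direction.
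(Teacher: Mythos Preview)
The paper does not prove this theorem. It is stated in the introduction with attribution to Enayat \cite{enayat2017variations} (and Visser \cite{Visser2006} for $\PA$) as background; the paper's own contribution runs in the opposite direction, establishing \emph{non}-tightness for proper subsystems.

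Your sketch is nonetheless close to Enayat's actual argument. The rigidity step for $\ZF$---observe that the round-trip copy $\widehat M$ is $M$-definably well-founded, Mostowski-collapse it to a transitive class, and run $\in$-induction to force $f = \id$---is correct and is the heart of the matter. The adaptations you indicate for $\KM$ and $\Zsf_2$ (class-parameter Foundation for the first-order part in $\KM$; first-order induction \`a la Visser for the numbers in $\Zsf_2$; extensionality to finish on the second-order part) are also the right moves.

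The gap you flag in the last paragraph is real and is not just bookkeeping. Knowing $I(J(M)) = M$ and, symmetrically, $J(I(N)) = N$ does not by itself produce an isomorphism $M \cong N$: you have only shown that each double composite collapses to the identity, not that $J(M)$ (the intermediate model sitting inside $M$) is itself $M$. The missing step in Enayat's proof is to argue that $J(M)$ is well-founded \emph{from $M$'s point of view}---this uses that $M = I(J(M))$ is interpreted inside $J(M)$, so $J(M)$ has definable access to all of $M$'s sets and can compare $\Ord^M$ with its own ordinals---after which a second Mostowski collapse and a second $\in$-induction give $J(M) = M$, hence $N \cong J(M) = M$. Your write-up stops short of this, and the phrase ``chase the diagram'' undersells what is required.
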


A natural followup question is whether there are tight subsystems of these theories. This was proposed in \cite[p. 14]{enayat2017variations} and partially addressed with respect to $\ZF$ by Freire and Hamkins in \cite{bwst2020}. They show that there are bi-interpretable models of $\Zsf$ and $\ZFCm$.\footnote{$\Zsf$ refers to the first order version of Zermelo set theory composed of $\ZF$ without the Replacement schema; $\ZFCm$ stands for $\ZFC$ without the Powerset axiom. Note that $\ZFCm$ should be axiomatized with Collection schema, not Replacement, and the well-ordering theorem instead of Zermelo's formulation of choice, as these are not equivalent in the absence of Powerset \cite{zarach1996}.} 
Moreover, since these model constructions were uniformly produced, they obtained different bi-interpretable extensions of $\Zsf$ and $\ZFCm$. A full answer to this question amounts to a profound characterization of $\ZF$ and it should be done by obtaining bi-interpretable models of the theory $\Zsf$ with fragments of the axiom scheme of replacement.

In a similar light, this article investigates tightness for subsystems of $\KM$, obtained by restricting the Comprehension axiom. The weakest subsytem in this hierarchy is G\"odel--Bernays class theory $\GB$, where Comprehension is only allowed for first-order formulae. Strengthening upward adding Comprehension for $\Sigma^1_k$ formulae gives theories we will call $\KM_k$.

Our first main results are that $\GB$ and $\KM_k$ are not semantically tight.

\begin{maintheorem}
Let $\kappa$ be an inaccessible cardinal and suppose $\Vrm_\kappa \models \Vrm = \HOD$. 
\begin{itemize}
\item The minimum model of $\GB$ over $\Vrm_\kappa$ is bi-interpretable with a certain extension adding a Cohen-generic class of ordinals. Thus, $\GB$ is not semantically tight.
\item Let $k \ge 1$. The minimum model of $\KM_k$ over $\Vrm_\kappa$ is bi-interpretable with a certain extension adding a Cohen-generic class of ordinals. Thus, $\KM_k$ is not semantically tight.
\end{itemize}
\end{maintheorem}

We then build on these to show that, indeed, these theories are not tight, and that the same is true for subsystems of $\Zsf_2$. The failure of tightness should be seen as a uniform version of the failure of semantic tightness.

\begin{maintheorem}
The following theories are not tight.
\begin{itemize}
    \item $\GB$;
    \item $\KM_k$ for $k \ge 1$; and
    \item $\KM_k + \Sigma^1_k$-Class Collection, for $k \ge 1$; and
    \item Any of the above theories plus the schema of Replacement for all second-order formulae.
\end{itemize}
\end{maintheorem}

While our primary interest is in class theories, our methods are flexible enough to also apply to subsystems of second-order arithmetic.

\begin{maintheorem}
The following theories are not tight.
\begin{itemize}
    \item $\ACA_0$;
    \item $\PnCA{k}_0$ for $k \ge 1$;
    \item $\SnAC{k}_0$ for $k \ge 1$; and
    \item Any of the above theories plus the full Induction schema, i.e. the theories $\ACA$, $\PnCA{k}$, and $\SnAC{k}$.
\end{itemize}
\end{maintheorem}

In forthcoming work, Ali Enayat \cite{enayat:in-progress} independently investigated the nontightness of fragments of $\KM$ and $\Zsf_2$. He showed that finitely axiomatizable subtheories of these are not tight. That gives an alternate proof of the nontightness of $\GB$, $\KM_k$, $\ACA_0$, and $\PnCA{k}_0$, as well as the versions with Class Collection or $\AC$. But the second-order Replacement schema and the full Induction schema are not finitely axiomatizable, so his methods don't apply to the theories with those schemata.

We present the semantic non-tightness (Section~\ref{semanticNonTightness}) and non-tightness (Section~\ref{NonTightness}) results for class theory separately. The constructions for non-tightness amounts to more difficult variants of the constructions for semantic non-tightness, generalized to apply to a wider class of models, including nonstandard models, so we present the easier constructions first. Additionally, the constructions over $\Vrm_\kappa$ may be of interest to the set theorist with no interest in nonstandard models, and we wish to be accommodating to any such reader. 
In Section~\ref{NonTightnessArithmetic}, we explore how to apply the same technique to subsystems of second-order arithmetic. And in Section~\ref{sec:final} we briefly discuss the extent to which our constructions generalize and what questions remain open.

Before these sections we recall some definitions and basic facts about class theory.

\section{Review of class theories and class forcing} \label{sec:class-theories}

In this paper we look at class theories, also called second-order set theories, those set theories that have proper classes as objects in their domains of discourse. We will use a two-sorted approach, writing a model as e.g. $(M,\Xcal)$ with $M$ being the sets of the model and $\Xcal$ being the classes of the model. Following standard convention, when writing formulae in the language of class theory we will use lowercase variables for sets and uppercase variables for classes. For example, $\forall x \exists Y \forall z\ (z \in x \iff z \in Y)$ asserts that every set is co-extensional with some class, a trivial consequence of First-Order Comprehension.

If a formula only quantifies over sets---but possibly has class parameters---we call it \emph{first-order}. The class of first-order formulae is denoted with any of $\Sigma^1_0$, $\Pi^1_0$, or $\Delta^1_0$. From the first-order formulae we build up the hierarchy of $\Sigma^1_k$ and $\Pi^1_k$ formulae by adding class quantifiers in the way familiar to any logician. Namely, a $\Sigma^1_k$ formula is of the form $\exists \bar X_1 \cdots \forall \bar X_k \phi(\bar X_1, \ldots \bar X_k)$, where $\phi$ is first-order and there are $k$ many blocks of alternating class quantifiers, while a $\Pi^1_k$ formula is of the form $\forall \bar X_1 \cdots \exists \bar X_k \phi(\bar X_1, \ldots \bar X_k)$, again with first-order $\phi$ and $k$ many blocks of alternating class quantifiers.

%\kameryn{Do we ever talk about $\Sigma^0_n$ (or whatever) formulae? If so, we should mention the $0$ superscript for the L\'evy hierarchy.}

\begin{definition}
\emph{G\"odel--Bernays} class theory $\GB$ is axiomatized by the following.
\begin{itemize}
\item $\ZFC$ for classes;\footnote{The models we consider in this paper will all satisfy $\Vrm = \HOD$ and hence satisfy the axiom of choice. So for our purposes we do not want to use merely $\ZF$ for the sets. For this same reason our models will for free satisfy Global Choice.}
\item Class Extensionality, asserting that two classes are equal if and only if they have the same elements;
\item Class Replacement, asserting that the image of a set under a class function is always a set; and
%\item Global Choice, asserting that there is a bijection $\Ord \to V$; and
\item First-Order Comprehension, asserting that classes can be defined using comprehension for first-order formulae. More precisely, this axiom schema has as instances the universal closure of
\[
\exists X\ X = \{ y : \phi(y,\bar P) \}
\]
for each first-order formula $\phi$.
\end{itemize}
If we add to $\GB$ Full Comprehension, viz. the instances of Comprehension for any formula in the language of class theory, we get \emph{Kelley--Morse} class theory $\KM$. For finite $k \ge 1$, adding $\Sigma^1_k$-Comprehension---Comprehension for $\Sigma^1_k$ formulae---gives the theory we will call $\KM_k$.
\end{definition}

For some purposes $\KM$ is insufficient, and needs to be extended by a version of Collection for classes.\footnote{For example, $\KM$ does not suffice to prove a class version of Fodor's lemma \cite{gitman-hamkins-karagila:2021}, but adding Class Collection enables the proof of class Fodor's lemma.}
It will be convenient in this paper to work with this stronger variant (but not stronger in consistency strength). Using a stronger version will not harm our results since tightness is preserved by extension in the same language.

A \emph{hyperclass} is a collection of classes. Formally these do not exist in our models similar to how classes formally do not exist as objects in $\ZFC$. However, some hyperclasses we can code with an individual class.

\begin{definition}\label{hyperclassCode}
A \emph{code} for a hyperclass is a class of ordered pairs. We say that a hyperclass $\Acal$ is coded if there is a code $A$ so that 
\[
\Acal = \big\{ (A)_i : i \in \Vrm \big\},
\]
where $(A)_i = \{ x : (i,x) \in A \}$ is the \emph{$i$-th slice} of $A$.
\end{definition}

\begin{definition} \label{def:cc}
The \emph{Class Collection} ($\CC$) axiom schema asserts that if for each set there is a class satisfying some property, then we can collect witnesses classes into a single coded hyperclass.\footnote{In the context of second-order arithmetic, the analogous axiom schema is referred to as $\AC$; cf. Definition~\ref{def:soas}.} 
Formally, instances of this schema are the universal closure of
\[
\forall x \exists Y\ \phi(x,Y,\bar P) \impl \exists B\ \forall x \exists i\ \phi(x,(B)_i,\bar P),
\]
where $\phi$ ranges across all formulae in the language of class theory. If we restrict this schema to $\Sigma^1_k$ formulae we get \emph{$\Sigma^1_k$-Class Collection} ($\SnCC{k}$).
\end{definition}

Marek and Mostowski \cite[Theorem 2.5]{marek-mostowski1975} showed that given any model of $\KM$ you can thin down the classes to get a model of $\KMCC = \KM + \CC$ with the same sets. Ratajczyk \cite{ratajczyk1979} built on their work to show that given any model of $\KM_k$ you can thin down the classes to get a model of $\KMCC_k = \KM_k + \SnCC{k}$ with the same sets, where $k > 0$.

Just as Collection yields that every formula in the language of set theory is equivalent to one in the L\'evy hierarchy, $\CC$ yields that every formula in the language of class theory is equivalent to a $\Sigma^1_k$ formula for some $k$. So the theories $\KMCC_k$ give a hierarchy of stronger and stronger theories which in the union give the full theory $\KMCC$.

\subsection{Bi-interpretability with first-order set theory}\label{bi-interpretability-first-order}

For some of our results it will be convenient to work with first-order set theories rather than class theories. The construction behind the more difficult direction of these bi-interpretations goes back to Scott \cite{scott1960}. The key observation is that the Foundation axiom implies that every set $x$ is determined by the isomorphism type of $(\TC(\{x\}),\in)$. As such, sets can be represented with isomorphism classes of well-founded, extensional directed graphs with a maximum element. In this way a model of $\GB$ can represent sets of rank $\mathord{>}\Ord$. To have a name, call this construction the \emph{unrolling construction} and refer to the model of first-order set theory obtained as the \emph{unrolled model}.

\begin{theorem}[\cite{marek-mostowski1975}]
$\KMCC$ and $\ZFCm$ plus ``there is a largest cardinal, and it is inaccessible'' are bi-interpretable.\footnote{See \cite{antos-friedman2017} for a modern treatment of this result.}
\end{theorem}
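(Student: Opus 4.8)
The plan is to exhibit the two interpretations and then check that both round trips are definably isomorphic to the identity. Throughout write $(M,\Xcal)$ for a model of $\KMCC$ and $N$ for a model of $\ZFCm + {}$``there is a largest cardinal, and it is inaccessible''.

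First, to interpret $\KMCC$ inside $\ZFCm+\ldots$: given $N$ with largest cardinal $\kappa$, take the sets of the interpreted structure to be $\Vrm_\kappa^N$ and the classes to be $\powerset^N(\Vrm_\kappa^N)$, with membership the restriction of $\in^N$. Inaccessibility of $\kappa$ gives $\Vrm_\kappa^N \models \ZFC$, and $\powerset^N(\Vrm_\kappa^N)$ is a genuine set of $N$; Class Extensionality and Class Replacement are immediate. For Full Comprehension, observe that a second-order formula over the interpreted structure is just a first-order formula over $N$ with the set $\powerset^N(\Vrm_\kappa^N)$ as a parameter bounding the class quantifiers, so $N$-Separation produces the desired class. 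The Collection schema of $N$ likewise delivers $\CC$: the witnessing classes are elements of $N$, Collection gathers them into a set, and re-indexing by an ordinal produces a code for the resulting hyperclass.

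Second, the unrolling construction interprets $\ZFCm+\ldots$ inside $\KMCC$. Given $(M,\Xcal)$, call a class $A \in \Xcal$ a \emph{membership code} if it codes a well-founded, extensional binary relation on a subclass of $M$ possessing a top element; it represents the set obtained as the image of the top element under the Mostowski collapse of the relation. Declare two codes equivalent when there is a class isomorphism between the hereditary parts below their tops, define membership between represented sets accordingly, and let $W$ be the resulting structure. Checking $W \models \ZFCm$ is the substance of the proof: Extensionality and Foundation reflect extensionality and well-foundedness of codes; Pairing, Union, Infinity, and Separation are routine surgery on codes available already in $\GB$; a global well-ordering of $M$ (available in this setting) well-orders each represented set, giving the well-ordering theorem. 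The one axiom genuinely needing the extra strength of $\KMCC$ is Collection: given a represented set $a$ and a formula assigning to each of its elements a represented set, each witness is a class, and $\CC$ is exactly what lets us collect these witnessing classes into a single coded hyperclass, out of which a single membership code for the collecting set is assembled. Finally, in $W$ the ordinals of $M$ form a cardinal $\kappa$ which is the largest cardinal — using Global Choice every class, hence the transitive closure of every represented set, has size at most $\Ord^M$, so every represented set has $W$-cardinality $\le \kappa$ — and $\kappa$ is inaccessible in $W$: regularity follows from Class Replacement (a cofinal map from a set ordinal below $\kappa$ would be a class function with set, hence bounded, image in $\Ord^M$), and the strong-limit property because for $\lambda < \kappa$ the power set $\powerset^W(\lambda)$ is in bijection with $\powerset^M(\lambda)$, a set of $M$.

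For the round trips: starting from $(M,\Xcal)$, unrolling to $W$ and then forming $\bigl(\Vrm_\kappa^W, \powerset^W(\Vrm_\kappa^W)\bigr)$ returns the represented sets of rank $<\Ord^M$, which by Foundation in $M$ are copies of the elements of $M$, together with the represented subsets of $\Vrm_\kappa^W$, which Full Comprehension in $(M,\Xcal)$ puts in definable bijection with $\Xcal$; the Mostowski collapse supplies the $(M,\Xcal)$-definable isomorphism. Starting from $N$, interpreting $\KMCC$ and then unrolling recovers, for each $x \in N$, the code $(\TC(\{x\}),\in,x)$ — legitimate because $|\TC(\{x\})| \le \kappa$ places the code in $\powerset^N(\Vrm_\kappa^N)$ — and uniqueness of Mostowski collapses makes $x \mapsto [\text{its code}]$ an $N$-definable isomorphism onto the unrolled model. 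Since every construction above is uniform in the model, these assemble into a bi-interpretation of the two theories. The main obstacle is the verification that the unrolled structure satisfies Collection, which is precisely where $\CC$ is indispensable and the reason the theorem is stated for $\KMCC$ rather than $\KM$, together with the bookkeeping needed to see that the composite interpretations are definably, not merely externally, isomorphic to the identity; for that detailed verification one can follow the modern treatment in \cite{antos-friedman2017}.
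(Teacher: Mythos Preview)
The paper does not prove this theorem; it is cited from \cite{marek-mostowski1975} with a pointer to \cite{antos-friedman2017} for a modern treatment, and the surrounding text only sketches the unrolling construction (membership codes as well-founded extensional graphs with a top element). Your outline is exactly that construction, so in approach you agree with what the paper indicates.

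There is, however, a genuine error in your first interpretation. You write that ``$\powerset^N(\Vrm_\kappa^N)$ is a genuine set of $N$'' and then use it as a parameter to bound the class quantifiers. This is false: $N \models \ZFCm$ has no Powerset axiom, and since $\kappa$ is the \emph{largest} cardinal of $N$, the collection of all subsets of $\Vrm_\kappa$ must be a proper class in $N$ (were it a set it would have cardinality $>\kappa$). The interpretation still works, but the justification is different: the class quantifiers of a second-order formula become \emph{unbounded} first-order quantifiers over $N$, relativized to the definable predicate ``is a subset of $\Vrm_\kappa$''. Full Comprehension then follows from Separation in $N$ applied to the set $\Vrm_\kappa$ (which \emph{is} a set, since inaccessibility gives $\powerset(\Vrm_\alpha)$ for $\alpha<\kappa$), not from bounding the quantifiers by a set parameter. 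Your Collection argument is fine once you drop the erroneous claim, since the domain $\Vrm_\kappa$ of the antecedent is a set and $\card b \le \kappa$ lets you index the witnesses. The rest of your sketch---the unrolling, the use of $\CC$ for Collection in the unrolled model, and the two round trips via Mostowski collapse---is correct.
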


Denote this latter theory by $\ZFCmi$. Working in $\ZFCmi$ let $\kappa$ denote the largest cardinal. For these bi-interpretability results, $\Vrm$ of the model of class theory is isomorphic to $\Vrm_\kappa$ of the unrolled model of $\ZFCmi$. That is, the sets are fixed and the bi-interpretation is entirely about what happens in the classes.

Doing the construction more carefully you can get versions of this result for restricted amounts of Comprehension.
Here, let $\ZFCmik{k}$ denote the theory obtained from $\ZFCmi$ by restricting the Collection and Separation schemata to $\Sigma_k$ formulae.

%\kameryn{TODO: Double check that $\Sigma_k$-Foundation is correct here, i.e. not $\Pi_k$-Foundation.}

\begin{theorem}[\cite{ratajczyk1979}]\label{companionModel}
The following pair of theories are bi-interpretable, for $k \ge 1$.
\begin{itemize}
\item $\KMCC_k$ and $\ZFCmik{k}$.
\end{itemize}
\end{theorem}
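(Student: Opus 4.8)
The statement to prove is Theorem~\ref{companionModel}: $\KMCC_k$ and $\ZFCmik{k}$ are bi-interpretable for $k \ge 1$. The plan is to adapt the unrolling construction (Scott's trick) that witnesses the bi-interpretation of $\KMCC$ with $\ZFCmi$, keeping careful track of the logical complexity of every formula involved, so that $\Sigma^1_k$-Comprehension plus $\Sigma^1_k$-Class Collection on the class-theory side corresponds exactly to $\Sigma_k$-Separation plus $\Sigma_k$-Collection on the set-theory side. Since Theorem~\ref{companionModel} is attributed to Ratajczyk \cite{ratajczyk1979}, I will follow that blueprint: the two interpretations are the standard ones, and the real work is verifying that each restricted schema is preserved in each direction.

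First I would set up the two interpretations. Going from a model $(M,\Xcal) \models \KMCC_k$ to a model of $\ZFCmik{k}$: let $\kappa = \Ord^M$ and form the unrolled model whose elements are isomorphism classes (coded by classes in $\Xcal$) of well-founded extensional directed graphs with a top element, with membership read off from the edge relation one level below the top. One checks that $\Vrm_\kappa$ of this unrolled model is (isomorphic to) $M$, that $\kappa$ is the largest cardinal and is inaccessible there, and that the unrolled structure satisfies the $\ZFCmi$ axioms that do not involve schemata. Going back, from $(N,E) \models \ZFCmik{k}$ with largest cardinal $\kappa$: the sets are $\Vrm_\kappa^N$ and the classes are the subsets of $\Vrm_\kappa^N$ that exist in $N$, i.e. the elements of $\Vrm_{\kappa+1}^N$ (equivalently, elements coding subsets of $\Vrm_\kappa$). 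Then I would exhibit the definable functions $f$ and $g$ witnessing that the round trips $\Ical^\Jcal$ and $\Jcal^\Ical$ are definably isomorphic to the identity — for the set side this is essentially Scott's observation that Foundation makes every set recoverable from the isomorphism type of $(\TC(\{x\}),\in)$, and for the class side it is the routine check that a class, unrolled into a graph and then re-collapsed, gives back the same class.

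The heart of the argument — and the step I expect to be the main obstacle — is the complexity bookkeeping in the schema-preservation checks. In the direction $\KMCC_k \to \ZFCmik{k}$: given a $\Sigma_k$ formula $\theta$ in the language of the unrolled model with a parameter, I must show the corresponding instance of $\Sigma_k$-Separation holds, i.e. that $\{x : \theta(x)\}$ exists as an element of the unrolled model. Translating $\theta$ back through the interpretation, a set quantifier in the unrolled model becomes a class quantifier (ranging over codes for hereditarily-below-$\kappa$-rank sets), so a $\Sigma_k$ formula becomes (up to the first-order matrix, which becomes a class-parameter first-order formula after unrolling) a $\Sigma^1_k$ formula over $(M,\Xcal)$; then $\Sigma^1_k$-Comprehension produces the class of codes, and one bundles these codes into a single unrolled set — this last bundling step is exactly where $\Sigma^1_k$-Class Collection is needed, to collect the witnessing graph-codes uniformly into one code. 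Similarly $\Sigma_k$-Collection in the unrolled model follows from $\Sigma^1_k$-Class Collection. Conversely, in the direction $\ZFCmik{k} \to \KMCC_k$: an instance of $\Sigma^1_k$-Comprehension over the interpreted class structure asks for a subset of $\Vrm_\kappa^N$ defined by a formula whose class quantifiers become quantifiers over $\Vrm_{\kappa+1}^N$; since $\Vrm_{\kappa+1}$ is definable and a single set quantifier bounded to it does not raise $\Sigma_k$-complexity past $\Sigma_k$ in $N$ (here I would lean on the $\ZFCmik{k}$ axioms guaranteeing $\Vrm_\kappa$ and $\Vrm_{\kappa+1}$ are sets and the Levy-hierarchy bookkeeping), $\Sigma_k$-Separation in $N$ yields the desired class, and $\Sigma_k$-Collection in $N$ yields $\Sigma^1_k$-Class Collection. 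The delicate points are (i) making sure the alternation count is genuinely preserved and not inflated by the coding of graphs and isomorphisms (one wants the graph-coding apparatus to be first-order, which it is, so it sits harmlessly inside the $\Sigma^1_0$ matrix), and (ii) checking that $k \ge 1$ is exactly the threshold at which this works — for $k = 0$ the correspondence would need $\GB$ versus a theory with no Separation/Collection at all, and the Class Collection ingredient has no first-order analogue to lean on, which is consistent with the fact (used elsewhere in the paper) that $\GB$ is handled separately. I would close by remarking that the argument is uniform in the models, so it is genuinely a bi-interpretation of theories and not merely a model-by-model equivalence.
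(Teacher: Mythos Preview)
The paper does not give its own proof of this theorem; it is stated as a result of Ratajczyk \cite{ratajczyk1979}, with a pointer to \cite{williams-diss} for a detailed treatment of the construction. So there is no in-paper argument to compare your proposal against. Your outline is the standard one and is essentially on target.

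One slip worth flagging in your complexity bookkeeping for the direction $\ZFCmik{k} \to \KMCC_k$: in $\ZFCmik{k}$ there is no Powerset axiom, so $\Vrm_{\kappa+1}$ does \emph{not} exist as a set in $N$, and you cannot treat a class quantifier as a bounded quantifier over $\Vrm_{\kappa+1}$. Rather, class quantifiers translate to \emph{unbounded} set quantifiers in $N$ (every set in $N$ injects into $\kappa$ and hence is coded by a subset of $\Vrm_\kappa$, so quantifying over classes is the same as quantifying over all of $N$). The alternation count still comes out right --- a $\Sigma^1_k$ formula over $(\Vrm_\kappa^N, \powerset(\Vrm_\kappa)^N)$ becomes a $\Sigma_k$ formula over $N$ --- but this is because the first-order matrix (set quantifiers over $\Vrm_\kappa$) translates to something $\Delta_1$ in the parameter $\Vrm_\kappa$, not because the outer quantifiers become bounded. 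With that correction your sketch matches the argument one finds in the cited sources.
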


The reader who desires to read through the construction in detail is referred to the second author's dissertation \cite{williams-diss}.% for the class theory case and Simpson's book on second-order arithmetic \cite[Chapter VII]{simpson:book} for the arithmetic case.

The main utility of these bi-interpretation results is that they allow us to use known facts about models of first-order set theory to draw conclusions about models of class theory. Additionally, some arguments become easier to formulate in that context, since we have access to von Neumann ordinals, the Mostowski collapse theorem, and so on, whereas with classes we don't have direct access to these powerful tools.

\subsection{Class forcing}

We will use class forcing over models of $\KMCC_k$. Because the theory of this is less well-known than over models of $\KM$ or $\GB$, we recall the important facts here. Let's begin by addressing nonstandard models.

With a transitive model of set theory, given a generic $G$ you can interpret all $\Pbb$-names via an induction external to the model. If a model of set theory is ill-founded, we cannot do that. Instead we need an approach similar to the Boolean ultrapower approach. The atomic forcing relations $p \forces \sigma = \tau$ and $p \forces \sigma \in \tau$ yield the equivalence relation $=_G$ defined as $\sigma =_G \tau$ if and only if $p \forces \sigma = \tau$ for some $p \in G$ and a similarly defined congruence $\in_G$ modulo $=_G$. Quotienting the $\Pbb$-names by $=_G$ and using $\in_G$ as the membership relation gives the forcing extension. Identifying the ground model with the collection of $\check x/\mathord{=_G}$ for check names $\check x$, we get the forcing extension as a genuine extension. It is straightforward to check that in case you start with a transitive model, this produces a model isomorphic to the one obtained by the external induction. The usual lemmata about forcing can be proved in this context.\footnote{For a recent exposition of these details, with a focus on how it makes sense for ill-founded models, we recommend \cite{GHHSW2020}.}

\begin{theorem}[Stanley, S.D. Friedman \cite{stanley1984,friedman:book}] %%todo: better cite for Friedman than his book?
$\GB$ proves that pretame class forcings satisfy the forcing theorem for first-order formulae, viz. that the relations $p \forces \phi(\sigma,\ldots)$ are classes for each first-order formula $\phi$.
\end{theorem}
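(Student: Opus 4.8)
The plan is to run the usual two-level recursion---atomic formulas by recursion on names, then all first-order formulas by recursion on logical complexity---and to isolate the two places where pretameness is essential. Working in $\GB$, first define the atomic forcing relations by the Shoenfield-style clauses: $p \forces \sigma \subseteq \tau$ iff for every $(\rho,r)\in\sigma$ and every $q\le p,r$ the class $\{s\le q : (\exists(\rho',r')\in\tau)\ s\le r' \text{ and } s\forces\rho=\rho'\}$ is dense below $q$; $p\forces\sigma=\tau$ iff $p$ forces both $\sigma\subseteq\tau$ and $\tau\subseteq\sigma$; and $p\forces\sigma\in\tau$ iff $\{q\le p : (\exists(\rho,r)\in\tau)\ q\le r \text{ and } q\forces\sigma=\rho\}$ is dense below $p$. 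Then extend to all first-order $\phi$ by recursion on complexity, using the Boolean clauses for $\wedge$ and $\neg$ (so $p\forces\neg\phi$ iff no $q\le p$ forces $\phi$) and, for existentials, $p\forces\exists x\,\phi(x)$ iff $\{q\le p : (\exists\text{ set-name }\tau)\ q\forces\phi(\tau)\}$ is dense below $p$. The theorem is the assertion that each of these relations, with its name-parameters held fixed, is a class; granting this, the truth lemma---$(M[G],\in_G)\models\phi$ iff $(\exists p\in G)\,p\forces\phi$---is the familiar induction on $\phi$ using genericity.

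These clauses are only problematic because $\Pbb$ is a proper class: a subclause such as ``$\cdots$ is dense below $q$'' or ``$(\exists\text{ set-name }\tau)$'' quantifies, on its face, over proper-class-many objects, so it is not visibly first-order and the name-recursion is not visibly well-behaved. Pretameness repairs exactly this. Applied to the one-term sequence $\langle D\rangle$, it says that any dense class $D$ below $q$ contains a set $d$ predense below some $q'\le q$; conversely any such $d$ can be closed off to a dense-below-$q$ open class without changing which names get forced. Hence every subclause ``$U$ is dense below $q$'' (with $U$ open) may be rewritten as ``$(\exists q'\le q)(\exists\text{ set }d)$: $d$ predense below $q'$ and $(\forall s\in d)[\cdots]$'', which is first-order. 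With this rewriting the atomic clause for $p\forces\sigma\in\tau$---for $\sigma,\tau$ set-names---refers to $\forces$ only at the set-many lower triples $(s,\sigma,\rho)$ with $s\in d$ and $\rho\in\dom\tau$; a routine induction (name-rank drops, branching is set-sized) then shows that a full derivation witnessing $p\forces\sigma\in\tau$ is itself a set, so the relation is $\{(p,\sigma,\tau) : (\exists\text{ set }t)\ t\text{ is such a derivation}\}$, a first-order condition, hence a class. One extends this to arbitrary (possibly proper-class) names by a single further application of the rewritten clause, now with the names as parameters; and one handles the $\exists$-clause the same way, using in addition that a pretame $\Pbb$ has enough set-names---every set of a generic extension is named by a set-sized name---so that quantifying only over set-names $\tau$, a genuine first-order quantifier, costs nothing. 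Given the inductive hypothesis that $\{(q,\tau) : q\forces\phi(\tau,\bar\sigma)\}$ is a class, the clause for $\exists x\,\phi$ then defines a class by First-Order Comprehension.

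The main obstacle is precisely the one just isolated: keeping the forcing relation a class despite the proper-class size of the partial order---for general, non-pretame class forcings over $\GB$ the forcing theorem can genuinely fail, so pretameness is not a mere convenience but the exact combinatorial hypothesis that makes the argument go through. Two subsidiary points merit care in a complete write-up: (i) checking that the pretameness-rewriting of ``dense below $q$'' is faithful, i.e. that a set $d$ predense below $q'$ can be enlarged to an open dense class without altering the forcing relation it defines, so that the recursively defined relation is the one satisfying the truth lemma; and (ii) the ``enough set-names'' fact, itself proved by a pretameness/reflection argument that, below a given condition, bounds the names needed to witness an existential statement. The remaining ingredients---the Boolean clauses and the truth-lemma induction---proceed as in set forcing. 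For the full development over possibly ill-founded models we refer to \cite{friedman:book,stanley1984,GHHSW2020}.
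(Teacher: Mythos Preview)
The paper does not prove this theorem; it states it with attribution to Stanley and S.~D.~Friedman and explicitly ``elide[s] the technicalities of tameness and pretameness,'' pointing to \cite{friedman:book} and \cite{AntosGitman:ModernClassForcing}. There is thus no paper proof to compare against.

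On the merits of your sketch: the architecture---atomic relation by name recursion, then formula complexity, with pretameness supplying the missing ingredient---is the right shape, but the key step is not correct as stated, and you have somewhat misdiagnosed the obstacle. Your proposed rewriting of ``$U$ is dense below $q$'' as ``$(\exists q'\le q)(\exists\text{ set }d)\ d$ predense below $q'$ and $d\subseteq U$'' is strictly weaker than density below $q$; pretameness gives only the forward direction. (An equivalent reformulation, for open $U$, would need a leading $\forall q''\le q$.) More importantly, the clauses are \emph{already} first-order in the sense relevant to Comprehension: conditions are sets, so ``dense below $q$'' is a set-quantifier statement about a class parameter. The real obstruction in $\GB$ is that the atomic recursion is a class-length recursion whose stages are themselves proper classes; carrying this out is an instance of Elementary Transfinite Recursion, which $\GB$ does not prove---and the counterexamples in \cite{GHHSW2020} show the atomic forcing relation can genuinely fail to be a class. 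Your ``a full derivation is a set'' idea is the right target, but ``name-rank drops and branching is set-sized'' is not yet true: the density clauses branch over class-many extensions $q$, and one needs pretameness to replace these by set-sized predense families \emph{uniformly along the entire recursion}. That uniformity---so that one obtains a single first-order definition of the whole relation rather than a transfinite tower of proper classes---is exactly what Stanley and Friedman supply and what your sketch leaves as the acknowledged-but-unproved point (i).
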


\begin{corollary}
Forcing with a tame class forcing preserves all axioms of $\GB$ or $\KM$.
%\kameryn{Should we address that both Stanley and Friedman do this in the context of Infinity? Easy enough to drop the assumption.}
\end{corollary}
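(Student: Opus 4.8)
The plan is to verify the axioms of $\GB$ (respectively $\KM$) one by one in the forcing extension $(M[G],\Xcal[G])$, where $(M,\Xcal)$ is a model of $\GB$ (respectively $\KM$), $\Pbb \in \Xcal$ is tame, and $G \subseteq \Pbb$ is generic over $(M,\Xcal)$. The tools are the forcing theorem for first-order formulae from the preceding theorem of Stanley and Friedman, genericity of $G$, and the definition of tameness---recall that a tame class forcing is a pretame one satisfying an additional requirement tailored to the preservation of Power Set. The axioms that speak only of sets---Extensionality, Pairing, Union, Infinity, Foundation, Power Set, the well-ordering theorem, and the class forms of Separation and Replacement---together with Class Replacement are exactly what the classical preservation analysis delivers: pretameness is precisely the condition ensuring that Collection, Separation, and Class Replacement pass to $M[G]$, tameness supplies what is additionally needed for Power Set, and Choice descends from Global Choice of the ground model. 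I would cite \cite{friedman:book} and \cite{GHHSW2020} for this rather than reprove it. Class Extensionality is immediate from the construction of $\Xcal[G]$ as $\Pbb$-names modulo $=_G$ with membership relation $\in_G$.

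The one axiom of $\GB$ that genuinely uses the forcing theorem is First-Order Comprehension. Given a first-order formula $\phi$ and a tuple $\dot P$ of names for class parameters, form the name
\[
\dot C = \bigl\{\, (\check y,p) : p\in\Pbb \ \text{and}\ p \forces \phi(\check y,\dot P) \,\bigr\}.
\]
By the preceding theorem the relation $p \forces \phi(\check y,\dot P)$ is a class of $M$, so $\dot C$ is a legitimate $\Pbb$-name, i.e.\ an element of $\Xcal$; and the forcing theorem together with genericity gives $\dot C_G = \{\, y : (M[G],\Xcal[G]) \models \phi(y,\dot P_G)\,\}$, which is therefore a class of $\Xcal[G]$. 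This is the desired instance of First-Order Comprehension.

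For the $\KM$ case the remaining task is Full Comprehension, and here the first-order forcing theorem is not enough: one needs the forcing theorem for arbitrary second-order formulae. This is provable over $\KM$ as a schema---for each second-order $\phi$, the relation $p \forces \phi$ is a class---by recursion on the complexity of $\phi$. The atomic and first-order clauses are the preceding theorem, while for the class-quantifier step one uses that a $\Pbb$-name for a class is itself a class of $M$, so that
\[
p \forces \forall X\,\psi(X,\ldots) \quad\text{iff}\quad p \forces \psi(\sigma,\ldots) \text{ for every $\Pbb$-name $\sigma$ for a class}
\]
is expressed by an ordinary second-order quantifier over $\Xcal$, after which Full Comprehension in $(M,\Xcal)$ assembles the forcing relation for $\phi$ into a single class. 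With that class in hand, the instance of Full Comprehension for $\phi$ in $(M[G],\Xcal[G])$ follows by the same name argument used above for First-Order Comprehension. I expect this last point---making precise the recursion defining the second-order forcing relation, in particular the class-quantifier clause---to be the main obstacle, and I would appeal to \cite{GHHSW2020}, where exactly this analysis is carried out, rather than reconstruct it here.
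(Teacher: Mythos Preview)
The paper does not supply its own proof of this corollary; it simply states the result and defers all technicalities of tameness and pretameness to \cite{friedman:book} and \cite{AntosGitman:ModernClassForcing}. Your proposal is the standard argument found in those references, so there is nothing to compare against---you have filled in what the paper deliberately elided.

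One small technical slip: in your name for the instance of First-Order Comprehension you write
\[
\dot C = \bigl\{\, (\check y,p) : p\in\Pbb \ \text{and}\ p \forces \phi(\check y,\dot P) \,\bigr\},
\]
using only check names. For a general tame forcing this yields only $\{\, y \in M : (M[G],\Xcal[G]) \models \phi(y,\dot P_G)\,\}$, missing any new sets added by $\Pbb$. The correct name uses arbitrary set $\Pbb$-names $\sigma$ in place of $\check y$; the forcing theorem still makes $\{\,(\sigma,p) : p \forces \phi(\sigma,\dot P)\,\}$ a class of the ground model, and its evaluation is the desired class in the extension. For the specific application in the paper---$\Add(\Ord,1)$, which adds no sets---your version happens to work, but the corollary is stated in full generality.
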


We elide the technicalities of tameness and pretameness, and point the reader to \cite{friedman:book} or \cite{AntosGitman:ModernClassForcing}. What is needed for our purposes is that $\Add(\Ord,1)$, the forcing to add a Cohen-generic class of ordinals, is tame.

\begin{theorem} \label{thm:forcing-preserves-axioms}
Let $k \ge 1$. Forcing with a tame class forcing preserves all axioms of $\KMCC_k$.
\end{theorem}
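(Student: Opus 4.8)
The plan is to pass to ordinary set forcing via the companion first-order model of Theorem~\ref{companionModel}. Fix $(M,\Xcal)\models\KMCC_k$ and let $N\models\ZFCmik{k}$ be its unrolled companion, so that $M$ is (isomorphic to) $\Vrm_\kappa^N$, where $\kappa$ is the largest cardinal of $N$ and is inaccessible there. Under this bi-interpretation the classes of $(M,\Xcal)$ are exactly the subsets of $\Vrm_\kappa$ lying in $N$, so a tame class forcing $\Pbb\in\Xcal$ becomes an honest set forcing $\Pbb\in N$ with $\Pbb\subseteq\Vrm_\kappa$, whence $\card{\Pbb}^N\le\kappa$; moreover genericity for $\Pbb$ over $(M,\Xcal)$---meeting every dense subclass---is the same notion as genericity for $\Pbb$ over $N$, since the dense subclasses of $\Pbb$ are precisely the dense subsets of $\Pbb$ in $N$. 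The first thing to verify is that this correspondence commutes with forcing: a $\Pbb$-name for a class over $(M,\Xcal)$ is literally the same datum as a $\Pbb$-name for a subset of $\Vrm_\kappa$ over $N$, the atomic forcing relations agree, hence so do the generic evaluations, and---since a tame class forcing adds no new sets---$\Vrm_\kappa^{N[G]}=\Vrm_\kappa^N=M$. Thus the classes of $(M,\Xcal[G])$ are exactly the subsets of $\Vrm_\kappa$ appearing in $N[G]$, i.e.\ $(M,\Xcal[G])$ is (up to isomorphism) the image of $N[G]$ under the companion interpretation. It therefore suffices to show $N[G]\models\ZFCmik{k}$, for then Theorem~\ref{companionModel}, read in the other direction, gives $(M,\Xcal[G])\models\KMCC_k$.

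To check $N[G]\models\ZFCmik{k}$: the statement ``there is a largest cardinal'' is preserved by \emph{any} set forcing, since forcing only adds sets and never deletes them, so any surjection of $\kappa$ onto an ordinal $\beta$ in $N$ survives to $N[G]$; and ``that cardinal is inaccessible'' survives because pretameness says precisely that $\kappa$ stays regular, while the fact that the forcing fixes the sets gives $\Vrm_\lambda^{N[G]}=\Vrm_\lambda^N$ for $\lambda\le\kappa$ and hence that $\kappa$ remains a strong limit. The axioms of $\ZFCm$ other than the Separation and Collection schemata are preserved by any set forcing by the standard arguments, which go through for ill-founded $N$ via the name-quotient treatment recalled above. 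It remains to preserve $\Sigma_k$-Separation and $\Sigma_k$-Collection, and here one invokes the standard analysis of forcing over models without power set: for $k\ge 1$ the relation $p\forces\phi$ is uniformly $\Sigma_k$-definable when $\phi$ is $\Sigma_k$ (this rests on the atomic forcing relation being definable by a well-founded recursion, which is available here, and being $\Delta_1$). Then an instance of $\Sigma_k$-Separation in $N[G]$ is witnessed by a name obtained by applying $\Sigma_k$-Separation \emph{in $N$} to the relation $p\forces\phi$ on $S\times\Pbb$, where $S$ is the set of names occurring in a name for the set being separated; and an instance of $\Sigma_k$-Collection in $N[G]$ is witnessed by applying $\Sigma_k$-Collection in $N$ to $p\forces\exists y\,\phi$ to collect a set of witnessing names. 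This is precisely the step that uses $N\models\ZFCmik{k}$ rather than just $N\models\ZFCmi$.

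I expect the main obstacle to be the verification that the bi-interpretation commutes with forcing in the way used above: one must unwind the unrolling construction far enough to confirm that $\Pbb$-names for classes over $(M,\Xcal)$ and their generic evaluations correspond exactly to $\Pbb$-names for subsets of $\Vrm_\kappa$ over $N$ and their evaluations, so that $\Xcal[G]$ really matches $\mathcal{P}(\Vrm_\kappa)^{N[G]}$; once that is settled, everything else is routine set forcing. An alternative route avoids the bi-interpretation but trades it for more delicate bookkeeping inside $(M,\Xcal)$: first show by induction on $k$ that $\KMCC_k$ proves a tame class forcing satisfies the forcing theorem for $\Sigma^1_k$ formulae with a $\Sigma^1_k$-definable forcing relation (base case: the Stanley--Friedman theorem; inductive step: expand $p\forces\exists\bar X\,\psi$ as the density of $\{\,q\le p:\exists\bar{\dot X}\ q\forces\psi(\bar{\dot X},\dots)\,\}$ below $p$, then use $\SnCC{k}$ to contract the resulting quantifier pattern back to $\Sigma^1_k$ and $\Sigma^1_k$-Comprehension to obtain a class). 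Granting this, $\Sigma^1_k$-Comprehension in $(M,\Xcal[G])$ follows by pushing the defining formula through the forcing relation and applying $\Sigma^1_k$-Comprehension in the ground model, while $\Sigma^1_k$-Class Collection is the subtle case: because $\Pbb$ is a proper class one cannot collect over its conditions, so one instead descends below a $p_0\in G$ forcing the given $\forall x\exists Y$ statement, applies $\SnCC{k}$ twice in the ground model---first, for each set $x$, collecting over the conditions $q\le p_0$ a name forced by $q$ to witness $\phi(\check x,\cdot,\bar{\dot P})$, then packaging these over all $x$ into a single code---and reads off a code of witnessing classes in $V[G]$ using that the relevant dense classes below $p_0$ meet $G$. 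Routing through the companion model is cleaner, so I would take it as primary.
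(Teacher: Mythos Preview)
Your two routes are exactly the two the paper sketches: the primary one through the bi-interpretation with $\ZFCmik{k}$, and the alternative directly in class theory via $\Sigma^1_k$ forcing relations, using $\SnCC{k}$ to pull the ``densely many'' set quantifiers inside the class quantifiers. The paper's proof is only a sketch and does not go beyond naming these two strategies, so on the level of approach you match it.

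There is, however, a genuine error in your elaboration of the first route. You assert that ``a tame class forcing adds no new sets'' and use this both to conclude $\Vrm_\kappa^{N[G]}=\Vrm_\kappa^N$ and to argue that $\kappa$ stays a strong limit. This is false: tame class forcings can add sets---any nontrivial set forcing is tame when viewed as a class forcing, and of course adds sets. Tameness is pretameness together with preservation of Powerset in the class-theoretic extension; it does not say the first-order part is fixed. What you actually need (the ``little work'' the paper alludes to) is the correspondence that tameness of $\Pbb$ over $(M,\Xcal)$ translates, on the $N$ side, into $\kappa$ remaining inaccessible in $N[G]$. Your argument for the strong-limit half of this rests on the false no-new-sets claim and so is circular. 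Once that correspondence is established correctly, the rest of your argument goes through with $M$ replaced by $M[G]=\Vrm_\kappa^{N[G]}$ throughout: $\Xcal[G]$ matches $\powerset(\Vrm_\kappa)^{N[G]}$, set forcing preserves $\Sigma_k$-Collection and $\Sigma_k$-Separation as you outline, and $N[G]\models\ZFCmik{k}$ then yields $(M[G],\Xcal[G])\models\KMCC_k$ via Theorem~\ref{companionModel}.
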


\begin{proof}[Proof Sketch]
One way to prove this goes through the bi-interpretability with first-order set theory. Knowing that set forcing preserves $\ZFCm_k$, with a little work one concludes tame class forcing preserves $\KMCC_k$.

Alternatively, one can prove this directly within class theory. To prove $\Sigma^1_k$-Comprehension and Class Collection, first you need to prove that the forcing relations for $\Sigma^1_k$-formulae are classes. Note that this uses $\SnCC{k}$ to be able to pull the set quantifiers expressing ``densely many conditions force such and such'' inside class quantifiers, so you get a $\Sigma^1_k$-definition for the forcing relations. (Compare to, in the $\ZF$ context, how replacement is used to pull bounded quantifiers inside to get that the forcing relation for a $\Sigma_k$ formula is $\Sigma_k$.) Once you know these forcing relations are classes, you then prove the preservation of the axioms in the usual way.
\end{proof}

\begin{definition}
Suppose $(M,\Xcal)$ and $(M,\Ycal)$ are two models of class theory with the same sets $M$. Say that $(M,\Ycal)$ is a \emph{width-extension} of $(M,\Xcal)$ if $\Xcal \subseteq \Ycal$ and for every well-order $\Gamma \in \Ycal$ there is $\Gamma' \in \Xcal$ so that $(M,\Ycal)$ has an isomorphism $\Gamma \cong \Gamma'$.
\end{definition}

This notion is a class theoretic cousin of the familiar notion in first-order set theory of an extension which does not contain any new ordinals. As in the $\ZF$ context, forcing gives a width-extension (assuming strong enough axioms in the ground model).

\begin{theorem} \label{thm:width-extensions}
Let $k \ge 1$. Forcing over a model of $\KMCC_k$ with a pretame forcing produces a width extension.
\end{theorem}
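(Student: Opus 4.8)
The plan is to reduce to the first-order picture via the unrolling bi-interpretation of Theorem~\ref{companionModel}, where the statement becomes the standard fact that set forcing adds no new ordinals. Let $(M,\Xcal) \models \KMCC_k$ and let $\Pbb \in \Xcal$ be a pretame class forcing; let $G$ be $\Pbb$-generic over $(M,\Xcal)$ and let $(M,\Ycal) = (M,\Xcal)[G]$ be the resulting width-candidate extension, which by Theorem~\ref{thm:forcing-preserves-axioms} again models $\KMCC_k$. First I would pass to the unrolled models: $(M,\Xcal)$ unrolls to a model $N \models \ZFCmik{k}$ with largest cardinal $\kappa$ and with $\Vrm_\kappa^N \cong M$, and the classes of $(M,\Xcal)$ correspond to the subsets of $\Vrm_\kappa$ in $N$. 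The key observation is that the unrolling construction is sufficiently absolute that the unrolled model of $(M,\Ycal)$ is $N[G]$, where $G$ is now viewed as generic for $\Pbb$ regarded as a \emph{set} forcing of size $\le\kappa$ over $N$ — here one uses that a pretame class forcing, via the bi-interpretation, corresponds to a set forcing over $N$, and that pretameness over $(M,\Xcal)$ is exactly what is needed for $N$ to still see $\Pbb$ as a reasonable (set-sized) forcing.

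The second step is to invoke (or reprove) that set forcing adds no new ordinals: $N[G]$ has the same ordinals as $N$, so in particular the same von Neumann ordinals below and up to $\kappa$, and — crucially — the same well-orders of elements of $\Vrm_\kappa$ up to isomorphism, since every well-order in $N[G]$ is isomorphic in $N[G]$ to a (unique) von Neumann ordinal, which already lies in $N$. Translating back across the bi-interpretation: given a well-order $\Gamma \in \Ycal$, its unrolled avatar is a well-order in $N[G]$, hence is isomorphic in $N[G]$ to an ordinal $\alpha \le \kappa$ of $N$; that ordinal, or rather a well-order of an element of $\Vrm_\kappa$ of that order type, rolls back down to some $\Gamma' \in \Xcal$, and the isomorphism $\Gamma \cong \Gamma'$ witnessed in $N[G]$ rolls back to an isomorphism witnessed in $(M,\Ycal)$. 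This is exactly the definition of a width-extension.

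One could alternatively argue directly inside class theory, avoiding unrolling: given a well-order $\Gamma \in \Ycal$ with a $\Pbb$-name $\dot\Gamma$, one uses the forcing theorem for $\Sigma^1_k$ formulae (established in the proof of Theorem~\ref{thm:forcing-preserves-axioms}, using $\SnCC{k}$) together with pretameness to find, inside $(M,\Xcal)$, a single set-sized antichain deciding enough of $\dot\Gamma$ to reconstruct its isomorphism type as a ground-model class $\Gamma'$; pretameness is precisely the hypothesis that lets the relevant class of dense subclasses be captured set-by-set, so that the rank function of $\Gamma$ is pinned down by ground-model data. The main obstacle, and the step deserving the most care, is this interaction between pretameness and the bi-interpretation: one must check that ``pretame over $(M,\Xcal)$'' transfers to ``$\Pbb$ behaves like an honest set forcing over the unrolled $N$'' (equivalently, that the unrolling of $(M,\Ycal)$ really is a forcing extension of $N$ and not merely some larger model), since it is this fact — not any new idea about well-orders — that does the real work. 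Since this parallels the corresponding lemma for $\KMCC$ and $\GB$ in the literature, I would present it as a proof sketch, citing \cite{williams-diss, friedman:book} for the pretameness bookkeeping and \cite{ratajczyk1979} for the restricted-Comprehension version of unrolling.
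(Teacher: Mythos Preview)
Your unrolling argument is correct and coincides with the paper's \emph{alternative} proof: reduce to the first-order side via Theorem~\ref{companionModel} and use that set forcing over $\ZFCm_1$ adds no new ordinals, then pull back the Mostowski-collapse isomorphism to get $\Gamma' \in \Xcal$. Your identification of the delicate step---that the unrolled extension of $(M,\Xcal)[G]$ really is $N[G]$, equivalently that pretameness on the class side transfers to ordinary set-forcing behaviour on the unrolled side---is exactly right, and the paper likewise waves at this rather than spelling it out.

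The paper's \emph{primary} proof, however, takes a different route you do not mention: it cites the Hamkins--Woodin theorem that pretame class forcing over a model of Open Class Determinacy adds no new order-types of class well-orders, together with the Gitman--Hamkins result that $\Sigma^1_1$-Comprehension already proves Open Class Determinacy. This route avoids the unrolling machinery entirely and works directly in the class-theoretic setting, at the cost of importing a nontrivial black box about clopen/open games. Your approach is more self-contained for a reader already comfortable with the bi-interpretation, while the paper's primary route gives a sharper picture of exactly which axiom (Open Determinacy, hence $\Sigma^1_1$-Comprehension) is doing the work---which is relevant given the open question immediately following the theorem about whether $\GB$ alone suffices. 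Your ``direct'' alternative sketch via antichains deciding $\dot\Gamma$ is vaguer than either and would need substantial work to make precise; I would drop it in favour of one of the two cleaner arguments.
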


\begin{proof}
Hamkins and Woodin \cite{HamkinsWoodin2018} proved that pretame forcing over a model of Open Class Determinancy cannot add new ordertypes for well-orders. Since $\Sigma^1_1$-Comprehension is enough to prove determinancy for open class games \cite{gitman-hamkins2017}, this gives the result.

Alternatively, one can prove this via the bi-interpretability with first-order set theory, using that forcing over a model of $\ZFCm_1$ cannot add new ordinals.
\end{proof}

As an aside, we remark that it is open whether $\KMCC_1$ is necessary for this result.

\begin{question}
Does $\GB$ prove that every pretame forcing extension is a width extension?
\end{question}

\section{Semantic non-tightness in class theory}\label{semanticNonTightness}

In this section we show that certain fragments of $\KM$ fail to be semantically tight. All models considered will have the same sets. Namely, they will be $\Vrm_\kappa$ for a fixed inaccessible cardinal $\kappa$, and we will assume that $\Vrm_\kappa \models \Vrm = \HOD$. (It is easy to arrange such by forcing, if necessary. Alternatively, this can be obtained by restricting down to an inner model.) 

It is well known that satisfying $\Vrm = \HOD$ is equivalent to having a definable (without parameters) global well-order. We will use the slightly stronger fact that there is a uniform definition which works for any model of $\Vrm = \HOD$. Namely, $\Vrm = \HOD$ asserts that every set is definable in some $\Vrm_\beta$ using some ordinal parameter $\alpha$. So if we order the sets $x$ by the least $\beta$, then the least formula $\phi(v_1,v_2)$, then the least parameter $\alpha$ so that $x$ is defined by $\phi$ in $\Vrm_\beta$ using parameter $\alpha$, this gives a global well-order of the universe in ordertype $\Ord$. We will call this the \emph{$\HOD$-order}, refer to \emph{$\HOD$-least} choices, and so on. 

\subsection{Semantic non-tightness of \texorpdfstring{$\GB$}{GB}}

The strategy for establishing the semantic non-tightness of $\GB$ is this. Using $\Vrm_\kappa$ as the sets there is a minimum model of $\GB$, namely $(\Vrm_\kappa, \Def(\Vrm_\kappa))$ where we append the first-order definable subsets of $\Vrm_\kappa$ to be the classes. By Tarski's theorem on the undefinability of truth, being in $\Def(\Vrm_\kappa)$ cannot be first-order definable over $\Vrm_\kappa$. But it is second-order definable and indeed absolutely so. Moreover, we will produce a carefully defined $\Crm \subseteq \kappa$ which is also absolutely second-order definable over $\Vrm_\kappa$. Our two bi-interpretable models of $\GB$ will then be $(\Vrm_\kappa,\Def(\Vrm_\kappa))$ and $(\Vrm_\kappa,\Def(\Vrm_\kappa;\Crm))$ where $\Def(\Vrm_\kappa;\Crm)$ denotes the hyperclass of subsets of $\Vrm_\kappa$ definable using $\Crm$ as a parameter.

\begin{observation}\label{obs:truth-definability}
The satisfaction predicate $\Trm$ for $\Vrm_\kappa$ is both $\Sigma^1_1$ and $\Pi^1_1$ definable over $\Vrm_\kappa$. If $\Xcal \subseteq \powerset(\Vrm_\kappa)$ is any possible collection of classes which give a model of $\GB$ then $\Xcal$ will correctly define $\Trm$.
\end{observation}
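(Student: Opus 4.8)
The plan is to exhibit one $\Sigma^1_1$ formula and one $\Pi^1_1$ formula, each defining $\Trm$, and then to show that $\GB$ by itself forces these two formulas to agree and to compute the real satisfaction predicate, so that their value cannot depend on the ambient classes $\Xcal$. The device is \emph{partial satisfaction classes}. For a natural number $m$ let $\Phi(S,m)$ be the first-order assertion (with $S$ a free class variable) that $S$ is a class of pairs whose domain is exactly $\{(\godel{\phi},a) : \phi \text{ a formula of syntactic depth} \le m,\ a \text{ an assignment for }\phi\}$ and that $S$ obeys the Tarski recursion clauses on this domain (the atomic, negation, conjunction, and existential-quantifier cases; each clause refers only to immediate subformulas, which have strictly smaller depth and hence remain in the domain, so $\Phi$ is genuinely first-order). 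Then set
\[
\Trm_\Sigma(\godel{\phi},a)\ :\equiv\ \exists S\ \bigl(\Phi(S,\operatorname{depth}(\phi)) \wedge (\godel{\phi},a)\in S\bigr), \qquad \Trm_\Pi(\godel{\phi},a)\ :\equiv\ \forall S\ \bigl(\Phi(S,\operatorname{depth}(\phi)) \rightarrow (\godel{\phi},a)\in S\bigr),
\]
which are $\Sigma^1_1$ and $\Pi^1_1$ respectively. Two facts make these work. \emph{Uniqueness:} $\GB$ proves $\Phi(S,m) \wedge \Phi(S',m) \rightarrow S=S'$, by induction on formula depth; this is a legitimate $\GB$ induction since $\{n\le m : S,S'\text{ agree on all pairs with a depth-}n\text{ formula}\}$ is a class by First-Order Comprehension (parameters $S,S'$) and is a subclass of the set $m+1$, hence a set. \emph{Existence:} for each (standard) $m$ the genuine depth-$m$ truth predicate $T_m$ is first-order definable over $\Vrm_\kappa$ without parameters, by a concrete formula $\theta_m$ obtained by unwinding the Tarski recursion $m$ times; hence $T_m\in\Def(\Vrm_\kappa)$, and, being parameter-free definable, $T_m$ belongs to the classes of every model of $\GB$ with sets $\Vrm_\kappa$, by First-Order Comprehension. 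Moreover $\GB$ proves $\Phi(T_m,m)$ (schematically in $m$).

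Granting these, fix a model $(\Vrm_\kappa,\Xcal)\models\GB$ and a real formula $\phi$ of depth $m$. By existence, $T_m\in\Xcal$ and $\Phi(T_m,m)$ holds there, and by uniqueness $T_m$ is the only class of $\Xcal$ satisfying $\Phi(\cdot,m)$; therefore in $(\Vrm_\kappa,\Xcal)$ both $\Trm_\Sigma(\godel{\phi},a)$ and $\Trm_\Pi(\godel{\phi},a)$ are equivalent to $(\godel{\phi},a)\in T_m$. Since $\Vrm_\kappa$ is genuinely well-founded with standard $\omega$, the formula-codes of the model are the real formula-codes and $T_m$ is the real depth-$m$ truth predicate, so this common value is exactly $\Trm(\godel{\phi},a)$. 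Taking $\Xcal=\powerset(\Vrm_\kappa)$, which models $\KM$ and a fortiori $\GB$, yields the first sentence of the observation; and since nothing above used anything about $\Xcal$ beyond its being a model of $\GB$ over $\Vrm_\kappa$, the second sentence follows.

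The point requiring care is the existence fact: that for each fixed $m$ the depth-$m$ truth predicate is first-order definable over $\Vrm_\kappa$. It must be stressed that there is provably no definition \emph{uniform in $m$} — such a thing would first-order define full truth over $\Vrm_\kappa$, contradicting Tarski — which is precisely why we stratify by depth and rely on a \emph{single} second-order quantifier ``$\exists S$'', which does not demand that the witnesses $T_m$ be presented uniformly. I would write out the recursion producing $\theta_m$ and note that, since assignments range over all of $\Vrm_\kappa$, the domain of $T_m$ is a proper class, so $T_m$ is an element of $\powerset(\Vrm_\kappa)$, i.e. a class rather than a set of the model. The remaining verifications — that $T_m$ actually satisfies the Tarski clauses and the depth induction underlying uniqueness — are routine.
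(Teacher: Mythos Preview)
Your proof is correct and follows essentially the same approach as the paper: both arguments stratify truth via partial satisfaction predicates that are individually first-order definable, use existence plus uniqueness of these partial predicates to get matching $\Sigma^1_1$ and $\Pi^1_1$ definitions, and invoke the well-foundedness of $\Vrm_\kappa$ to ensure only standard levels arise. The only cosmetic difference is that you stratify by syntactic depth $m$ whereas the paper stratifies by quantifier complexity $\Sigma_k$; either choice works for the same reason.
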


The content of this observation can be traced back to Mostowski \cite{mostowski1951}.

\begin{proof}
To define $\Trm$ in a $\Sigma^1_1$ way, we observe that it is the union of the $\Sigma_k$-satisfaction predicates, and these all agree on their common domains. While the first-order definitions of these are progressively more complex as $k$ increases, whether a class is a $\Sigma_k$-satisfaction class is uniformly recognizable in $k$. Namely, $S$ is the $\Sigma_k$-satisfaction class if it satisfies the Tarskian recursion on its domain and it judges the truth of all and only the $\Sigma_k$ formulae. To define $\Trm$ in a $\Pi^1_1$ way, $\phi[\vec a] \in \Trm$ iff for any class $S$ if $S$ is a $\Sigma_k$-satisfaction class and $\phi[\vec a]$ is in its domain, then $S$ judges $\phi[\vec a]$ to be true.

Observe that $\GB$ suffices to prove the $\Sigma_k$-satisfaction classes exist. So these definitions work for any model of $\GB$ with $\Vrm_\kappa$ as its sets. (Here we use that we are working over a transitive model and so there are only standard $k$ to worry about.)
\end{proof}

While $\Def(\Vrm_\kappa)$ is a hyperclass and thus cannot be a class in any model of class theory with $\Vrm_\kappa$ as its sets, it can be coded by a single class.

\begin{observation}
After a minor reshuffling of coordinates, $\Trm$ is a code for $\Def(\Vrm_\kappa)$.
\end{observation}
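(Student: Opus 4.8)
The plan is to spell out the reshuffling and then check the two conditions of Definition~\ref{hyperclassCode}. Fix once and for all an absolute G\"odel coding of first-order formulae in the language of set theory together with an absolute coding of finite sequences; since $\kappa$ is inaccessible, $\Vrm_\kappa$ is closed under both, and prepending to a sequence, splitting off the first entry, and pairing are all first-order definable over $\Vrm_\kappa$. Recall that $\Trm$ consists of the pairs $(\godel{\phi}, \seq{a_0, \ldots, a_n})$ with $\phi(v_0, \ldots, v_n)$ a first-order formula and $\Vrm_\kappa \models \phi[a_0, \ldots, a_n]$.

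First I would define the reshuffled class
$$A = \Bigl\{ \bigl( \seq{\godel{\phi},\, \seq{a_1, \ldots, a_n}},\ a_0 \bigr) \;:\; \bigl(\godel{\phi},\, \seq{a_0, \ldots, a_n}\bigr) \in \Trm \Bigr\},$$
which simply peels the first parameter off as the ``free slot'' and folds the remaining parameters into the index coordinate. This definition is first-order in $\Trm$ and uniform, so $A$ is a class of any model of class theory with $\Vrm_\kappa$ as its sets in which $\Trm$ is a class.

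Next I would verify that the slices of $A$ are exactly the members of $\Def(\Vrm_\kappa)$. Given an index of the form $i = \seq{\godel{\phi},\, \seq{p_1, \ldots, p_n}}$ with $\phi(v_0, \ldots, v_n)$ a formula, the Tarskian recursion satisfied by $\Trm$ (valid by Observation~\ref{obs:truth-definability}) gives $(A)_i = \{\, a : \Vrm_\kappa \models \phi[a, p_1, \ldots, p_n]\,\}$, which is a parametrically first-order definable subset of $\Vrm_\kappa$ and hence lies in $\Def(\Vrm_\kappa)$; for every other $i$ (not a pair, or with first coordinate not coding a formula, or with a parameter block of the wrong length) no pair in $A$ has first coordinate $i$, so $(A)_i = \emptyset \in \Def(\Vrm_\kappa)$. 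Conversely, each $S \in \Def(\Vrm_\kappa)$ equals $\{\, a : \Vrm_\kappa \models \phi[a, p_1, \ldots, p_n]\,\}$ for suitable $\phi$ and $p_1, \ldots, p_n \in \Vrm_\kappa$, so $S = (A)_i$ for $i = \seq{\godel{\phi},\, \seq{p_1, \ldots, p_n}}$. Thus $\Def(\Vrm_\kappa) = \{\, (A)_i : i \in \Vrm_\kappa \,\}$, i.e. $A$ codes $\Def(\Vrm_\kappa)$.

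I do not anticipate any genuine obstacle. The one point needing care is the bookkeeping with the codings, arranged so that the reshuffling map is genuinely first-order and absolute and so that ``junk'' indices contribute only the empty set (itself definable, hence harmless); everything else is immediate once we know $\Trm$ is the genuine satisfaction class of $\Vrm_\kappa$.
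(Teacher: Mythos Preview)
Your proposal is correct and follows essentially the same approach as the paper: the paper's proof is a one-line sketch observing that reshuffling $\Trm$ into pairs $((\phi,\vec a),x)$ yields a code for $\Def(\Vrm_\kappa)$, and you have simply spelled out that reshuffling and the verification of the two inclusions in detail, including the harmless bookkeeping about junk indices.
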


\begin{proof}
A class $X$ is definable if and only if $X = \{ x : \phi[x,\vec a] \in \Trm \}$ for some formula $\phi$ with parameters $\vec a$. So by reshuffling coordinates in $\Trm$ to consist of ordered pairs $((\phi,\vec a),x)$ we get a code for $\Def(\Vrm_\kappa)$.
\end{proof}

We will slightly abuse notation and use $\Trm$ to refer both to the satisfaction class and to this code for $\Def(\Vrm_\kappa)$. We will write $(\Trm)_\xi$ to refer to the slice of $\Trm$ corresponding to the $\xi$-th pair $(\phi,\vec a)$ in the $\HOD$-order.

If $\Crm$ is a second-order definable generic for a forcing $\Pbb \in \Def(\Vrm_\kappa)$ then similar results hold for $\Trm(\Crm)$, the satisfaction class relative to $\Crm$ as a parameter, and $\Def(\Vrm_\kappa;\Crm)$, the hyperclass of classes definable using $\Crm$ as a parameter.

\begin{lemma} \label{lem:crm-relative-truth}
Suppose $\Crm$ is a generic over $(\Vrm_\kappa,\Def(\Vrm_\kappa))$ for a forcing $\Pbb \in \Def(\Vrm_\kappa)$, and $\Crm$ is second-order definable. Then, $\Trm(\Crm)$ is definable, indeed definable in a uniform manner across all $(\Vrm_\kappa,\Xcal) \models \GB$ which define $\Crm$ the same. Moreover, after a minor reshuffling of coordinates $\Trm(\Crm)$ is a code for $\Def(\Vrm_\kappa;\Crm)$.
\end{lemma}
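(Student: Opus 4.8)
The plan is to mirror the proof of Observation~\ref{obs:truth-definability} and the observation that $\Trm$ codes $\Def(\Vrm_\kappa)$, but now carrying the parameter $\Crm$ along throughout, and to be careful that the hypotheses ``$\Crm$ is a generic for a forcing $\Pbb \in \Def(\Vrm_\kappa)$'' and ``$\Crm$ is second-order definable'' are enough to make the relativized satisfaction class exist in any $(\Vrm_\kappa,\Xcal) \models \GB$ that pins down $\Crm$.

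First I would note that since $\Crm$ is second-order definable, say by a formula $\theta$ (possibly in the parameter-free form guaranteed by the setup, or with a fixed parameter), any model $(\Vrm_\kappa,\Xcal) \models \GB$ that ``defines $\Crm$ the same'' has $\Crm$ as one of its classes: $\GB$ proves $\exists X\, \theta(X)$ once $\theta$ is, say, $\Sigma^1_1$ and we have witnesses — more precisely, the hypothesis is exactly that $\theta$ picks out $\Crm$ in each such model, so $\Crm \in \Xcal$. Then $\Crm$ is available as a legitimate class parameter, and $\GB$ with the parameter $\Crm$ proves that the relativized $\Sigma_k$-satisfaction classes $\Trm_k(\Crm)$ exist for each standard $k$: these are defined by the Tarskian recursion on formulae in the language $\in$ augmented by a predicate for $\Crm$, restricted to $\Sigma_k$ formulae, exactly as in the unrelativized case. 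Because we work over a transitive model of the sets, only standard $k$ arise, and the $\Trm_k(\Crm)$ cohere on common domains.

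Next, $\Trm(\Crm)$ is the union $\bigcup_k \Trm_k(\Crm)$. The $\Sigma^1_1$ definition is: $S$ is \emph{a} relativized $\Sigma_k$-satisfaction class if it obeys the Tarskian recursion (using $\Crm$ to evaluate atomic formulae involving the $\Crm$-predicate) on its domain and judges exactly the $\Sigma_k$ formulae; then $\phi[\vec a] \in \Trm(\Crm)$ iff there is such an $S$ with $\phi[\vec a]$ in its domain judging it true. The $\Pi^1_1$ definition is the dual: for every such $S$ containing $\phi[\vec a]$ in its domain, $S$ judges it true. The key point — identical to the proof of Observation~\ref{obs:truth-definability} — is that ``$S$ is a relativized $\Sigma_k$-satisfaction class'' is uniformly recognizable in $k$ even though the explicit first-order definitions grow in complexity, and that any two candidates agree where both are defined, by a straightforward induction inside $\GB$. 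Since these definitions only mention $\Crm$ and logical machinery, and since $\GB$ proves the requisite $\Trm_k(\Crm)$ exist (using that $\Crm \in \Xcal$), the same formula defines $\Trm(\Crm)$ in every $(\Vrm_\kappa,\Xcal) \models \GB$ defining $\Crm$ the same — this is the claimed uniformity.

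Finally, for the coding claim: a class $X$ is definable from $\Crm$ iff $X = \{ x : \phi[x,\vec a] \in \Trm(\Crm) \}$ for some formula $\phi$ (in the $\in$-and-$\Crm$ language) with set parameters $\vec a$; reshuffling the coordinates of $\Trm(\Crm)$ so that it consists of pairs $((\phi,\vec a), x)$ yields a code for $\Def(\Vrm_\kappa;\Crm)$ in the sense of Definition~\ref{hyperclassCode}, just as in the unrelativized observation. I expect the main obstacle to be the uniformity clause — making sure that the role played by the hypotheses ``$\Crm$ is second-order definable'' (so $\Crm$ lands in every relevant $\Xcal$) and ``$\Pbb \in \Def(\Vrm_\kappa)$'' (so the relativized satisfaction apparatus is absolute and $\GB$-provably total) is spelled out correctly, rather than the Tarskian bookkeeping, which is routine given Observation~\ref{obs:truth-definability}. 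The genericity of $\Crm$ is not really needed for this lemma per se; it is there because $\Crm$ will be used later, and it is harmless to include. I would remark in passing that if one only wanted $\Trm(\Crm)$ to be a class in one particular model it would suffice that $\Crm$ be a class of that model, but the uniform second-order definability is what lets the later bi-interpretation argument go through.
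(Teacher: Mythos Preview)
There is a genuine gap in your approach. You assume that ``$\Crm$ is second-order definable'' implies $\Crm \in \Xcal$, and then proceed to build the relativized partial satisfaction classes $\Trm_k(\Crm)$ using $\Crm$ as a class parameter in First-Order Comprehension. But second-order definability does \emph{not} entail membership in $\Xcal$: indeed $\Trm$ itself is second-order definable yet $\Trm \notin \Def(\Vrm_\kappa)$ by Tarski's theorem, and the same holds for $\Crm$. In the central case $\Xcal = \Def(\Vrm_\kappa)$, the class $\Crm$ is generic over $(\Vrm_\kappa,\Def(\Vrm_\kappa))$ and hence certainly not in $\Def(\Vrm_\kappa)$. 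Consequently the relativized $\Sigma_k$-satisfaction classes $\Trm_k(\Crm)$ are not first-order definable (they would require $\Crm$ as a parameter) and need not be classes in $\Xcal$ at all. The paper's proof flags exactly this point: one \emph{cannot} just relativize the definition of $\Trm$.

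The paper's fix is to use the forcing theorem, and this is where both the genericity of $\Crm$ and the hypothesis $\Pbb \in \Def(\Vrm_\kappa)$ do real work (contrary to your remark that genericity is not needed). One defines $\phi[\vec a] \in \Trm(\Crm)$ iff there is $p \in \Crm$ with ``$p \forces \phi(\vec a,\check\Crm)$'' $\in \Trm$. The forcing relation is first-order definable over $\Vrm_\kappa$ since $\Pbb \in \Def(\Vrm_\kappa)$, so the statement in quotes is decided by $\Trm$, which is absolutely second-order definable; and the quantifier ``there is $p \in \Crm$'' is expressible because $\Crm$ is second-order definable. Genericity is precisely what guarantees this definition really computes truth in the extension. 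Your coding argument for $\Def(\Vrm_\kappa;\Crm)$ at the end is fine once $\Trm(\Crm)$ has been correctly defined.
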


\begin{proof}
The reason this isn't completely trivial is that partial satisfaction classes relative to $\Crm$ will not be (first-order) definable unless $\Pbb$ is trivial, and so we cannot just relativize the definition of $\Trm$. Instead, we use the forcing theorem: $\phi[\vec a] \in \Trm(\Crm)$ if and only if there is $p \in \Crm$ so that ``$p \forces \phi(\vec a,\check C)\text{''} \in \Trm$. By the assumption that $\Crm$ is second-order definable we can express ``there is $p \in \Crm$ so that\dots''. This definition works across any $(\Vrm_\kappa,\Xcal) \models \GB$ which defines $\Crm$ the same because $\Trm$ is absolute. Finally, the same argument as with $\Trm$ gives a code for the hyperclass $\Def(\Vrm_\kappa;\Crm)$.
\end{proof}

In particular, this lemma implies that ``every class is definable from $\Crm$'' is a second-order definable property. We will write $\Class = \Def(\Vrm_\kappa;\Crm)$ as an abbreviation for the second-order formula asserting this.

It remains to determine how to give an absolute definition for a generic $\Crm$. In brief, we will define $\Crm$ to be a carefully chosen Cohen generic subclass of $\kappa$, using the $\HOD$-order to ensure canonicity of any choices.

\begin{lemma}\label{le:cannonicalChoiceofC}
There is a second-order definition for $\Crm \subseteq \kappa$ which is Cohen-generic over $(\Vrm_\kappa,\Def(\Vrm_\kappa))$ so that any $\GB$ model over $\Vrm_\kappa$ defines $\Crm$ the same. Consequently, there is a second-order definition for $\Trm(\Crm)$ so that all $\GB$ models over $\Vrm_\kappa$ define $\Trm(\Crm)$ the same.
\end{lemma}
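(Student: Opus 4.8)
The plan is to build $\Crm$ by a transfinite recursion of length $\Ord = \kappa$ which diagonalizes against the definable dense classes, always making the $\HOD$-least available choice so that the outcome does not depend on which model of classes we sit in. The relevant forcing is $\Add(\Ord,1)$, whose conditions are the set-sized binary sequences $p \colon \beta \to 2$ with $\beta < \kappa$; since the generic object is a subclass of $\kappa$ rather than a set, it has room to meet the proper-class many dense classes in $\Def(\Vrm_\kappa)$, whereas a set-sized (e.g.\ Cohen-real) generic over $(\Vrm_\kappa,\Def(\Vrm_\kappa))$ does not exist at all. Recall from the preceding observations that $\Trm$ codes $\Def(\Vrm_\kappa)$, so that the slice $(\Trm)_\eta$ runs over all first-order definable subclasses of $\Vrm_\kappa$ as $\eta$ ranges over $\Ord$ in the $\HOD$-order, and recall that both $\Trm$ and the $\HOD$-order are definable uniformly across every $(\Vrm_\kappa,\Xcal) \models \GB$.

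First I would set up the recursion: $q_0 = \emptyset$; at limits $\lambda$ put $q_\lambda = \bigcup_{\eta < \lambda} q_\eta$; and at successors let $q_{\eta+1}$ be the $\HOD$-least condition $p \le q_\eta$ with $p \in (\Trm)_\eta$ if such a $p$ exists, and $q_{\eta+1} = q_\eta$ otherwise. Two points make this legitimate inside $\GB$: since $\kappa$ is regular, the union at any limit $\lambda < \kappa$ has bounded domain and hence is again a condition; and since $\GB$ has Class Replacement, the partial recursion $\langle q_\eta : \eta \le \xi \rangle$ is, for each $\xi < \kappa$, a genuine set. Then set $\Crm = \bigcup_{\xi < \kappa} q_\xi$, viewed as a subclass of $\kappa$.

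Next I would verify the three assertions. For second-order definability: the relation ``$p = p_\xi$'' (the $\xi$-th stage) is expressed by asserting the existence of a set sequence $\langle q_\eta : \eta \le \xi \rangle$ obeying the recursion clauses, whose only ingredient beyond first-order set theory over $\Vrm_\kappa$ is a reference to the class $\Trm$ (to decide membership in $(\Trm)_\eta$) and to the first-order $\HOD$-order; hence ``$p = p_\xi$'' is first-order with $\Trm$ as a class parameter, and $\alpha \in \Crm \iff \exists \xi\, \exists p\, (p = p_\xi \wedge \alpha \in \dom p \wedge p(\alpha) = 1)$ is a second-order formula. Uniformity across $\GB$-models over $\Vrm_\kappa$ is then immediate, since each such model computes $\Trm$ and the $\HOD$-order the same way and hence runs the recursion identically. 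For genericity over $(\Vrm_\kappa,\Def(\Vrm_\kappa))$: given a dense definable class $D$, write $D = (\Trm)_\eta$; since $D$ is dense, $q_\eta$ has an extension in $D$, so the successor clause puts $q_{\eta+1} \in D$ with $q_{\eta+1} \subseteq \Crm$, witnessing that $\Crm$ meets $D$. (Meeting the dense classes $\{p : \beta \in \dom p\}$ for $\beta < \kappa$ also shows $\dom \Crm = \kappa$.) Finally, the ``consequently'' clause follows by applying Lemma~\ref{lem:crm-relative-truth} to this $\Crm$---or rather the mild extension of that argument to the tame class forcing $\Add(\Ord,1)$, using that it satisfies the forcing theorem for first-order formulae, so that ``$p \forces \phi(\vec a,\check C)$'' is recognizable via $\Trm$---which yields a uniform second-order definition of $\Trm(\Crm)$.

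The main obstacle is the bookkeeping of the second step: confirming that the $\Ord$-length recursion, which lies above what $\GB$ provably iterates as a class function, nonetheless has every proper initial segment a set---this is exactly where regularity of $\kappa$ and Class Replacement enter---so that ``$p = p_\xi$'' requires only a set quantifier together with the class parameter $\Trm$. That is what makes $\Crm$ genuinely second-order definable and, crucially, absolute between the minimum model $(\Vrm_\kappa,\Def(\Vrm_\kappa))$ and any of its width-extensions. The genericity count and the relativized truth predicate are then routine given the earlier lemmas.
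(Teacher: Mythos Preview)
Your proposal is correct and follows essentially the same approach as the paper: build $\Crm$ by a $\kappa$-length recursion that meets the definable dense classes in $\HOD$-least fashion, using regularity of $\kappa$ at limits and the absoluteness of $\Trm$ to guarantee all $\GB$ models over $\Vrm_\kappa$ run the recursion identically. The only cosmetic difference is that the paper first extracts from $\Trm$ an enumeration $\seq{D_\xi : \xi < \kappa}$ of \emph{just} the dense classes and then always extends into $D_\xi$, whereas you iterate through every slice $(\Trm)_\eta$ and extend only when possible; this changes nothing of substance, and your more explicit discussion of why the initial segments of the recursion are sets (so that ``$p = p_\xi$'' is first-order in the parameter $\Trm$) is a welcome addition.
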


The idea behind this lemma is originally due to Feferman \cite{feferman1965}, who did the same construction in the context of arithmetic. See \cite{odifreddi1983-1,odifreddi1983-2,odifreddi1983-3} for an exposition of Feferman's work.

\begin{proof}
Recall that the forcing $\Add(\kappa,1)$ is $\mathord{<}\kappa$-closed and is first-order definable over $\Vrm_\kappa$. There are $\kappa$ many dense subsets of $\Add(\kappa,1)$ which appear in $\Def(\Vrm_\kappa)$, so we can meet them one at a time, using closure at limit stages. From $\Trm$ define a sequence $\vec D = \seq{D_\xi : \xi \in \kappa}$ of all the dense classes in $\Def(\Vrm_\kappa)$ by ordering them by the $\HOD$-least pair $(\phi,\vec a)$ which gives a dense class. Note that $\vec D$ is first-order definable from $\Trm$. Since $\Trm$ is absolutely definable this means that all models of $\GB$ over $\Vrm_\kappa$ compute $\vec D$ the same. 

The construction is done in $\kappa$ many steps. Start with $p_0 = \emptyset$. Having built $p_\xi$ define $p_{\xi+1}$ to be the $\HOD$-least condition $< p_\xi$ which meets $D_\xi$. And if $\eta$ is limit then define $p_\eta = \bigcup_{\xi < \eta} p_\xi$. Because $\kappa$ is inaccessible we have that $p_\eta \in \Vrm_\kappa$ and thus we can continue the induction. Finally, set $\Crm = \bigcup_{\xi \in \kappa} p_\xi$. Because any model of $\GB$ over $\Vrm_\kappa$ computes $\vec D$ the same, inductively we can see that they all compute each $p_\xi$ the same, whence they compute $\Crm$ the same. 
\end{proof}

We are now in a position to exhibit that $\GB$ is not semantically tight.

\begin{theorem} \label{thm:gb-not-sem-tight}
The two models $(\Vrm_\kappa,\Def(\Vrm_\kappa))$ and $(\Vrm_\kappa,\Def(\Vrm_\kappa;\Crm))$ of $\GB$, where $\kappa$ is inaccessible, $\Vrm_\kappa \models \Vrm = \HOD$, and $\Crm$ is the generic defined as above, are bi-interpretable.\footnote{Indeed, as remarked by the referee, one can alternatively establish the bi-interpretation of $(\Vrm_\kappa,\Def(\Vrm_\kappa))$ and $(\Vrm_\kappa,\Def(\Vrm_\kappa;\Crm))$ with $(\Vrm_\kappa,\Trm)$. The interpretation of $(\Vrm_\kappa,\Def(\Vrm_\kappa;\Crm))$ in $(\Vrm_\kappa,\Trm)$ is obtained by representing $\Def(\Vrm_\kappa;\Crm))$ with the $\HOD$-least codes of $\Trm$ with the additional symbol for $C$; the other direction is obtained directly by the Observation~\ref{obs:truth-definability}.}
\end{theorem}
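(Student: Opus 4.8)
The plan is to exhibit interpretations $\Ical$ and $\Jcal$ in the two directions, each of which is the identity on the set sort $\Vrm_\kappa$ and does all its work on the class sort by passing through the satisfaction codes $\Trm$ and $\Trm(\Crm)$, and then to check that the two composite interpretations are definably isomorphic to the identity. The crucial input throughout is that $\Trm$ and $\Trm(\Crm)$ are \emph{absolutely} second-order definable over every $\GB$-model with $\Vrm_\kappa$ as its sets (Observation~\ref{obs:truth-definability}, Lemma~\ref{lem:crm-relative-truth}, Lemma~\ref{le:cannonicalChoiceofC}), so that relativizing a defining formula to an interpreted copy does not change which class it names.

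Concretely, I would let $\Ical$ interpret $(\Vrm_\kappa,\Def(\Vrm_\kappa;\Crm))$ inside $(\Vrm_\kappa,\Def(\Vrm_\kappa))$ as follows: the set sort is the identity, and a class of the interpreted model is represented by a code $(\phi,\vec a)$ --- a formula in the language of set theory augmented by a predicate symbol $\dot C$ for $\Crm$, together with a parameter tuple from $\Vrm_\kappa$ --- which we read as an index of a slice of $\Trm(\Crm)$. Membership of a set $x$ in the class coded by $(\phi,\vec a)$, and equality of two codes, are expressed by the second-order formula defining the code $\Trm(\Crm)$, which is available over $(\Vrm_\kappa,\Def(\Vrm_\kappa))$ by Lemma~\ref{lem:crm-relative-truth} (this being a $\GB$-model over $\Vrm_\kappa$, it defines $\Crm$ correctly by Lemma~\ref{le:cannonicalChoiceofC}). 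Since $\Trm(\Crm)$ is a code for $\Def(\Vrm_\kappa;\Crm)$, the map sending $(\phi,\vec a)$ to its slice is an isomorphism of $\Ical$ applied to $(\Vrm_\kappa,\Def(\Vrm_\kappa))$ onto $(\Vrm_\kappa,\Def(\Vrm_\kappa;\Crm))$. Symmetrically, $\Jcal$ interprets $(\Vrm_\kappa,\Def(\Vrm_\kappa))$ inside $(\Vrm_\kappa,\Def(\Vrm_\kappa;\Crm))$: identity on sets, and a class is represented by a code $(\psi,\vec b)$ indexing a slice of $\Trm$, where the needed second-order definition of $\Trm$ is furnished by Observation~\ref{obs:truth-definability}, applicable because $(\Vrm_\kappa,\Def(\Vrm_\kappa;\Crm))$ is a $\GB$-model over $\Vrm_\kappa$, being a tame class-forcing extension of the minimum model by $\Crm$; that $\Trm$ codes $\Def(\Vrm_\kappa)$ makes this an isomorphism onto $(\Vrm_\kappa,\Def(\Vrm_\kappa))$.

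It then remains to verify the two round trips. Applying $\Jcal$ inside the $\Ical$-copy of $(\Vrm_\kappa,\Def(\Vrm_\kappa;\Crm))$ relativizes $\Jcal$'s clauses to that copy; since $\Trm$ is absolutely second-order definable over any $\GB$-model over $\Vrm_\kappa$, the $\Trm$ computed there is the genuine $\Trm$, so the double copy has the codes $(\psi,\vec b)$ as its classes with real membership, and the function sending $X \in \Def(\Vrm_\kappa)$ to the $\HOD$-least code $(\psi,\vec b)$ with $\{x : \psi[x,\vec b] \in \Trm\} = X$ is a second-order-definable isomorphism of $(\Vrm_\kappa,\Def(\Vrm_\kappa))$ onto it. The other round trip is identical with $\Trm$ replaced by $\Trm(\Crm)$: the witnessing function sends $X \in \Def(\Vrm_\kappa;\Crm)$ to the $\HOD$-least code $(\phi,\vec a)$ with $\{x : (\Vrm_\kappa,\in,\Crm) \models \phi[x,\vec a]\} = X$, which is second-order definable over $(\Vrm_\kappa,\Def(\Vrm_\kappa;\Crm))$, again using that every $\GB$-model over $\Vrm_\kappa$ computes $\Crm$ the same (Lemma~\ref{le:cannonicalChoiceofC}) together with the uniform definition of $\Trm(\Crm)$ (Lemma~\ref{lem:crm-relative-truth}). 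These definable isomorphisms are precisely the functions $f$ and $g$ required by the definition of bi-interpretation.

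The main difficulty is bookkeeping rather than mathematical depth: $\Trm$ and $\Trm(\Crm)$ belong to neither model by Tarski's theorem, so they must be handled throughout as \emph{virtual} classes given by second-order formulas, and one must confirm that every clause of the interpretations and every witnessing function really is a (second-order) formula in the language of class theory, and that ``defines $\Crm$ the same'' and ``correctly defines $\Trm$'' survive relativization to an interpreted copy. Beyond that, the only genuine ingredient is the absoluteness already packaged in Observation~\ref{obs:truth-definability} and Lemma~\ref{lem:crm-relative-truth}, and it is exactly that absoluteness which forces the two composite interpretations to be definably isomorphic to the identity.
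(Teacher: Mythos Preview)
Your proposal is correct and follows the same essential strategy as the paper: exploit the absolute second-order definability of $\Trm$ and $\Trm(\Crm)$ to build the interpretations, and use the $\HOD$-least code map for the witnessing isomorphisms. The one difference is in the ``easy'' direction. The paper's $\Ical$ interprets the smaller model $(\Vrm_\kappa,\Def(\Vrm_\kappa))$ inside the larger $(\Vrm_\kappa,\Def(\Vrm_\kappa;\Crm))$ by exploiting the literal inclusion $\Def(\Vrm_\kappa) \subseteq \Def(\Vrm_\kappa;\Crm)$: it is the identity on sets and the identity on classes, with domain cut down to those classes lying in the definable hyperclass $\Def(\Vrm_\kappa)$. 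Your $\Jcal$ instead represents those same classes by codes indexing slices of $\Trm$, just as you (and the paper) do for the harder direction. Both are fine; the paper's shortcut is slightly simpler because it avoids a quotient by code-equality, while your symmetric treatment is closer to the referee's alternative noted in the footnote, where both models are bi-interpreted with the single-class structure $(\Vrm_\kappa,\Trm)$.
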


\begin{corollary}
$\GB$ is not semantically tight.
\qed
\end{corollary}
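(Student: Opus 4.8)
The plan is to exhibit explicit interpretations $\Ical$ of $(\Vrm_\kappa,\Def(\Vrm_\kappa))$ in $(\Vrm_\kappa,\Def(\Vrm_\kappa;\Crm))$ and $\Jcal$ in the other direction, with the underlying sets fixed throughout, and then verify the two bi-interpretation identities. Since both models have the same sets $\Vrm_\kappa$, I would let both interpretations be the identity on the set-sort and define each only on the class-sort. For $\Jcal$ (interpreting the small model $(\Vrm_\kappa,\Def(\Vrm_\kappa))$ inside the big model $(\Vrm_\kappa,\Def(\Vrm_\kappa;\Crm))$), a class of the small model is just a class of the big model; but to get the bi-interpretation identities to close up I would represent each class $X \in \Def(\Vrm_\kappa)$ by its $\HOD$-least code, i.e. the $\HOD$-least pair $(\phi,\vec a)$ with $X = (\Trm)_{(\phi,\vec a)}$. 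This is available in the big model because, by Observation~\ref{obs:truth-definability}, $\Trm$ is absolutely second-order definable, hence $\Trm \in \Def(\Vrm_\kappa;\Crm)$. For $\Ical$ (interpreting the big model inside the small model), I would represent each class $Y \in \Def(\Vrm_\kappa;\Crm)$ by the $\HOD$-least pair $(\phi,\vec a)$ with $Y = \{x : \phi[x,\vec a,\Crm] \in \Trm(\Crm)\}$; this code is available in the small model because $\Crm$, and hence $\Trm(\Crm)$, is second-order definable over $\Vrm_\kappa$ by Lemma~\ref{le:cannonicalChoiceofC} and Lemma~\ref{lem:crm-relative-truth}, so $\Trm(\Crm) \in \Def(\Vrm_\kappa)$.

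The key steps, in order. First, check that $\Ical$ really lands in $(\Vrm_\kappa,\Def(\Vrm_\kappa)) \models \GB$: the point is that $\Trm(\Crm)$ is a class of the small model and every $Y \in \Def(\Vrm_\kappa;\Crm)$ is a slice of (a reshuffling of) $\Trm(\Crm)$, so the interpreted class-sort is the coded hyperclass $\Def(\Vrm_\kappa;\Crm)$ as computed inside $(\Vrm_\kappa,\Def(\Vrm_\kappa))$; one then verifies $\GB$ holds in this interpreted structure, which follows from the tame-forcing preservation results (Theorem~\ref{thm:forcing-preserves-axioms} is overkill here — for $\GB$ it is the Stanley/Friedman theorem) together with the fact that $(\Vrm_\kappa,\Def(\Vrm_\kappa;\Crm))$ is, by construction of $\Crm$, a submodel of the Cohen forcing extension. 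Symmetrically, $\Jcal$ lands in $(\Vrm_\kappa,\Def(\Vrm_\kappa;\Crm)) \models \GB$, which is trivial since $\Def(\Vrm_\kappa) \subseteq \Def(\Vrm_\kappa;\Crm)$ and $\GB$ holds there. Second, produce the definable isomorphisms. In $(\Vrm_\kappa,\Def(\Vrm_\kappa))$, the composite $\Ical^\Jcal$ reinterprets a small-model class $X$ as: first view $X$ (via $\Jcal$) as a big-model class coded by its $\HOD$-least $\Trm(\Crm)$-code, then view that (via $\Ical$) as a small-model class coded by its $\HOD$-least $\Trm$-code. The map $g$ sending $X$ to this iterated code is definable in $(\Vrm_\kappa,\Def(\Vrm_\kappa))$ precisely because $\Trm$ and $\Trm(\Crm)$ are both definable there, and it is a bijection onto the interpreted class-sort since every small-model class has a unique $\HOD$-least code and the coding is invertible. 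One checks $g$ is an isomorphism, i.e. $X \in X'$-type atomic formulas and membership are preserved — membership of sets in classes is preserved because the code of a class always decodes to that same class. Symmetrically one defines $f$ in $(\Vrm_\kappa,\Def(\Vrm_\kappa;\Crm))$ and checks $\Jcal^\Ical$ is the identity up to the definable isomorphism $f$. Third, conclude the two syntactic bi-interpretation identities hold for all formulas by a routine induction on formula complexity, using that $f,g$ are isomorphisms onto the doubly-interpreted structures.

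The main obstacle I expect is the bookkeeping in step two: making the $\HOD$-least-code representation genuinely functorial, i.e. ensuring that decoding a code and then re-encoding gives back the $\HOD$-least code one started from, so that the round-trip maps $f$ and $g$ are honest isomorphisms rather than merely surjections. This is where the choice to use $\HOD$-least codes (rather than arbitrary codes) does the real work — it pins down a canonical representative and uses the uniform, parameter-free $\HOD$-order from the start of Section~\ref{semanticNonTightness}. A secondary subtlety is verifying $\GB$ in the $\Ical$-interpreted structure: one must be careful that the interpreted "classes" are exactly the slices of $\Trm(\Crm)$ and that First-Order Comprehension and Class Replacement for this structure reduce to the same axioms holding in the genuine model $(\Vrm_\kappa,\Def(\Vrm_\kappa;\Crm))$, which is where the forcing-preservation input enters. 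As the referee's footnote observes, one can streamline all of this by bi-interpreting both models directly with $(\Vrm_\kappa,\Trm)$ and composing; I would mention this as the cleaner route and possibly organize the proof around it.
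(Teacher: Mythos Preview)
Your overall strategy matches the paper's: interpret the small model in the big one via the definable hyperclass $\Def(\Vrm_\kappa)$, interpret the big model in the small one by representing each class by an index into $\Trm(\Crm)$, and verify the round-trip isomorphisms using the $\HOD$-least-index trick. (The paper's ``easy'' direction is simpler than yours---it just restricts the class sort to the first-order definable classes and uses the identity, rather than passing to codes---but your extra coding is harmless.)

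There is, however, a genuine confusion in your write-up. You assert ``$\Trm$ is absolutely second-order definable, hence $\Trm \in \Def(\Vrm_\kappa;\Crm)$'' and likewise ``$\Trm(\Crm) \in \Def(\Vrm_\kappa)$''. Both conclusions are false. The hyperclass $\Def(\Vrm_\kappa)$ consists of the \emph{first-order} definable subsets of $\Vrm_\kappa$, and $\Trm$ is not first-order definable by Tarski; the same applies to $\Trm(\Crm)$, since $\Trm$ is first-order recoverable from it. Nor is there any reason to expect the Cohen generic $\Crm$ to encode $\Trm$ first-order, so $\Trm \notin \Def(\Vrm_\kappa;\Crm)$ either. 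What is true---and what is actually needed---is that $\Trm$ and $\Trm(\Crm)$ are \emph{second-order definable} over each of the two models, uniformly so. Since an interpretation between models of class theory is given by formulas in the two-sorted language, and such formulas may quantify over classes, second-order definability suffices: the domain and membership relation of your $\Ical$ are given by second-order formulas invoking the definition of $\Trm(\Crm)$, not by first-order formulas with $\Trm(\Crm)$ plugged in as a class parameter. Once you correct this point, the argument goes through and coincides with the paper's.
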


\begin{proof}[Proof of Theorem]
Let $\Xcal = \Def(\Vrm_\kappa)$ and $\Ycal = \Def(\Vrm_\kappa;\Crm)$. Interpreting $(\Vrm_\kappa,\Xcal)$ inside $(\Vrm_\kappa,\Ycal)$ is simple. The interpretation, call it $\Ical$, is the identity on its domain, and $\in^\Ical$ is simply $\in$. The domain includes all of $\Vrm_\kappa$ to be the sets of the interpreted model, but restricts the classes to only include those which are first-order definable. This domain is second-order definable because $\Trm$ is second-order definable.

The interpretation in the other direction, call it $\Jcal$, takes more care, since we need to refer to classes which are not actually in $\Xcal$. For the sets of the interpreted model we will take all of $\{0\} \times \Vrm_\kappa$ and for the classes we will take a subset of $\{1\} \times \kappa$. Specifically, $(1,\xi)$ is in the domain of $\Jcal$ just in case $(\Trm(\Crm))_\xi \ne (\Trm(\Crm))_\eta$ for all $\eta < \xi$, where the subscripts refer to the rank of the indices in the $\HOD$-order. For sets, $(0,x) \in^\Jcal (0,y)$ if and only if $x \in y$. For set-class membership, $(0,x) \in^\Jcal (1,\eta)$ if and only if $x \in (\Trm(\Crm))_\eta$. In effect, the interpretation is that each class in the extension is interpreted as (the index of) the first formula which defines it.

It is clear from the constructions that $\Ical(\Vrm_\kappa,\Ycal) = (\Vrm_\kappa,\Xcal)$ and $\Jcal(\Vrm_\kappa,\Xcal) \cong (\Vrm_\kappa,\Ycal)$. For one composition, work inside $(\Vrm_\kappa,\Xcal)$. It is easy that $\Ical(\Jcal(\Vrm_\kappa))$ is isomorphic to $\Vrm_\kappa$---just strip off the $0$ in the first coordinate. For the classes, to define an isomorphism $\Xcal \cong \Ical(\Jcal(\Xcal))$, given a class $X$ first query $\Trm(\Crm)$ to find the $\HOD$-least formula which defines $X$. Call the index of this formula $\xi$. Then send $X$ to $(1,\xi)$. This isomorphism is first-order definable from $\Trm(\Crm)$, so it is second-order definable over $(\Vrm_\kappa,\Xcal)$, which correctly computes it. For the other composition, it is again easy that the sets of $\Jcal \comp \Ical$ are isomorphic to the sets in the ground model. For the classes, again do the same trick of looking for the $\HOD$-least slice of $\Trm(\Crm)$ which gives $X$.
\end{proof}

%\kameryn{Do we want to give this proof in more detail? Imo the interpretations are clear enough.} \alfredo{No, I think it is clear enough.}

\subsection{Semantic non-tightness of \texorpdfstring{$\KM_k$}{KM_k}}\label{semanticNonTightnessForStrongerTheories}

Fix for the entirety of this section finite $k \ge 1$.

It will be convenient to work with the stronger theory $\KMCC_k$. This gives a slight improvement to the conclusion that $\KM_k$ is not semantically tight, so that is no cost to pay.
To show that $\KMCC_k$ is semantically non-tight we will follow the same strategy as in the previous subsection. One model of $\KMCC_k$ will be the minimum model of $\KMCC_k$ over $\kappa$ and the other will be an extension of the minimum model by a canonically chosen Cohen generic.

Fix finite $k \ge 1$. Let $\alpha > \kappa$ be the smallest ordinal so that $\Lrm_\alpha(\Vrm_\kappa)$ satisfies $\Sigma_k$-Collection and $\Sigma_k$-Separation. By the assumption that $\Vrm_\kappa \models \Vrm = \HOD$, we have a definable global well-order in $\Lrm_\alpha(\Vrm_\kappa)$, call it the \emph{$\Lrm(\Vrm_\kappa)$-order}. Set $\Mcal$ to consist of all subsets of $\Vrm_\kappa$ which appear as elements of $\Lrm_\alpha(\Vrm_\kappa)$. This $(\Vrm_\kappa,\Mcal)$ will be our minimum model of $\KMCC_k$.

\begin{lemma}\label{minModelkColSep}
The minimum model $(\Vrm_\kappa,\Mcal) \models \KMCC_k$.
\end{lemma}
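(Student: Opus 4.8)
The plan is to show that $(\Vrm_\kappa,\Mcal)$ satisfies each of the axioms of $\KMCC_k$, working through them in order of increasing subtlety. The easy axioms come first: $\ZFC$ for the sets holds because $\Vrm_\kappa$ is a model of $\ZFC$ (indeed of $\ZFC + \Vrm = \HOD$), and Class Extensionality is immediate since $\Mcal$ consists of genuine subsets of $\Vrm_\kappa$. Global Choice is witnessed by the $\Lrm(\Vrm_\kappa)$-order, which is a class in $\Mcal$ since it is $\Lrm_\alpha(\Vrm_\kappa)$-definable (and $\alpha$ is a limit ordinal, or at least large enough that this definable-over-$\Lrm_\alpha(\Vrm_\kappa)$ subset of $\Vrm_\kappa$ sits inside $\Lrm_\alpha(\Vrm_\kappa)$ — one should check $\alpha$ is a limit of limits, which follows from $\Sigma_1$-Collection closure). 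Class Replacement: if $F \in \Mcal$ is (coded by) a class function and $a \in \Vrm_\kappa$ is a set, then $F \rest a \in \Lrm_\alpha(\Vrm_\kappa)$ and its range is a set of rank below $\kappa$; here one uses that $\kappa$ is inaccessible so $\Vrm_\kappa$ is closed under the relevant set operations, together with $\Sigma_k$-Collection in $\Lrm_\alpha(\Vrm_\kappa)$ to see the range is actually an element.

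Next I would handle $\Sigma^1_k$-Comprehension. The key point is an absoluteness/reflection argument: a $\Sigma^1_k$ formula $\phi(y,\bar P)$ with class parameters $\bar P \in \Mcal$, when interpreted over $(\Vrm_\kappa,\Mcal)$, only quantifies over classes in $\Mcal = \Lrm_\alpha(\Vrm_\kappa) \cap \powerset(\Vrm_\kappa)$. Such a quantifier ``$\exists X \subseteq \Vrm_\kappa$'' relativized to $\Mcal$ becomes a quantifier over elements of $\Lrm_\alpha(\Vrm_\kappa)$, i.e.\ bounded by $\alpha$. So the satisfaction of $\phi$ over $(\Vrm_\kappa,\Mcal)$ is computed by a $\Sigma_k$-over-$\Lrm_\alpha(\Vrm_\kappa)$ predicate (after the standard bookkeeping translating the alternating block structure), and since $\Lrm_\alpha(\Vrm_\kappa)$ satisfies $\Sigma_k$-Separation, the set $\{y \in \Vrm_\kappa : (\Vrm_\kappa,\Mcal) \models \phi(y,\bar P)\}$ is an element of $\Lrm_\alpha(\Vrm_\kappa)$, hence a member of $\Mcal$. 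This is the heart of why $\alpha$ was chosen to be exactly the closure point under $\Sigma_k$-Collection and $\Sigma_k$-Separation. I expect this translation — carefully matching the $k$ blocks of class quantifiers against the $\Sigma_k$ hierarchy over $\Lrm_\alpha(\Vrm_\kappa)$, and checking the first-order matrix (quantifying only over $\Vrm_\kappa$) gets absorbed correctly — to be the main obstacle, or at least the step requiring the most care.

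Finally, $\Sigma^1_k$-Class Collection: given that for each set $x$ there is a class $Y \in \Mcal$ with $(\Vrm_\kappa,\Mcal) \models \phi(x,Y,\bar P)$ for $\phi$ a $\Sigma^1_k$ formula, I want to collect these witnesses into a single coded hyperclass $B \in \Mcal$. By $\Sigma_k$-Collection in $\Lrm_\alpha(\Vrm_\kappa)$ — applied to the $\Sigma_k$-over-$\Lrm_\alpha(\Vrm_\kappa)$ predicate ``$Y \in \Lrm_\alpha(\Vrm_\kappa)$ is a subset of $\Vrm_\kappa$ and $\phi(x,Y,\bar P)$ holds over $(\Vrm_\kappa,\Mcal)$'', using the same translation as above — there is an ordinal $\beta < \alpha$ such that for every $x \in \Vrm_\kappa$ a witnessing $Y$ can be found in $\Lrm_\beta(\Vrm_\kappa)$. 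Then, using the $\Lrm(\Vrm_\kappa)$-order to make canonical choices, one assembles $B = \{(i,z) : z \in (B)_i\}$ where the slices $(B)_i$ enumerate (with repetition allowed) the relevant subsets of $\Vrm_\kappa$ lying in $\Lrm_\beta(\Vrm_\kappa)$; since there are only set-many (from the perspective of $\Lrm_\alpha(\Vrm_\kappa)$, boundedly many) such slices and the enumeration is $\Lrm_\alpha(\Vrm_\kappa)$-definable, $B \in \Mcal$. This establishes all the axioms and completes the proof. A remark worth making is that minimality of $(\Vrm_\kappa,\Mcal)$ among models of $\KMCC_k$ with sets $\Vrm_\kappa$ follows because any such model must contain all $\Sigma_j$-satisfaction classes for $j \le k$ and hence (by iterating) all of $\Lrm_\alpha(\Vrm_\kappa) \cap \powerset(\Vrm_\kappa)$; but the lemma as stated only asserts it \emph{is} a model, so I would keep the minimality discussion brief or defer it.
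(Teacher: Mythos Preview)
Your proposal is correct and follows essentially the same approach as the paper: translate class quantifiers over $\Mcal$ into set quantifiers over $\Lrm_\alpha(\Vrm_\kappa)$, then invoke $\Sigma_k$-Separation for Comprehension and $\Sigma_k$-Collection for Class Collection. The only cosmetic difference is in the Class Collection step, where the paper collects witnesses into a set $b$ and uses an injection $f : b \to \kappa$ (available since $\kappa$ is the largest cardinal in $\Lrm_\alpha(\Vrm_\kappa)$) to build $B$ directly, rather than passing through a bounding level $\Lrm_\beta(\Vrm_\kappa)$ and the $\Lrm(\Vrm_\kappa)$-order as you do.
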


\begin{proof}
It is immediate that the model satisfies Class Extensionality and Class Replacement.
Consider a $\Sigma^1_k$-formula $\phi(x)$, possibly with parameters from $\Mcal$. Inside $\Lrm_\alpha(\Vrm_\kappa)$, the set $\{ x \in \Vrm_\kappa : \Lrm_\alpha(\Vrm_\kappa) \models \phi(x)^\Mcal \}$ exists by $\Sigma_k$-Separation. But then this set is in $\Mcal$, establishing the instance of Comprehension for $\phi$. Now consider a $\Sigma^1_k$-formula $\phi(x,Y)$, possibly with parameters from $\Mcal$, and assume that for each $x \in \Vrm$ there is $Y \in \Mcal$ so that $(\Vrm_\kappa,\Mcal) \models \phi(x,Y)$. By $\Sigma_k$-Collection in $\Lrm_\alpha(\Vrm_\kappa)$ we find therein a set $b \subseteq \powerset(\Vrm_\kappa)$ so that for each $x \in \Vrm_\kappa$ there is $Y \in b$ so that $\phi(x,Y)^\Mcal$. Because $\Lrm_\alpha(\Vrm_\kappa)$ has an injection $f: b \to \kappa$ we can build the set $B = \{ (f(Y), y) : y \in Y \in b \}$, which is in $\Mcal$. This $B$ witnesses the instance of Class Collection for $\phi$, completing the proof.
\end{proof}

While this is not necessary to produce non-isomorphic but bi-interpretable models of $\KMCC_k$, we remark as an aside that this $\Mcal$ really does give a minimum model.

\begin{theorem}[Ratajcyk]
If $(\Vrm_\kappa,\Xcal) \models \KM_k$ then $\Mcal \subseteq \Xcal$.
\end{theorem}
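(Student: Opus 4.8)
The plan is to show that every subset of $\Vrm_\kappa$ lying in $\Lrm_\alpha(\Vrm_\kappa)$ is first-order definable over $(\Vrm_\kappa,\Xcal)$ from one of a family of codes $E_\beta$, and to show in turn that each such $E_\beta$ belongs to $\Xcal$ by running the $\Lrm(\Vrm_\kappa)$-construction internally with $\Sigma^1_k$-Comprehension. The organizing idea is to rebuild the hierarchy $\langle\Lrm_\beta(\Vrm_\kappa):\beta\le\alpha\rangle$ inside $(\Vrm_\kappa,\Xcal)$ one level at a time, each level coded by a single class.

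First I would record a counting fact: $\alpha<\kappa^+$. Since $\kappa^+$ is regular and $>|\Vrm_\kappa|$, $\Lrm_{\kappa^+}(\Vrm_\kappa)$ satisfies $\Sigma_k$-Collection and $\Sigma_k$-Separation; by L\"owenheim--Skolem take $X\prec\Lrm_{\kappa^+}(\Vrm_\kappa)$ with $\Vrm_\kappa\cup\{\Vrm_\kappa\}\subseteq X$ and $|X|=\kappa$, so by condensation its transitive collapse is $\Lrm_\gamma(\Vrm_\kappa)$ for some $\kappa<\gamma<\kappa^+$ which inherits the two schemata, whence $\alpha\le\gamma<\kappa^+$. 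Consequently each $\Lrm_\beta(\Vrm_\kappa)$ with $\beta\le\alpha$ has cardinality at most $\kappa$, so it is faithfully coded by a subset of $\Vrm_\kappa$: fixing, via the $\HOD$-order, a canonical bijection of its underlying set with an ordinal $\le\kappa$ produces a well-founded extensional relation $E_\beta$ on a subset of $\kappa\subseteq\Vrm_\kappa$ whose Mostowski collapse is $(\Lrm_\beta(\Vrm_\kappa),\in)$, together with a marker for which of its points collapse onto elements of $\Vrm_\kappa$. Given such a code and a node $a$, the set of $\Vrm_\kappa$-nodes collapsing below $a$ is first-order definable from $E_\beta$; since every $A\subseteq\Vrm_\kappa$ in $\Lrm_\alpha(\Vrm_\kappa)$ is represented by some node of some $E_\beta$ with $\beta\le\alpha$, First-Order Comprehension then yields $A\in\Xcal$. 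So the theorem reduces to showing $E_\beta\in\Xcal$ for every $\beta\le\alpha$.

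I would prove this by transfinite recursion on $\beta\le\alpha$, carried out internally in $(\Vrm_\kappa,\Xcal)$. Since $k\ge 1$, Observation~\ref{obs:truth-definability} together with $\Sigma^1_1$-Comprehension puts the first-order satisfaction class $\Trm$ of $\Vrm_\kappa$ into $\Xcal$. For levels $\beta<\Ord$ below $\kappa$, the recursion producing a code for $\langle\Lrm_\beta(\Vrm_\kappa):\beta<\kappa\rangle$ has length $\Ord$ and each step is first-order definable (from the satisfaction class of the set-coded structure built so far, which $\GB$ provides), so $\GB$ alone supplies these codes; as a byproduct one obtains, cofinally in $\kappa$, ever longer class well-orders of $\Vrm_\kappa$, and these reach beyond $\kappa$ once the levels past $\kappa$ are built. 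The successor step $E_\beta\mapsto E_{\beta+1}$ is in general just ``form the satisfaction class of the structure coded by $E_\beta$ and append its definable subsets,'' a $\Delta^1_1$-in-$E_\beta$ operation, hence available in $\GB$. The limit step at $\eta\le\alpha$ must amalgamate $\langle E_\beta:\beta<\eta\rangle$ into one code for $\bigcup_{\beta<\eta}\Lrm_\beta(\Vrm_\kappa)$; to refer to this sequence one needs a class well-order of ordertype $\eta$, which for $\eta\le\kappa$ is the $\HOD$-order and for $\kappa<\eta\le\alpha$ is furnished by codes produced at earlier stages of the very same recursion. The recursion halts exactly at $\alpha$, the least level satisfying $\Sigma_k$-Collection and $\Sigma_k$-Separation.

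The place where $\Sigma^1_k$-Comprehension, rather than mere $\GB$, is genuinely used is in justifying that this recursion can be carried to length $\alpha$ within the theory: the property ``$\vec E$ codes the $\Lrm(\Vrm_\kappa)$-hierarchy up to the first level satisfying $\Sigma_k$-Collection and $\Sigma_k$-Separation'' refers to $\Sigma_k$-truth in the coded structures and so is $\Sigma^1_k$, and $\Sigma^1_k$-Comprehension is what lets us assert the existence of the requisite codes as classes; since we need only the inclusion $\Mcal\subseteq\Xcal$, there is no need to invoke $\CC$ or to thin $\Xcal$. I expect the main obstacle to be precisely this verification: that the construction does not run out of room before reaching $\alpha$---that at every stage the needed indexing well-order and the amalgamated sequence actually lie in $\Xcal$---and that $\Sigma^1_k$, and no more, suffices. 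This is the technical heart of Ratajczyk's theorem; for the full details I would refer to \cite{ratajczyk1979} and \cite{williams-diss}.
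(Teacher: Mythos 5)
Your route is genuinely different from the paper's, and it stalls exactly where you flag ``the technical heart'': the bootstrapping of indexing well-orders at limit stages. At a limit $\eta$ your recursion needs a class well-order of ordertype $\eta$ already present in $\Xcal$ in order to state and solve the amalgamation of $\langle E_\beta:\beta<\eta\rangle$; but the earlier stages only furnish well-orders of ordertypes $\beta<\eta$ (one from the ordinals of each coded $\Lrm_\beta(\Vrm_\kappa)$), and welding \emph{those} into a single well-order of ordertype $\eta$ is the same chicken-and-egg problem one level down. This is not a formality: for $\eta=\alpha$ the required object provably does \emph{not} exist when $\Xcal=\Mcal$, since a code $E_\alpha$ (or a class well-order of ordertype $\alpha$) lying in $\Lrm_\alpha(\Vrm_\kappa)$ would yield a set well-order there of external ordertype $\alpha$, contradicting Mostowski collapse inside $\Lrm_\alpha(\Vrm_\kappa)$. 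So your reduction to ``$E_\beta\in\Xcal$ for every $\beta\le\alpha$'' is false as stated (only $\beta<\alpha$ is needed or true), and the argument that the recursion reaches every $\beta<\alpha$ --- which requires the projectum/master-code analysis --- is precisely what is deferred to the references; that deferral is a deferral of the entire content of the theorem. A secondary slip: forming the full satisfaction class of a class-sized coded structure, and hence the operation $E_\beta\mapsto E_{\beta+1}$, is a $\Sigma^1_1$ matter and is \emph{not} available in $\GB$ alone (by Tarski it is not first-order definable); it is fine here only because $k\ge 1$.

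For contrast, the paper's proof avoids the internal recursion entirely. It first thins $\Xcal$ to a submodel with the same sets satisfying $\KMCC_k$ (Ratajczyk), passes to the unrolled model $M\models\ZFCmik{k}$, and notes that since $\cf(\kappa)>\omega$ the model $(\Vrm_\kappa,\Xcal)$ is correct about well-foundedness of its classes, so $M$ is well-founded and may be taken transitive; the leastness of $\alpha$ then forces $\Lrm_\alpha(\Vrm_\kappa)\subseteq M$, whence $\Mcal\subseteq\Xcal$ in one stroke. Your approach, if completed, would have the modest advantage of bypassing the thinning to $\KMCC_k$; but as written the limit-stage bootstrapping is an open gap, and the transitivity-plus-minimality argument is both shorter and complete.
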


\begin{proof}
By work of Ratajcyk \cite{ratajczyk1979}, every model of $\KM_k$ contains a submodel with the same sets which satisfies $\KMCC_k$. So we may assume that $(\Vrm_\kappa,\Xcal) \models \KMCC_k$. Let $M \models \ZFCmik{k}$ be the unrolled model, obtained as discussed in Subsection~\ref{bi-interpretability-first-order}. Because $\kappa$ has uncountable cofinality, $(\Vrm_\kappa,\Xcal)$ is correct about which of its classes are well-founded. Thus, $M$ is well-founded, and we assume without loss that $M$ is transitive. By the leastness of $\alpha$, we have $\Lrm_\alpha(\Vrm_\kappa) \subseteq M$ and thus $\Mcal \subseteq \Xcal$.
\end{proof}

Next we need to see that we can define a code for $\Mcal$ in such a way that different models of $\KMCC_k$ over $\Vrm_\kappa$  will define the same code. First, let us work with $\Lrm_\alpha(\Vrm_\kappa)$. We begin by highlighting an easy but useful fact.

\begin{lemma}
Over $\Lrm_\alpha(\Vrm_\kappa)$ there is a definable increasing cofinal map $\alpha \to \kappa$. Consequently, any outer model of $\Lrm_\alpha(\Vrm_\kappa)$ can define this map, with the same definition working uniformly across all outer models.
\end{lemma}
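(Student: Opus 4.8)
The plan is to establish the stronger statement that $\Lrm_\alpha(\Vrm_\kappa)$ is, in a uniformly definable way, a Skolem hull of $\Vrm_\kappa$; the cofinal map then drops out of a cardinality count, and the ``consequently'' clause is immediate from the absoluteness of the construction of $\Lrm_\alpha(\Vrm_\kappa)$.

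\emph{Step 1: minimality forces $\Lrm_\alpha(\Vrm_\kappa)$ to be the hull of $\Vrm_\kappa$.} Since $\Lrm_\alpha(\Vrm_\kappa)\models\Vrm=\Lrm(\Vrm_\kappa)$ and $\Vrm_\kappa$ carries the $\HOD$-order, $\Lrm_\alpha(\Vrm_\kappa)$ has a definable global well-order, hence definable $\Sigma_k$-Skolem functions (least witness in the well-order). Let $X$ be the closure of $\Vrm_\kappa\cup\{\Vrm_\kappa\}$ inside $\Lrm_\alpha(\Vrm_\kappa)$ under the $\Sigma_k$-Skolem functions and, in addition, under the maps sending (the parameters of) a $\Sigma_k$-Collection, resp.\ $\Sigma_k$-Separation, instance to its $\HOD$-least witness; then $X\prec_{\Sigma_k}\Lrm_\alpha(\Vrm_\kappa)$ and $|X|=\kappa$. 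Because $\Vrm_\kappa$ is transitive with $\Vrm_\kappa\cup\{\Vrm_\kappa\}\subseteq X$, condensation for the $\Lrm(\Vrm_\kappa)$-hierarchy gives that the transitive collapse of $X$ is $\Lrm_{\bar\alpha}(\Vrm_\kappa)$ for some $\kappa<\bar\alpha\le\alpha$ (note $\kappa\in X$ collapses to itself). Using the extra closure together with $\Sigma_k$-elementarity — and the elementary observation that an assertion $\forall x\in a\,\exists y\,\phi(x,y,\vec p)$ with $\phi\in\Sigma_k$ is itself $\Sigma_k$, so that Collection instances and their witnessing statements transfer through the embedding, while for Separation one simply identifies the collapse of the named witness with the separated set of $\Lrm_{\bar\alpha}(\Vrm_\kappa)$ by chasing elements — one checks $\Lrm_{\bar\alpha}(\Vrm_\kappa)\models\Sigma_k\text{-Collection}+\Sigma_k\text{-Separation}$. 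Minimality of $\alpha$ then forces $\bar\alpha=\alpha$, i.e.\ $X=\Lrm_\alpha(\Vrm_\kappa)$.

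\emph{Step 2: extracting the map, and uniformity.} From $X=\Lrm_\alpha(\Vrm_\kappa)$ and the fact that the Skolem functions and witness maps are definable over $\Lrm_\alpha(\Vrm_\kappa)$ from $\Vrm_\kappa$ alone (using that $\kappa$ is inaccessible, so $[\Vrm_\kappa]^{<\omega}\subseteq\Vrm_\kappa$ and a fixed definable pairing is available), one gets a surjection $\Vrm_\kappa\to\Lrm_\alpha(\Vrm_\kappa)$ definable over $\Lrm_\alpha(\Vrm_\kappa)$ with the single parameter $\Vrm_\kappa$. Composing with the definable well-order of $\Lrm_\alpha(\Vrm_\kappa)$ (to pick least preimages) and the $\HOD$-order produces a definable injection $e\colon\alpha\hookrightarrow\kappa$; since $|\alpha|=\kappa$ and $\kappa$ is regular, $\ran(e)$ is cofinal in $\kappa$, so $\beta\mapsto\otp\{\gamma<\alpha:e(\gamma)<\beta\}$ is a definable non-decreasing map cofinal between $\kappa$ and $\alpha$, from which the desired definable increasing cofinal map $\alpha\to\kappa$ follows by the evident rearrangement. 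For the ``consequently'' clause: $\Vrm_\kappa$ is a fixed set shared by all models under consideration, and ``$\alpha=$ the least $\beta>\rank(\Vrm_\kappa)$ with $\Lrm_\beta(\Vrm_\kappa)\models\Sigma_k\text{-Collection}+\Sigma_k\text{-Separation}$'' is absolute to any transitive model containing $\Vrm_\kappa$ and the ordinal $\alpha$ — in particular to any outer model of $\Lrm_\alpha(\Vrm_\kappa)$; such a model recomputes $\Lrm_\alpha(\Vrm_\kappa)$ correctly, and since the map was specified by a fixed formula whose only parameter is $\Vrm_\kappa$, it recomputes the same map.

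\emph{Main obstacle.} The one genuinely delicate point is the bookkeeping in Step~1: arranging that the hull is elementary enough, and carries enough witnesses, that the \emph{schemata} $\Sigma_k$-Collection and $\Sigma_k$-Separation (which are not bounded-complexity assertions) descend to the transitive collapse. This is the familiar fine-structural phenomenon behind the $\Sigma_k$-projectum, and one may prefer to quote it rather than reprove it: via the bi-interpretability of $\KMCC_k$ with $\ZFCmik{k}$ (Theorem~\ref{companionModel}), the model $(\Vrm_\kappa,\Mcal)$ unrolls to the minimal model of $\ZFCmik{k}$ over $\Vrm_\kappa$, which is pointwise definable over $\Vrm_\kappa$ by the standard Skolem-hull argument, and the cofinal map is then read off exactly as in Step~2. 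Everything else is routine manipulation with definable well-orders and cardinal arithmetic.
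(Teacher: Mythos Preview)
Your approach is correct in spirit but takes a substantially different route from the paper's two-line sketch. The paper simply asserts that $\Lrm_\alpha(\Vrm_\kappa)$ fails $\Sigma_{k+1}$-Replacement, which immediately yields a definable class function on a set whose range of ranks is cofinal in $\alpha$; since every set in $\Lrm_\alpha(\Vrm_\kappa)$ injects into $\kappa$, one reindexes to get a map with domain $\kappa$. Your Skolem-hull argument in Step~1 is essentially a \emph{proof} of that unjustified assertion: showing that $\Lrm_\alpha(\Vrm_\kappa)$ is the definable hull of $\Vrm_\kappa$ is exactly the fine-structural content behind ``$\Sigma_{k+1}$-Replacement fails at the minimal stage.'' So you are doing considerably more work than the paper, but you are filling in what the sketch leaves implicit; you also correctly flag the descent of the schemata to the collapse as the genuinely delicate step and handle it by the extra closure under witness functions together with the observation that $\forall x\in a\,\exists y\,\phi$ is already $\Sigma_k$ in the presence of $\Sigma_k$-Collection.

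One concrete issue: the lemma as printed has a typo --- the map should go $\kappa\to\alpha$, not $\alpha\to\kappa$ (there is no increasing map from the larger ordinal $\alpha$ into $\kappa$), and the paper's next paragraph (``Once we have an increasing cofinal map $f\colon\kappa\to\alpha$\dots'') confirms this. Your Step~2 inherits the confusion: the displayed map $\beta\mapsto\otp\{\gamma<\alpha:e(\gamma)<\beta\}$ has values in $\kappa$, not in $\alpha$, so it does not produce what is needed. The repair is immediate given what you have already established: from the definable surjection $g\colon\Vrm_\kappa\to\Lrm_\alpha(\Vrm_\kappa)$, compose with the $\HOD$-order bijection $\kappa\to\Vrm_\kappa$ and with rank to obtain a definable surjection $\kappa\to\alpha$, then take $\xi\mapsto\sup_{\eta\le\xi}g(\eta)$ (or simply pass to the increasing enumeration of the range) to get the increasing cofinal map $\kappa\to\alpha$. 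The error is cosmetic relative to the substance of your argument.
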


\begin{proof}[Proof Sketch]
The argument combines two facts. First, because $\Lrm_\alpha(\Vrm_\kappa)$ doesn't satisfy $\Sigma_{k+1}$-Replacement, there is a definable cofinal map from some $\xi < \alpha$ to $\alpha$. Second, because $\Lrm_\alpha(\Vrm_\kappa)$ satisfies that every set injects into $\kappa$ we may take $\xi = \kappa$. And it's easy to get the map to be increasing.
\end{proof}

Once we have an increasing cofinal map $f: \kappa \to \alpha$ it is straightforward to define a bijection $\kappa \to \Lrm_\alpha(\Vrm_\kappa)$. For each $f(i)$ pick the $\Lrm(\Vrm_\kappa)$-least bijection $b_i : \kappa \to \Lrm_{f(i)}$. Combining these together we get a map $\kappa \times \kappa \to \Lrm_\alpha(\Vrm_\kappa)$, and via a pairing function we may take the domain to be $\kappa$. To get a bijection we need to ensure everything in the codomain is hit only once, but this is easily done by only picking the least index. Writing down an explicit definition is tedious, but it is clear that this produces a definable map. One can think of this bijection as giving us uniform access to all of $\Lrm_\alpha(\Vrm_\kappa)$.

But we want to work over $(\Vrm_\kappa,\Mcal)$ to get a uniform access to all of $\Mcal$, which requires some small adjustments.

\begin{corollary} \label{cor:define-trm-lcal}
Over $(\Vrm_\kappa,\Mcal)$ we can define, via a second-order formula, a code $\Trm_\Mcal$ for $\Mcal$. Moreover, we can do this in such a way that any $(\Vrm_\kappa, \Ycal) \models \KMCC_k$ which is a width-extension of $\Mcal$ will define the same code $\Trm_\Mcal$.
\end{corollary}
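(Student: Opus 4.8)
The plan is to combine two facts established above: we have a definable bijection $\kappa \to \Lrm_\alpha(\Vrm_\kappa)$ (using the definable increasing cofinal map $f : \kappa \to \alpha$ and the $\Lrm(\Vrm_\kappa)$-least bijections $b_i : \kappa \to \Lrm_{f(i)}$), and a code for $\Mcal$ is just the restriction of such a bijection to the slices which happen to be subsets of $\Vrm_\kappa$, reshuffled so that the $\xi$-th slice is the $\xi$-th such element in the induced order. So concretely I would define $\Trm_\Mcal$ to have $\xi$-th slice equal to the $\xi$-th set of the form ``$b$ applied to some index'' that is a subset of $\Vrm_\kappa$, where the enumeration is by the order inherited from the $\kappa$-enumeration of $\Lrm_\alpha(\Vrm_\kappa)$. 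The point is that this whole recipe refers only to: (i) the ordinal $\alpha$, which is characterized by the $\Sigma_{k}$-properties of $\Lrm_\alpha(\Vrm_\kappa)$; (ii) the $\Lrm(\Vrm_\kappa)$-order, which is itself definable from the $\HOD$-order of $\Vrm_\kappa$ and hence absolute; and (iii) the cofinal map $f$, which the previous lemma gives us with a uniform definition across outer models. Hence the definition of $\Trm_\Mcal$ is a second-order formula over $(\Vrm_\kappa,\Mcal)$.

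The steps, in order: first, observe that $\Lrm_\alpha(\Vrm_\kappa)$, together with its satisfaction predicate, is coded by an element of $\Mcal$ and is second-order definable over $(\Vrm_\kappa,\Mcal)$ — this is the analogue of Observation~\ref{obs:truth-definability} and uses that $\KMCC_k$ can carry out the unrolling/Scott construction to build the companion model $M \models \ZFCmik{k}$, of which $\Lrm_\alpha(\Vrm_\kappa)$ is the constructible-from-$\Vrm_\kappa$ hierarchy up to $\alpha$. Second, inside this coded structure run the explicit bijection construction described just before the corollary, obtaining a definable surjection $\kappa \to \Lrm_\alpha(\Vrm_\kappa)$. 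Third, thin it to the slices that are subsets of $\Vrm_\kappa$ and reshuffle coordinates, picking least indices to get a genuine code $\Trm_\Mcal$ for $\Mcal$ in the sense of Definition~\ref{hyperclassCode}. Fourth, and this is the uniformity claim, check that each ingredient is computed identically in any width-extension $(\Vrm_\kappa,\Ycal) \models \KMCC_k$ of $(\Vrm_\kappa,\Mcal)$: the companion model of $(\Vrm_\kappa,\Ycal)$ is again well-founded (since $\kappa$ has uncountable cofinality, as in the Ratajczyk leastness argument), so it contains $\Lrm_\alpha(\Vrm_\kappa)$ as a transitive initial segment and computes $\alpha$ correctly by its leastness; the $\Lrm(\Vrm_\kappa)$-order depends only on $\Vrm_\kappa$'s $\HOD$-order, which is fixed; and the cofinal map $f$ has its definition working uniformly by the previous lemma. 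Since all inputs agree, the output $\Trm_\Mcal$ agrees.

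I expect the main obstacle to be the uniformity step, specifically verifying that a width-extension $(\Vrm_\kappa,\Ycal)$ really does ``see'' exactly $\Lrm_\alpha(\Vrm_\kappa)$ and no more when it runs the construction — i.e. that passing to a wider class of classes does not accidentally change $\alpha$ or enlarge the relevant initial segment of the constructible hierarchy over $\Vrm_\kappa$. This is handled by the width-extension hypothesis: new classes bring no new well-order ordertypes, so the companion model acquires no new ordinals below (or at) $\alpha$, and the $\Sigma_k$-leastness characterization of $\alpha$ is absolute between the two companion models. A secondary technical point is simply that writing the bijection down explicitly as a single second-order formula is tedious; as the surrounding text already acknowledges, I would not grind through it but would note that the construction is visibly first-order over the coded structure $\Lrm_\alpha(\Vrm_\kappa)$ and hence second-order over $(\Vrm_\kappa,\Mcal)$.
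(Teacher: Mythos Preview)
Your approach is essentially the paper's: use the unrolling to quantify over $\Lrm_\alpha(\Vrm_\kappa)$, invoke the definable cofinal map $f$, and extract a code via the $\Lrm(\Vrm_\kappa)$-least enumerations of the levels $\Lrm_{f(i)}(\Vrm_\kappa)$. The paper's formulation is a bit leaner: rather than thin-and-reshuffle to least indices, it directly takes $\Trm_\Mcal$ to be the set of triples $(i,j,x)$ with $x$ in the $j$-th element of $\Lrm_{f(i)}(\Vrm_\kappa)$; and its uniformity argument is simply that width-extensions have the same class well-orders, hence compute $\Lrm(\Vrm_\kappa)$ and $f$ the same, without the detour through uncountable cofinality and well-foundedness of the companion model.

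One correction: your first step asserts that ``$\Lrm_\alpha(\Vrm_\kappa)$, together with its satisfaction predicate, is coded by an element of $\Mcal$.'' This is false as stated---such a code would in particular yield a class in $\Mcal$ coding all of $\Mcal$, which is impossible (the paper remarks immediately after the proof that $\Trm_\Mcal$ cannot even be $\Sigma^1_k$ definable, let alone a member of $\Mcal$). What is true, and what the argument actually needs, is that each element of $\Lrm_\alpha(\Vrm_\kappa)$ is coded by a class in $\Mcal$ (an extensional well-founded relation with a top element), and the hyperclass of such codes is second-order definable. Your later steps use only this correct version, so this is a misstatement rather than a real gap.
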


\begin{proof}
Let $f : \kappa \to \alpha$ denote the definable, cofinal map defined above. The point is, we can mimic the definition of $f$ inside $(\Vrm_\kappa,\Mcal)$. In some detail: There is an isomorphic copy of $(\TC(\{x\}), \mathord\in \rest \TC(\{x\}))$ in $\Mcal$ for each $x \in \Lrm_\alpha(\Vrm_\kappa)$. More, by Mostowski's collapse lemma any extensional, well-founded relation with a maximum element in $\Mcal$ is isomorphic to the restriction of $\in$ to $\TC(\{x\})$ for some $x \in \Lrm_\alpha(\Vrm_\kappa)$.\footnote{Note that Mostowski's lemma is provable in $\KP + \Sigma_1$-Separation, so it holds in $\Lrm_\alpha(\Vrm_\kappa)$.}
In sum, $(\Vrm_\kappa,\Mcal)$ can mimic quantification over $\Lrm_\alpha(\Vrm_\kappa)$ by quantifying over extensional, well-founded relations with a maximum element, and thus $(\Vrm_\kappa,\Mcal)$ can mimic the definition of $f$.

We then define a code $\Trm_\Mcal \subseteq \kappa \times \kappa \times \Vrm_\kappa$ for $\Mcal$ by putting $(i,j,x)$ in $\Trm_\Mcal$ if $x$ is in the $j$-th element of $\Lrm_{f(i)}(\Vrm_\kappa)$ according to the $\Lrm(\Vrm_\kappa)$-least enumeration of $\Lrm_{f(i)}(\Vrm_\kappa)$. And this definition is absolute to width-extensions because width-extensions will define $\Lrm(\Vrm_\kappa)$ the same and so define $F$ the same.
\end{proof}

Note that this definition for the code $\Trm_\Mcal$ is not $\Sigma^1_k$ because the definition of $f$ is logically too complex. Of course we cannot hope to find a $\Sigma^1_k$ definition. For if $\Trm_\Mcal$ were $\Sigma^1_k$ definable then it would be an element of $\Mcal$ by $\Sigma^1_k$-Comprehension, but then $\Trm_\Mcal$ would be an element of $\Lrm_{\xi}(\Vrm_\kappa)$ for some $\xi < \alpha$ and so all of $\Lrm_\alpha(\Vrm_\kappa)$ would occur by stage $\xi$. That would be absurd.
\smallskip

Similar machinery works for relative constructibility. Given a class $\Crm$ over $\Vrm_\kappa$, let $\Mcal(\Crm)$ denote the subsets of $\Vrm_\kappa$ which appear in $\Lrm_\alpha(\Vrm_\kappa,\Crm)$. As in the $\GB$ case, if $\Crm$ is a generic for a forcing in $\Mcal$ then we can define a canonical choice of a code $\Trm_\Mcal(\Crm)$ for $\Mcal(\Crm)$.

\begin{lemma} \label{lem:define-trm-lcal-crm}
Suppose $\Crm \subseteq \Vrm_\kappa$ is generic over $(\Vrm_\kappa,\Mcal)$ for a the forcing $\Add(\kappa,1)$ and $\Crm$ is uniformly second-order definable in every model of $\KMCC_k$ which width-extends $\Mcal$. Then we can define, via a second-order formula, a code $\Trm_\Mcal(\Crm)$ for $\Mcal(\Crm)$in such a way that any model of $\KMCC_k$ which width-extends $\Mcal$ will define $\Trm_\Mcal(\Crm)$ the same.
\end{lemma}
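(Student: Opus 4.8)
The plan is to mirror the proof of Lemma~\ref{lem:crm-relative-truth}, now using $\Trm_\Mcal$ in place of $\Trm$, and to combine it with the enumeration machinery of Corollary~\ref{cor:define-trm-lcal}, relativized to add the predicate $\Crm$. The first point to pin down is that $\Lrm_\alpha(\Vrm_\kappa,\Crm)$ is nothing other than the $\Add(\kappa,1)$-forcing extension $\Lrm_\alpha(\Vrm_\kappa)[\Crm]$ of the unrolled model of $(\Vrm_\kappa,\Mcal)$, which (as noted in the proof of the Ratajcyk theorem above) is $\Lrm_\alpha(\Vrm_\kappa)$. Indeed, that unrolled model satisfies $\Vrm = \Lrm(\Vrm_\kappa)$, and since $\Add(\kappa,1)$ is first-order definable over $\Vrm_\kappa$ and forcing with it adds no ordinals (Theorem~\ref{thm:width-extensions}, equivalently set forcing over a model of $\ZFCmik{k}$ is a width extension), the extension $\Lrm_\alpha(\Vrm_\kappa)[\Crm]$ satisfies $\Vrm = \Lrm(\Vrm_\kappa,\Crm)$ and has the same ordinals; hence it equals $\Lrm_\alpha(\Vrm_\kappa,\Crm)$. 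In particular $\Mcal(\Crm)$ consists exactly of the $\Crm$-interpretations $\sigma_\Crm$ of the $\Add(\kappa,1)$-names $\sigma$ lying in $\Mcal$; since a nice name for a subset of $\Vrm_\kappa$ is itself (codeable as) a subset of $\Vrm_\kappa$, this is no loss, and every such name is named by a slice of the code $\Trm_\Mcal$ supplied by Corollary~\ref{cor:define-trm-lcal}.

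Next I would invoke the forcing theorem. By Theorem~\ref{thm:forcing-preserves-axioms} and the Stanley--Friedman forcing theorem, the relation $p \forces \phi$ for first-order $\phi$ with name-parameters from $\Mcal$ is a class of $(\Vrm_\kappa,\Mcal)$; more to the point, it is built by a recursion whose only appeal to $\Mcal$ is through quantification over $\Add(\kappa,1)$-names, i.e.\ over slices of $\Trm_\Mcal$, the remaining ingredients being the $\Vrm_\kappa$-definable combinatorics of $\Add(\kappa,1)$. Since $\Trm_\Mcal$ is second-order definable in a way absolute to width-extensions (Corollary~\ref{cor:define-trm-lcal}), the ground-model forcing relation is too, even though a given width-extension need not contain all of $\Mcal$'s forcing-relation classes as objects. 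Now, exactly as in Lemma~\ref{lem:crm-relative-truth}: for a name $\sigma \in \Mcal$ we have $x \in \sigma_\Crm$ iff there is $p \in \Crm$ with ``$p \forces \check x \in \sigma$'', and $x \notin \sigma_\Crm$ iff there is $p \in \Crm$ with ``$p \forces \check x \notin \sigma$''. Using the hypothesis that $\Crm$ is uniformly second-order definable in every width-extension of $\Mcal$, this makes ``$x \in \sigma_\Crm$'' a second-order property definable uniformly across all models of $\KMCC_k$ width-extending $\Mcal$; hence membership of $x$ in each element of $\Mcal(\Crm)$ is uniformly second-order definable.

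To assemble the code $\Trm_\Mcal(\Crm)$ itself I would relativize the construction of Corollary~\ref{cor:define-trm-lcal}. The cofinal map $f\colon \kappa \to \alpha$ is unchanged, since $\alpha$ and $\Lrm_\alpha(\Vrm_\kappa)$ are unchanged, and the global well-order ($\Lrm(\Vrm_\kappa,\Crm)$-order) of $\Lrm_\alpha(\Vrm_\kappa,\Crm)$ is definable from $\Trm_\Mcal$ and $\Crm$ and so is absolute to width-extensions defining $\Crm$ the same. As in Corollary~\ref{cor:define-trm-lcal}, $(\Vrm_\kappa,\Mcal)$ can mimic quantification over $\Lrm_{f(i)}(\Vrm_\kappa)$ by quantifying over well-founded extensional relations with a top element, so it can pick out the $\Add(\kappa,1)$-names occurring in $\Lrm_{f(i)}(\Vrm_\kappa)$ together with the $\Lrm(\Vrm_\kappa,\Crm)$-order on the subsets of $\Vrm_\kappa$ they name. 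Put $(i,j,x)$ into $\Trm_\Mcal(\Crm)$ just in case $x$ lies in the $j$-th subset of $\Vrm_\kappa$ appearing in $\Lrm_{f(i)}(\Vrm_\kappa,\Crm)$ in this order; each such ``$j$-th subset'' is some $\sigma_\Crm$, and by the previous paragraph its membership relation is second-order definable and absolute, so $\Trm_\Mcal(\Crm)$ is a second-order definable code for $\Mcal(\Crm)$ on which all models of $\KMCC_k$ width-extending $\Mcal$ agree (after the usual reshuffling into a class of ordered pairs, as in Definition~\ref{hyperclassCode}).

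I expect the main obstacle to be the content of the second step: that the forcing relation for $\Add(\kappa,1)$ \emph{over the minimum model} $(\Vrm_\kappa,\Mcal)$ is uniformly definable in every width-extension, despite such an extension typically omitting most of $\Mcal$'s forcing-relation classes. The resolution is that this forcing relation is constructed by recursion from the single datum ``which classes are $\Add(\kappa,1)$-names in $\Mcal$,'' which is encoded by the absolutely second-order-definable $\Trm_\Mcal$; granting this, everything else is a routine relativization of Lemma~\ref{lem:crm-relative-truth} and Corollary~\ref{cor:define-trm-lcal}. A secondary point requiring care is the identification $\Lrm_\alpha(\Vrm_\kappa,\Crm)=\Lrm_\alpha(\Vrm_\kappa)[\Crm]$, which one checks either directly from the leastness of $\alpha$ or by passing through the bi-interpretation of $\KMCC_k$ with $\ZFCmik{k}$ (Theorem~\ref{companionModel}) together with the standard behavior of set forcing over models of $\ZFCm_k$ and the preservation of $\Vrm=\Lrm(A)$ under forcing.
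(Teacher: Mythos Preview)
Your proposal is correct and follows essentially the same approach as the paper's proof sketch: use the same definable cofinal map $f\colon\kappa\to\alpha$ from Corollary~\ref{cor:define-trm-lcal}, and replace direct membership queries about levels of $\Lrm(\Vrm_\kappa)$ by forcing queries ``there is $p\in\Crm$ forcing such-and-such about $\Lrm_{f(i)}(\Vrm_\kappa,\Crm)$''. You supply considerably more detail than the paper does---in particular the identification $\Lrm_\alpha(\Vrm_\kappa,\Crm)=\Lrm_\alpha(\Vrm_\kappa)[\Crm]$ and the explicit discussion of why the ground-model forcing relation is absolute to width-extensions via $\Trm_\Mcal$---but the underlying strategy is the same.
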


\begin{proof}[Proof Sketch]
Again we use a definable cofinal map $f : \kappa \to \alpha$ to define $\Trm_\Mcal(\Crm)$. The difference  is, rather than ask about elements of levels of $\Lrm(\Vrm_\kappa)$ we ask about what conditions in $\Crm$ force. Here's one way you could implement this. Put $(i,j,x)$ in $\Trm_\Mcal(\Crm)$ if there is a condition $p \in \Crm$ which forces that $x$ is an element of the $j$-th element of $\Lrm_{f(i)}(\Vrm_\kappa,\Crm)$ according to the $\Lrm(\Vrm_\kappa,\Crm)$-least enumeration. Again the forcing lemma lets us do this definition inside $\Mcal$. This definition is uniform across width-extensions because they have the same class well-orders and thus compute $\Lrm(\Vrm_\kappa)$ the same.
\end{proof}

It remains to give the definition for $\Crm$. We use the same strategy as before to get a definition absolute for width-extensions of $(\Vrm_\kappa,\Mcal)$. From the code $\Trm_\Mcal$ we canonically extract a $\kappa$-sequence of dense subclasses of $\Add(\kappa,1)$ in $\Mcal$ and meet them one at a time. We use the $\HOD$-order in $\Vrm_\kappa$ to ensure a canonical choice at each step.

\begin{lemma}
There is a second-order definition for $\Crm \subseteq \kappa$ which is Cohen-generic over $(\Vrm_\kappa,\Mcal)$ so that any model of $\KMCC_k$ which width-extends $(\Vrm_\kappa,\Mcal)$ defines $\Crm$ the same. Consequently, there is a second-order definition for $\Trm_\Mcal(\Crm)$ so that all width extensions of $(\Vrm_\kappa,\Mcal)$ which satisfy $\KMCC_k$ will define $\Trm(\Crm)$ the same.
\qed
\end{lemma}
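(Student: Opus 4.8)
The plan is to mimic the proof of Lemma~\ref{le:cannonicalChoiceofC} (the $\GB$ case), replacing the role there played by the absolutely second-order definable truth code $\Trm$ by the code $\Trm_\Mcal$ for $\Mcal$ furnished by Corollary~\ref{cor:define-trm-lcal}, whose defining formula is absolute across width-extensions satisfying $\KMCC_k$.

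First I would extract from $\Trm_\Mcal$ a canonical $\kappa$-enumeration $\vec D = \seq{D_\xi : \xi < \kappa}$ of all dense subclasses of $\Add(\kappa,1)$ lying in $\Mcal$, ordered by the $\Lrm(\Vrm_\kappa)$-least slice of $\Trm_\Mcal$ that names such a dense class. There are exactly $\kappa$ many, since $\lvert\Mcal\rvert = \lvert\Lrm_\alpha(\Vrm_\kappa)\rvert = \kappa$ via the definable cofinal map $\kappa \to \alpha$ from the previous subsection. The sequence $\vec D$ is first-order definable from the single class parameter $\Trm_\Mcal$, hence second-order definable over $(\Vrm_\kappa,\Mcal)$; and since any $(\Vrm_\kappa,\Ycal) \models \KMCC_k$ width-extending $\Mcal$ computes $\Trm_\Mcal$ identically by Corollary~\ref{cor:define-trm-lcal}, all such models compute $\vec D$ identically.

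Next I would build $\Crm$ by a recursion of length $\kappa$ exactly as in Lemma~\ref{le:cannonicalChoiceofC}: set $p_0 = \emptyset$; let $p_{\xi+1}$ be the $\HOD$-least condition $\le p_\xi$ meeting $D_\xi$; and at limits set $p_\eta = \bigcup_{\xi < \eta} p_\xi$. Since $\Add(\kappa,1)$ is $\mathord{<}\kappa$-closed and $\kappa$ is inaccessible, each $p_\eta \in \Vrm_\kappa$, so the recursion goes through and $\Crm = \bigcup_{\xi < \kappa} p_\xi$ is $\Add(\kappa,1)$-generic over $(\Vrm_\kappa,\Mcal)$. The recursion rule depends only on the class $\Trm_\Mcal$ (through $\vec D$) and on the $\HOD$-order of $\Vrm_\kappa$, which is first-order definable and hence computed the same in every $\GB$ model over $\Vrm_\kappa$; so the class $\seq{p_\xi : \xi < \kappa}$ can be formed in any $(\Vrm_\kappa,\Ycal) \models \KMCC_k$ by the class recursion theorem, and by induction on $\xi$ any such width-extension computes each $p_\xi$, hence $\Crm$, identically. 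This is the first assertion. For the ``Consequently'' clause, $\Crm$ now satisfies the hypothesis of Lemma~\ref{lem:define-trm-lcal-crm}, so applying that lemma produces a second-order definition of the code $\Trm_\Mcal(\Crm)$ computed identically by all width-extensions of $(\Vrm_\kappa,\Mcal)$ satisfying $\KMCC_k$.

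The main obstacle I expect is bookkeeping rather than conceptual: verifying that the recursion building $\Crm$ really can be carried out as a class inside an arbitrary width-extension $(\Vrm_\kappa,\Ycal) \models \KMCC_k$ — i.e. that the recursion rule is first-order definable from the single class parameter $\Trm_\Mcal$ so that the standard $\GB$ class recursion along $\kappa$ applies — and that the absoluteness of $\Trm_\Mcal$ across width-extensions from Corollary~\ref{cor:define-trm-lcal} genuinely propagates through the recursion level by level. I would also flag explicitly, as in the remark following Corollary~\ref{cor:define-trm-lcal}, that the resulting defining formula for $\Crm$ is of higher logical complexity than $\Sigma^1_k$ (it refers to $\Trm_\Mcal$), which is harmless since only second-order definability is claimed.
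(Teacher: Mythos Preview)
Your proposal is correct and follows essentially the same route as the paper: the paper's argument (given in the paragraph immediately preceding the lemma, which is stated with \qed) is exactly to extract from the code $\Trm_\Mcal$ a canonical $\kappa$-sequence of the dense subclasses of $\Add(\kappa,1)$ in $\Mcal$ and meet them one at a time using the $\HOD$-order, just as in Lemma~\ref{le:cannonicalChoiceofC}. Your write-up is in fact more detailed than what the paper provides, and your bookkeeping remarks (first-order definability of the recursion rule from $\Trm_\Mcal$, propagation of absoluteness through the recursion, and the higher-than-$\Sigma^1_k$ complexity of the resulting definition) are all apt.
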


Recall Theorem~\ref{thm:forcing-preserves-axioms} that tame class forcing, such as adding a Cohen-generic class of ordinals, preserves $\KMCC_k$. Also recall Theorem~\ref{thm:width-extensions} that tame class forcing produces width-extensions. So $(\Vrm_\kappa,\Mcal[\Crm])$ is among the width-extensions of $(\Vrm_\kappa,\Mcal)$ subject to the conclusion of the lemma.

We are now in a position to exhibit that $\SnCA{k}$ is not semantically tight. This is analogous to the $\GB$ proof, so we omit most the details. 

\begin{theorem}
Let $k \ge 1$, let $\kappa$ be inaccessible, and let $\Mcal$ and $\Crm$ be defined as above, where we assume $\Vrm_\kappa \models \Vrm = \HOD$.
The two models $(\Vrm_\kappa,\Mcal)$ and $(\Vrm_\kappa,\Mcal(\Crm))$ of $\KMCC_k$ are bi-interpretable.
\end{theorem}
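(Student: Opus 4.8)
The plan is to run the proof of Theorem~\ref{thm:gb-not-sem-tight} almost verbatim, under the translation $\Def(\Vrm_\kappa)\mapsto\Mcal$, $\Def(\Vrm_\kappa;\Crm)\mapsto\Mcal(\Crm)$, the satisfaction-class code $\Trm\mapsto\Trm_\Mcal$ from Corollary~\ref{cor:define-trm-lcal}, the relativized code $\Trm(\Crm)\mapsto\Trm_\Mcal(\Crm)$ from Lemma~\ref{lem:define-trm-lcal-crm}, and the $\HOD$-order $\mapsto$ the $\Lrm(\Vrm_\kappa)$- and $\Lrm(\Vrm_\kappa,\Crm)$-orders those codes are built from. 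First I would record the standing facts. We have $(\Vrm_\kappa,\Mcal)\models\KMCC_k$ by Lemma~\ref{minModelkColSep}, and $(\Vrm_\kappa,\Mcal(\Crm))\models\KMCC_k$ because it is the extension of $(\Vrm_\kappa,\Mcal)$ by the tame forcing $\Add(\kappa,1)$, so Theorem~\ref{thm:forcing-preserves-axioms} applies. Moreover $(\Vrm_\kappa,\Mcal(\Crm))$ is a width-extension of $(\Vrm_\kappa,\Mcal)$ by Theorem~\ref{thm:width-extensions}, so $\Mcal\subseteq\Mcal(\Crm)$; and since $\Crm$ was chosen to be computed uniformly across width-extensions of $(\Vrm_\kappa,\Mcal)$, the hypothesis of Lemma~\ref{lem:define-trm-lcal-crm} is met, and the uniformity clauses of Corollary~\ref{cor:define-trm-lcal} and Lemma~\ref{lem:define-trm-lcal-crm} give that both $(\Vrm_\kappa,\Mcal)$ and $(\Vrm_\kappa,\Mcal(\Crm))$ evaluate the designated second-order definitions of $\Trm_\Mcal$ and $\Trm_\Mcal(\Crm)$ to the same objects, correctly as codes for $\Mcal$ and $\Mcal(\Crm)$.

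I would then write down the two interpretations exactly as in the $\GB$ proof. The interpretation $\Ical$ of $(\Vrm_\kappa,\Mcal)$ in $(\Vrm_\kappa,\Mcal(\Crm))$ is the identity on the sets and keeps as classes precisely the slices $(\Trm_\Mcal)_\xi$; its domain is a second-order definable sub-hyperclass of $\Mcal(\Crm)$ because $\Trm_\Mcal\in\Mcal\subseteq\Mcal(\Crm)$, and it recovers exactly $\Mcal$. The interpretation $\Jcal$ of $(\Vrm_\kappa,\Mcal(\Crm))$ in $(\Vrm_\kappa,\Mcal)$ copies the template: the sets are $\{0\}\times\Vrm_\kappa$ with $\in$ on the second coordinate; identifying the index set $\kappa\times\kappa$ of the slices of $\Trm_\Mcal(\Crm)\subseteq\kappa\times\kappa\times\Vrm_\kappa$ with $\kappa$ via a fixed definable bijection, a pair $(1,\xi)\in\{1\}\times\kappa$ is a class exactly when $(\Trm_\Mcal(\Crm))_\xi\ne(\Trm_\Mcal(\Crm))_\eta$ for all $\eta<\xi$; and set-class membership is $(0,x)\in^\Jcal(1,\eta)$ iff $x\in(\Trm_\Mcal(\Crm))_\eta$. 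Since $\Trm_\Mcal(\Crm)$ enumerates all of $\Mcal(\Crm)$, the structure $\Jcal(\Vrm_\kappa,\Mcal)$ is a copy of $(\Vrm_\kappa,\Mcal(\Crm))$ in which each class is named by the least index of a slice giving it.

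Next I would produce the two round-trip isomorphisms, again lifted directly from the $\GB$ proof. Working inside $(\Vrm_\kappa,\Mcal)$, the isomorphism $\Mcal\cong\Ical(\Jcal(\Mcal))$ sends a class $X$ to $(1,\xi)$, where $\xi$ is the least index with $(\Trm_\Mcal(\Crm))_\xi=X$, read off by querying $\Trm_\Mcal(\Crm)$; on sets it merely strips the leading $0$. Working inside $(\Vrm_\kappa,\Mcal(\Crm))$, the composition $\Jcal\comp\Ical$ is handled the same way, now using that this structure---itself a width-extension of $(\Vrm_\kappa,\Mcal)$---also computes $\Trm_\Mcal(\Crm)$ by the designated definition. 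Each isomorphism is first-order definable from the relevant code, hence second-order definable in the structure in question, so $\Ical$, $\Jcal$ and these two isomorphisms exhibit the bi-interpretation.

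The one genuinely new point, and the step I expect to require care, is that in the $\GB$ setting the codes $\Trm$ and $\Trm(\Crm)$ were second-order definable \emph{absolutely}, over every $\GB$ model with sets $\Vrm_\kappa$, whereas here $\Trm_\Mcal$ and $\Trm_\Mcal(\Crm)$ are only guaranteed to be computed uniformly across \emph{width-extensions} of $(\Vrm_\kappa,\Mcal)$. The work is to observe that this weaker uniformity is all the argument uses: every interpretation and every round-trip isomorphism above is only ever evaluated inside $(\Vrm_\kappa,\Mcal)$ or $(\Vrm_\kappa,\Mcal(\Crm))$, and both of these are width-extensions of $(\Vrm_\kappa,\Mcal)$. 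Granting this, the verification that $\Ical$, $\Jcal$ and the displayed isomorphisms form a bi-interpretation is the same bookkeeping as for Theorem~\ref{thm:gb-not-sem-tight}. (For the attendant non-tightness corollary one also notes the two models are not isomorphic: any isomorphism would be the identity on the rigid structure $(\Vrm_\kappa,\in)$, forcing $\Mcal=\Mcal(\Crm)$ and contradicting $\Crm\in\Mcal(\Crm)\setminus\Mcal$.)
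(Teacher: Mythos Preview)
Your approach is exactly the paper's: run the proof of Theorem~\ref{thm:gb-not-sem-tight} with the substitutions $\Trm\mapsto\Trm_\Mcal$, $\Trm(\Crm)\mapsto\Trm_\Mcal(\Crm)$, and with absoluteness restricted to width-extensions of $(\Vrm_\kappa,\Mcal)$, which (as you correctly observe) is all the argument needs since both models in play are such width-extensions.

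There is one slip to fix. You justify the definability of the domain of $\Ical$ by writing ``$\Trm_\Mcal\in\Mcal\subseteq\Mcal(\Crm)$''. But $\Trm_\Mcal\notin\Mcal$: the paper notes explicitly after Corollary~\ref{cor:define-trm-lcal} that $\Trm_\Mcal$ cannot be $\Sigma^1_k$-definable, else it would lie in $\Mcal$ and collapse $\Lrm_\alpha(\Vrm_\kappa)$ to a proper initial segment of itself. For the same reason there is no expectation that $\Trm_\Mcal\in\Mcal(\Crm)$. The correct justification is the one you already invoke elsewhere: $\Trm_\Mcal$ is given by a fixed second-order \emph{formula} which, by the uniformity clause of Corollary~\ref{cor:define-trm-lcal}, the width-extension $(\Vrm_\kappa,\Mcal(\Crm))$ evaluates to the intended code; so the hyperclass $\Mcal=\{(\Trm_\Mcal)_\xi:\xi\in\kappa\times\kappa\}$ is second-order definable there even though the code itself is not a class of the model. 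With that corrected, your argument goes through and matches the paper's sketch.
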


\begin{corollary}
$\KM_k$ and $\KMCC_k$ are not semantically tight.
\qed
\end{corollary}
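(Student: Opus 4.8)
The plan is to mimic the proof of Theorem~\ref{thm:gb-not-sem-tight}, with the code $\Trm_\Mcal$ for $\Mcal$ and the code $\Trm_\Mcal(\Crm)$ for $\Mcal(\Crm)$ playing the roles that $\Trm$ (viewed as a code for $\Def(\Vrm_\kappa)$) and $\Trm(\Crm)$ played there. First I would record the one structural fact needed to license the uniform definitions: both models in question are width-extensions of the minimum model $(\Vrm_\kappa,\Mcal)$. This is trivial for $\Mcal$ itself, and for $\Mcal(\Crm) = \Mcal[\Crm]$ it follows because $\Crm$ is added by the tame class forcing $\Add(\kappa,1)$, so by Theorem~\ref{thm:forcing-preserves-axioms} the extension satisfies $\KMCC_k$ and by Theorem~\ref{thm:width-extensions} it is a width-extension. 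Consequently, by Corollary~\ref{cor:define-trm-lcal}, Lemma~\ref{lem:define-trm-lcal-crm}, and the subsequent lemma, the objects $\Crm$, $\Trm_\Mcal$, and $\Trm_\Mcal(\Crm)$ each admit second-order definitions that are computed identically in $(\Vrm_\kappa,\Mcal)$ and in $(\Vrm_\kappa,\Mcal(\Crm))$.

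Next I would write down the two interpretations. The interpretation $\Ical$ of $(\Vrm_\kappa,\Mcal)$ inside $(\Vrm_\kappa,\Mcal(\Crm))$ is the identity on sets, has $\in^\Ical$ equal to $\in$, and restricts the classes to those occurring as slices $(\Trm_\Mcal)_\xi$, i.e.\ exactly the classes in $\Mcal$; this domain is second-order definable over $(\Vrm_\kappa,\Mcal(\Crm))$ since $\Trm_\Mcal$ is. The interpretation $\Jcal$ of $(\Vrm_\kappa,\Mcal(\Crm))$ inside $(\Vrm_\kappa,\Mcal)$ takes $\{0\}\times\Vrm_\kappa$ for its sets and, for its classes, those $(1,\xi)$ with $(\Trm_\Mcal(\Crm))_\xi \neq (\Trm_\Mcal(\Crm))_\eta$ for all $\eta<\xi$ in the $\HOD$-order, setting $(0,x)\in^\Jcal(0,y)$ iff $x\in y$ and $(0,x)\in^\Jcal(1,\eta)$ iff $x\in(\Trm_\Mcal(\Crm))_\eta$; this is second-order definable over $(\Vrm_\kappa,\Mcal)$ because $\Trm_\Mcal(\Crm)$ is. Directly, $\Ical(\Vrm_\kappa,\Mcal(\Crm)) = (\Vrm_\kappa,\Mcal)$, and $\Jcal(\Vrm_\kappa,\Mcal) \cong (\Vrm_\kappa,\Mcal(\Crm))$ via $\xi \mapsto (\Trm_\Mcal(\Crm))_\xi$, since $\Trm_\Mcal(\Crm)$ codes $\Mcal(\Crm)$.

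For the two round-trips: working inside $(\Vrm_\kappa,\Mcal)$, the composite $\Ical\comp\Jcal$ fixes $\Vrm_\kappa$ up to stripping the leading $0$, and on classes the assignment $X \mapsto (1,\xi)$, where $\xi$ is the $\HOD$-least index with $(\Trm_\Mcal(\Crm))_\xi = X$ (well-defined since $\Mcal \subseteq \Mcal(\Crm)$), is a definable isomorphism, being definable from $\Trm_\Mcal(\Crm)$. Symmetrically, working inside $(\Vrm_\kappa,\Mcal(\Crm))$, the composite $\Jcal\comp\Ical$ is a definable isomorphism, again using that $\Trm_\Mcal(\Crm)$ is second-order definable there. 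This exhibits the bi-interpretation, and gives the analogous corollary that $\KM_k$ and $\KMCC_k$ fail to be semantically tight.

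I expect the one subtle point—rather than a real obstacle—to be this: unlike in the $\GB$ argument, where $\Trm$ and $\Def(\Vrm_\kappa)$ are definable uniformly across \emph{all} models of $\GB$ with sets $\Vrm_\kappa$ (Observation~\ref{obs:truth-definability}), the codes $\Trm_\Mcal$ and $\Trm_\Mcal(\Crm)$ are provably not $\Sigma^1_k$ and are only known to be computed uniformly across width-extensions of the minimum model. The care required is therefore to confirm that every model actually entering the argument—the two given models and the interpreted copies appearing in the two round-trips—is such a width-extension of $(\Vrm_\kappa,\Mcal)$; since those copies are isomorphic either to $(\Vrm_\kappa,\Mcal)$ or to $(\Vrm_\kappa,\Mcal(\Crm))$, this holds, and the uniformity of the definitions then applies. (Alternatively, one could route the whole argument through the bi-interpretation of $\KMCC_k$ with $\ZFCmik{k}$ from Theorem~\ref{companionModel} and argue with the unrolled first-order models, but the direct approach above is shorter.)
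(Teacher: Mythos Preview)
Your proposal is correct and follows essentially the same approach as the paper: the corollary is immediate from the preceding theorem, whose proof sketch uses exactly your interpretations $\Ical$ (restricting classes to the definable hyperclass $\Mcal$ via the code $\Trm_\Mcal$) and $\Jcal$ (coding classes of $\Mcal(\Crm)$ by $\HOD$-least indices into $\Trm_\Mcal(\Crm)$), together with the observation that both models, being width-extensions of $(\Vrm_\kappa,\Mcal)$, compute $\Trm_\Mcal$ and $\Trm_\Mcal(\Crm)$ identically. Your extra care in verifying the width-extension hypothesis and spelling out the round-trip isomorphisms simply expands the paper's sketch.
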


\begin{proof}[Proof Sketch of Theorem]
Interpreting $(\Vrm_\kappa,\Mcal)$ inside the larger model $(\Vrm_\kappa,\Mcal[\Crm])$ is easy, because $\Mcal$ is a definable hyperclass in the larger model. For the other direction, use the code $\Trm_\Mcal[\Crm]$, which is second-order definable over $(\Vrm_\kappa,\Mcal)$ to give an interpretation, as in the similar direction in the proof of Theorem~\ref{thm:gb-not-sem-tight}, interpreting classes in the larger model by the ($\HOD$-least) index of their slice in $\Trm_\Mcal$.

As remarked after the lemmata, $(\Vrm_\kappa,\Mcal[\Crm])$ and $(\Vrm_\kappa,\Mcal[\Crm])$ compute $\Trm_\Mcal$ and $\Trm_\Mcal[\Crm]$ the same. This ensures that composing one interpretation with the other gives back (an isomorphic copy of) the model we started out with.
\end{proof}

\section{Non-tightness in class theory}\label{NonTightness}

To obtain the nontightness of the class theories we consider we need a uniform construction, one which applies to any model of a fixed first-order theory. We will strengthen the theories we used to ensure an appropriately modified version of the construction from Section~\ref{semanticNonTightness} goes through in a more general setting. Two key facts about $\Vrm_\kappa$ we used were its well-foundedness, ensuring uniqueness of certain constructions, and the regularity of $\kappa$, ensuring that when we constructed a Cohen generic in $\kappa$ many steps that the partial constructions were sets in $\Vrm_\kappa$. For our purposes we can replace these non-first-order axiomatizable properties with a strong form of the Replacement schema.

\begin{definition}\label{def:sor}
Let $\Phi$ be a collection of formulae in the language of set or class theory. The axiom schema of \emph{$\Phi$-Replacement} consists of the instances of Replacement for all functional $\phi \in \Phi$, i.e. the axioms
\[
\forall a\ \big( (\forall x \in a \exists y\ \phi(x,y)) \impl (\exists b \forall x \in a \exists y \in b\ \phi(x,y)) \big),
\]
allowing parameters, which we suppressed here.
Let $\SOR$ denote \emph{Second-Order Replacement}, namely $\Phi$-Replacement where $\Phi$ is the collection of all second-order formulae in the language of class theory, allowing class parameters. 
\end{definition}

It is not difficult to see that $\SOR$ is consistent, given mild large cardinals. If $\kappa$ is inaccessible then $(\Vrm_\kappa, \Xcal) \models \SOR$ for any collection $\Xcal \subseteq \powerset(\Vrm_\kappa)$ of classes over $\Vrm_\kappa$, by the regularity of $\kappa$. 

Let's collect some consequences of $\SOR$. These are proved using the same arguments for the first-order versions of the axiom/theorem schemata.

\begin{lemma}[Second-order separation]
Fix $(M,\Xcal) \models \GB + \SOR$. If $x \in M$ and $\phi(y)$ is any second-order formula, possibly with parameters, then $\{ y \in^M x : (M,\Xcal) \models \phi(y) \}$ is an element of $M$.\footnote{There is a small abuse of notation here. It could be $M$ isn't  a transitive set and $\in^M$ isn't the true $\in$. In such a case it doesn't make sense to talk about $\{ y \in^M x : (M,\Xcal) \models \phi(y) \}$ being an element of $M$. What we mean is that $M$ has an element $z$ so that $(M,\Xcal) \models z = \{ y \in x : \phi(y) \}$. We use this sort of talk rather than more precise circumlocutions because we think it clearer to stick close to how we talk about transitive models.} \qed
\end{lemma}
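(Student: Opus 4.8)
The plan is to run the textbook derivation of Separation from Replacement, the only new feature being that the functional relation we replace along is second-order, so the appeal to Replacement becomes an appeal to $\SOR$ rather than to the first-order Replacement schema available in $\GB$. Fix $(M,\Xcal)\models\GB+\SOR$, a set $x\in M$, and a second-order formula $\phi(y)$ with suppressed set and class parameters; write $a$ for the subcollection $\{y\in^M x:(M,\Xcal)\models\phi(y)\}$ of $x$ that we wish to realize as an element of $M$.

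First I would split on whether $\phi$ is realized inside $x$. If $(M,\Xcal)\models\neg\exists y\in x\ \phi(y)$, then $a$ is empty and we are done, since $\GB$ includes $\ZFC$ for sets and hence has the empty set. Otherwise fix $y_0\in M$ with $y_0\in^M x$ and $(M,\Xcal)\models\phi(y_0)$; the role of this witness is to supply a default value that turns $\phi$ into a \emph{total} functional relation. Concretely, consider the second-order formula
\[
\psi(u,v)\;\equiv\;\bigl(\phi(u)\wedge v=u\bigr)\vee\bigl(\neg\phi(u)\wedge v=y_0\bigr),
\]
which has $y_0$ and the parameters of $\phi$ as parameters. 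One checks that $\psi$ is functional and total: for every $u$ there is exactly one $v$ with $\psi(u,v)$, namely $v=u$ when $\phi(u)$ holds and $v=y_0$ otherwise. Thus the instance of $\SOR$ for $\psi$ applies with $x$ in the role of the domain, and (using the functionality of $\psi$) produces a set $b\in M$ which is the image of $x$ under $\psi$.

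It then remains to identify $b$. Unwinding the definition of $\psi$, the image of $x$ under $\psi$ is $\{y\in x:\phi(y)\}\cup\{y_0\}$, and since $y_0\in x$ satisfies $\phi$ this union is just $a$; hence $a=b\in M$. I do not expect any real difficulty here: the argument is verbatim the classical one, and the only genuinely delicate point is the notational caveat already flagged in the footnote, namely that when $M$ is not transitive every step above must be read internally — one produces an element $z\in M$ with $(M,\Xcal)\models z=\{y\in x:\phi(y)\}$ — which causes no trouble since each step is the interpretation in $(M,\Xcal)$ of an explicit (second-order) formula. If anything counts as ``the hard part'', it is simply remembering to do the case split so that the witness $y_0$ is available before $\psi$ is defined.
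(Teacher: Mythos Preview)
Your proof is correct and is precisely the approach the paper has in mind: the paper gives no explicit argument, merely remarking that these lemmata ``are proved using the same arguments for the first-order versions of the axiom/theorem schemata,'' and you have supplied exactly the textbook derivation of Separation from Replacement, with the single appeal to Replacement upgraded to an appeal to $\SOR$. One small wording caution: the paper's displayed formulation of $\SOR$ literally only yields a set $b$ \emph{containing} the image of $x$ under $\psi$, so your phrase ``produces a set $b\in M$ which is the image'' slightly overstates what the axiom as written gives; this is a minor imprecision shared with the paper's own informal treatment and does not affect the intended argument.
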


\begin{lemma}[Second-order recursion along $\Ord$]\label{SORec}
Fix $(M, \Xcal) \models \GB + \SOR$. Let $G \subseteq M$ be a second-order definable class function. Then there is a unique definable function $F$ over $(M, \Xcal)$ such that $F(\alpha) = G(F \rest \alpha)$ for every $\alpha \in \Ord^M$. \qed
\end{lemma}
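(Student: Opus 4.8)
The plan is to prove Lemma~\ref{SORec} (second-order recursion along $\Ord$) by the standard transfinite-recursion argument, transplanted into the two-sorted setting and using $\SOR$ wherever the first-order proof would use Replacement. The statement to establish has two halves: \emph{existence} of a definable $F$ satisfying the recursion equation $F(\alpha) = G(F \rest \alpha)$ for all $\alpha \in \Ord^M$, and \emph{uniqueness} of such an $F$.

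For uniqueness, I would argue as usual by induction on $\Ord^M$: if $F_1$ and $F_2$ both satisfy the recursion equation and $\alpha$ is least (in the ordinals of $M$) where $F_1(\alpha) \ne F_2(\alpha)$, then $F_1 \rest \alpha = F_2 \rest \alpha$, so $F_1(\alpha) = G(F_1 \rest \alpha) = G(F_2 \rest \alpha) = F_2(\alpha)$, a contradiction. This uses only that $\Ord^M$ is well-ordered in $(M,\Xcal)$ (true since $M \models \ZFC$) and that we may quantify over the relevant objects; no appeal to $\SOR$ is needed here beyond having the two functions as definable classes.

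For existence, the plan is the familiar ``approximations'' construction. Call a set $s \in M$ (coding a function with domain some ordinal $\gamma + 1$, or we can work with partial functions whose domain is an ordinal) a \emph{$\gamma$-approximation} if $\dom(s) = \gamma$ and $s(\xi) = G(s \rest \xi)$ for all $\xi < \gamma$. First one shows, by the uniqueness argument applied below $\gamma$, that any two approximations agree on the common part of their domains. Next, the crucial step: for every $\alpha \in \Ord^M$ there \emph{exists} an $\alpha$-approximation. This is proved by induction on $\alpha$; at successor stages one extends by one more value using $G$, and at limit stages one takes the union of the approximations below, which requires knowing that the set of those approximations (indexed by $\xi < \lambda$) is a set in $M$ — and this is exactly where $\SOR$ enters, since the map $\xi \mapsto (\text{the unique }\xi\text{-approximation})$ is given by a second-order formula (it involves quantifying over classes to express ``is an approximation and satisfies the recursion with respect to $G$'', as $G$ is only second-order definable). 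Applying $\SOR$-Replacement to this functional second-order formula with domain the set $\lambda$ yields the required set of approximations, whose union is the $\lambda$-approximation. Finally, define $F$ by $F(\alpha) = s(\alpha)$ where $s$ is the (unique) $(\alpha+1)$-approximation; $F$ is a definable class function (again via a second-order formula) and one checks directly that it satisfies $F(\alpha) = G(F \rest \alpha)$, and that $F$ is genuinely a class of the model — it is second-order definable, hence a class by the amount of Comprehension available, or one simply records that $F$ is definable and leaves its classhood to whatever ambient Comprehension is present. Because the ambient models here will always have at least $\Sigma^1_k$-Comprehension with $k \ge 1$ (in the applications), $F$ is indeed a class.

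The main obstacle is bookkeeping the logical complexity at the limit stage: one must check that ``$s$ is an $\alpha$-approximation for $G$'' is expressible by a second-order formula so that $\SOR$ actually applies — this is routine since $G$ being second-order definable means the defining formula has finitely many class quantifiers, and ``$s$ is a function with domain $\alpha$ and $\forall \xi < \alpha\ s(\xi) = G(s\rest\xi)$'' then has bounded set quantifiers wrapped around that, still a second-order formula. There is no genuine difficulty beyond that; the proof is, as the lemma statement signals with its \qed, essentially a transcription of the classical transfinite recursion theorem with $\SOR$ doing the work of Replacement. I would present it tersely for that reason, emphasizing only the point where $\SOR$ is invoked.
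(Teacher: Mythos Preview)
Your proposal is correct and matches the paper's intended approach: the paper gives no proof at all beyond the \qed and the preceding remark that ``these are proved using the same arguments for the first-order versions of the axiom/theorem schemata,'' and what you have written is precisely that standard transfinite-recursion argument with $\SOR$ standing in for Replacement at the limit stage. One small clarification: the lemma only asserts that $F$ is \emph{definable} over $(M,\Xcal)$, not that $F \in \Xcal$, so your concern about classhood is unnecessary---definability is all that is claimed, and the subsequent corollary uses second-order Separation (not Comprehension) to extract set-sized values of $F$.
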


\begin{lemma}[Second-order recursion along set-like, well-founded relations]\label{SOWFRec}
Fix $(M, \Xcal) \models \GB + \SOR$. Let $G \subseteq M$ be a second-order definable class function and $R 
\subseteq M$ be a second-order definable, set-like, well-founded relation.\footnote{We of course mean that $(M,\Xcal)$ thinks that $R$ is set-like and well-founded. In the sequel we will use similar phrasing with similar intent, and trust the reader to understand. If we wish to speak of what is seen externally to the model we will be explicit.} Then there is a unique definable function $F$ over $(M, \Xcal)$ such that $F(x) = G(F \rest \Ext_R(x))$ for every $x \in \dom R$. \qed
\end{lemma}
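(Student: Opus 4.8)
The plan is to transcribe the classical $\ZF$ proof of recursion along a set-like well-founded relation, invoking $\SOR$ (together with the second-order Separation and $\Ord$-recursion lemmata it yields) at each point where the merely second-order definable parameters $G$ and $R$ would otherwise be illegitimate, since in $\GB$ these need not be classes of the model. The crucial preliminary step is to show that every point has a set-sized $R$-downward closure: define $\cl_R(x)$ as $\bigcup_{n\in\omega} X_n$ where $X_0 = \{x\}$ and $X_{n+1} = X_n \cup \bigcup_{y\in X_n}\Ext_R(y)$. Since $R$ is set-like each $\Ext_R(y)$ is a set, so an application of $\SOR$ shows $X_{n+1}$ is a set whenever $X_n$ is; the sequence $\seq{X_n : n\in\omega}$ exists by recursion along $\Ord$ (Lemma~\ref{SORec}) applied to the evident second-order step function; and then $\cl_R(x)\in M$ by Union. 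One checks that $\cl_R(x)$ is the $\subseteq$-least $R$-downward-closed set containing $x$ and that $\cl_R(x) = \{x\}\cup\bigcup_{y\mathrel R x}\cl_R(y)$. A consequence is that any nonempty subclass $D$ of $M$ has an $R$-minimal element: pick $x\in D$, apply well-foundedness to the nonempty \emph{set} $D\cap\cl_R(x)$, and observe by downward-closure that a minimal element there is minimal in $D$.

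Next I would set up partial solutions. Call a set function $p$ \emph{$R$-closed} if $\dom p$ is $R$-downward closed and $p(y) = G(p\rest\Ext_R(y))$ for every $y\in\dom p$ (note $\Ext_R(y)\subseteq\dom p$ in that case). A routine $R$-induction, licensed by the class-foundation for $R$ established above, shows any two $R$-closed functions agree on the intersection of their domains. Hence $F := \bigcup\{p : p\text{ is }R\text{-closed}\}$ is a single-valued class function, and it is second-order definable over $(M,\Xcal)$, since $F(x) = z$ iff there is an $R$-closed $p$ with $p(x)=z$, and the quantifier over the \emph{set} $p$ together with the second-order definitions of $G$ and $R$ keeps this second-order. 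Directly from the definition, $F(x) = G(F\rest\Ext_R(x))$ for every $x\in\dom F$.

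It then remains to see $\dom R\subseteq\dom F$. If not, take an $R$-minimal $x\in\dom R\setminus\dom F$. For each $y\mathrel R x$ we have $y\in\dom F$, and since $\cl_R(y)$ is a set, $F\rest\cl_R(y)$ is a set by $\SOR$ and is $R$-closed; taking the union of these over the set $\Ext_R(x)$ (again by $\SOR$ and Union) and adjoining the single pair $\big(x,\,G(F\rest\Ext_R(x))\big)$ produces an $R$-closed function with domain $\cl_R(x)\ni x$ (here one uses absence of $R$-cycles, which follows from well-foundedness, to check the recursion equation still holds at $x$), contradicting $x\notin\dom F$. Finally, uniqueness is the same $R$-induction: if $F'$ also satisfies the recursion equation then $\{x\in\dom R : F(x)\ne F'(x)\}$ has no $R$-minimal element, hence is empty.

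I expect the only real friction to be bookkeeping rather than mathematics. The substantive content lies entirely in the first step: set-likeness of $R$ is exactly what makes $\cl_R(x)$ a set, and it is this that powers both the class-foundation argument and the construction of the extending partial solution; without it the statement itself would fail. The remaining care needed is simply to confirm, clause by clause, that $\SOR$ (and second-order Separation, and Lemma~\ref{SORec}) can stand in for the instances of Replacement that the analogous $\ZF$ argument would use with $G$ and $R$ as parameters, and that the function $F$ produced is genuinely second-order definable and not inadvertently of higher complexity.
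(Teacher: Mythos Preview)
Your proposal is correct and is exactly the argument the paper has in mind: the lemma is stated with an immediate \qed, the surrounding text explaining that ``these are proved using the same arguments for the first-order versions of the axiom/theorem schemata,'' and your write-up carries out precisely that transcription, invoking $\SOR$, second-order Separation, and Lemma~\ref{SORec} at the points where the classical $\ZF$ proof would use Replacement with the (here only second-order definable) parameters $G$ and $R$.
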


We highlight an immediate corollary we will make repeated use of.

\begin{corollary}
Fix $(M,\Xcal) \models \GB + \SOR$. Suppose $F \subseteq M$ is defined by second-order recursion. Then for any $x \in M$ we have that $F(x) \in M$.
\end{corollary}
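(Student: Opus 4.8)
The plan is to derive the statement directly from Lemma~\ref{SOWFRec} (and Lemma~\ref{SORec} in the $\Ord$-indexed case), making explicit the single place where $\SOR$ does the work. Work inside $(M,\Xcal)$ and let $\phi_F$ be the second-order formula defining $F$, so that, by the recursion lemma, $F \subseteq M$, $F$ is a function on $\dom R$, and $F(x) = G(F \rest \Ext_R(x))$ for every $x \in \dom R$ (with $F(x)$ governed by the boundary clause when $x \notin \dom R$). First I would reduce the claim to: for every $x$, the partial function $F \rest \Ext_R(x)$ is an element of $M$. Granting this, $G$ is a second-order definable class function with $G \subseteq M$, so its value on the set $F \rest \Ext_R(x)$ lies in $M$, and that value is $F(x)$.

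To see that $F \rest \Ext_R(x) \in M$ I would run an induction on $x$ along $R$ --- legitimate since $R$ is a second-order definable, set-like, well-founded relation and we are over a model of $\GB + \SOR$. Assume $F(z) \in M$ for every $z \mathrel{R} x$. Since $R$ is set-like, $\Ext_R(x) \in M$; and since $\phi_F$ is a second-order formula, $\SOR$ applied to it gives that the range $F[\Ext_R(x)] = \{\, v : \exists z\ (z \in \Ext_R(x) \wedge \phi_F(z,v)) \,\}$ is an element of $M$. Hence $F \rest \Ext_R(x)$ is a subclass of the set $\Ext_R(x) \times F[\Ext_R(x)]$, so it is itself an element of $M$ by Second-order Separation. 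This closes the induction and, with the reduction above, proves the corollary. The $\Ord$-indexed case is the same, with $\Ext_R(x)$ replaced by an initial segment $\alpha$ of $\Ord^M$ and $R$-induction replaced by induction on the ordinals.

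The only real content here is the appeal to $\SOR$ to land $F[\Ext_R(x)]$ inside $M$: the class $F$ is only second-order definable and need not belong to $\Xcal$, so the ordinary Class Replacement of $\GB$ --- which speaks about genuine classes in $\Xcal$ --- does not apply, and it is precisely Second-Order Replacement that permits collecting the range of $F$ over a set into a set. Everything else (the induction bookkeeping, passing from ``the range is a set'' to ``the restricted function is a set'' via Separation, and evaluating the class function $G$ at a set argument) is routine, and indeed the corollary can be regarded as little more than a convenient repackaging of Lemmas~\ref{SORec} and~\ref{SOWFRec}.
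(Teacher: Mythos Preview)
Your proof is correct and takes essentially the same approach as the paper, whose entire proof is the one line ``Because $F(x)$ is definable by second-order Separation.'' You supply the inductive structure and the explicit appeal to $\SOR$ (to get that $F[\Ext_R(x)]$ is a set before invoking Separation) that the paper leaves to the reader.
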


\begin{proof}
Because $F(x)$ is definable by second-order Separation.
\end{proof}

\begin{lemma}[Second-order induction]
Fix $(M, \Xcal) \models \GB + \SOR$. Let $R \subseteq M$ be a second-order definable, set-like, well-founded relation. Suppose $X$ is a second-order definable, inductive subset of the domain of $R$. Then $X = \dom R$. \qed
\end{lemma}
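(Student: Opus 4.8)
The plan is to run the standard well-founded-induction argument, the only real wrinkle being that $X$ and $R$ are merely second-order definable rather than genuine members of $\Xcal$, so one cannot directly invoke the $\GB$-provable fact that every nonempty subclass of $\field R$ has an $R$-minimal element; instead $\SOR$ is used to localize the argument to a \emph{set} of $M$, where that fact does apply. So, arguing by contradiction, suppose $X \ne \dom R$. Since being an inductive subset of $\dom R$ includes $X \subseteq \dom R$, we may fix $x_0 \in \dom R \setminus X$, and the goal becomes to produce an $R$-minimal element of (a set-sized approximation to) $\dom R \setminus X$ and contradict inductivity.

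The first substantive step is to form $D$, the $R$-downward closure of $\{x_0\}$. Using Lemma~\ref{SORec}, define a function $F$ by the recursion $F(0) = \{x_0\} \cup \Ext_R(x_0)$, $F(n+1) = F(n) \cup \bigcup_{y \in F(n)} \Ext_R(y)$, and $F(\eta) = \bigcup_{\gamma < \eta} F(\gamma)$ at limits; each successor step is legitimate because $R$ is set-like, so $\Ext_R(y) \in M$, and by $\SOR$ (second-order Replacement) the collection $\{\Ext_R(y) : y \in F(n)\}$ is a set of $M$, whence so is its union. Put $D = F(\omega)$. By the Corollary following Lemma~\ref{SOWFRec}, $D \in M$; by construction $x_0 \in D \subseteq \dom R$ and $D$ is closed under $R$-predecessors. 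Now second-order Separation, applied to the second-order formula ``$y \notin X$'' restricted to the set $D$, shows $D \setminus X = \{ y \in D : y \notin X\}$ is a set of $M$, and it is nonempty since $x_0 \in D \setminus X$. As $(M,\Xcal) \models \GB$ believes $R$ is set-like and well-founded, it believes the nonempty set $D \setminus X$ has an $R$-minimal element $z$.

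The contradiction is then immediate: for any $y$ with $y \mathrel{R} z$ we have $y \in D$ by downward closedness of $D$, and $y \notin D \setminus X$ by $R$-minimality of $z$, hence $y \in X$; thus $\Ext_R(z) \subseteq X$, and inductivity of $X$ together with $z \in \dom R$ gives $z \in X$, contradicting $z \in D \setminus X$. Therefore $\dom R \setminus X = \emptyset$, and combined with $X \subseteq \dom R$ this yields $X = \dom R$. The only point demanding genuine care — the ``hard part,'' such as it is — is verifying that $D$ is a bona fide set of $M$: one must check that the operation $s \mapsto s \cup \bigcup_{y \in s}\Ext_R(y)$ is a total second-order-definable class function $M \to M$, which is exactly where $\SOR$ (through second-order Replacement together with the recursion lemmas) is used; everything downstream of that is the familiar argument.
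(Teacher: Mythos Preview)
Your proof is correct and is exactly the standard well-founded-induction argument the paper has in mind; note that the paper does not actually give a proof here---the lemma is marked with \qed and the preceding text simply says these results ``are proved using the same arguments for the first-order versions of the axiom/theorem schemata.'' Your write-up makes explicit the one nontrivial point the paper leaves implicit, namely that $\SOR$ (via second-order Replacement and the recursion lemma) is needed to guarantee that the downward closure $D$ and the set $D \setminus X$ are genuine elements of $M$, after which the hypothesis that $R$ is well-founded applies to the nonempty \emph{set} $D \setminus X$ to yield the minimal counterexample.
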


An instance of this is especially relevant to our purposes.

\begin{corollary}
Over $\GB$, $\SOR$ proves the single sentence that asserts for every $k \in \omega$ there is a $\Sigma_k$ satisfaction predicate.
\end{corollary}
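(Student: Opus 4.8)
The idea is to replace the finite iteration of the Tarski recursion---which is all $\GB$ by itself licenses---by a single recursion along $\omega^M$, which $\SOR$ makes available. Fix $(M,\Xcal)\models\GB+\SOR$. Unwinding definitions, the sentence in question asserts: for every $k\in\omega$ there is a class $S$ whose elements are pairs $(\ulcorner\phi\urcorner,\bar a)$ with $\phi$ a first-order formula of set theory lying at level ${\le}k$ of the L\'evy alternation hierarchy and $\bar a$ an assignment, such that $S$ obeys the Tarskian satisfaction clauses on its domain and decides every such formula in the structure $(M,\in)$. Here $k$ occurs only as a set parameter, so this is a single ($\Sigma^1_1$) sentence of class theory, and proving it amounts to producing, inside $(M,\Xcal)$ and for an arbitrary---possibly nonstandard---$k\in\omega^M$, a witnessing class $S$.

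I would produce all of these witnesses at once. Using second-order recursion along $\Ord$ (Lemma~\ref{SORec}), restricted to $\omega$, I would define a class $\Trm$, thought of as a collection of triples $(k,\ulcorner\phi\urcorner,\bar a)$, whose $k$-th slice $(\Trm)_k=\{w:(k,w)\in\Trm\}$ is the level-${\le}k$ satisfaction class. The base clause puts into $(\Trm)_0$ exactly the true $\Delta_0$ instances; the $\Delta_0$-satisfaction relation is $\Delta_1$-definable over $M$ by a definition that works uniformly in every model, so this is legitimately a class of $(M,\Xcal)$ by First-Order Comprehension. The successor clause is the usual one: writing a level-${\le}(k{+}1)$ formula in prenex form as $\exists\bar x\,\psi$ (resp.\ $\forall\bar x\,\psi$) with $\psi$ at level ${\le}k$, declare $(\ulcorner\exists\bar x\,\psi\urcorner,\bar a)\in(\Trm)_{k+1}$ iff there is $\bar b$ with $(\ulcorner\psi\urcorner,\bar a\cat\bar b)\in(\Trm)_k$ (resp.\ the dual). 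The key point is that this clause defines $(\Trm)_{k+1}$ by a single first-order formula taking $k$ and the class $(\Trm)_k$ as its only inputs, so the recursion step is a second-order definable operation; hence $\SOR$ delivers the class $\Trm$ together with the uniqueness of the construction. It is precisely here that we pass beyond $\GB$: the recursion runs for $\omega^M$-many steps, whereas $\GB$ alone only licenses iterating the Tarski step standardly-often, and so reaches $(\Trm)_k$ only for standard $k$.

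Finally, for any $k\in\omega^M$ the slice $(\Trm)_k$ is a class of $(M,\Xcal)$, being first-order definable from the class $\Trm$, and a routine second-order induction on $k$ (Lemma~\ref{SOWFRec}, or its induction companion) shows that $(\Trm)_k$ obeys the Tarskian clauses and decides exactly the formulas at level ${\le}k$; thus $(\Trm)_k$ witnesses the required instance, and since $k$ was arbitrary we are done. The one technical wrinkle---which I would expect to be the main obstacle to a fully rigorous write-up---is that the values of this recursion, the slices $(\Trm)_k$, are proper classes rather than elements of $M$, so strictly speaking we do not invoke Lemma~\ref{SORec} verbatim but a class-valued version of it. That version has the same proof from $\SOR$: one carries out the construction and uses Second-Order Separation together with second-order induction to see that the stagewise approximations cohere into a single class. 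I would either isolate this as a separate lemma alongside Lemmas~\ref{SORec} and~\ref{SOWFRec} or fold the argument in directly. Beyond this, the only remaining care is the syntactic bookkeeping (prenexation, recognizing the alternation level of a code, tracking free variables), which is entirely routine.
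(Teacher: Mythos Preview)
Your approach has a genuine gap at precisely the point you flag as a ``technical wrinkle.'' You want $\SOR$ to deliver $\Trm$ as a single class via class-valued recursion along $\omega$, and then obtain each $(\Trm)_k$ as a slice. But $\GB + \SOR$ does \emph{not} prove that $\Trm$ exists as a class: for $\kappa$ inaccessible the model $(\Vrm_\kappa, \Def(\Vrm_\kappa))$ satisfies $\GB + \SOR$ (as the paper notes just after Definition~\ref{def:sor}), yet by Tarski's undefinability theorem the full satisfaction predicate---equivalently, your $\Trm$ up to reindexing---is not in $\Def(\Vrm_\kappa)$. The obstruction is that ``cohering into a single class'' means forming $\bigcup_k \{k\} \times (\Trm)_k$, a union of $\omega$-many proper classes, and this needs comprehension beyond what $\GB$ provides. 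So the class-valued recursion lemma you sketch is not available here, and your sentence ``the slice $(\Trm)_k$ is a class of $(M,\Xcal)$, being first-order definable from the class $\Trm$'' rests on a false premise.

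The paper's proof avoids this entirely by not attempting to build $\Trm$. It simply observes that the subset of $\omega$ consisting of those $k$ for which a $\Sigma_k$ satisfaction predicate exists is second-order definable and inductive: $0$ is in it since $\Delta_0$ satisfaction is first-order definable, and closure under successor is one Tarski step using First-Order Comprehension with the previous satisfaction class as parameter. Second-order induction (available from $\SOR$) then gives that this set is all of $\omega$. This never asks for any class beyond the individual $\Sigma_k$ predicates, which is exactly what the corollary asserts. Your final paragraph's ``routine second-order induction on $k$'' is essentially this argument already; the detour through constructing $\Trm$ is both unnecessary and, in $\GB + \SOR$, impossible.
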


\begin{proof}
It is easy to see that the subset of $\omega$ consisting of the $k$ for which a $\Sigma_k$ satisfaction predicate exists is inductive.
\end{proof}

Just $\GB$ alone proves the existence of the $\Sigma_k$ satisfaction predicate for every standard $k$, by an induction in the metatheory. The point is, with $\SOR$ the quantification over $k$ is not in the metatheory and we get $\Sigma_k$ satisfaction predicates even for nonstandard $k$. The connoisseur of nonstandard models knows that $\omega$-nonstandard models may fail to admit any $\Sigma_k$ satisfaction predicate for nonstandard $k$.\footnote{For the non-connoisseur: Let $M$ be an $\omega$-nonstandard model of $\ZF$, and let $\Xcal$ consist of its definable classes. Then $(M,\Xcal) \models \GB$. But $\Xcal$ cannot have a $\Sigma_k$ satisfaction predicate for nonstandard $k$ by Tarski's theorem on the undefinability of truth.} 
Second-Order Replacement rules these models out from consideration.

As an aside, we remark that $\GB + \SOR$ exceeds $\GB$ in consistency strength.

\begin{proposition}
$\GB + \SOR$ proves the consistency of $\GB$.
\end{proposition}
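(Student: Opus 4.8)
The plan is to reduce to the consistency of $\ZFC$ and then exploit the satisfaction predicates that $\SOR$ provides. First I would recall two standard facts about $\GB$ that hold already in a weak metatheory, hence in $\GB + \SOR$: that $\GB$ is finitely axiomatizable, and that $\GB$ is a conservative extension of $\ZFC$, with the conservativity established by a finitary transformation of derivations (so that $\GB \vdash \bot$ yields $\ZFC \vdash \bot$, i.e.\ $\GB+\SOR \vdash \Con(\ZFC) \to \Con(\GB)$). Granting these, it suffices to prove $\Con(\ZFC)$ inside $\GB + \SOR$. To do that I would prove, for every $k \in \omega$, the consistency of the subtheory $\ZFC_k \subseteq \ZFC$ consisting of the axioms of syntactic complexity at most $\Sigma_k$, and then observe that $\Con(\ZFC)$ follows: any derivation of a contradiction from $\ZFC$ is (internally) a finite object, so it invokes only finitely many $\ZFC$-axioms and hence only axioms of complexity $\le k$ for some --- possibly nonstandard --- $k$, so it is already a derivation from $\ZFC_k$, contradicting $\Con(\ZFC_k)$.

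The core of the argument is thus the proof, inside $\GB + \SOR$, of the single sentence ``$\forall k \in \omega\ \Con(\ZFC_k)$''. I would argue uniformly in $k$ by second-order induction along $\omega$, in the same spirit as the corollary above that $\SOR$ yields $\Sigma_k$-satisfaction predicates for every $k \in \omega$. Fix $k$ and let $\mathrm{Sat}_k$ be the $\Sigma_k$-satisfaction class for $(\Vrm,\in)$. Two things need checking: (i) $\Vrm$ correctly models $\ZFC_k$, i.e.\ $\mathrm{Sat}_k$ judges every axiom of $\ZFC_k$ true --- the axioms other than instances of Separation and Replacement hold outright, while the relevant bounded instances of Separation and Replacement hold because $\SOR$ supplies second-order Separation and Replacement; and (ii) $\{ \alpha : \Vrm_\alpha \prec_{\Sigma_k} (\Vrm,\in) \}$ is closed unbounded --- here I would run the usual L\'evy--Montague reflection argument relative to $\mathrm{Sat}_k$, the construction of the reflecting closure points being a recursion along $\Ord$ justified by the second-order recursion principle, with the requisite Replacement coming from $\SOR$. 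Combining (i) and (ii), any $\alpha$ with $\Vrm_\alpha \prec_{\Sigma_k}\Vrm$ satisfies $\Vrm_\alpha \models \ZFC_k$; since $\Vrm_\alpha$ is a set, its satisfaction relation is definable and ``$\Vrm_\alpha \models \ZFC_k$'' is a bona fide statement, so the formalized soundness theorem --- provable already in $\GB$ by induction on derivations --- gives $\Con(\ZFC_k)$. Putting this together with the reduction of the previous paragraph yields $\Con(\ZFC)$, and then $\Con(\GB)$ by conservativity.

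I expect the only real obstacle to be the insistence on \emph{uniformity} in the middle step: packaging reflection-plus-soundness as the single sentence $\forall k\ \Con(\ZFC_k)$ rather than a metatheoretic scheme of instances. Plain $\GB$ proves each instance $\Con(\ZFC_k)$ separately for standard $k$, but cannot quantify over $k$ --- which is exactly why $\GB$ does not prove $\Con(\GB)$ --- and the point of the proposition is that the second-order induction, recursion, Separation, and Replacement principles furnished by $\SOR$ let the whole construction be internalized, including at nonstandard $k$, where $\GB$ alone need not possess a $\Sigma_k$-satisfaction class at all.
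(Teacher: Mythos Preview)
Your argument is correct, but it takes a longer route than the paper's. The paper's proof is essentially a two-line sketch: use second-order Separation to form the \emph{set} $T$ of parameter-free first-order truths of $\Vrm$ (this is a subset of the set of sentences, and is second-order definable via the $\Trm$ predicate), check that $T$ contains every axiom of $\ZFC$ --- including nonstandard schema instances, which is where an internal induction is used --- and observe that $T$ is deductively closed and omits $\bot$, hence is a consistent extension of $\ZFC$. Then $\Con(\GB)$ follows from conservativity, as you also note.

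Your detour through reflection is unnecessary: once you have $\mathrm{Sat}_k$ (or better, the full $\Trm$) and have verified that it validates all of $\ZFC_k$, you already have a consistent theory in hand --- there is no need to descend to a set $\Vrm_\alpha$ in order to invoke formalized soundness. Indeed, the truth set itself is a set, so ``the theory of $\Vrm$'' is available directly without passing through a set-sized elementary substructure. Your stratification into the $\ZFC_k$ and the final compactness-style step (``any proof uses axioms of bounded complexity'') is likewise avoidable once one works with the full truth set from the start. One minor point: in your step (i), the reason $\mathrm{Sat}_k$ validates the $\Sigma_k$ instances of Separation and Replacement is not $\SOR$ per se but ordinary First-Order Comprehension applied with $\mathrm{Sat}_k$ as a class parameter; $\SOR$ is only needed to guarantee that $\mathrm{Sat}_k$ exists for every $k$, including nonstandard ones. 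That said, none of this affects the correctness of your argument --- it simply does more work than required.
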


\begin{proof}[Proof Sketch]
Work internally to a model of $\GB + \SOR$. By second-order Separation form the set of (parameter-free) first-order truths of the universe of sets. By induction this truth set contains every instance of Replacement and Separation. And it must be consistent, so we have constructed a consistent extension of $\ZFC$, whence we get the consistency of $\GB$.
\end{proof}

On the other hand, $\SOR$ says very little about what classes exist.

\begin{lemma} \label{lem:sor-goes-down}
Let $(M,\Xcal) \models \GB + \SOR$. Suppose $\Ycal \subseteq \Xcal$ is definable over $(M,\Xcal)$ by a second-order formula, possibly using parameters. Then $(M,\Ycal) \models \SOR$.
\end{lemma}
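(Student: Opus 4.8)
The plan is to reduce each instance of $\SOR$ for $(M,\Ycal)$ to the corresponding instance of $\SOR$ for $(M,\Xcal)$ by relativizing class quantifiers to $\Ycal$. First I would fix a second-order formula $\theta(Z,\bar Q)$, with class parameters $\bar Q$ from $\Xcal$, which defines $\Ycal$ over $(M,\Xcal)$; such a $\theta$ exists by the hypothesis that $\Ycal$ is second-order definable over $(M,\Xcal)$. Then, for each second-order formula $\phi$ in the language of class theory, I would form its relativization $\phi^{\Ycal}$ by replacing each class quantifier $\exists Z\,(\cdots)$ with $\exists Z\,(\theta(Z,\bar Q) \wedge \cdots)$ and each $\forall Z\,(\cdots)$ with $\forall Z\,(\theta(Z,\bar Q) \impl \cdots)$, leaving atomic subformulae and set quantifiers untouched. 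A routine induction on the structure of $\phi$, carried out in the metatheory since $\phi$ is a fixed standard formula, gives the relativization identity: for all $a_1,\dots,a_n \in M$ and all classes $\bar P$ in $\Ycal$,
$$(M,\Ycal) \models \phi(a_1,\dots,a_n,\bar P) \iff (M,\Xcal) \models \phi^{\Ycal}(a_1,\dots,a_n,\bar P,\bar Q).$$

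With this in hand the lemma follows. Fix a functional second-order formula $\phi(x,y,\bar P)$ with class parameters $\bar P$ in $\Ycal$, and suppose $a \in M$ is such that $(M,\Ycal) \models \forall x \in a\ \exists y\ \phi(x,y,\bar P)$. By the relativization identity this is equivalent to $(M,\Xcal) \models \forall x \in a\ \exists y\ \phi^{\Ycal}(x,y,\bar P,\bar Q)$. Now $\phi^{\Ycal}$ is again a second-order formula, and its parameters $\bar P,\bar Q$ all lie in $\Xcal$, so the matching instance of $\SOR$ is available in $(M,\Xcal)$; it yields $b \in M$ with $(M,\Xcal) \models \forall x \in a\ \exists y \in b\ \phi^{\Ycal}(x,y,\bar P,\bar Q)$. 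Applying the relativization identity once more, $(M,\Ycal) \models \forall x \in a\ \exists y \in b\ \phi(x,y,\bar P)$, which is exactly the desired instance of $\SOR$ for $(M,\Ycal)$.

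There is no serious obstacle here; the only thing to be careful about is the bookkeeping. Since $\SOR$ is Replacement for functions from sets to sets whose graph is second-order definable (the quantifier $\exists y \in b$ ranges over sets, not classes), the relativization never needs to modify the ``$\exists y \in b$'' clause, so no question arises about the range of class quantifiers in the conclusion. The other point worth flagging is that the parameters $\bar Q$ witnessing the definability of $\Ycal$ belong to $\Xcal$ rather than to $\Ycal$, which is precisely what is allowed when invoking $\SOR$ over $(M,\Xcal)$. Set parameters, suppressed above, cause no trouble since $M$ is common to both models.
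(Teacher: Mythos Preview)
Your proposal is correct and follows essentially the same approach as the paper: relativize an instance $\psi$ of $\SOR$ to $\Ycal$ and observe that $\psi^{\Ycal}$ is again an instance of $\SOR$ (for the relativized formula $\phi^{\Ycal}$), hence holds in $(M,\Xcal)$. Your write-up is simply a more detailed unpacking of the paper's terse argument, including the explicit relativization identity and careful remarks about parameters.
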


\begin{proof}
Consider an instance $\psi$ of $\SOR$. Let $\psi^\Ycal$ be the relativization of $\psi$ so that class quantifiers only quantify over elements of $\Ycal$, using that $\Ycal$ is definable. (Set quantifiers are unchanged.) By $\SOR$ we get that $(M,\Xcal) \models \psi^\Ycal$. Hence $(M,\Ycal) \models \psi$.
\end{proof}

In particular, by similar logic as to how we defined $\Def(\Vrm_\kappa)$ in the previous section, we will get that any model of $\GB$ can define what internally looks like the definable classes. This gives a model of $\GB + \SOR$ with a weak second-order theory, not even able to prove the existence of a satisfaction predicate which measures all first-order formulae. Such a model will fail to satisfy even $\PCA$.

We close this section with the fact that forcing preserves $\SOR$.

\begin{lemma} \label{lem:forcing-preserve-sor}
Suppose $(M,\Xcal) \models \GB + \SOR$. Then, any forcing extension of $(M,\Xcal)$ by a tame, $\mathord{<}\Ord$-closed forcing in $\Xcal$  will satisfy $\SOR$.
\end{lemma}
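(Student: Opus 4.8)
The plan is to mimic the standard argument that forcing preserves Replacement, exploiting the hypothesis that the forcing is $\mathord{<}\Ord$-closed.

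First I would show that a tame, $\mathord{<}\Ord$-closed $\Pbb \in \Xcal$ adds no new sets, so that the extension $(M,\Xcal[G])$ has exactly the sets of $M$. Given a name $\dot x$ for a subset of a set $c$ and a condition $p$, one builds a descending sequence of conditions of ordinal length that decides ``$\check z \in \dot x$'' for the elements $z$ of $c$ one at a time, taking lower bounds at limit stages by $\mathord{<}\Ord$-closure. This sequence is constructed by second-order recursion along $\Ord$ (Lemma~\ref{SORec}), after fixing a global well-order of $\Pbb$, which for the forcings of interest such as $\Add(\Ord,1)$ is first-order definable. The terminal condition decides $\dot x \cap c$ completely, hence forces $\dot x$ to equal a ground-model set; the conditions doing so form a dense class (only the first-order forcing relation is needed here, which is a class by the Stanley--Friedman theorem), so $G$ meets it and $\dot x_G \in M$.

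Granting this, let $a \in M$, let $\phi$ be a second-order formula with parameters $\bar P = \dot{\bar P}_G \in \Xcal[G]$ for names $\dot{\bar P} \in \Xcal$, and suppose $(M,\Xcal[G]) \models \forall x \in a\ \exists y\ \phi(x,y,\bar P)$; we must produce a bounding set $b$. Since no new sets appear, every witness $y$ lies in $M$, so it suffices to bound their ranks. Working in $(M,\Xcal)$, reading the first-order matrix of $\phi$ through the forcing theorem and the class quantifiers of $\phi$ as quantifiers over names in $\Xcal$, I would assign to each $x \in a$ and each condition $r$ the least rank $\rho(x,r)$ of a $y \in M$ with $r \forces \phi(\check x,\check y,\dot{\bar P})$ (and $\rho(x,r)=0$ if there is none). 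Fixing $p_0 \in G$ forcing the displayed statement, pretameness (which tameness supplies) replaces the proper class of conditions below $p_0$ by a set-sized predense family, while $\mathord{<}\Ord$-closure lets us decide the relevant set-many instances simultaneously along one descending sequence; hence $\{\rho(x,r) : x \in a,\ r \text{ in the chosen family}\}$ is a set by second-order Replacement in $(M,\Xcal)$. Taking $\gamma$ above its supremum and $b := \Vrm_\gamma$, one verifies through the forcing theorem that $b$ bounds the witnesses in $(M,\Xcal[G])$, just as in the $\ZF$ case; and forcing preserves the remaining axioms of $\GB$ as already recorded.

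The step I expect to be the main obstacle is the bookkeeping forced by the fact that over $\GB$ the forcing relation for a second-order formula need not be a class---only its first-order instances are covered by the Stanley--Friedman theorem---so one cannot simply invoke genericity against a dense class defined from it. This is precisely where $\mathord{<}\Ord$-closure is essential: it lets us meet set-many dense requirements by constructing a single descending sequence of conditions rather than by appealing to a dense class, and it is where pretameness is needed, to cut the proper-class forcing down to set-sized data. Everything else is a routine adaptation of the corresponding argument for first-order Replacement.
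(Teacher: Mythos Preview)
Your approach and the paper's share the same core idea: reflect the instance of $\SOR$ to the ground model by using $\mathord{<}\Ord$-closure to find a single condition deciding all $\lvert a\rvert$-many witnesses, then apply ground-model $\SOR$ to the second-order definable function reading off those witnesses. The paper packages this more directly as a contradiction: assume $\SOR$ fails in $(M,\Xcal[G])$ at $a$ and $\phi$, pick $p \in G$ forcing the failure, extend $p$ via closure to decide each of the $\lvert a\rvert$ many unique witnesses $y$ (necessarily check names, since no sets are added), and observe that $x \mapsto y$ with $p \forces \phi(\check x,\check y)$ is then a second-order definable functional relation on $a$ whose range is not contained in any set---contradicting $\SOR$ in $(M,\Xcal)$.

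Your detour through pretameness is unnecessary and not quite accurately described: pretameness does not ``replace the proper class of conditions below $p_0$ by a set-sized predense family''; it refines a set-indexed family of predense classes to set-sized ones. You do not need it here, because $\mathord{<}\Ord$-closure already gives you a single condition deciding everything, after which the only quantification is over $x \in a$. Relatedly, the ``main obstacle'' you flag---that the forcing relation for second-order $\phi$ need not be a class---is not really an obstacle: as you yourself note, it is still second-order \emph{definable} (first-order matrix via the forcing theorem, class quantifiers as quantifiers over names), and second-order definability is exactly what $\SOR$ demands of the function being Replaced. So you can simply apply $\SOR$ to $x \mapsto$ (the $y$ decided by your terminal condition) and skip the bookkeeping with $\rho(x,r)$ and predense families altogether.
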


\begin{proof}
Let $(M,\Xcal[G])$ denote the forcing extension---the sets are the same by $\mathord{<}\Ord$-closure---and suppose toward a contradiction that it fails to satisfy $\SOR$. Let $\phi(x,y)$ be an instance of $\SOR$ which fails in this extension; that is, $(M,\Xcal[G])$ has a set $a$ so that for all $x \in a$ there is unique $y$ so that $\phi(x,y)$ but there is no set containing all such $y$. This is forced by some condition $p \in G$. By $\mathord{<}\Ord$-closure we may moreover assume that $p$ decides the identity of each of these witnesses; there are $\card{a} < \Ord$ many names to decide, so by closure we have enough space to continually extend to decide each of them. And they are decided to be equal to some check name $\check y$, since no sets are added. But then $(M,\Xcal)$ satisfies that there is a set $a$ so that for all $x \in a$ there is a unique set $y$ so that $p \forces \phi(\check x,\check y)$, with no set $b$ containing all such $y$. This is a failure of $\SOR$ in the ground model, contrary to the assumptions of the lemma.
\end{proof}

\subsection{Non-tightness of \texorpdfstring{$\GB$}{GB}}\label{NonTightnessGB}

All results in this section concern models of $\GB + \SOR + \Vrm = \HOD$, and the reader is warned we will not make this assumption explicit in every single definition and lemma. Many results do not need the full strength of this assumption, but we leave it to the interested reader to identify the minimal assumptions for each result.

We start this section by considering truth and definability. We take some care to make it clear everything works in the $\omega$-nonstandard case. All definitions that follow take place in the context of a fixed model of $\GB + \SOR + \Vrm = \HOD$.

\begin{definition}
A \emph{partial satisfaction predicate} is a class $S$ of (first-order) formulae $\phi[\vec a]$ equipped with set parameters assigned to all free variables so that the domain of $S$ is closed under subformulae and $S$ satisfies the Tarskian recursion on its domain. If the domain of $S$ is all $\Sigma_k$ formulae, for $k \in \omega$, we call $S$ the \emph{$\Sigma_k$ satisfaction predicate}.
\end{definition}

Our use of the definite article in that last sentence is justified by the following observation.

\begin{proposition}
Any two partial satisfaction classes agree on their common domain.
\end{proposition}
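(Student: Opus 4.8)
The statement to prove is that any two partial satisfaction classes agree on their common domain. The plan is to argue by induction on the complexity of formulae, using that the domain of a partial satisfaction class is closed under subformulae and that the class obeys the Tarskian recursion there. First I would set up the induction properly: since we are in an $\omega$-nonstandard setting, the induction cannot be run naively in the metatheory on the external natural numbers — it must be an internal induction carried out inside the model, and here is where Lemma~\ref{SOWFRec} (second-order recursion/induction along a set-like well-founded relation) or rather the corresponding second-order induction principle enters. Concretely, fix two partial satisfaction classes $S$ and $S'$, and let $X$ be the collection of (codes for) formulae $\phi$ such that for all assignments $\vec a$ of set parameters with $\phi[\vec a] \in \dom S \cap \dom S'$, we have $\phi[\vec a] \in S \iff \phi[\vec a] \in S'$. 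This $X$ is a second-order definable subset of $\omega$ (in fact first-order definable from the parameters $S, S'$), and the goal is to show $X = \omega$.

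Next I would verify that $X$ is inductive with respect to formula complexity, so that second-order induction along the (set-like, well-founded) subformula relation — or just induction on $\omega$ ordered by the length/Gödel-code of formulae — forces $X$ to contain all formulae. The base case is atomic formulae: by the definition of a partial satisfaction predicate, both $S$ and $S'$ must judge an atomic formula $\phi[\vec a]$ according to whether the corresponding atomic fact actually holds of the parameters $\vec a$ (this is part of ``satisfies the Tarskian recursion''), so they agree. For the inductive step, suppose $\phi$ is built from strictly simpler formulae $\psi_0, \psi_1$ (or $\psi$) whose codes lie in $X$. If $\phi[\vec a]$ lies in both domains, then by closure of each domain under subformulae, the relevant instances of $\psi_0, \psi_1$ (or $\psi$, for all witnesses in the quantifier case) lie in both $\dom S$ and $\dom S'$. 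The Tarskian recursion clauses then express membership of $\phi[\vec a]$ in $S$ purely in terms of membership of these simpler instances in $S$, and likewise for $S'$; since the simpler instances are handled identically by the inductive hypothesis, $S$ and $S'$ agree on $\phi[\vec a]$. The quantifier case ($\exists v\, \psi$ or $\forall v\, \psi$) requires noting that the Tarskian clause quantifies over \emph{all} set witnesses $b$, and for each such $b$ the instance $\psi[b,\vec a]$ is a subformula-instance of $\phi[\vec a]$ hence in both domains, so again the inductive hypothesis applies uniformly.

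The one point requiring care — and the step I expect to be the main (modest) obstacle — is making the induction legitimate in the $\omega$-nonstandard case: we cannot assume a partial satisfaction class is indexed by standard formulae only, so the induction ``on $\phi$'' must be an internal second-order induction. This is exactly why the ambient theory $\GB + \SOR$ is invoked: $X$ is second-order definable and inductive along a set-like well-founded relation (the subformula tree, or $\omega$ under $<$), so the second-order induction lemma yields $X = \omega$. I would state this explicitly rather than gesture at ``obvious'' induction, since the whole surrounding discussion emphasizes that naive metatheoretic induction fails here. With that in place the proof is complete: $S$ and $S'$ agree on every formula-instance lying in $\dom S \cap \dom S'$, which is precisely the claim.
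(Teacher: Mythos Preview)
Your argument is correct, but it takes a heavier route than the paper and, in doing so, slightly misdiagnoses where the real work lies. The paper's proof is a two-line minimal-counterexample argument entirely within $\GB$: by Elementary Comprehension (with $S$ and $S'$ as class parameters) form the class of formula-instances $\phi[\vec a]$ in $\dom S \cap \dom S'$ on which $S$ and $S'$ disagree; if nonempty, it has a subformula-minimal element, and the Tarskian recursion then forces agreement there, a contradiction. No appeal to $\SOR$ or to second-order induction is made or needed.

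Your worry about nonstandard $\omega$ is legitimate in spirit but misplaced in this particular proof. The crucial point is that $S$ and $S'$ are already \emph{classes in the model}, so the agreement set $X$ (or its complement) is first-order definable from them and hence a class by Elementary Comprehension alone. Once it is a class, the internal well-ordering of $\omega$ --- which $\GB$ proves outright, since any nonempty subclass of $\omega$ has a least element --- supplies the minimal counterexample, and this works uniformly across nonstandard $k$. Your invocation of the second-order induction lemma would only be required if $X$ were merely second-order definable and \emph{not} already a class; that situation does arise elsewhere in the paper (e.g., showing that $\Sigma_k$ satisfaction predicates exist for all $k$), but not here. So your proof is valid but uses more than is necessary, and your explanation that ``this is exactly why $\GB + \SOR$ is invoked'' overstates the dependence on $\SOR$ for this proposition.
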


\begin{proof}
By Elementary Comprehension form the class of locations where they disagree. If nonempty there must be a minimal location $\phi[\vec a]$ of disagreement. But since they agree on subformulae of $\phi[\vec a]$ and they both satisfy the Tarskian recursion they must agree on the truth of $\phi[\vec a]$.
\end{proof}

\begin{definition}
Define $\Trm$ to be the union of all partial satisfaction predicate. Write $\Trm_k$ for the $\Sigma_k$ satisfaction predicate.
\end{definition}

It follows from earlier remarks that $\SOR$ implies $\Trm$ is the full satisfaction predicate, the unique satisfaction predicate that measures the truth of all formulae.\footnote{Uniqueness here is only inside the model of $\GB$. From the external perspective we may see multiple subsets of $M$ which satisfy the Tarskian recursion and measure the truth of all formulae in $M$. But only one of these can be a class in our model.}

A notion of satisfaction carries a notion of definability. Let $\Dcal$ be the (second-order definable) hyperclass of all $\Trm$-definable classes. That is, $X \in \Dcal$ if and only if there is $\phi[x,\vec a]$ so that $X = \{ x : \phi[x,\vec a] \in \Trm\}$. As in the $\Vrm_\kappa$ case, with minor reshuffling of indexing $\Trm$ gives a code for $\Dcal$. 

\begin{definition}
We write $\Class = \Def(V)$ to denote the axiom asserting that every class is in $\Dcal$.
\end{definition}

\begin{lemma}
Consider a model $(M,\Xcal)$ of $\GB + \SOR + \Vrm = \HOD$. Then $\Dcal^\Xcal \subseteq \Xcal$ and $(M,\Dcal^\Xcal) \models \GB + \SOR + \Vrm = \HOD$. 
\end{lemma}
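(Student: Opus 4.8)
The statement has two parts: first, that $\Dcal^\Xcal$ (the hyperclass of $\Trm$-definable classes, as computed inside $(M,\Xcal)$) is actually contained in $\Xcal$; and second, that the model $(M,\Dcal^\Xcal)$ again satisfies $\GB + \SOR + \Vrm = \HOD$. The plan is to handle containment first, then verify each of the three components of the theory in turn, leaning on the lemmata already established in this section — chiefly Lemma~\ref{lem:sor-goes-down} (that $\SOR$ passes to second-order definable subcollections of classes) and the fact, noted just before the lemma, that $\GB + \SOR$ proves the existence of $\Trm$ as a genuine class.

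For containment: since $(M,\Xcal) \models \GB + \SOR$, the satisfaction predicate $\Trm$ exists as an element of $\Xcal$ (this is exactly the corollary that $\SOR$ over $\GB$ proves the single sentence asserting $\Sigma_k$-satisfaction predicates exist for all $k \in \omega$, together with the fact that $\SOR$ then lets us take the union over $k$ by second-order recursion). Given $\Trm \in \Xcal$, every class of the form $X = \{x : \phi[x,\vec a] \in \Trm\}$ is obtained from $\Trm$ by First-Order Comprehension with $\Trm$ and $\vec a$ as parameters, hence lies in $\Xcal$. So $\Dcal^\Xcal \subseteq \Xcal$, and in fact $\Dcal^\Xcal$ is a second-order definable (indeed $\Trm$-coded) subcollection of $\Xcal$, which is what we need to invoke the downward lemmas.

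Now for $(M,\Dcal^\Xcal) \models \GB + \SOR + \Vrm = \HOD$. The $\SOR$ part is immediate from Lemma~\ref{lem:sor-goes-down}, since $\Dcal^\Xcal$ is second-order definable over $(M,\Xcal)$ (with $\Trm$ as parameter, or parameter-free if one unwinds the definition of $\Trm$). For $\GB$: Class Extensionality is automatic. $\ZFC$ for the sets is unchanged since $M$ is unchanged. First-Order Comprehension holds because if $\phi$ is first-order with class parameters $\bar P \in \Dcal^\Xcal$, then each $P_i = \{x : \psi_i[x,\vec a_i] \in \Trm\}$, so $\{y : \phi(y,\bar P)\}$ is itself $\Trm$-definable — one substitutes the defining formulas for the $P_i$ into $\phi$ and folds the parameters $\vec a_i$ together, landing back in $\Dcal^\Xcal$; here one should note that $\Trm$ measures truth of all first-order formulas, so the composite formula is in its domain. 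Class Replacement: given a class function $F \in \Dcal^\Xcal$ and a set $a \in M$, the image $F[a]$ is a set in $M$ by Class Replacement in $(M,\Xcal)$ (since $F \in \Xcal$), and being a set it is trivially represented; no new classes are needed. Finally $\Vrm = \HOD$ is a statement purely about the sets $M$ (it asserts the existence of the parameter-free definable global well-order, which is the $\HOD$-order), and $M$ is unchanged, so it persists. The one point deserving care — and the mild obstacle — is the First-Order Comprehension verification: one must be sure that substituting the $\Trm$-definitions of the parameters into $\phi$ yields a single first-order formula whose Gödel code, together with the combined parameter tuple, actually lies in $\dom(\Trm)$, i.e. that $\Trm$ is genuinely the \emph{full} first-order satisfaction predicate and not merely a partial one; this is precisely where $\SOR$ (via the union-over-$k$ construction) is essential, and it is worth spelling out rather than leaving implicit.
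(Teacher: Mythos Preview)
Your argument has a genuine error in the containment step. You claim that $\Trm \in \Xcal$, arguing that $\SOR$ lets you ``take the union over $k$ by second-order recursion.'' But second-order recursion (Lemma~\ref{SORec}) produces a second-order \emph{definable} function, not a class guaranteed to lie in $\Xcal$; the model only has First-Order Comprehension, so a $\Delta^1_1$-definable object like $\Trm$ need not be a class. Indeed, $\Trm$ cannot be in $\Xcal$ in general: take $(\Vrm_\kappa, \Def(\Vrm_\kappa))$ for inaccessible $\kappa$, which satisfies $\GB + \SOR + \Vrm = \HOD$. There $\Trm$ is the full first-order truth predicate for $\Vrm_\kappa$, and by Tarski's theorem it is not first-order definable, so $\Trm \notin \Def(\Vrm_\kappa)$. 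More pointedly, once you have proved the very lemma at hand, $(M,\Dcal^\Xcal)$ is itself a model of $\GB + \SOR$ in which $\Trm \notin \Dcal^\Xcal$ for the same Tarskian reason---so your purported proof that $\GB + \SOR$ yields $\Trm \in \Xcal$ cannot be sound.

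The fix is the one the paper uses: a class $X \in \Dcal^\Xcal$ is $\Trm$-definable, hence $\Sigma_k$-definable for some particular $k \in \omega^M$. By $\SOR$ the single class $\Trm_k$ (the $\Sigma_k$-satisfaction predicate) \emph{is} in $\Xcal$, and then $X$ is obtained from $\Trm_k$ by First-Order Comprehension, so $X \in \Xcal$. The remainder of your argument---First-Order Comprehension via substituting the defining formulae, $\SOR$ via Lemma~\ref{lem:sor-goes-down}, and $\Vrm = \HOD$ as a set-only statement---is correct and matches the paper's approach.
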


\begin{proof}
For the first part, we note that $X$ is $\Trm$-definable if, and only if, $X$ is $\Sigma_k$-definable for some $k \in \omega^M$. By $\SOR$, for each $k$ the $\Sigma_k$ satisfaction predicate is in $\Xcal$. So from First-Order Comprehension we obtain $X \in \Xcal$.

For the second part:
Extensionality is trivially obtained and Replacement holds because it holds in the larger $\Xcal$. For First-Order Comprehension, fix $A \in \Dcal^\Xcal$ and assume $B$ is externally definable from $A$ via a $\Sigma_\ell$-formula with set parameters, i.e. for standard $\ell$. Because $A$ is $\Trm$-definable that means that $A$ is $\Sigma_k$-definable for some level $k$ in $\omega^M$. But then $B$ is $\Sigma_{k+\ell}$-definable, whence $B$ is $\Trm$-definable. 
Finally, that $(M,\Dcal^\Xcal)$ satisfies $\SOR$ is Lemma~\ref{lem:sor-goes-down} and that it satisfies $\Vrm = \HOD$ is because $\Vrm = \HOD$ only quantifies over sets.
\end{proof}

\begin{lemma}\label{lemma:Dabsolute}
The definition of $\Dcal$ is absolute between models with the same sets and same $\Trm$. That is, if $(M,\Xcal)$ and $(M,\Ycal)$ are models where $\Trm^\Xcal = \Trm^\Ycal$ then $\Dcal^\Xcal = \Dcal^\Ycal$.
\end{lemma}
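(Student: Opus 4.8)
The plan is to unwind the definition of $\Dcal$ and observe that membership in it is controlled entirely by the sets $M$ and by the single class $\Trm$. Throughout, fix two models $(M,\Xcal)$ and $(M,\Ycal)$ of $\GB + \SOR + \Vrm = \HOD$ with $\Trm^\Xcal = \Trm^\Ycal$, and write $\Trm$ for this common class. By symmetry it is enough to prove $\Dcal^\Xcal \subseteq \Dcal^\Ycal$.

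First I would take $X \in \Dcal^\Xcal$ and extract a witness: by definition there is a (possibly nonstandard) formula $\phi$ and a tuple $\vec a \in M$ of set parameters with $X = \{ x \in M : \phi[x,\vec a] \in \Trm \}$, an identity holding in $(M,\Xcal)$. The crucial observation is that the right-hand side is nothing but the instance of First-Order Comprehension applied to the first-order formula asserting ``the code of $\phi$ together with the assignment sending the free variables to $x,\vec a$ lies in $T$'', with the class variable $T$ interpreted as $\Trm$. Since this formula has no class quantifiers and its only class parameter is $\Trm$, the resulting subset of $M$ depends only on $M$, $\Trm$, $\phi$, and $\vec a$, not on whether we compute it inside $\Xcal$ or inside $\Ycal$.

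Next I would verify that $X$ actually belongs to $\Ycal$, so that it is eligible to be an element of the hyperclass $\Dcal^\Ycal$. This is precisely the content of (the proof of) the previous lemma: since $X$ is $\Trm$-definable it is $\Sigma_k$-definable for some $k \in \omega^M$, and by $\SOR$ the $\Sigma_k$ satisfaction predicate $\Trm_k$ is an element of $\Ycal$, so First-Order Comprehension inside $(M,\Ycal)$ produces $X$. Now the very same pair $(\phi,\vec a)$ witnesses that $X \in \Dcal^\Ycal$. This establishes $\Dcal^\Xcal \subseteq \Dcal^\Ycal$, and the reverse inclusion follows by exchanging the roles of $\Xcal$ and $\Ycal$.

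I do not expect a genuine obstacle here; this lemma is essentially a bookkeeping statement that isolates the dependence of $\Dcal$ on the parameters $M$ and $\Trm$. The only two points requiring a moment's attention are that $\phi$ may be a nonstandard formula code, which is harmless because under $\SOR$ the class $\Trm$ measures the truth of all formulae, standard or not, and that the ``same'' subset $X$ of $M$ must be recognized as an object of both $\Xcal$ and $\Ycal$, which is exactly what the preceding lemma guarantees.
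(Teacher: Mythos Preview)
Your proposal is correct and follows exactly the approach in the paper: the defining condition for $\Dcal$ uses only set quantifiers and the single class parameter $\Trm$, so it depends only on $M$ and $\Trm$. The paper compresses this into a single sentence, while you additionally spell out why the resulting $X$ lies in $\Ycal$, which is indeed just an appeal to the preceding lemma.
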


\begin{proof}
Just observe that the definition of $\Dcal$ from the parameter $\Trm$ only quantifies over sets. 
\end{proof}

\begin{lemma}
Moving to $\Dcal$ preserves the satisfaction predicate. In symbols: $\Trm^\Dcal = \Trm$.
\end{lemma}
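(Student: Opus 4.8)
The plan is to exploit that, by the preceding lemma, $(M,\Dcal^\Xcal)$ is itself a model of $\GB+\SOR+\Vrm=\HOD$ with \emph{the same sets} $M$, so that the truth analysis developed earlier applies verbatim inside it. The crux is that both $\Trm$ (the full satisfaction predicate of $(M,\Xcal)$) and $\Trm^{\Dcal}$ (the full satisfaction predicate of $(M,\Dcal^\Xcal)$) are classes satisfying the Tarskian recursion over the same set-structure $M$, with domain all first-order formulae-with-parameters; the uniqueness of such a class then forces $\Trm^{\Dcal}=\Trm$.

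In detail, I would first dispatch the inclusion $\Trm^{\Dcal}\subseteq\Trm$: since $\Dcal^\Xcal\subseteq\Xcal$, every partial satisfaction predicate that is a class of $(M,\Dcal^\Xcal)$ is also a class of $(M,\Xcal)$, so each contributes to $\Trm$, and hence the union $\Trm^{\Dcal}$ defining these is contained in $\Trm$. For the reverse inclusion I would use that $\SOR$ makes $\Trm$ measure \emph{all} formulae: applying the corollary that $\GB+\SOR$ proves that for every $k\in\omega$ there is a $\Sigma_k$ satisfaction predicate to the model $(M,\Dcal^\Xcal)$, each $\Trm_k$ is a class of $(M,\Dcal^\Xcal)$; and since over $\GB$ every (code of a) formula is $\Sigma_k$ for some $k\in\omega^M$, the union $\Trm^{\Dcal}=\bigcup_{k\in\omega^M}\Trm_k$ has domain all first-order formulae-with-parameters over $M$. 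Thus $\Trm^{\Dcal}$ is a class of $(M,\Xcal)$ (as $\Dcal^\Xcal\subseteq\Xcal$) that is a full satisfaction predicate over $M$. Since $\Trm$ is, inside $(M,\Xcal)$, the unique such class --- by the proposition that any two partial satisfaction classes agree on their common domain, the common domain here being everything --- we conclude $\Trm^{\Dcal}=\Trm$. (If a more hands-on argument for $\Trm\subseteq\Trm^{\Dcal}$ is wanted, one checks each $\Trm_k$ is $\Trm$-definable, namely $\Trm_k=\{u:\psi_k[u]\in\Trm\}$ where $\psi_k\in M$ is the formula code, computed from $k$ inside $M$, saying ``$u$ codes a $\Sigma_k$ formula with parameters and the internal $\Sigma_k$-satisfaction condition holds of it''; for nonstandard $k$ this $\psi_k$ is a nonstandard formula, but $\Trm$ measures those too, so $\Trm_k$ is a legitimate slice of the code $\Trm$, hence lies in $\Dcal^\Xcal$.)

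The one point to be careful about --- and where I expect any real work to hide --- is the $\omega$-nonstandard case: it is precisely here that one must use $(M,\Dcal^\Xcal)\models\SOR$ rather than merely $\GB$, since over plain $\GB$ the collection $\Dcal^\Xcal$ might carry partial satisfaction predicates only up to standard complexity, which would make $\Trm^{\Dcal}$ a proper sub-predicate of $\Trm$. The preceding lemma's verification that $(M,\Dcal^\Xcal)\models\SOR$ is exactly what rules this out, and I would flag explicitly that it is doing the essential work.
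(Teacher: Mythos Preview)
Your proof is correct. The paper's own proof is a single line: ``Because the $\Sigma_k$ satisfaction predicate is $\Sigma_{k+1}$-definable.'' That is, each $\Trm_k$ is cut out by a (possibly nonstandard) $\Sigma_{k+1}$ formula, hence is $\Trm$-definable, hence lies in $\Dcal$; so $\Trm = \bigcup_k \Trm_k \subseteq \Trm^{\Dcal}$, and the other inclusion is immediate. This is precisely your parenthetical ``hands-on'' alternative.

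Your primary route --- invoking that $(M,\Dcal^\Xcal)\models\SOR$ from the preceding lemma, so that $(M,\Dcal^\Xcal)$ itself manufactures a $\Sigma_k$ satisfaction predicate for every $k\in\omega^M$, and then appealing to uniqueness --- is a legitimate detour that reaches the same conclusion. It is slightly more roundabout, since it passes through the structural fact that $\SOR$ descends to $\Dcal$ rather than just citing the definability of $\Trm_k$ directly; on the other hand, it makes explicit why the nonstandard case does not cause trouble, which the paper's one-liner leaves implicit. Either argument is fine here.
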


\begin{proof}
Because the $\Sigma_k$ satisfaction predicate is $\Sigma_{k+1}$-definable.
\end{proof}

Altogether, we have that $\Dcal$ thinks it is the minimum model of $\GB$.

\begin{corollary}
The $\Dcal$ operator is idempotent. That is, for any $(M,\Xcal) \models \GB$ we have that $\Dcal^{\Dcal^\Xcal} = \Dcal^\Xcal$.
Consequently, $(M,\Dcal^\Xcal)$ satisfies $\GB + \SOR + \Vrm = \HOD + \Class = \Def(V)$. \qed
\end{corollary}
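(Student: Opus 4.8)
The plan is to read the corollary off from the three lemmas immediately preceding it — the lemma that $\Dcal^\Xcal \subseteq \Xcal$ and $(M,\Dcal^\Xcal) \models \GB + \SOR + \Vrm = \HOD$, the absoluteness lemma (Lemma~\ref{lemma:Dabsolute}), and the lemma that passing to $\Dcal$ preserves the satisfaction predicate, namely $\Trm^{\Dcal^\Xcal} = \Trm^\Xcal$. No new combinatorics is needed; the work is entirely in chaining these together correctly.

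First I would handle idempotence. Fix $(M,\Xcal) \models \GB + \SOR + \Vrm = \HOD$ and put $\Ycal = \Dcal^\Xcal$. The structures $(M,\Xcal)$ and $(M,\Ycal)$ share the same sets $M$, and by the lemma that moving to $\Dcal$ preserves the satisfaction predicate they share the same $\Trm$, i.e.\ $\Trm^\Ycal = \Trm^\Xcal$. Since the definition of the hyperclass $\Dcal$ from the parameter $\Trm$ quantifies only over sets, Lemma~\ref{lemma:Dabsolute} applies verbatim and yields $\Dcal^\Ycal = \Dcal^\Xcal$, which is exactly the assertion $\Dcal^{\Dcal^\Xcal} = \Dcal^\Xcal$.

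Next I would deduce the displayed consequence. That $(M,\Dcal^\Xcal)$ satisfies $\GB + \SOR + \Vrm = \HOD$ is the earlier lemma, so only $\Class = \Def(V)$ remains. By definition this axiom asserts that every class lies in the model's own copy of $\Dcal$. Evaluated inside $(M,\Dcal^\Xcal)$, that copy of $\Dcal$ is computed from that model's satisfaction predicate, hence is $\Dcal^{\Dcal^\Xcal}$, which by the idempotence just established equals $\Dcal^\Xcal$ — i.e.\ the entire class part of the model. So every class of $(M,\Dcal^\Xcal)$ is $\Trm$-definable over it, which is precisely $(M,\Dcal^\Xcal) \models \Class = \Def(V)$.

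I do not expect a serious obstacle; the one point needing care is conceptual rather than computational. One must be certain that recomputing $\Dcal$ inside the thinned model $(M,\Dcal^\Xcal)$ genuinely returns the same hyperclass: a priori, if the thinned model carried a different (smaller) notion of first-order truth, it could produce a different and smaller class of definable classes, destroying idempotence. The lemma $\Trm^{\Dcal^\Xcal} = \Trm^\Xcal$ — which is where $\SOR$ earns its keep, guaranteeing the full satisfaction predicate survives the passage to $\Dcal$ — is exactly what forecloses this, after which absoluteness of the (set-quantifier-only) definition of $\Dcal$ closes the loop.
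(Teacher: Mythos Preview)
Your proposal is correct and is exactly the intended argument: the paper marks the corollary with \qed\ and gives no proof precisely because it follows by chaining the three preceding lemmas in the way you describe. Your identification of the one conceptual pitfall --- that the thinned model might in principle compute a different $\Trm$ and hence a different $\Dcal$ --- and its resolution via $\Trm^{\Dcal} = \Trm$ together with the absoluteness lemma is spot on.
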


\begin{corollary}\label{modelOfDefinableClasses}
If $(M, \Xcal) \models \GB + \SOR + \Vrm = \HOD + \Class = \Def(V)$, then $\Xcal = \Dcal^\Xcal$. \qed
\end{corollary}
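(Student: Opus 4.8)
The plan is to read the equality off from two inclusions, each of which is essentially already in hand. For the inclusion $\Dcal^\Xcal \subseteq \Xcal$ I would invoke the earlier lemma stating that in any model of $\GB + \SOR + \Vrm = \HOD$ the hyperclass $\Dcal^\Xcal$ of $\Trm$-definable classes is contained among the classes of the model; recall that this used $\SOR$ to guarantee that for each $k \in \omega^M$ the $\Sigma_k$ satisfaction predicate is a class, and then First-Order Comprehension to conclude that any $\Sigma_k$-definable class, hence any $\Trm$-definable class, lies in $\Xcal$. For the reverse inclusion $\Xcal \subseteq \Dcal^\Xcal$ I would simply unwind the definition of the axiom $\Class = \Def(V)$: it literally asserts that every class belongs to $\Dcal$, so this inclusion is immediate from the hypothesis. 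Combining the two gives $\Xcal = \Dcal^\Xcal$.

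I do not anticipate a genuine obstacle. The only point requiring the slightest care is to note that the $\Trm$ used to compute $\Dcal^\Xcal$ is the satisfaction predicate internal to $(M,\Xcal)$, and that it is the same $\Trm$ implicitly referenced by $\Class = \Def(V)$; but under $\SOR$ this is the unique full satisfaction predicate of the model, so there is nothing to check. One could alternatively route the argument through the idempotence corollary---observe $\Xcal \subseteq \Dcal^\Xcal$, apply $\Dcal$, and use $\Dcal^{\Dcal^\Xcal} = \Dcal^\Xcal$ together with $\Dcal^\Xcal \subseteq \Xcal$ the other way---but the direct two-inclusion argument is cleanest and is what I would write.
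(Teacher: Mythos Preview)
Your proposal is correct and matches the paper's approach: the corollary is marked with a bare \qed, so the intended argument is precisely the immediate two-inclusion check you describe, with $\Dcal^\Xcal \subseteq \Xcal$ coming from the earlier lemma and $\Xcal \subseteq \Dcal^\Xcal$ being the literal content of the axiom $\Class = \Def(V)$.
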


These definitions and results about satisfaction/definability can be relativized to a class parameter. If this parameter is an element of $\Xcal$ then the proofs are near identical. If the parameter is a second-order definable Cohen generic then we need a slight change. As in the proof of Lemma~\ref{lem:crm-relative-truth}, the change is to ask about what is forced. We state the relativized results only for the Cohen generic case, and omit any proofs as they are the same modulo this small change.

\begin{lemma}
Fix $(M,\Xcal) \models \GB$ and suppose $\Crm \subseteq M$ is generic over $\Dcal^\Xcal$ for the forcing $\Add(\Ord,1)$. Then $\Crm \in \Dcal(\Crm)^\Xcal$ and $(M,\Dcal(\Crm)^\Xcal) \models \GB$. Moreover, if $\Crm$ is uniformly definable over models with the same sets and the same $\Trm$, then the definitions of $\Trm(\Crm)$ and $\Dcal(\Crm)$ are absolute between these models. \qed
\end{lemma}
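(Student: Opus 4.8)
The plan is to transcribe, almost verbatim, the proofs of the unrelativized statements of this section---that $(M,\Dcal^\Xcal)\models\GB+\SOR+\Vrm=\HOD$, that the $\Dcal$ operator is idempotent, and the absoluteness Lemma~\ref{lemma:Dabsolute}---making the single essential substitution that the relativized satisfaction predicate $\Trm(\Crm)$ is \emph{not} obtained by relativizing the Tarskian recursion to a predicate symbol for $\Crm$. The obstruction, exactly as in the proof of Lemma~\ref{lem:crm-relative-truth}, is that $\Add(\Ord,1)$ is nontrivial, so no partial satisfaction predicate relative to $\Crm$ is first-order definable and such classes need not occur in $\Dcal^\Xcal$. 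The fix is to go through the forcing theorem: define $\Trm(\Crm)$ by declaring $\phi[\vec a]\in\Trm(\Crm)$ if and only if there is $p\in\Crm$ so that the statement $p\forces\phi(\vec a,\check C)$ belongs to $\Trm$, where $\forces$ is the forcing relation of $\Add(\Ord,1)$ for first-order formulae in the language expanded by a predicate for the generic. By the Stanley--Friedman forcing theorem this relation is a class, since $\Add(\Ord,1)\in\Dcal^\Xcal$ is tame, and it is first-order definable from $\Trm$ (a class by $\SOR$, under the ambient assumption $(M,\Xcal)\models\GB+\SOR+\Vrm=\HOD$).

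With $\Trm(\Crm)$ so defined, take $\Dcal(\Crm)^\Xcal$ to be the hyperclass of $\Trm(\Crm)$-definable subsets of $M$. Then $\Crm\in\Dcal(\Crm)^\Xcal$ is immediate, as $\Crm$ is defined by the atomic formula $x\in\dot C$. For $(M,\Dcal(\Crm)^\Xcal)\models\GB$: Class Extensionality is trivial; Class Replacement is inherited from $(M,\Dcal^\Xcal[\Crm])$, which contains $\Dcal(\Crm)^\Xcal$ and models $\GB$ because tame forcing preserves $\GB$; and First-Order Comprehension goes through as in the unrelativized lemma. Namely, $(M,\Dcal^\Xcal[\Crm])\models\GB+\SOR$---using Lemma~\ref{lem:forcing-preserve-sor} for $\SOR$, with $\Vrm=\HOD$ untouched since it only quantifies over sets---so the parameter-$\Crm$ form of the corollary that $\SOR$ proves the existence of $\Sigma_k$-satisfaction predicates gives, for every $k\in\omega^M$, a $\Sigma_k(\Crm)$-satisfaction predicate; each is $\Sigma_{k+1}(\Crm)$-definable, hence lies in $\Dcal(\Crm)^\Xcal$. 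Any $A\in\Dcal(\Crm)^\Xcal$ is then $\Sigma_k(\Crm)$-definable for some $k$, and $\{x:\psi(x,A,\vec a)\}$ for first-order $\psi$ of standard complexity $\ell$ is $\Sigma_{k+\ell}(\Crm)$-definable, hence again in $\Dcal(\Crm)^\Xcal$. (As before one in fact gets $\GB+\SOR+\Vrm=\HOD$, though only $\GB$ was asked for.)

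For the ``moreover'' clause, suppose $\Crm$ is given by a single formula that defines the same class in any two models $(M,\Xcal)$ and $(M,\Ycal)$ sharing the sets $M$ and the satisfaction predicate $\Trm$ (hence, by Lemma~\ref{lemma:Dabsolute}, the same $\Dcal$). The poset $\Add(\Ord,1)$ and its forcing relation for first-order formulae depend only on $M$---each $\Sigma_n$-fragment of $\forces$ being first-order definable from the poset---so they are the same in both models; the displayed definition of $\Trm(\Crm)$ then yields the same class in both, and since the definition of $\Dcal(\Crm)$ from $\Trm(\Crm)$ quantifies only over sets, $\Dcal(\Crm)$ agrees as well. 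The one point I would flag as the crux is the interaction of $\SOR$ with $\omega$-nonstandardness: one must be sure $\SOR$ survives the forcing---which it does, Lemma~\ref{lem:forcing-preserve-sor} applying because $\Add(\Ord,1)$ is $\mathord{<}\Ord$-closed and tame---so that the $\Sigma_k(\Crm)$-satisfaction predicates are available for nonstandard $k$ too; with that secured the rest is a routine transcription of the parameter-free arguments.
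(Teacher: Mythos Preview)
Your proposal is correct and follows exactly the approach the paper indicates: the paper omits the proof entirely (note the \qed in the statement), saying just before this lemma that ``the proofs are near identical'' to the unrelativized case, with the only change being to define $\Trm(\Crm)$ via the forcing relation as in Lemma~\ref{lem:crm-relative-truth}. You have carried out precisely that transcription, and your identification of the crux---that $\SOR$ must survive the forcing so that $\Sigma_k(\Crm)$-satisfaction predicates exist even for nonstandard $k$---is apt and matches the paper's ambient reliance on Lemma~\ref{lem:forcing-preserve-sor}.
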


\begin{lemma}
% The $\Dcal(\Crm)$ operator is a closure operator. That is, 
For any $(M,\Xcal) \models \GB$ and any Cohen-generic $\Crm$, we have that $\Dcal(\Crm)^{\Dcal(\Crm)^\Xcal} = \Dcal(\Crm)^\Xcal$. \qed
\end{lemma}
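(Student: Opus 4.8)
The plan is to mirror the proof that the unrelativized operator $\Dcal$ is idempotent, carrying each supporting lemma through its relativization to the Cohen generic $\Crm$. Three of the needed ingredients are already in hand from the lemmas immediately above: $(M,\Dcal(\Crm)^\Xcal) \models \GB$ (so that the operator $\Dcal(\Crm)$ may be applied to this model at all); $\Crm \in \Dcal(\Crm)^\Xcal$ (so that the expression $\Dcal(\Crm)^{\Dcal(\Crm)^\Xcal}$ is even meaningful); and the absoluteness of the defining formula for $\Dcal(\Crm)$, which is built from the parameters $\Trm(\Crm)$ and $\Crm$ by a formula quantifying only over sets, and hence computes the same hyperclass in any two models sharing the same sets, the same $\Crm$, and the same $\Trm(\Crm)$. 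Granting these, the whole statement reduces to a single claim: that passing from $\Xcal$ to $\Dcal(\Crm)^\Xcal$ does not change the relativized satisfaction predicate, i.e. $\Trm(\Crm)^{\Dcal(\Crm)^\Xcal} = \Trm(\Crm)^\Xcal$. Once that is established, I would apply the absoluteness statement to the pair of models $(M,\Dcal(\Crm)^\Xcal)$ and $(M,\Xcal)$ — same sets $M$, same distinguished class $\Crm$, and now same $\Trm(\Crm)$ — to conclude $\Dcal(\Crm)^{\Dcal(\Crm)^\Xcal}=\Dcal(\Crm)^\Xcal$.

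So the real work is to show $\Trm(\Crm)^{\Dcal(\Crm)^\Xcal}=\Trm(\Crm)^\Xcal$, the relativized analogue of ``moving to $\Dcal$ preserves $\Trm$''. The point I would argue is that each $\Sigma_k$-satisfaction predicate relative to $\Crm$, call it $\Trm_k(\Crm)$, is definable at complexity $\Sigma_{k+c}$ from the parameter $\Crm$, for a fixed standard $c$. As already flagged in the proof of Lemma~\ref{lem:crm-relative-truth}, one cannot simply relativize the bound ``$\Trm_k$ is $\Sigma_{k+1}$-definable'', because partial satisfaction predicates relative to $\Crm$ are not first-order definable; instead I would pass through the forcing theorem. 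Since $\Add(\Ord,1)$ is tame, $\phi[\vec a]\in\Trm_k(\Crm)$ iff there is $p\in\Crm$ with ``$p\forces\phi(\vec a,\check\Crm)$'', and the first-order class-forcing relation is itself a class (Stanley, S.D.\ Friedman); counting quantifiers yields the $\Sigma_{k+c}$ bound. Hence $\Trm_k(\Crm)$ is $\Trm(\Crm)$-definable and so lies in $\Dcal(\Crm)^\Xcal$ for every $k\in\omega^M$. It follows that $(M,\Dcal(\Crm)^\Xcal)$ contains every partial satisfaction predicate relative to $\Crm$, so its computed $\Trm(\Crm)$ contains $\bigcup_k\Trm_k(\Crm)$, which by $\SOR$ already measures every formula; the reverse inclusion is the usual argument that any two partial satisfaction predicates (here, relative to $\Crm$) agree on their common domain.

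The step I expect to be the main obstacle is precisely that forcing-theoretic complexity count: it is the only place where relativizing to the Cohen generic is not automatic, exactly because partial satisfaction predicates relative to $\Crm$ fail to be first-order definable. The remaining points are bookkeeping already handled by the preceding lemmas, with one small thing to double-check: that the forcing theorem invoked over $(M,\Dcal(\Crm)^\Xcal)$ is legitimate, i.e. that $\Crm$ is still generic over the definable-classes part of that model — which holds because moving to $\Dcal(\Crm)$ leaves the non-relativized $\Trm$, and hence $\Dcal$, unchanged (a $\Trm$-definable class is trivially $\Trm(\Crm)$-definable), so that definable part is the same $\Dcal^\Xcal$ over which $\Crm$ was assumed generic.
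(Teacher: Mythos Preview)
Your proposal is correct and takes essentially the same approach as the paper. The paper omits the proof entirely, stating just before this lemma that the relativized results ``are the same modulo this small change'' of asking about what is forced (as in Lemma~\ref{lem:crm-relative-truth}); you have faithfully expanded exactly that outline, reducing idempotency to $\Trm(\Crm)^{\Dcal(\Crm)^\Xcal}=\Trm(\Crm)^\Xcal$ and then establishing this via the forcing-theorem complexity bound for $\Trm_k(\Crm)$, which is precisely the relativized analogue of the paper's one-line proof that $\Trm^\Dcal=\Trm$.
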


\begin{corollary}
Fix $(M,\Xcal) \models \GB$ and fix a Cohen-generic $\Crm$. Then $(M,\Dcal(\Crm)^\Xcal) \models \GB + \Class = \Def(V; \Crm)$. Note that this can be expressed as a single second-order assertion, using the parameter $\Crm$. \qed
\end{corollary}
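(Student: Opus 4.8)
The plan is to obtain this corollary by unwinding the two preceding relativized lemmas, with essentially no new work. Write $\Ncal = (M,\Dcal(\Crm)^\Xcal)$. The first of those lemmas gives $\Ncal \models \GB$, and the second gives the idempotency $\Dcal(\Crm)^{\Dcal(\Crm)^\Xcal} = \Dcal(\Crm)^\Xcal$. The whole content of the proof is recognising that, read inside $\Ncal$, this equation is precisely the assertion $\Class = \Def(V;\Crm)$.

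First I would invoke the relativized form of the lemma $\Trm^\Dcal = \Trm$, namely $\Trm(\Crm)^{\Ncal} = \Trm(\Crm)^{(M,\Xcal)}$; this has the same proof, since the $\Sigma_k$-relative-to-$\Crm$ satisfaction predicate is $\Sigma_{k+1}$-relative-to-$\Crm$ definable. So $\Ncal$ computes $\Trm(\Crm)$ the same way $(M,\Xcal)$ does, and therefore the hyperclass of $\Trm(\Crm)$-definable classes \emph{as computed inside $\Ncal$} is exactly $\Dcal(\Crm)^{\Dcal(\Crm)^\Xcal}$. By the idempotency equation this equals $\Dcal(\Crm)^\Xcal$, which is the totality of the classes of $\Ncal$. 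Unwinding the definition of $\Dcal(\Crm)$, this says that $\Ncal$ thinks every class $X$ has the form $\{x : \phi[x,\vec a] \in \Trm(\Crm)\}$ for some first-order $\phi[x,\vec a]$; that is, $\Ncal \models \Class = \Def(V;\Crm)$.

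For the parenthetical, I would appeal to the relativized version of Lemma~\ref{lem:crm-relative-truth}: via the forcing theorem $\phi[\vec a] \in \Trm(\Crm)$ iff the first-order statement ``$p \forces \phi(\vec a,\check{\Crm})$'' lies in $\Trm$ for some $p \in \Crm$, so $\Trm(\Crm)$ is second-order definable from $\Crm$, and under $\SOR$ it is moreover the unique full $\Crm$-relative satisfaction predicate. Hence ``every class lies in $\Dcal(\Crm)$'' is equivalent to the single second-order sentence
\[
\exists S\ \Big( S \text{ is the full } \Crm\text{-relative satisfaction predicate} \ \wedge\ \forall X\, \exists \phi\, \exists \vec a\ X = \{x : \phi[x,\vec a] \in S\} \Big),
\]
with $\Crm$ as its only parameter, exactly as was recorded for $\Vrm_\kappa$ after Lemma~\ref{lem:crm-relative-truth}. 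The only point meriting a moment's care --- and it is no real obstacle --- is confirming that this existential is witnessed in $\Ncal$ by $S = \Trm(\Crm)$, which is where one uses that $\Ncal$ has all the relevant $\Sigma_k$-relative-to-$\Crm$ satisfaction predicates and assembles their union; this is the same bookkeeping that underlies the relativized $\Trm^\Dcal = \Trm$.
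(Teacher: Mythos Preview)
Your main argument is correct and matches the paper's approach exactly: the paper gives no proof (the \qed marks it as immediate), and you have correctly identified that the corollary is just the idempotency lemma read from inside $\Ncal = (M,\Dcal(\Crm)^\Xcal)$, together with the earlier lemma that $\Ncal \models \GB$.

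There is, however, a genuine error in your treatment of the parenthetical about expressibility. Your displayed sentence
\[
\exists S\ \Big( S \text{ is the full } \Crm\text{-relative satisfaction predicate} \ \wedge\ \forall X\, \exists \phi\, \exists \vec a\ X = \{x : \phi[x,\vec a] \in S\} \Big)
\]
cannot hold in $\Ncal$: by Tarski's undefinability theorem the full $\Crm$-relative satisfaction predicate $\Trm(\Crm)$ is \emph{not} first-order definable from $\Crm$, so $\Trm(\Crm) \notin \Dcal(\Crm)^\Xcal$, and the outer existential has no witness among the classes of $\Ncal$. Your sentence is therefore false in exactly the model you want it to hold in. The fix is the one the paper uses implicitly throughout (compare the unrelativized definition of $\Class = \Def(V)$ and Observation~\ref{obs:truth-definability}): quantify over \emph{partial} satisfaction predicates, asserting that for every class $X$ there exist $k$, $\phi$, $\vec a$, and a class $S$ which is the $\Sigma_k$-satisfaction predicate relative to $\Crm$ with $X = \{x : \phi[x,\vec a] \in S\}$. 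Each such $S$ \emph{is} in $\Dcal(\Crm)^\Xcal$, and their union---which is $\Trm(\Crm)$---is only ever referred to as a definable hyperclass, never as a single class variable.
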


%% We never use this. --KJW
%%\begin{lemma}
%%Fix $(M,\Xcal) \models \GB$ and $\Crm \in \Xcal$. Then $\Dcal^\Xcal = \Dcal(\Crm)^\Xcal$ if and only if $\Crm$ is $\Trm$-definable.
%%\end{lemma}

%%\begin{proof}
%%If $C$ is a $\Trm$-definable class in $(M, \Xcal)$, it is definable in some level $\Trm_k$. Every $\Trm_n(C)$-definable $Y$ is then $\Trm_{k+n}$-definable. 
%%\end{proof}

\begin{corollary}\label{modelOfRelativeDefinableClasses}
If $(M, \Xcal) \models \GB + Class = \Def(V; \Crm)$ where $\Crm$ is a Cohen-generic over $(M,\Dcal^\Xcal)$, then $\Xcal = \Dcal(\Crm)^\Xcal$. \qed
\end{corollary}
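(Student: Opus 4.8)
The plan is to establish the equality $\Xcal = \Dcal(\Crm)^\Xcal$ by proving its two inclusions, in exact parallel to the proof of Corollary~\ref{modelOfDefinableClasses} but with everything relativized to the class parameter $\Crm$. Throughout I would use the standing hypotheses of this subsection, that $(M,\Xcal) \models \GB + \SOR + \Vrm = \HOD$. A preliminary observation: $\Crm$ is necessarily one of the classes of $(M,\Xcal)$, since $\Class = \Def(V;\Crm)$ is a single second-order sentence that uses $\Crm$ as a parameter, so assuming that $(M,\Xcal)$ satisfies it already presupposes $\Crm \in \Xcal$.

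First I would prove $\Dcal(\Crm)^\Xcal \subseteq \Xcal$, the $\Crm$-relativized form of the lemma that $\Dcal^\Xcal \subseteq \Xcal$ for models of $\GB + \SOR + \Vrm = \HOD$. Given $X \in \Dcal(\Crm)^\Xcal$, write $X = \{ x : \phi[x,\vec a] \in \Trm(\Crm) \}$ for a first-order formula $\phi$, possibly of nonstandard complexity if $M$ is $\omega$-nonstandard, and set parameters $\vec a$; fix $k \in \omega^M$ bounding the quantifier complexity of $\phi$. The argument that $\SOR$ yields a $\Sigma_k$-satisfaction predicate for every $k \in \omega$, namely second-order induction along $\omega^M$ with the inductive step an instance of First-Order Comprehension, run relative to $\Crm$ (taking for $\Trm(\Crm)_k$ either a direct relativization of the Tarskian recursion to the predicate $\Crm \in \Xcal$, or the forcing characterization in the style of Lemma~\ref{lem:crm-relative-truth}), produces the class $\Trm(\Crm)_k \in \Xcal$. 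Then $X = \{ x : \phi[x,\vec a] \in \Trm(\Crm)_k \}$ lies in $\Xcal$ by First-Order Comprehension with parameters $\Trm(\Crm)_k$, $\Crm$, $\vec a$.

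Next I would prove the reverse inclusion $\Xcal \subseteq \Dcal(\Crm)^\Xcal$, which is essentially just unwinding the hypothesis. The sentence $\Class = \Def(V;\Crm)$ asserts precisely that every class belongs to the internally computed hyperclass $\Dcal(\Crm)$, that is, has the form $\{ x : \phi[x,\vec a] \in \Trm(\Crm)\}$ for some formula $\phi$ and set parameters $\vec a$. Since, as in Lemma~\ref{lemma:Dabsolute}, the definition of $\Dcal(\Crm)$ from $\Trm(\Crm)$ and $\Crm$ quantifies only over sets (over codes of formulas, tuples of set parameters, and the elements being collected), the internally computed hyperclass agrees with the externally computed $\Dcal(\Crm)^\Xcal$. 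Hence every $X \in \Xcal$ lies in $\Dcal(\Crm)^\Xcal$, and combining the two inclusions gives the claim.

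I do not anticipate a genuine obstacle here: the corollary is a formal consequence of the relativized versions of the preceding lemmas, in exact analogy with Corollary~\ref{modelOfDefinableClasses}. The one point needing care, and the sole place where the standing hypothesis $\SOR$ is really used, is the $\omega$-nonstandard case of the first inclusion --- over a bare model of $\GB$ the predicate $\Trm(\Crm)_k$ need not exist for nonstandard $k$, and it is exactly the second-order induction furnished by $\SOR$ that rescues this. I would also note that genericity of $\Crm$ over $(M,\Dcal^\Xcal)$ is not itself used in either inclusion; it enters only to make the hypothesis non-vacuous, via the preceding corollary that $(M,\Dcal(\Crm)^\Xcal) \models \GB + \Class = \Def(V;\Crm)$ whenever $\Crm$ is such a generic.
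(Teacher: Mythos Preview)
Your proposal is correct and follows exactly the approach the paper intends: the paper gives no explicit proof (marking the corollary with \qed), treating it as the immediate relativized analogue of Corollary~\ref{modelOfDefinableClasses} via the preceding lemmata, and your two-inclusion argument is precisely the natural expansion of that \qed.
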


Now we turn our attention to the definition of the Cohen generic $\Crm$ we use in our construction. 

\begin{lemma} \label{lem:cohen}
Work over $(M,\Xcal) \models \GB + \SOR + \Vrm = \HOD$. Over this model there is second-order definable $\Crm \subseteq M$ which is generic for $\Add(\Ord,1)^M$ over $(M,\Dcal^\Xcal)$. Moreover, $\Crm$ and $\Trm(\Crm)$ are absolute to any $(M,\Ycal) \models \GB + \SOR + \Vrm = \HOD$ for which $\Trm^\Ycal = \Trm^\Xcal$.
\end{lemma}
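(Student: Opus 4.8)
The plan is to construct $\Crm$ as the union of a definable $\Ord$-length descending sequence of conditions in $\Add(\Ord,1)$, meeting the dense classes of $\Dcal^\Xcal$ one at a time, using the $\HOD$-order to make every choice canonical. First I would observe that, since $\Add(\Ord,1)^M$ is $\mathord{<}\Ord$-closed and first-order definable, and since $\Dcal^\Xcal$ has a code $\Trm$ (by the discussion preceding the lemma) which is second-order definable, we can extract from $\Trm$ a definable $\Ord$-indexed enumeration $\vec D = \seq{D_\xi : \xi \in \Ord}$ of all dense subclasses of $\Add(\Ord,1)$ lying in $\Dcal^\Xcal$: order the $\Trm$-codes $(\phi,\vec a)$ by the $\HOD$-order and keep those for which the corresponding slice of $\Trm$ is (first-order verifiably) a dense subclass. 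Crucially, the predicate ``$(\phi,\vec a)$ codes a dense class'' and the enumeration $\vec D$ are \emph{first-order} definable from the parameter $\Trm$, hence second-order definable over $(M,\Xcal)$.

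Next I would define the descending sequence $\seq{p_\xi : \xi \in \Ord}$ by recursion along $\Ord$: set $p_0 = \emptyset$; given $p_\xi$, let $p_{\xi+1}$ be the $\HOD$-least condition $\le p_\xi$ meeting $D_\xi$ (which exists by density); and at limits $\eta$ put $p_\eta = \bigcup_{\xi < \eta} p_\xi$. The key point here is that this is a legitimate second-order recursion along $\Ord$ in the sense of Lemma~\ref{SORec}, since the recursion step is second-order definable (it refers to $\HOD$-least, to $\vec D$, hence to $\Trm$). By the corollary to Lemma~\ref{SORec}, each $p_\xi$ is then an element of $M$ --- this is exactly where $\SOR$ replaces the regularity of $\kappa$ used in Lemma~\ref{le:cannonicalChoiceofC}, guaranteeing the limit stages $p_\eta \in M$ so the recursion can continue. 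Finally set $\Crm = \bigcup_{\xi \in \Ord} p_\xi$, which is a class by First-Order Comprehension (indeed by second-order Separation from the sequence). Since $\seq{p_\xi}$ meets every dense class in $\Dcal^\Xcal$, $\Crm$ is generic over $(M,\Dcal^\Xcal)$ for $\Add(\Ord,1)^M$.

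For absoluteness: if $(M,\Ycal) \models \GB + \SOR + \Vrm = \HOD$ and $\Trm^\Ycal = \Trm^\Xcal$, then by Lemma~\ref{lemma:Dabsolute} we have $\Dcal^\Ycal = \Dcal^\Xcal$, so $(M,\Ycal)$ computes the same enumeration $\vec D$ (it is first-order definable from $\Trm$), the same $\HOD$-order (it quantifies only over sets), and hence by induction along $\Ord$ the same sequence $\seq{p_\xi}$ and the same $\Crm$. Then $\Trm(\Crm)$ is absolute by the relativized version of Lemma~\ref{lemma:Dabsolute} (stated just above as the lemma about $\Dcal(\Crm)$ being absolute between models with the same sets and same $\Trm$), since $\Trm(\Crm)$ is defined from $\Trm$ and $\Crm$ via the forcing relation exactly as in Lemma~\ref{lem:crm-relative-truth}. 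The main obstacle is bookkeeping rather than conceptual: one must be careful that the recursion defining $\seq{p_\xi}$ genuinely falls under Lemma~\ref{SORec}, i.e. that ``the $\HOD$-least condition below $p_\xi$ meeting $D_\xi$'' is uniformly second-order definable in the parameter $\Trm$ and the partial sequence built so far --- and in particular that the $\omega$-nonstandard case causes no trouble, which it does not precisely because $\SOR$ secures $\Trm$ as a genuine class measuring all formulae, standard or not.
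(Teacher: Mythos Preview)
Your proposal is correct and follows essentially the same approach as the paper's proof: enumerate the $\Trm$-definable dense classes via the $\HOD$-order, build the descending sequence $\seq{p_\xi}$ by a second-order transfinite recursion whose success is secured by $\SOR$ via Lemma~\ref{SORec}, and argue absoluteness from the fact that models with the same $\Trm$ compute $\vec D$, the $\HOD$-order, and hence each $p_\xi$ identically. Indeed, you spell out more detail than the paper, which largely defers to Lemma~\ref{le:cannonicalChoiceofC}; one small slip is the aside that $\Crm$ ``is a class by First-Order Comprehension (indeed by second-order Separation from the sequence)''---the lemma only asserts $\Crm$ is second-order \emph{definable}, and in general $\Crm \notin \Xcal$ (e.g.\ when $\Xcal = \Dcal^\Xcal$), so neither Comprehension nor Separation is the relevant point there.
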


\begin{proof}
As in Lemma~\ref{le:cannonicalChoiceofC}, we define the class $D \subset \Ord \times \Add(\Ord, 1)$ such that the slices $(D)_i$ are all the $\Trm$-definable dense subclass of $\Add(\Ord, 1)$. The order in $D$ is obtained from the $\HOD$-order in $M$. We also use the $\HOD$-order to recursively build the sequence of increasingly stronger forcing condition $\seq{p_\xi : \xi \in \Ord}$ such that $p_\xi \in (D)_\xi$. This is where we use $\SOR$: This sequence is defined by transfinite recursion using a second-order definition (because we need a second-order definition to define $\Trm$ to thereby define $D$). By Lemma~\ref{SORec} this recursion succeeds and the initial segments of the sequences are sets in $M$. 

What remains in the proof is precisely the same as in the proof of Lemma~\ref{le:cannonicalChoiceofC}. We omit repeating it.
\end{proof}

A consequence of Lemma~\ref{lem:forcing-preserve-sor} is that the extension by $\Crm$ will satisfy $\SOR$. To check that the lemma we just proved includes this extension itself we simply need to check that it defines $\Trm$ the same as its ground model. Fortunately this is easy.

\begin{lemma}
Let $(M,\Xcal) \models \GB + \SOR + \Vrm = \HOD$. Then any extension of $(M,\Xcal)$ by a tame forcing in $\Xcal$ which does not add sets will define $\Trm$ the same as $\Xcal$.
\end{lemma}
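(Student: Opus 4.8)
The plan is to use that $\Trm$ is a predicate for \emph{first-order} truth over the sets, and so can depend only on the first-order part $(M,\in)$ of the model, which is left untouched by a forcing that adds no sets. All that really needs doing is turning the slogan ``$\Trm$ does not depend on the classes'' into a precise comparison of the class collection $\Xcal$ with the class collection of the extension, which we write $\Xcal[G]$.

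First I would recall that over $\GB + \SOR$ one has $\Trm = \bigcup_{k \in \omega^M} \Trm_k$: by the Corollary derived from second-order induction every $\Sigma_k$ satisfaction predicate $\Trm_k$ exists (here $k$ ranges over all of $\omega^M$, standard or not), by the Proposition any partial satisfaction predicate agrees with the appropriate $\Trm_k$ on its domain, and every formula is internally $\Sigma_k$ for some $k$. Now, since the forcing adds no sets, $\omega^M$ and the syntactic apparatus (``formula'', ``$\Sigma_k$'', ``subformula'') are computed the same in $(M,\Xcal[G])$, and each $\Trm_k$, being a class of the ground model, persists as a class of $(M,\Xcal[G])$ with the same elements, using that the check-name copy of a ground-model class genuinely has the same members when no sets are added. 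Being a $\Sigma_k$ satisfaction predicate over $(M,\in)$ is a first-order property of a class with parameter $k$, so each $\Trm_k$ remains the $\Sigma_k$ satisfaction predicate as computed inside $(M,\Xcal[G])$.

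Next I would prove $\Trm^{\Xcal[G]} = \Trm^{\Xcal}$ by double inclusion. The inclusion $\Trm^{\Xcal} \subseteq \Trm^{\Xcal[G]}$ is immediate, since each $\Trm_k^{\Xcal}$ is a partial satisfaction predicate lying in $\Xcal[G]$ and hence contributes to the union defining $\Trm^{\Xcal[G]}$. For the reverse inclusion, let $S \in \Xcal[G]$ be any partial satisfaction predicate making a judgment about some $\phi[\vec a]$; internally $\phi$ is $\Sigma_k$ for some $k \in \omega^M$, so $\phi[\vec a]$ lies in the common domain of $S$ and of $\Trm_k^{\Xcal}$. Applying the uniqueness proposition inside $(M,\Xcal[G])$ --- legitimate since tame class forcing preserves $\GB$ --- the predicates $S$ and $\Trm_k^{\Xcal}$ make the same judgment at $\phi[\vec a]$, and that judgment coincides with the one recorded in $\Trm^{\Xcal}$. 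Hence every judgment in $\Trm^{\Xcal[G]}$ already occurs in $\Trm^{\Xcal}$, giving equality.

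I do not anticipate a genuine obstacle; the only point calling for care is the bookkeeping about ground-model classes surviving into the forcing extension in the possibly ill-founded setting, i.e.\ that the copy of $\Trm_k$ in $(M,\Xcal[G])$ has the same members and the same first-order properties. This is handled by the forcing lemmata for tame class forcing over $\GB$ recalled in Section~\ref{sec:class-theories} together with the hypothesis that no sets are added. It is worth noting that the argument uses $\SOR$ only in the ground model, to manufacture the $\Trm_k$, and not in the extension, so the lemma is really a statement about preservation of a single ground-model object.
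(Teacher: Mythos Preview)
Your proposal is correct and follows essentially the same approach as the paper: both argue that the $\Sigma_k$ satisfaction predicates exist in $\Xcal$ (via $\SOR$), persist into $\Xcal[G] \supseteq \Xcal$, and by uniqueness of partial satisfaction predicates in any $\GB$ model must coincide with the $\Sigma_k$ satisfaction predicates as computed in the extension, whence the unions $\Trm^{\Xcal}$ and $\Trm^{\Xcal[G]}$ agree. Your write-up is more detailed about the double inclusion and the bookkeeping in the possibly ill-founded setting, but the underlying argument is the same.
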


\begin{proof}
Let $(M,\Xcal[G])$ denote the forcing extension. It satisfies $\GB$, so it thinks that $\Sigma_k$ satisfaction predicates are unique. Since $\Xcal[G]$ contains all of $\Xcal$, it thus agrees with $\Xcal$ as to what class is the $\Sigma_k$ satisfaction predicate for all $k$, even nonstandard. So they define $\Trm$ the same.
\end{proof}

Finally we are in a position to prove that $\GB$ is not tight.

\begin{theorem}
Consider the following two theories.
\begin{itemize}
\item $D$ is the theory consisting of $\GB + \SOR + \Vrm = \HOD + \Class = \Def(V)$.
\item $U$ is the theory consisting of $\GB + \SOR + \Vrm = \HOD + \Class = \Def(V; \Crm)$ where $\Crm$ is the Cohen generic over $\Dcal$ built up according to Lemma~\ref{lem:cohen}.
\end{itemize}
The theories $D$ and $U$ are bi-interpretable, via interpretations that fix the sets of the models.
\end{theorem}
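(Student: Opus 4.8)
The plan is to run the argument of Theorem~\ref{thm:gb-not-sem-tight} in this more general setting, with the appeals there to well-foundedness and the regularity of $\kappa$ replaced by the structural facts assembled in this section: idempotence of $\Dcal$ (and its relativized version), the absoluteness of $\Trm$, $\Trm(\Crm)$ and $\Crm$ across $\GB$-models with prescribed data, Lemma~\ref{lem:sor-goes-down}, Lemma~\ref{lem:forcing-preserve-sor}, and the second-order recursion of Lemma~\ref{SORec} underlying Lemma~\ref{lem:cohen}.

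First I would set up the two interpretations at the level of models. Given $(M,\Xcal)\models D$, Corollary~\ref{modelOfDefinableClasses} gives $\Xcal=\Dcal^\Xcal$, and $\Crm$ together with its relative satisfaction code $\Trm(\Crm)$ are second-order definable over $(M,\Xcal)$ by Lemma~\ref{lem:cohen}. Define $\Jcal$ to have sets $\{0\}\times M$ with membership copied from $\in^M$, and classes those $(1,\xi)$, $\xi\in\Ord^M$, with $(\Trm(\Crm))_\xi\ne(\Trm(\Crm))_\eta$ for all $\eta<_{\HOD}\xi$; set-class membership is $(0,x)\in^\Jcal(1,\xi)$ iff $x\in(\Trm(\Crm))_\xi$. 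Then $\Jcal(M,\Xcal)\cong(M,\Dcal(\Crm)^\Xcal)$ via $(1,\xi)\mapsto(\Trm(\Crm))_\xi$. In the other direction, given $(M,\Ycal)\models U$, let $\Ical$ be the identity on sets and restrict the classes to $\Dcal^\Ycal$, which is second-order definable since $\Trm$ is. Both $\Jcal$ and $\Ical$ are thus legitimate second-order definable interpretations, and each fixes the sets.

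Next I would check that these interpretations produce models of the target theory. For $\Ical$: $\Ical(M,\Ycal)=(M,\Dcal^\Ycal)$, which satisfies $D$ by the idempotence of $\Dcal$ together with Lemma~\ref{lem:sor-goes-down} for $\SOR$ (and $\Vrm=\HOD$ is inherited as it speaks only of sets). For $\Jcal$: $(M,\Dcal(\Crm)^\Xcal)$ satisfies $\GB+\Class=\Def(V;\Crm)$ by the relativized idempotence results; it satisfies $\SOR$ because $\Dcal(\Crm)^\Xcal$ is a second-order definable subcollection of the forcing extension $\Xcal[\Crm]$, and the latter satisfies $\SOR$ by Lemma~\ref{lem:forcing-preserve-sor} since $\Add(\Ord,1)$ is tame and $\mathord{<}\Ord$-closed, so Lemma~\ref{lem:sor-goes-down} applies; and $\Vrm=\HOD$ is again inherited. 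The one delicate point is that the canonical generic recomputed inside $(M,\Dcal(\Crm)^\Xcal)$ is $\Crm$ itself: this model is sandwiched between $\Xcal$ and $\Xcal[\Crm]$, which compute $\Trm$ the same (uniqueness of $\Sigma_k$-satisfaction predicates in models of $\GB$), so it computes $\Trm$ the same as $(M,\Xcal)$, and then Lemma~\ref{lem:cohen} yields absoluteness of $\Crm$. Hence $\Jcal(M,\Xcal)\models U$.

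Finally I would verify the two round trips, with definable isomorphisms, exactly as in Theorem~\ref{thm:gb-not-sem-tight}. Applying $\Ical$ to $\Jcal(M,\Xcal)\cong(M,\Dcal(\Crm)^\Xcal)$ restricts to the $\Trm$-definable classes, i.e.\ to $\Dcal^\Xcal=\Xcal$ (using Lemma~\ref{lemma:Dabsolute} and that the intermediate model computes $\Trm$ correctly), and the map $X\mapsto(1,\xi)$ for the $\HOD$-least $\xi$ with $(\Trm(\Crm))_\xi=X$ is first-order definable from $\Trm(\Crm)$, hence second-order definable and correctly computed over $(M,\Xcal)$. Applying $\Jcal$ to $\Ical(M,\Ycal)=(M,\Dcal^\Ycal)$: since $(M,\Dcal^\Ycal)$ has the same sets and the same $\Trm$ as $(M,\Ycal)$, the recomputed generic equals the original $\Crm$ by Lemma~\ref{lem:cohen}, so the result is isomorphic to $(M,\Dcal(\Crm)^\Ycal)=\Ycal$ by Corollary~\ref{modelOfRelativeDefinableClasses}, via the analogous definable isomorphism. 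Because every interpretation, defining formula, and isomorphism is chosen by a single uniform recipe, this gives a bi-interpretation of the theories $D$ and $U$ fixing the sets. The main obstacle I expect is precisely this bookkeeping of absoluteness: one must confirm that at each stage the interpreted structure's own canonically defined Cohen generic coincides with the one used to build it, which is where the $\SOR$-preservation lemmas and the uniform computability of $\Trm$ by all $\GB$-models with given classes do the real work; the combinatorial content of the interpretations themselves is routine given Section~\ref{semanticNonTightness}.
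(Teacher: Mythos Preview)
Your proposal is correct and follows essentially the same approach as the paper: the same interpretations $\Ical$ and $\Jcal$ from Theorem~\ref{thm:gb-not-sem-tight} are used, with the section's lemmata about $\Dcal$, $\Crm$, and $\SOR$-preservation replacing the transitivity and regularity assumptions. If anything, you supply more of the verification than the paper's terse proof sketch does---in particular your explicit check that the interpreted model of $U$ recomputes the same $\Crm$ (via the sandwich $\Xcal\subseteq\Dcal(\Crm)^\Xcal\subseteq\Xcal[\Crm]$ and agreement on $\Trm$) makes precise what the paper leaves to the phrase ``the lemmata about $\Crm$ imply that this is an interpretation of $U$ in $D$''.
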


\begin{corollary}
$\GB$ and $\GB + \SOR$ are not tight. \qed
\end{corollary}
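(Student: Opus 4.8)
The plan is to run a uniform version of the argument behind Theorem~\ref{thm:gb-not-sem-tight}, with the two appeals there to the well-foundedness of $\Vrm_\kappa$ (uniqueness of the constructions) and the regularity of $\kappa$ (initial segments of length-$\Ord$ recursions are sets) replaced by the consequences of $\SOR$ assembled in this section. Fix $(M,\Xcal)\models D$; by Corollary~\ref{modelOfDefinableClasses}, $\Xcal=\Dcal^\Xcal$, so the classes of $(M,\Xcal)$ are exactly those coded by $\Trm$. Let $\Crm$ be the canonical Cohen generic of Lemma~\ref{lem:cohen} and $\Trm(\Crm)$ its relative satisfaction code, both second-order definable (and absolute to models with the same sets and the same $\Trm$) by Lemma~\ref{lem:cohen} and the relativization of Lemma~\ref{lem:crm-relative-truth}. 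I would define the interpretation $\Jcal$ of $U$ in $(M,\Xcal)$ verbatim as the interpretation $\Jcal$ of Theorem~\ref{thm:gb-not-sem-tight}, reading $\Trm(\Crm)$ for the old code and $\Ord^M$ for $\kappa$: the sets are $\{0\}\times M$, the pair $(1,\xi)$ is a class exactly when $(\Trm(\Crm))_\xi\neq(\Trm(\Crm))_\eta$ for all $\eta<\xi$ in the $\HOD$-order, $(0,x)\in^\Jcal(0,y)\iff x\in y$, and $(0,x)\in^\Jcal(1,\eta)\iff x\in(\Trm(\Crm))_\eta$. In the other direction, fix $(N,\Ycal)\models U$; then $\Dcal^\Ycal$ is a second-order definable sub-hyperclass of $\Ycal$, and the interpretation $\Ical$ of $D$ in $(N,\Ycal)$ is the identity on sets with the classes cut down to $\Dcal^\Ycal$. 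Both interpretations fix the sets.

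Next I would check that each interpretation lands in the advertised theory. For $\Ical$: the results of this section give $\Dcal^\Ycal\subseteq\Ycal$, $(N,\Dcal^\Ycal)\models\GB+\SOR+\Vrm=\HOD$, and idempotence of $\Dcal$, so $(N,\Dcal^\Ycal)\models\Class=\Def(V)$ as well, hence $(N,\Dcal^\Ycal)\models D$. For $\Jcal$: the interpreted structure is isomorphic, via $X\mapsto$ (the $\HOD$-least index of a slice equal to) $X$, to $(M,\Dcal(\Crm)^\Xcal)$; the relativized idempotence results established above give $(M,\Dcal(\Crm)^\Xcal)\models\GB+\Class=\Def(V;\Crm)$, and $\Vrm=\HOD$ persists as it only quantifies over sets. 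The one point needing the $\SOR$-machinery rather than a direct citation is $\SOR$ in the target: since $\Crm$ is generic for the tame, $\mathord{<}\Ord$-closed forcing $\Add(\Ord,1)$, the extension $(M,\Xcal[\Crm])$ satisfies $\SOR$ by Lemma~\ref{lem:forcing-preserve-sor}, and $\Dcal(\Crm)^\Xcal$ is a second-order definable sub-hyperclass of $\Xcal[\Crm]$ (the argument that $\Dcal^\Xcal\subseteq\Xcal$, applied to $\Trm(\Crm)$ and using that the $\Sigma_k$-relative satisfaction predicates are classes in $\Xcal[\Crm]$), so $\SOR$ descends to it by Lemma~\ref{lem:sor-goes-down}. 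Thus $(M,\Dcal(\Crm)^\Xcal)\models U$.

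Then I would verify the two round trips, the key input being absoluteness: in any model of $\GB+\SOR$ the full satisfaction predicate is the unique predicate satisfying the Tarskian recursion on all formulae, so $\Trm$ depends only on the sets; hence (Lemma~\ref{lemma:Dabsolute}, Lemma~\ref{lem:cohen}, and its relativization) any two of our models with the same sets compute $\Dcal$, $\Crm$, $\Trm(\Crm)$, and $\Dcal(\Crm)$ by the same definitions, and the $\HOD$-order is likewise uniform. Starting from $(M,\Xcal)\models D$, applying $\Jcal$ yields a coded copy of $(M,\Ycal)$ with $\Ycal=\Dcal(\Crm)^\Xcal\supseteq\Xcal$, which has the same sets and the same $\Trm$ as $(M,\Xcal)$; applying $\Ical$ cuts $\Ycal$ down to $\Dcal^\Ycal=\Dcal^\Xcal=\Xcal$. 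Starting from $(N,\Ycal)\models U$, applying $\Ical$ yields $(N,\Dcal^\Ycal)\models D$, which has the same sets and the same $\Trm$ as $(N,\Ycal)$ and, by idempotence, the same canonical generic $\Crm$; applying $\Jcal$ inside it yields a coded copy of $(N,\Dcal(\Crm)^{\Dcal^\Ycal})=(N,\Dcal(\Crm)^\Ycal)=(N,\Ycal)$, the last equality by Corollary~\ref{modelOfRelativeDefinableClasses}. In both round trips the required definable isomorphism back to the original model is the ``$\HOD$-least defining index'' map, exactly as in the proof of Theorem~\ref{thm:gb-not-sem-tight}; it is definable from $\Trm$ (resp.\ $\Trm(\Crm)$), which the model computes. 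This gives the bi-interpretation of $D$ and $U$, fixing the sets. The corollary then follows: $D$ and $U$ are deductively closed extensions of $\GB+\SOR$, hence of $\GB$, in the same language; they are bi-interpretable; they are consistent (witnessed by the minimum model of Section~\ref{semanticNonTightness} and its Cohen extension); and they differ deductively, since $D\vdash\Class=\Def(V)$ while $U\vdash\lnot(\Class=\Def(V))$ because a Cohen generic over $\Dcal$ is not itself in $\Dcal$.

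The step I expect to be the main obstacle is genuine uniformity in the $\omega$-nonstandard case, and it is essentially the reason the preceding lemmas were proved in the strength they were. Where the semantic argument could lean on transitivity of $\Vrm_\kappa$ to know that ``$\Trm$'' names an honest satisfaction predicate and that length-$\kappa$ recursions stay inside the model, here one must know that \emph{every} model of $\GB+\SOR+\Vrm=\HOD$ admits the partial satisfaction predicates $\Trm_k$ for all $k\in\omega^M$, even nonstandard ones, so that $\Trm$, $\Dcal$, $\Crm$, and $\Trm(\Crm)$ are given by one and the same second-order formula across all models of each theory, and that the transfinite recursion of length $\Ord$ building the descending sequence of Cohen conditions really terminates with set-sized initial segments. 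These are precisely the $\SOR$-consequences already available (existence of $\Sigma_k$ satisfaction predicates for nonstandard $k$, second-order recursion along $\Ord$ from Lemma~\ref{SORec}, and second-order Separation); being cavalier about either would break the uniformity of the interpretations exactly on the ill-founded or $\omega$-nonstandard models that the semantic-tightness proof never had to confront.
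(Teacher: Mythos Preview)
Your proposal is correct and follows essentially the same approach as the paper: the corollary is immediate from the preceding theorem, whose proof sketch defines $\Ical$ and $\Jcal$ exactly as you do and defers the verification of the round trips to the argument of Theorem~\ref{thm:gb-not-sem-tight}, citing the accumulated lemmata about $\Dcal$, $\Trm$, and $\Crm$ for the uniformity. You have simply filled in more of the details the paper leaves implicit, including the check that $\SOR$ holds in the interpreted $U$-model and the explicit reason $D$ and $U$ differ deductively.
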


\begin{proof}[Proof Sketch of Theorem]
We use the same interpretations $\Ical$ and $\Jcal$ from Theorem~\ref{thm:gb-not-sem-tight}. 

First we interpret $D$ in $U$ via $\Ical$, whose domain is $\Dcal$. This is expressible beacuse $\Dcal$ is a definable hyperclass. As before, $\Ical$ is the identity on its domain and $\in^\Ical$ is $\in$. The lemmata about $\Dcal$ then give that this is an interpretation of $D$ in $U$. 

For the other direction let us work in an arbitrary model $(M, \Xcal) \models D$. For the sets of the interpreted model we will take all of $\{0\} \times M$ and for the classes we will take a subset of $\{1\} \times \Ord.$%\footnote{To define these products we want a flat pairing function which doesn't increase rank, such as the Morse pairing function, rather than the usual Kuratowski pairing function.}
Specifically, $(1,\xi)$ is in the domain of $\Mcal$ just in case $(\Trm(\Crm))_\xi \ne (\Trm(\Crm))_\eta$ for all $\eta < \xi$, where the subscripts refer to the rank of the indices in the canonical global well-order. For sets, $(0,x) \in^\Mcal (0,y)$ if and only if $x \in y$. For set-class membership, $(0,x) \in^\Mcal (1,\eta)$ if and only if $x \in (\Trm(\Crm))_\eta$. In effect, the interpretation is that each class in the extension is interpreted as (the index of) the first formula which defines it. The lemmata about $\Crm$ imply that this is an interpretation of $U$ in $D$. In particular.

That these interpretations compose to give definable bijections is the same argument as in Theorem~\ref{thm:gb-not-sem-tight}.
\end{proof}

\subsection{Non-tightness of $\KMCC_k$}\label{semantic-non-tightness-KMk}

Our work here is to show that the construction in Section~\ref{semanticNonTightnessForStrongerTheories} can be made to work uniformly, instead of working only over a fixed transitive model. As before, it will be convenient to work with the unrolled model of $\ZFCmik{k}$ as described in Section~\ref{bi-interpretability-first-order}. We will use $\Class = \Lrm$ to mean that every class is (second-order) constructible. More precisely, $\Class = \Lrm$ expresses the translation of $\Vrm = \Lrm$ in the unrolled model. Note that $\Class = \Lrm$ implies $\Vrm = \Lrm$.\footnote{Earlier we only assumed $\Vrm = \HOD$. We think that the results in this section would go through in that more general context. But since we needed to make use of some nontrivial fine structure theory for this section we found it easier to work in this section with models where everything is constructible, rather than work with relative constructability. This is a cost, since it means our results as written cannot apply to models with large enough large cardinals. We leave it to the reader who wishes to avoid this cost to check the details for the relative constructibility context.}

Before we describe our construction, let us recall the construction in the transitive context. A transitive model of $\ZFCmik{k} + \Vrm = \Lrm$ is of the form $\Lrm_\alpha$ and thinks there is a largest cardinal, call it $\kappa$, and it is inaccessible. This $\Lrm_\alpha$ is bi-interpetable with a model of the form $(\Lrm_\kappa, \Mcal) \models \KMCC_k$. There is a least $\alpha$ which gives such a model of $\KMCC_k$ with $\Lrm_\kappa$ as the sets. It can be characterized as the smallest $\alpha > \kappa$ which satisfies $\Sigma_k$-Replacement. 

In the transitive setting, we used that such $\alpha$ must admit some $\Sigma_{k+1}$-definable cofinal map $\kappa \to \alpha$. But if we are to have a uniform construction then we must have a single definition for a cofinal map across all models, and it must be sufficiently absolute to achieve the nontightness of $\KMCC_k$. The basic idea is, we successively close off under taking witnesses for instances of $\Sigma_k$-Replacement for more and more inputs. For this we will make use of some fine structural tools.

%% kill?
%\begin{definition}
%Let $M$ be a model of $\ZFCmi(k)$ and $\kappa$ the largest cardinal in $M$. For a natural number $n$, we say (if it exists) that a ordinal $\rho$ is a $\rho_n$ if $\rho > \kappa$ and $\Lrm_{\rho}$ satisfies $\Sigma_n$-Collection and $\Sigma_n$-Separation.
%\end{definition}

%The definition of the $\rho_n$'s can be made internal to the theory. Before obtaining this, we should recall Jensen's fine structural property for Skolem functions discussed in \cite{jensen1972fine}. He builds the $J$ hierarchy restricting definability used in $\Lrm$'s hierarchy to compositions of what he calls \textit{rudimentary functions}.
%For our purpose, we need not to dive into details of the $J$ hierarchy. 
%This new hierarchy coincides with $\Lrm$ in the limit levels and so for our purposes we can directly refer to Jensen's results with respect to $\Lrm$.
%It will be sufficient to refer to the following result:

Briefly: Jensen---e.g. in \cite{jensen1972fine}---gave a precise analysis of the structure of $\Lrm$ as built up using \textit{rudimentary functions}. He considers an alternate hierarchy, the $\Jrm$ hierarchy, to build up $\Lrm$. But the $\Jrm$ and $\Lrm$ hierarchies agree on limit levels, so the distinction will not be relevant for our purposes. A key theorem he proves is that levels of the $\Jrm$ hierarchy have $\Sigma_k$ Skolem functions for all $k$, uniformly so. Let us give a version of this appropriate to our context.

%\begin{lemma}[Jensen]
%If $\omega.\alpha = \alpha$, then $J_\alpha = L_\alpha$.\footnote{see. \cite[p. 255]{jensen1972fine} for details.}
%\end{lemma}

\begin{definition}\label{def-skolem-func}
Consider a model $U$ of a strong enough fragment of $\ZFC$ and fix finite $k$. We say that \emph{$U$ has a $\Sigma_k$ Skolem function} when there is a $\Sigma_k$ definition for the function $h: \omega^U \times U \to U$ such that, for every $\Sigma_k$ formula $\phi$,%\footnote{Jensen defines Skolem function in \cite[p. 248]{jensen1972fine} and subsequently discusses its uniform version in lemma 2.8. \cite[p. 249]{jensen1972fine}.}
\[
U \models \exists y\ \phi(y, x) \impl \phi(h(\godel{\phi}, x), x).
\]
\end{definition}

%\begin{definition}
%$(J_\alpha, A)$ is amenable when $(J_\alpha, A)$ is transitive and $A \cap x \in J_\alpha$ whenever $x \in J_\alpha$.
%\end{definition}

\begin{theorem}[Jensen's $\Sigma_\ell$ uniformization theorem]\label{jensenTheorem}
Fix $1 \le k \le \ell$.
There is a $\Sigma_k$ formula $\psi$ such that $\ZFCm_\ell + \Vrm = \Lrm$ proves that $\psi$ defines a $\Sigma_k$ Skolem function. That is, if $U \models \ZFCm_\ell + \Vrm = \Lrm$ then the function $h = \psi^U$ defined by $\psi$ in $U$ is a $\Sigma_k$ Skolem function for $U$.
\end{theorem}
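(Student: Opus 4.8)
The plan is to cite and adapt Jensen's fine-structural work rather than reprove it from scratch, since the statement is essentially a packaging of the classical $\Sigma_n$ uniformization/Skolem function results for the $\Jrm$-hierarchy into a form suitable for possibly ill-founded models of a weak set theory. First I would recall the precise statement from Jensen \cite{jensen1972fine} (or a modern treatment such as Zeman's book on inner models and fine structure): for each $n \ge 1$ there is a single $\Sigma_n$ formula $\psi_n$ which, over any $\Jrm_\beta$, defines a partial $\Sigma_n$ Skolem function, and this is proved by an induction on $n$ using the $\Sigma_n$ master codes / the $\Sigma_n$ projectum machinery, together with the fact that the $\Sigma_1$ Skolem function of $\Jrm_\beta$ is uniformly $\Sigma_1$-definable (this base case is the content of the $\Sigma_1$ uniformization lemma, which only uses amenability and the rudimentary closure of the $\Jrm$-hierarchy). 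The key point to extract is that the \emph{formula} $\psi_n$ does not depend on $\beta$ — it is absolutely defined — and that the verification that $\psi_n$ works is carried out from axioms weak enough to hold in any model of $\ZFCm_\ell + \Vrm = \Lrm$ for $\ell \ge n$.

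The main steps, in order, would be: (1) State the base case: $\ZFCm_1 + \Vrm = \Lrm$ proves there is a $\Sigma_1$ formula defining a $\Sigma_1$ Skolem function. This follows because in $\Vrm = \Lrm$ every set lies in some $\Jrm_\gamma$, the $\Jrm$-hierarchy is uniformly $\Sigma_1$-definable, and one can uniformly pick, for a given $\Sigma_1$ formula $\phi$ and parameter $x$, the witness $y$ appearing in the $\Jrm_\gamma$-least pair $(\gamma, \text{witness})$ — the $<_L$ order restricted to $\Jrm_\gamma$ being uniformly definable. Checking this is the verification that the classical $\Sigma_1$ uniformization argument goes through over $\ZFCm_1$ rather than over a level of $\Lrm$; the only ingredients are $\Sigma_1$-collection, $\Delta_0$-separation, and the basic properties of the $\Jrm$-hierarchy, all available. (2) Carry out the induction: assuming $\psi_k$ is a $\Sigma_k$ formula defining a $\Sigma_k$ Skolem function provably over $\ZFCm_k + \Vrm = \Lrm$, produce $\psi_{k+1}$. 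Here one uses that a $\Sigma_{k+1}$ formula $\exists z\, \theta(z,y,x)$ with $\theta \in \Pi_k$ can be handled by first using the $\Sigma_k$ Skolem function (hence $\psi_k$) to reflect into a suitable "good" substructure or to reduce the search, combined with the $\Sigma_{k+1}$ projectum analysis — the upshot being a $\Sigma_{k+1}$ definition of the witness-picking function. Since we only need the statement for $1 \le k \le \ell$ with the ambient theory $\ZFCm_\ell$, at each stage of the induction the ambient theory is strong enough. (3) Finally, observe that the formula $\psi = \psi_k$ obtained is a fixed $\Sigma_k$ formula and that the implication displayed in Definition~\ref{def-skolem-func} is exactly what the induction establishes, uniformly in $U \models \ZFCm_\ell + \Vrm = \Lrm$; in particular $U$ need not be well-founded, because every step is a first-order consequence of the axioms.

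I expect the main obstacle to be step (2) — the inductive step of the uniformization — and more specifically the bookkeeping required to keep the defining formula genuinely $\Sigma_k$ (not merely $\Sigma_{k+1}$ or worse) while also keeping the \emph{proof} that it works inside $\ZFCm_\ell + \Vrm = \Lrm$. In the standard fine-structure treatment this is where one invokes the $\Sigma_n$ master code / the reduction of $\Sigma_{n+1}$-definability over $\Jrm_\beta$ to $\Sigma_1$-definability over the next master-code structure $\Jrm_{\beta_n}^{A_n}$, and one must check that these reductions are uniform in $\beta$ and provable from the weak theory. The honest move here — and what I would actually write in the paper — is to not reprove this at all, but to cite \cite{jensen1972fine} (and a textbook account) for the uniformity, and only remark that the proofs there are formalizable over $\ZFCm_\ell + \Vrm = \Lrm$ because they nowhere use Replacement beyond $\Sigma_\ell$ levels nor any well-foundedness of the model. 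A secondary, more cosmetic obstacle is matching conventions: Jensen states things for the $\Jrm$-hierarchy and for $\Sigma_n$ in the Levy hierarchy over a fixed structure, whereas Definition~\ref{def-skolem-func} phrases the Skolem function as $h : \omega^U \times U \to U$ with a Gödel-number argument; translating between "a uniform family $\{h_\phi\}_\phi$ of Skolem functions" and "a single function $h(\godel\phi, x)$" is routine but should be stated carefully so that the single $h$ is still $\Sigma_k$-definable (which it is, since the recursion-theoretic apparatus for decoding $\godel\phi$ is $\Sigma_0$).
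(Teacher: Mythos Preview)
Your proposal is essentially the same approach as the paper's: the paper does not reprove the result but cites Jensen \cite{jensen1972fine} (specifically Lemma 2.9, Lemma 3.4(i), and the standard-codes machinery of Section 4), together with \cite{jensenNewBook} and \cite{dodd:book}, and then remarks that a careful reading shows the background theory $\ZFC$ is overkill---Powerset is unnecessary and the Collection/Separation needed does not exceed the complexity of the desired Skolem function. Your outline of how the induction on $k$ would go is more detailed than what the paper writes, but your explicit bottom line (``the honest move here \ldots\ is to not reprove this at all, but to cite \ldots'') matches exactly what the paper does.
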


This is a combination of some results in  \cite{jensen1972fine}, mainly Lemma 2.9. and Lemma 3.4.(i) together with the technique of standard codes developed in Section 4. 
A recent version of these can be found in the recent \cite{jensenNewBook} where Jensen develops the fine structure theory in more detail, mentioning more precisely where and how uniformization applies absolutely. 
Jensen presents his work in the context of transitive models with $\ZFC$ as the background theory. A careful read-through of his arguments makes clear that this background theory is overkill. One does not need the Powerset axiom to carry out the inductive construction, and the amount of Collection and Separation needed does not exceed the complexity of the desired Skolem function. Rather than multiply this paper's length by giving a reconstruction of Jensen's arguments with a careful accounting of the axioms used, we point the reader to the above-cited works. We also point to \cite{dodd:book} for an analysis of the minimal axioms---less than even $\KP$---needed to carry out the basic constructions of the rudimentary functions.
%Jensen do not mention precisely the background theory he is using and we could not find any direct study of the minimal background for his construction. It is clear, however, that one need not to use powerset axiom. Some work is needed to check that the construction do not require more collection, separation and foundation than $\Sigma_n$. We avoid this here for it would need a long re-exposition of Jensen's proofs.

%\begin{lemma}\label{admissibility}
%If $M \models \ZFCmi(k)$ with $k \geq 1$, then $J^M = L^M$.
%\end{lemma}

%\begin{proof}[Proof sketch]
%This is the result of observing that the ordinal $\gamma$ of $M$ must be such that $\omega.\gamma = \gamma$.
%\end{proof}

%% in-progress: I think we need a small amount of SOR to prove that \alpha_{i+1} is always an element in U. The point is, this amounts to an induction. But imma set htis aside for a moment.
\begin{lemma}\label{rho-definitions}
Fix $k \ge 1$.
Work over $\KMCC_k + \SOR + \Class = \Lrm$ Consider the unrolled model $U \models \ZFCmik{k} + \Vrm = \Lrm$, and let $\kappa$ denote the largest cardinal in this model. Then there is a definition for a sequence $\seq{\alpha_i : i \in \omega}$ so that $\bigcup_i \Lrm_{\alpha_i} \models \ZFCmik{k}$, with the same largest cardinal $\kappa$. Consequently the property ``the sequence $\seq{\alpha_i}$ is cofinal in the ordinals'' is expressible.
%There is a canonical definition for a sequence that is cofinal with a $\rho_k$ in $\ZFCmi(k)$.\footnote{One can also show that the obtained $\rho_k$ is the least such that $L_{\rho_k}$ satisfies $\Sigma_k$-Collection and $\Sigma_k$-Separation. For our purpose here, it is sufficient for us to obtain any $\rho_k$ as long as it is obtained canonically.}
\end{lemma}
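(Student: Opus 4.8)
The plan is to pass to the unrolled model $U \models \ZFCmik{k} + \Vrm = \Lrm$ (so $\Class = \Lrm$ becomes $\Vrm = \Lrm$ there) and, working inside $U$, to build by a recursion of length $\omega$ an increasing sequence $\seq{\alpha_i : i \in \omega}$ of limit ordinals such that, with $\lambda := \sup_i \alpha_i$, the level $\bigcup_i \Lrm_{\alpha_i} = \Lrm_\lambda$ (interpreted as all of $U$ when the $\alpha_i$ are cofinal in $\Ord^U$) is the least segment of $U$ modeling $\ZFCmik{k}$. The point is not just to produce such a sequence but to produce it from a single formula, uniformly across all $U$ and absolutely enough for the non-tightness construction that follows. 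The engine is Jensen's $\Sigma_k$-uniformization, Theorem~\ref{jensenTheorem}, together with the standard-code apparatus behind it: these furnish, uniformly in $\beta$, the $\Sigma_k$-Skolem hull of a level $\Lrm_\beta$ inside $U$ and the collection of $\Sigma_k$-definable subsets of its elements, and package this as a $\Sigma_k$-definable (over $U$) ``$\Sigma_k$-closure'' operator on $\Ord^U$.

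Concretely, I would take $\alpha_0$ to be the least limit ordinal above $\kappa$ (the precise starting value is immaterial) and set $\alpha_{i+1} = H_k(\alpha_i)$, where $H_k(\gamma)$ is the least limit $\beta > \gamma$ such that $\Lrm_\beta$ contains (i) a witness $h(\godel{\phi},\vec q)$ for every $\Sigma_k$ formula $\phi$ and every $\vec q \in \Lrm_\gamma$ with $U \models \exists y\, \phi(y,\vec q)$, where $h = \psi^U$ is the Skolem function from Theorem~\ref{jensenTheorem}, and (ii) the set $\{x \in a : \phi^U(x,\vec q)\}$ for every $\Sigma_k$ formula $\phi$ and all $a,\vec q \in \Lrm_\gamma$. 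That $H_k(\gamma) < \Ord^U$ is exactly where $\Sigma_k$-Collection in $U$ is used: the sets listed in (i), and --- via standard codes, which make $(a,\phi,\vec q) \mapsto \{x \in a : \phi^U(x,\vec q)\}$ a $\Sigma_k$-definable function --- those in (ii) as well, form the image of (roughly) the set $\Lrm_\gamma \times \omega$ under a $\Sigma_k$-definable map, hence have bounded $\Lrm$-rank. Since each step is definable from the $\Sigma_k$-satisfaction predicate for $U$, the $\omega$-recursion yields a definable sequence $\seq{\alpha_i : i \in \omega}$; $\SOR$ and Lemma~\ref{SORec} are available if one prefers to phrase this uniformly, and $\Sigma^1_k$-Class Collection packages it as a single coded hyperclass if one wants one. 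Exactly as in the $\GB$ case, $\Class = \Lrm$ makes the whole definition absolute between models of the theory with the same sets.

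Next I would verify that $\bigcup_i \Lrm_{\alpha_i} \models \ZFCmik{k}$ with largest cardinal $\kappa$. Closure of $\Lrm_\lambda$ under $h$ is immediate from (i), and a routine induction on formula complexity (Tarski--Vaught, with transitivity giving the $\Sigma_0$ base case) yields $\Lrm_\lambda \prec_{\Sigma_k} U$; from this $\Sigma_k$-Collection transfers downward, using that ``$\exists b\, \forall x\in a\, \exists y\in b\, \phi$'' is $\Sigma_k$ for $k \ge 1$. $\Sigma_k$-Separation is supplied by clause (ii): for $a,\vec q \in \Lrm_\lambda$ and $\Sigma_k$ formula $\phi$, the set $\{x\in a : \phi^{\Lrm_\lambda}(x,\vec q)\}$ equals $\{x\in a:\phi^U(x,\vec q)\}$ by elementarity and was placed into $\Lrm_\lambda$ by construction. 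That $\kappa$ stays inaccessible is downward absolute (regularity is $\Pi_1$-type; strong-limitness follows from condensation and GCH in $\Lrm$), and that $\kappa$ remains the largest cardinal follows because the least $\Lrm$-level containing a collapsing surjection $\kappa \to \eta$ is $\Sigma_1$-definable over $U$ and hence lies below $\lambda$ for every $\eta < \lambda$, so each such $\eta$ is collapsed inside $\Lrm_\lambda$. Finally, once $\seq{\alpha_i}$ is definable, ``$\forall \xi\, \exists i\ \alpha_i \ge \xi$'' is a second-order formula, which is the last assertion of the lemma.

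The hard part is clause (ii): turning ``adjoin all $\Sigma_k$-definable subsets of each set in $\Lrm_\gamma$'' into a single $\Sigma_k$-definable step that behaves uniformly and absolutely across all $U$. Plain closure under the Skolem function $h$ does not do this --- the predicate ``$v = \{x\in a : \phi(x)\}$'' is only a Boolean combination of $\Sigma_k$ formulas (essentially $\Delta_{k+1}$), so such $v$ need not lie in the $\Sigma_k$-Skolem hull --- and this is precisely where the fine-structural machinery underpinning Theorem~\ref{jensenTheorem}, namely Jensen's standard codes / $\Sigma_k$-master codes, is indispensable: it reduces $\Sigma_k$-definability over a level to $\Sigma_1$-definability over an associated coded structure, bringing the separation sets within reach of a uniform definition. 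I would isolate this as a self-contained fine-structural lemma, or cite it from \cite{jensen1972fine, jensenNewBook, dodd:book}, rather than reconstruct it here.
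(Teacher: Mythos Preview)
Your overall strategy is the paper's: pass to the unrolled model, use Jensen's uniform $\Sigma_k$ Skolem function $h$ to iteratively close off, check by Tarski--Vaught that the union $N$ is $\Sigma_k$-elementary in $U$, and invoke $\SOR$ so the recursion runs through all of $\omega$. The paper starts with $\alpha_0 = \kappa$ and sets $\alpha_{i+1}$ to be the sup of the ordinal values $h(\godel\phi,x)$ for $x \in \Lrm_{\alpha_i}$ and $\Sigma_k$ $\phi$ --- in effect, your clause~(i) alone.

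Where you diverge is clause~(ii), and the master-code detour you take to make it $\Sigma_k$-definable. This is unnecessary. The same closure argument that gives $\Sigma_k$-Replacement in $N$ already gives \emph{strong} $\Sigma_k$-Collection: for $a$ and parameters in $\Lrm_{\alpha_i}$ and a $\Sigma_k$ formula $\phi(x,y)$, any $x \in a$ with $U \models \exists y\,\phi(x,y)$ has the witness $h(\godel\phi,x) \in \Lrm_{\alpha_{i+1}} \in N$, and by $\Sigma_k$-elementarity this is a witness in $N$ as well. Once $N$ has strong $\Sigma_k$-Collection and $\Delta_0$-Separation (automatic for a union of $\Lrm$-levels), $\Sigma_k$-Separation follows by the standard inductive bounding argument: bound the outer existential by a set, reduce to $\Pi_{k-1}$, take the complement in $a$, and repeat. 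Your observation that ``$v = \{x \in a : \phi(x)\}$'' is only $\Delta_{k+1}$ is correct but irrelevant --- one never needs to name the Separation set via $h$. So you may drop clause~(ii) entirely and avoid citing the standard-code apparatus beyond what is already packaged in Theorem~\ref{jensenTheorem}.

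The paper's argument that $\kappa$ remains the largest cardinal is also simpler than yours: inductively, $\Lrm_{\alpha_{i+1}}$ is a union of $\kappa$-many sets of size $\kappa$ (the $h$-images of $\Lrm_{\alpha_i}$), so each $\Lrm_{\alpha_i}$ has size $\kappa$ in $U$.
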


Before the proof let's clear up a potential misunderstanding. The union $\bigcup_i \Lrm_{\alpha_i}$ refers to the direct limit of the system of models $\Lrm_{\alpha_i}$, each equipped with the membership relation from the unrolled model. If we're working over a transitive, model then this union is itself a level of the $\Lrm$ hierarchy. But in a nonstandard model there might be a cut and the sequence $\alpha_i$ doesn't have a supremum in the model. (Indeed, that is exactly what happens when the sequence is cofinal, which it will be in the models we are interested in.) After all, while the sequence is definable, its definition is too complex for the weak theory satisfied by the model to guarantee its supremum exists as an element of the model. Nonetheless, the direct system is definable and hence its direct limit is also definable.

\begin{proof}
Consider a model $(M,\Mcal) \models \KMCC_k + \SOR + \Class = \Lrm$ and work in its unrolled model $U \models \ZFCmik{k}$. From Theorem~\ref{jensenTheorem}, we have a $\Sigma_k$ Skolem function $h$ for $U$. Define the $\omega$-sequence $\seq{\alpha_i}$ as follows: Start with $\alpha_0 = \kappa$ and at successors we will pick $\alpha_{i+1}$ to be give a level of the $\Lrm$ hierarchy which is closed under $h$ for inputs from $\Lrm_{\alpha_i}$. To this purpose, define the class function
\[
W(\alpha) = \{ \xi \in \Ord : \xi = h(\godel{\phi}, x) \text{ where } x \in \Lrm_{\alpha} \mand \phi \text{ is } \Sigma_k\}.
\]
The function $h$ is $\Sigma_k$ and being $\Lrm_{\alpha}$ is a $\Sigma_1$ property of $\alpha$. So in all $W$ is $\Sigma_k$, and so $W(\alpha)$ is a set in $U$. Then, set
\[
\alpha_{i+1} = \bigcup W(\alpha_i).
\]
If $\psi(y,x)$ is $\Sigma_k$ then the property ``$\Lrm_\xi$ contains $h(\godel{\psi},x)$'' is also $\Sigma_k$ in parameters $\xi$ and $x$. So this definition really does give that $\Lrm_{\alpha_{i+1}}$ is closed under $h$ for inputs from $\Lrm_{\alpha_i}$.

We can always continue the construction one more step, and so the set of $i$ for which $\alpha_i$ is defined forms an inductive subset of $\omega$. So by $\SOR$ it must be all of $\omega$.

To show that $N = \bigcup_i \Lrm_{\alpha_i} \models \ZFCmik{k}$ we first show that any $\Sigma_k$ formula $\theta(y,x)$ with a parameter $x$ in one of the $\Lrm_{\alpha_i}$ reflects. To this end fix such $\theta(y,x)$ and $x \in \Lrm_{\alpha_i}$. Suppose that $U \models \exists y\ \theta(y,x)$. Then $U \models \theta(h(\godel\theta,x))$. But $h(\godel\theta,x) \in \Lrm_{\alpha_{i+1}}$. By Tarski--Vaught we get that $N$ is a $\Sigma_k$ elementary submodel of $U$. Now by a standard argument we get that $N \models \Sigma_k$-Replacement. Namely, suppose there is $a \in N$ so that for each $x \in a$ there's a unique $y$ so that $\phi(x,y)$, where $\phi$ is a $\Sigma_k$ formula, possibly with parameters. Then there's $i$ so that $a$ and all parameters are in $\Lrm_{\alpha_i}$. Using that $N$ is $\Sigma_k$-elementary in $U$, we get that the witnesses $y$ must all be in $\Lrm_{\alpha_{i+1}}$, witnessing that instance of Replacement. This immediately implies that $N \models \Sigma_k$-Separation and Collection, where for the second fact we use that $N$ has a definable global well-order. That $N \models \Vrm = \Lrm$ is immediate. And $N$ has the same $\kappa$ as its largest cardinal by an inductive argument. Trivially $\Lrm_{\alpha_0} = \Lrm_\kappa$ has cardinality $\kappa$. And then inductively $\Lrm_{\alpha_{i+1}}$ is a union of $\kappa$ many sets of size $\kappa$ whence it's also of cardinality $\kappa$.
\end{proof}

This theorem allows us a characterization of models of class theory which think they are the minimum model of $\KMCC_k$. Namely, let $\Class = \MinMod_k$ be the conjunction of $\Class = \Lrm$ and ``in the unrolling the sequence $\seq{\alpha_i}$ is cofinal in the ordinals''. Moreover, we can express whether a model is a width-extension of a model of $\Class = \MinMod_k$, by expressing that the second-order constructible classes of that model satisfy $\Class = \MinMod_k$.

As in the transitive case, having a definable cofinal sequence in the ordinals of the unrolled model allows us to define a code for the hyperclass of all classes.

\begin{lemma}
Fix $k \ge 1$. Let $(M,\Mcal) \models \KMCC_k + \SOR + \MinMod_k$. Then over $(M,\Mcal)$ there is a definition for a code $\Trm_\Mcal$ for $\Mcal$. Moreover, this definition can be chosen to be absolute across all width-extensions of $(M,\Mcal)$.
\end{lemma}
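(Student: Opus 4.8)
The plan is to mimic the construction of $\Trm_\Mcal$ from the transitive case (Corollary~\ref{cor:define-trm-lcal}), but using the definable cofinal sequence $\seq{\alpha_i : i \in \omega}$ provided by Lemma~\ref{rho-definitions} in place of the $\Sigma_{k+1}$-definable cofinal map $f : \kappa \to \alpha$ that we had in the transitive setting. First I would work in the unrolled model $U \models \ZFCmik{k} + \Vrm = \Lrm$ obtained via the bi-interpretability of Subsection~\ref{bi-interpretability-first-order}; here $\kappa$ denotes the largest cardinal. Since $(M,\Mcal) \models \Class = \MinMod_k$, the sequence $\seq{\alpha_i}$ is cofinal in the ordinals of $U$, so $\Mcal$ is exactly $\powerset(\Lrm_\kappa)^U = \bigcup_i (\powerset(\Lrm_\kappa) \cap \Lrm_{\alpha_i})$. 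Using the $\Lrm$-order (the definable global well-order), for each $i$ fix the $\Lrm$-least enumeration of $\Lrm_{\alpha_i}$ in ordertype the ordinal $\alpha_i$ (or a convenient surjection $\kappa \to \Lrm_{\alpha_i}$, as in the passage following Lemma~\ref{rho-definitions} in the transitive case where we compose an increasing cofinal map with level-by-level bijections). Then define $\Trm_\Mcal$ to hold of $(i,j,x)$ just in case $x$ is an element of the $j$-th set in the $\Lrm$-least enumeration of $\Lrm_{\alpha_i}(\Vrm_\kappa)$ — equivalently, of $\Lrm_{\alpha_i}$ since we are in the $\Vrm = \Lrm$ context. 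This $\Trm_\Mcal$ is a class of $U$, and reshuffling via the bi-interpretation back to $(M,\Mcal)$ it becomes a class there; since the $(i,j)$-indexing ranges over $\omega \times \Ord$ and every set of $\Mcal$ appears as some slice, it is a code for $\Mcal$ in the sense of Definition~\ref{hyperclassCode}.

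The point is that this whole definition goes through inside $(M,\Mcal)$ rather than merely in the unrolled model: as in Corollary~\ref{cor:define-trm-lcal}, quantification over $\Lrm_{\alpha_i}$ can be mimicked by quantification over extensional, well-founded relations with a maximum element (Mostowski's collapse lemma, available since $\KP + \Sigma_1$-Separation suffices and holds at the relevant levels), so $(M,\Mcal)$ can define the sequence $\seq{\alpha_i}$, the level enumerations, and hence $\Trm_\Mcal$. The only subtlety relative to the transitive case is that the sequence $\seq{\alpha_i}$ is defined by a second-order recursion of nonstandard-proof complexity: its definition invokes the $\Sigma_k$ Skolem function $h$ from Theorem~\ref{jensenTheorem} and a recursion along $\omega$. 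We need $\SOR$ (via Lemma~\ref{SORec}, second-order recursion along $\Ord$, restricted to $\omega$) to know this recursion succeeds and that each initial segment $\seq{\alpha_j : j \le i}$ is a set of the model — exactly as argued in the proof of Lemma~\ref{rho-definitions}. This is why the hypothesis $\SOR$ appears in the statement, and it is what lets the argument run even when $(M,\Mcal)$ is $\omega$-nonstandard.

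For absoluteness across width-extensions, suppose $(M,\Ycal) \models \KMCC_k$ is a width-extension of $(M,\Mcal)$. The definitions of the $\Lrm$-order, the levels $\Lrm_{\alpha_i}$, and the enumerations depend only on the well-orders present, and a width-extension has, up to isomorphism in $\Ycal$, exactly the same class well-orders as $\Mcal$; hence $(M,\Ycal)$ computes the sequence $\seq{\alpha_i}$ and the level-by-level $\Lrm$-least enumerations the same way, and so defines the same $\Trm_\Mcal$ (this is the same reasoning as in Corollary~\ref{cor:define-trm-lcal} and Lemma~\ref{lem:define-trm-lcal-crm}: width-extensions "define $\Lrm(\Vrm_\kappa)$ the same"). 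One should also note, as in the transitive case, that $\Trm_\Mcal \notin \Mcal$ — it cannot be $\Sigma^1_k$-definable, else $\Sigma^1_k$-Comprehension would place it in $\Mcal$, forcing all of $\bigcup_i \Lrm_{\alpha_i}$ to appear at some bounded stage, contradicting cofinality of $\seq{\alpha_i}$ — but this is an observation, not needed for the statement.

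The main obstacle I expect is the bookkeeping in transporting Jensen's fine-structural uniformization (Theorem~\ref{jensenTheorem}) through the unrolling bi-interpretation and verifying that the resulting definition of $\seq{\alpha_i}$, and then of $\Trm_\Mcal$, really is expressible by a single second-order formula over $(M,\Mcal)$ that is insensitive to passing to width-extensions; the recursion-theoretic content (that $\SOR$ makes the $\omega$-recursion total) is already isolated in Lemma~\ref{rho-definitions}, so the remaining work is ensuring the Mostowski-collapse mimicry of $\Lrm$-levels composes correctly with the cofinal sequence, exactly parallel to the transitive argument but with $\seq{\alpha_i}$ replacing $f$.
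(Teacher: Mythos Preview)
Your proposal is correct and follows essentially the same approach as the paper's proof sketch: define $\Trm_\Mcal$ via triples $(i,j,x)$ with $i \in \omega$ indexing the cofinal sequence $\seq{\alpha_i}$ from Lemma~\ref{rho-definitions}, $j \in \Ord$ indexing the $\Lrm$-least enumeration of $\Lrm_{\alpha_i}$, and argue absoluteness by relativizing to second-order $\Lrm$ so that width-extensions compute the same code. Your write-up is considerably more detailed than the paper's brief sketch (in particular your explicit discussion of how $\SOR$ is used and the Mostowski-collapse mimicry), but the underlying construction is the same.
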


\begin{proof}[Proof Sketch]
As in Corollary~\ref{cor:define-trm-lcal}, but since our cofinal sequence has length $\omega$ we define the code to consist of triples $(i,j,x)$ where $i \in \omega$, $j \in \Ord$, and $x$ is a set. To make the definition absolute across width-extensions, relativize it to second-order $\Lrm$.
\end{proof}

We can also do this for Cohen generics over a model of $\MinMod_k$, as in Lemma~\ref{lem:define-trm-lcal-crm}.

\begin{lemma}
Fix $k \ge 1$. Let $(M,\Mcal) \models \KMCC_k + \SOR + \MinMod_k$ and suppose $\Crm \subseteq M$ is a generic over $(M,\Mcal)$ for $\Add(\Ord,1)$ which is second-order definable over $(M,\Mcal)$. Then there is a definition for a code $\Trm_\Mcal(\Crm)$ which is absolute across all width-extensions of $(M,\Mcal)$. \qed
\end{lemma}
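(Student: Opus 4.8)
The plan is to combine the cofinal-sequence construction of Lemma~\ref{rho-definitions} with the forcing trick already used in the transitive setting (Lemma~\ref{lem:define-trm-lcal-crm}), relativizing the constructible hierarchy to the parameter $\Crm$. Concretely, I would work inside the unrolled model $U \models \ZFCmik{k} + \Vrm = \Lrm$ of $(M,\Mcal)$, with largest cardinal $\kappa$; there $\Crm$ becomes a subset of $\Ord^U$, and since $\Add(\Ord,1)$ is $\mathord{<}\Ord$-closed the unrolled model of the extension has the same sets, so $\Mcal(\Crm)$ is correctly described as the subsets of $M$ that appear in the relativized hierarchy $\Lrm[\Crm]$ computed in $U$, up to the level reached by the sequence built below.

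First I would produce the analogue of the sequence $\seq{\alpha_i}$ for $\Lrm[\Crm]$. Jensen's fine structure, and in particular the $\Sigma_k$ uniformization theorem (Theorem~\ref{jensenTheorem}), goes through for the hierarchy $\Lrm[A]$ built over an arbitrary predicate $A$ (see \cite{jensenNewBook}) and, exactly as the authors note for the unrelativized case, never uses more Collection or Separation than is already available in $\ZFCmik{k}$; so $U$ carries a $\Sigma_k$ Skolem function $h^\Crm$ for $\Lrm[\Crm]$ defined by a fixed formula. Mimicking Lemma~\ref{rho-definitions}, set $\alpha_0^\Crm = \kappa$ and $\alpha_{i+1}^\Crm = \bigcup W^\Crm(\alpha_i^\Crm)$, where $W^\Crm(\beta)$ collects the values $h^\Crm(\godel{\phi},x)$ over $\Sigma_k$ formulae $\phi$ and $x \in \Lrm_\beta[\Crm]$. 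As there, $\SOR$ shows that $\alpha_i^\Crm$ is defined for every $i \in \omega$ (the good $i$ form an inductive set) and that $\bigcup_i \Lrm_{\alpha_i^\Crm}[\Crm] \models \ZFCmik{k}$ with the same largest cardinal $\kappa$, so this direct limit is precisely the unrolled model of $(M,\Mcal(\Crm))$.

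Next I would read off the code as in Corollary~\ref{cor:define-trm-lcal} and Lemma~\ref{lem:define-trm-lcal-crm}: after a pairing function contracts the index set, put $(i,j,x)$ into $\Trm_\Mcal(\Crm)$ exactly when there is a condition $p \in \Crm$ forcing that $x$ lies in the $j$-th element of $\Lrm_{\alpha_i^{\dot{\Crm}}}[\dot{\Crm}]$ in the $\Lrm[\dot{\Crm}]$-least enumeration of that level. Since $\Add(\Ord,1)$ is tame and $\mathord{<}\Ord$-closed its forcing relation is a definable class (Theorem~\ref{thm:forcing-preserves-axioms}), and since $\Crm$ is second-order definable the quantifier ``there is $p \in \Crm$'' is expressible; hence this is a legitimate second-order definition over $(M,\Mcal)$, and it codes $\Mcal(\Crm)$ because every class of the extension appears at some level $\alpha_i^\Crm$. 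Absoluteness to width-extensions then comes for free: a width-extension $(M,\Ycal)$ has the same well-orders, hence the same ordinals and the same hierarchies $\Lrm$ and $\Lrm[\Crm]$ in its unrolled model, hence computes $h^\Crm$, the sequence $\seq{\alpha_i^\Crm}$, and the forcing relation of the (definable) forcing identically, so it computes $\Trm_\Mcal(\Crm)$ identically, provided it also computes $\Crm$ identically; this last point is guaranteed once $\Crm$ is given by the canonical width-extension-absolute definition (the $\KMCC_k$-analogue of Lemmas~\ref{le:cannonicalChoiceofC} and~\ref{lem:cohen}, built from the code $\Trm_\Mcal$ of the preceding lemma), or, equivalently, by strengthening the hypothesis to uniform definability across width-extensions as in Lemma~\ref{lem:define-trm-lcal-crm}.

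The step I expect to be the real obstacle is verifying that Jensen's $\Sigma_k$ uniformization genuinely survives the two simultaneous weakenings imposed on his setting: relativization to the Cohen class $\Crm$, and replacing $\ZFC$ by the fragment $\ZFCmik{k}$ (no powerset, restricted Collection and Separation), all while keeping the Skolem-function definition absolute between the unrolled models of different width-extensions. I would handle this by citing \cite{jensenNewBook} for the relativized fine structure and, as the authors already do for Lemma~\ref{rho-definitions}, observing that none of Jensen's constructions (the rudimentary functions, the $\Jrm$-hierarchy, the standard codes) exceed the complexity of the Skolem function being uniformized, so they are available in $\ZFCmik{k}$ and their defining formulae are fixed in advance; everything else (the $\SOR$-driven $\omega$-recursion, the forcing bookkeeping, the reshuffling into a code) is a routine transcription of arguments already carried out earlier in the paper.
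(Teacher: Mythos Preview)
Your argument is essentially correct, but it takes a substantial detour that the paper avoids. The paper does not prove this lemma explicitly (it is stated with \qed), but the intended argument is clear from the parallel with Lemma~\ref{lem:define-trm-lcal-crm} and the preceding lemma: one simply \emph{reuses} the cofinal sequence $\seq{\alpha_i : i \in \omega}$ already constructed in Lemma~\ref{rho-definitions} from the unrelativized fine structure of $U$. Since the Cohen extension is a width-extension (Theorem~\ref{thm:width-extensions}), its unrolled model has exactly the same ordinals as $U$, so $\seq{\alpha_i}$ is automatically cofinal there as well. One then puts $(i,j,x)$ into $\Trm_\Mcal(\Crm)$ when some $p \in \Crm$ forces that $x$ is in the $j$-th element of $\Lrm_{\alpha_i}[\dot\Crm]$ under the canonical enumeration---exactly the same forcing trick you describe, but indexed by the \emph{old} sequence.

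The payoff of the paper's route is precisely that the step you flag as ``the real obstacle''---verifying Jensen's $\Sigma_k$ uniformization for the relativized hierarchy $\Lrm[\Crm]$ inside the weak fragment $\ZFCmik{k}$---never arises. No relativized Skolem function $h^\Crm$ is needed, no new sequence $\seq{\alpha_i^\Crm}$ is built, and the absoluteness to width-extensions follows immediately from the absoluteness of $\seq{\alpha_i}$ already established. Your approach would also work (relativized fine structure is available, and your absoluteness argument is sound), but it front-loads a genuine technical verification that can be sidestepped entirely by noticing that forcing doesn't change the ordinal height.
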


The extension by $\Crm$ is a width-extension, so it is among those extensions for which the definition of $\Trm_\Mcal(\Crm)$ is absolute.

Finally, we must say how to define $\Crm$. But there is no new content here. Work in a model of $\KMCC_k + \SOR + \MinMod_k$. Using the code $\Trm_\Mcal$ we extract a canonical $\Ord$-sequence of the dense subclasses of $\Add(\Ord,1)$ in the model. We extend to meet these subclasses one at a time, always using the $\Lrm$-order to make choices of how to extend. Here $\SOR$ comes into play to ensure this construction never takes us outside the model. So in $\Ord$ many steps we produce $\Crm$. And any extension which defines $\Trm_\Mcal$ the same will define $\Crm$ the same.

Let $\Class = \MinMod_k[\Crm]$ be the second-order formula which expresses that the classes are precisely those which appear in the code $\Trm_\Mcal[\Crm]$. Intuitive, this formula expresses that the model is the forcing extension of the minimum model of $\KMCC_k$ by the canonical choice of a Cohen generic.

Following the same interpretation strategy as before, we get that any model of $\Class = \MinMod_k$ is bi-interpretable with its extension by $\Crm$.

\begin{theorem}
Fix $k \ge 1$. The following two theories are bi-interpertable.
\begin{enumerate}
    \item $D_k = \KMCC_k + \SOR + \Class = \MinMod_k$.
    \item $U_k = \KMCC_k + \SOR + \Class = \MinMod_k[\Crm]$. \qed
\end{enumerate}
\end{theorem}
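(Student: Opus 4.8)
The plan is to mirror, in the uniform setting, the proof of semantic non-tightness of $\KMCC_k$ from Section~\ref{semanticNonTightnessForStrongerTheories}, in precisely the way the non-tightness of $\GB$ (Section~\ref{NonTightnessGB}) mirrors Theorem~\ref{thm:gb-not-sem-tight}. Throughout, $\Mcal$ denotes the hyperclass coded by $\Trm_\Mcal$, i.e. the classes that are second-order constructible and lie in the minimum model of $\KMCC_k$; this is a second-order definable hyperclass by the code lemmata preceding this theorem. In a model of $U_k$ the slices of $\Trm_\Mcal$ pick out exactly the class collection of the minimum submodel, while in a model of $D_k$ they are the whole class collection.

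First I would interpret $D_k$ in $U_k$. Work in an arbitrary $(M,\Ycal)\models U_k$. Let $\Ical$ be the two-sorted structure whose sets are all of $M$, with $\in^\Ical=\mathord\in$, and whose classes are the slices of $\Trm_\Mcal$ (these lie in $\Ycal$ since $\Mcal\subseteq\Ycal$); the domain is second-order definable since $\Trm_\Mcal$ is. The uniform analogues of the lemmata of Section~\ref{semanticNonTightnessForStrongerTheories}, together with Lemma~\ref{rho-definitions}, give that $(M,\Mcal)\models\KMCC_k$; Lemma~\ref{lem:sor-goes-down} gives $\SOR$; $\Vrm=\Lrm$ passes down as it quantifies only over sets; and $\Class=\MinMod_k$ holds by the very definition of $\Mcal$. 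Hence $\Ical$ interprets $D_k$ in $U_k$.

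Second I would interpret $U_k$ in $D_k$, via the code $\Trm_\Mcal[\Crm]\subseteq\omega\times\Ord\times M$, which is second-order definable over any $(M,\Mcal)\models D_k$. Fixing the lexicographic well-order on $\omega\times\Ord$ (with $\omega$ first, using the global well-order on $\Ord$), let $\Jcal$ have sets $\{0\}\times M$ and classes those triples $(1,i,\xi)$ for which the $(i,\xi)$-th slice of $\Trm_\Mcal[\Crm]$ differs from every earlier slice; set $(0,x)\in^\Jcal(0,y)$ iff $x\in y$, and $(0,x)\in^\Jcal(1,i,\xi)$ iff $x\in(\Trm_\Mcal[\Crm])_{(i,\xi)}$. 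The resulting structure is an isomorphic copy of $(M,\Mcal[\Crm])$, which satisfies $\KMCC_k+\SOR$ by Theorems~\ref{thm:forcing-preserves-axioms} and \ref{thm:width-extensions} and Lemma~\ref{lem:forcing-preserve-sor}, and satisfies $\Class=\MinMod_k[\Crm]$ by the design of the code; so $\Jcal$ interprets $U_k$ in $D_k$.

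For the round trips one argues exactly as in Theorem~\ref{thm:gb-not-sem-tight}: strip the tag $0$ off the sets, and send a class $X$ to the index $(1,i,\xi)$ with $(i,\xi)$ the least pair such that the $(i,\xi)$-th slice of the relevant code ($\Trm_\Mcal$ or $\Trm_\Mcal[\Crm]$) equals $X$. This map is first-order definable from the code, hence second-order definable; correctness of the two compositions follows from the absoluteness of $\Trm_\Mcal$ and $\Trm_\Mcal[\Crm]$ across width-extensions, which includes the idempotence-type fact that passing to $\Mcal[\Crm]$ and back does not alter the code. I expect the genuine work to be concentrated in the second step: verifying uniformly, and from outside the model, that the classes coded by $\Trm_\Mcal[\Crm]$ are closed under $\Sigma^1_k$-Comprehension and $\Sigma^1_k$-Class Collection --- this requires running the forcing-preservation argument of Theorem~\ref{thm:forcing-preserves-axioms} internally to the $D_k$-model and relies on the fine-structural absoluteness of the cofinal sequence $\seq{\alpha_i}$, itself a consequence of Jensen's $\Sigma_k$ uniformization (Theorem~\ref{jensenTheorem}). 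The remaining verifications are routine adaptations of the $\GB$ arguments.
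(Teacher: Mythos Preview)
Your proposal is correct and follows essentially the same approach as the paper: the paper simply marks the theorem with \qed after the preceding lemmata, with the remark ``Following the same interpretation strategy as before'', and your write-up spells out exactly that strategy---interpret $D_k$ in $U_k$ by restricting to the definable hyperclass $\Mcal$, interpret $U_k$ in $D_k$ via indices into the code $\Trm_\Mcal[\Crm]$, and verify the compositions using absoluteness of the codes across width-extensions. Your added commentary on where the genuine work lies (the forcing-preservation and fine-structural absoluteness already packaged into the earlier lemmata) is accurate but not part of the present proof, since those facts were established beforehand.
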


\begin{corollary}
Fix $k \ge 1$.
The theories $\KMCC_k$ and $\KMCC_k + \SOR$ are not tight. \qed
\end{corollary}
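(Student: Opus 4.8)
The plan is to obtain the corollary essentially for free from the preceding bi-interpretability theorem; the only things to verify are that $D_k$ and $U_k$ are extensions of $\KMCC_k$ (and of $\KMCC_k + \SOR$) in the same language, and that they have different deductive closures. For the language point, note that each of $\SOR$, $\Class = \MinMod_k$, and $\Class = \MinMod_k[\Crm]$ is a single sentence in the ordinary two-sorted language of class theory: this is precisely the content of the chain of absoluteness lemmas culminating in the definition of $\Crm$, which make $\Crm$ a \emph{definable} class rather than a new primitive symbol. Since $D_k \supseteq \KMCC_k + \SOR \supseteq \KMCC_k$, and likewise for $U_k$, both are extensions, in the same language, of each of the two theories named in the corollary. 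Granting the theorem that $D_k$ and $U_k$ are bi-interpretable, the non-tightness of $\KMCC_k$ and of $\KMCC_k + \SOR$ thus reduces to the single claim that $D_k$ and $U_k$ do not have the same deductive closure.

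To prove that claim I would exhibit a sentence separating the two theories, the natural candidate being $\Class = \Lrm$. By definition $\Class = \MinMod_k$ includes $\Class = \Lrm$, so $D_k \vdash \Class = \Lrm$; I claim $U_k \vdash \lnot(\Class = \Lrm)$. To see this, work in $U_k$: the axiom $\Class = \MinMod_k[\Crm]$ makes $\Crm$ a class (it appears in the code $\Trm_\Mcal[\Crm]$, being among the $\Crm$-constructible classes) and entails that $\Crm$ meets every $\Trm_\Mcal$-definable dense subclass of $\Add(\Ord,1)$, i.e.\ that $\Crm$ is generic for $\Add(\Ord,1)$ over the (uniformly definable) collection of second-order constructible classes. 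But for each second-order constructible $d \subseteq \Ord$ the class $\{ p \in \Add(\Ord,1) : p(\alpha) \neq d(\alpha) \text{ for some } \alpha \in \dom p \}$ is dense and definable from $d$, so $\Crm$ meets it and hence $\Crm \neq d$; as this holds for every constructible $d$, there is a class, namely $\Crm$, that is not second-order constructible, so $\lnot(\Class = \Lrm)$. Since the earlier constructions already give a model $(\Lrm_\kappa,\Mcal) \models D_k$ for $\kappa$ inaccessible (and, similarly, $(\Lrm_\kappa,\Mcal[\Crm]) \models U_k$), the theory $D_k$ is consistent; as $D_k \vdash \Class = \Lrm$, the sentence $\lnot(\Class = \Lrm)$ is a theorem of $U_k$ but not of $D_k$, so the two theories have different deductive closures. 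Therefore neither $\KMCC_k$ nor $\KMCC_k + \SOR$ is tight.

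I do not expect any serious obstacle at the level of the corollary itself; the one point deserving care is the status of $\Crm$ in the two theories. Over a model of $D_k$, where every class is already second-order constructible, the formula defining $\Crm$ picks out a second-order definable collection of ordinals which is provably \emph{not} a class; it is only after passing to the width-extension that adjoins it that $\Class = \MinMod_k[\Crm]$ becomes true, with the absoluteness of $\Trm_\Mcal$ and $\Trm_\Mcal(\Crm)$ across width-extensions guaranteeing that the canonical generic recomputed in the extension is again $\Crm$. This is exactly what the results on width-extensions (e.g.\ Theorem~\ref{thm:width-extensions}) and on the codes $\Trm_\Mcal(\Crm)$ provide, and the bi-interpretation itself is witnessed by the same pair $\Ical,\Jcal$ as in the $\GB$ case (with $\Ical$ cutting down to the definable hyperclass of constructible classes and $\Jcal$ representing each class by the index of its first slice in $\Trm_\Mcal[\Crm]$), so nothing new is needed there. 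The genuinely hard work of this section, which I am taking as given, is showing that the $\Trm_\Mcal$ construction and the transfinite recursion building $\Crm$ go through uniformly over arbitrary, possibly $\omega$-nonstandard, models of $\KMCC_k + \SOR + \Class = \MinMod_k$, using $\SOR$ (via Lemma~\ref{SORec}) in place of the regularity of $\kappa$ and Jensen's uniformization theorem (Theorem~\ref{jensenTheorem}, via Lemma~\ref{rho-definitions}) in place of the transitive fine-structure facts used in Section~\ref{semanticNonTightnessForStrongerTheories}.
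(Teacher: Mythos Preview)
Your argument is correct and follows the paper's approach: the paper simply writes \qed\ after the bi-interpretability theorem, leaving implicit exactly the points you spell out---that $D_k$ and $U_k$ are extensions of $\KMCC_k+\SOR$ in the same language (since $\Crm$ is second-order definable rather than a new primitive) and that they differ, witnessed by $\Class=\Lrm$. Your explicit density argument that $U_k\vdash\lnot(\Class=\Lrm)$ and your attention to the consistency of $D_k$ (via the inaccessible construction of Section~\ref{semanticNonTightnessForStrongerTheories}) are more careful than the paper, which tacitly assumes the reader sees these points; note in particular that the corollary, as stated, does implicitly require $\Con(\KMCC_k+\SOR)$, a point the paper does not make explicit.
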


\section{Non-tightness in second-order arithmetic}\label{NonTightnessArithmetic}

The constructions used in the previous section also work in the context of second-order arithmetic. However, there are enough subtleties and notational differences in arithmetic context that for ease of exposition we discuss it separately in this section. Most proofs carry over \textit{mutatis mutandis} from the class theory context, and we leave it to the interested reader to rewrite the proofs with the changed details.
%\alfredo{I think we should reference your presentation available in youtube.}
We state some facts about models of second-order arithmetic without proof, and we point the reader to Simpson's book on the subject \cite{simpson:book}, especially Chapter VII, for proofs and detailed references. 

In the class theory context, to get the failure of tightness we added the full second-order Replacement schema to our theories. In the arithmetic context, the analogue is the full Induction schema, i.e. the instances of Induction for every second-order formula, and we will include it in our theories to ensure constructions go through the model's full $\omega$.

The strategy is the same as in class theory. For fragments of second-order arithmetic we can write down a theory which characterizes a minimum model. We can define a canonical code for this minimum model, and thereby define a canonical Cohen extension of the minimum model. These two models are bi-interpretable. Of course, with the nonstandardness phenomenon there is no hope for an notion of minimum absolute between all models of the theory. But we can get a sufficiently absolute notion to enable the bi-interpretation, so we get the fragment of $\Zsf_2$ cannot be tight.

First we discuss the analogue of $\GB$. The theory $\ACA_0$ has as its principle axioms Induction and Comprehension for arithmetical formulae. If you strengthen Induction to the full schema, over all second-order formulae, you get the theory $\ACA$. Every $\omega$-model of $\ACA_0$---viz. a model whose numbers are isomorphic to $\omega$---automatically satisfies full $\ACA$. But for nonstandard models the theories diverge. For instance, analogous to the situation with $\GB$ and $\SOR$, over $\ACA$ you can prove that the $\Sigma_k$-satisfaction class exists for all $k$, even nonstandard. Whereas with just $\ACA_0$ you are only guaranteed to have such for standard $k$. The reason, of course, is that $(M,\Def(M))$ is always a model of $\ACA_0$ for any $M \models \PA$, and no nonstandard $\Sigma_k$-satisfaction class can be definable.

Following the $\GB$ context, we can write down a theory which identifies the minimum $\omega$-model, namely the arithmetical reals, among all $\omega$-models. Using full Induction, this will allow a definition sufficiently absolute among nonstandard models to enable two distinct but bi-interpretable extensions of $\ACA$.

There is a second-order definition for the (first-order) satisfaction predicate, call it $\Trm$. Indeed, the same definition as before---viz. the union of the $\Sigma_k$-satisfaction classes---will do, modulo the details of G\"odel coding. Note that full Induction is used to ensure there is a $\Sigma_k$-satisfaction predicate for every $k$ in the model. This makes possible a theory $D$ expressing $\ACA$ $+$ ``every set is arithmetical'' and a theory $U$ expressing $\ACA$ $+$ ``there is a canonical Cohen-generic $\Crm$ over the arithmetical sets and every set is arithmetical in $\Crm$''. These two theories are then bi-interpretable, with two key points---proved much the same as the class theoretic case---being that forcing preserves full and induction and the definition of $\Trm$.

All in all, we get the following result.

\begin{theorem}
The theory $\ACA$ is not tight. Consequently, any weakening of $\ACA$ in the language of second-order arithmetic, such as $\ACA_0$, is also not tight. \qed
\end{theorem}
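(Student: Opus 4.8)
The plan is to transport the construction of Section~\ref{NonTightnessGB} essentially verbatim, with the dictionary: $\GB$ becomes $\ACA_0$, the full second-order Replacement schema $\SOR$ becomes the full Induction schema, $\Ord$ becomes $\omega$, the class forcing $\Add(\Ord,1)$ becomes Cohen forcing on $2^{<\omega}$, and the $\HOD$-order becomes the ordering of the arithmetical sets induced by least G\"odel code (so no ``$\Vrm = \HOD$''-style hypothesis is needed, since arithmetic comes with canonical coding built in). I would first fix a model $(M,\Xcal) \models \ACA$ and, as in the paragraphs above, define the first-order satisfaction class $\Trm$ to be the union of the $\Sigma_k$ satisfaction classes. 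Here full Induction does exactly what $\SOR$ did in Section~\ref{NonTightnessGB}: it guarantees a $\Sigma_k$ satisfaction class exists for every $k \in \omega^M$, standard or not, so $\Trm$ is a genuine set measuring all first-order formulae. From $\Trm$ I would define $\Dcal$, the hyperclass of arithmetical sets (those of the form $\{ n : \phi[n,\vec a] \in \Trm \}$), and verify, by the arguments of Lemma~\ref{lem:sor-goes-down} and the $\Dcal$-lemmata of Section~\ref{NonTightnessGB}, that $(M,\Dcal) \models \ACA$, that the $\Dcal$ operator is idempotent, and that $\Trm^\Dcal = \Trm$. Thus the theory $D$, namely $\ACA$ plus ``every set is arithmetical'', is consistent, witnessed by $(M,\Dcal)$.

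Next I would build the canonical Cohen generic. As in Lemma~\ref{le:cannonicalChoiceofC} and Lemma~\ref{lem:cohen}, use $\Trm$ to extract a canonical $\omega$-enumeration $\seq{D_n : n \in \omega}$ of all arithmetical dense subsets of Cohen forcing, then recursively define conditions $\seq{p_n : n \in \omega}$ by taking $p_{n+1}$ to be the least extension of $p_n$ meeting $D_n$, and set $\Crm = \bigcup_n p_n$. Full Induction is invoked here exactly where $\SOR$ was: the recursion is driven by a second-order definition (it must refer to $\Trm$), so only the full Induction schema guarantees it runs through all of $\omega^M$ and returns a genuine set $\Crm$. One checks, as before, that $\Crm$ and the relativized satisfaction class $\Trm(\Crm)$ have definitions absolute between any two models of $\ACA$ sharing the same numbers and the same $\Trm$. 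Since Cohen forcing adds no new numbers, the extension $(M,\Dcal(\Crm))$ has the same numbers and defines $\Trm$ the same; and, analogously to Theorem~\ref{thm:forcing-preserves-axioms} and Lemma~\ref{lem:forcing-preserve-sor}, Cohen forcing preserves full Induction, so $(M,\Dcal(\Crm)) \models \ACA$. Hence the theory $U$, namely $\ACA$ plus ``$\Crm$ is the canonical Cohen generic over the arithmetical sets and every set is arithmetical in $\Crm$'', is consistent.

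The bi-interpretation of $D$ and $U$ then runs just as in Theorem~\ref{thm:gb-not-sem-tight}: interpret $D$ in $U$ by cutting the sets of the model down to the arithmetical ones, a definable hyperclass via $\Trm$, with the interpretation the identity on its domain; interpret $U$ in $D$ by representing each set of the extension by the least index of a slice of $\Trm(\Crm)$ which produces it, reading membership off the slices. Both composites are definable bijections, because both $D$ and $U$ compute $\Trm$, and hence $\Trm(\Crm)$, in the way dictated by the construction. Now $D$ proves every set is arithmetical whereas $U$ proves $\Crm$ is a non-arithmetical set (a Cohen generic over the arithmetical sets is new), so $D$ and $U$ have different deductive closures while being bi-interpretable; since both extend $\ACA$ in the language of second-order arithmetic, $\ACA$ is not tight. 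The final clause is immediate: if $S \subseteq T$ are theories in the same language and $T$ is not tight, then $S$ is not tight, because a pair of bi-interpretable extensions of $T$ with distinct deductive closures is also such a pair for $S$; taking $T = \ACA$ and $S$ any weakening such as $\ACA_0$ finishes the argument.

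The step I expect to require the most care is confirming that full Induction genuinely replaces $\SOR$ throughout: that it delivers $\Sigma_k$ satisfaction classes at nonstandard $k$, hence the existence of $\Trm$, and hence the success of the logically complex (though only $\omega$-length) recursion defining $\Crm$; together with checking that the forcing-preservation and absoluteness lemmata survive translation into arithmetic, in particular that Cohen forcing over a model of $\ACA$ preserves the full Induction schema. Everything else is a routine transcription of Section~\ref{NonTightnessGB} modulo G\"odel-coding bookkeeping.
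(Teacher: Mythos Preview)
Your proposal is correct and follows essentially the same approach as the paper: the paper explicitly says the $\GB$ construction carries over \emph{mutatis mutandis}, with full Induction replacing $\SOR$, G\"odel coding replacing the $\HOD$-order, and the two key points being that Cohen forcing preserves full Induction and the definition of $\Trm$. Your dictionary and the two theories $D$ and $U$ match the paper's exactly, and your write-up is in fact more detailed than the paper's own sketch.
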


For stronger fragments of $\Zsf_2$ the same basic strategy works, but producing the code for the minimum model is more difficult than defining a satisfaction predicate. We start by recalling some definitions and facts.

\begin{definition} \label{def:soas}
Fix $k \ge 1$.
\begin{itemize}
\item The theory $\PnCA{k}_0$ is obtained from $\ACA_0$ by adding Comprehension for $\Pi^1_k$ formulae.
\item The theory $\PnCA{k}$ is obtained from $\PnCA{k}_0$ by adding full Induction.
\item The theory $\SnAC{k}_0$ is obtained from $\PnCA{k}$ by adding the $\Sigma^1_k$-Choice schema. This schema is the arithmetic counterpart to the $\Sigma^1_k$-Class Collection schema; cf. Definition~\ref{def:cc}.
\item The theory $\SnAC{k}$ is obtained from $\SnAC{k}_0$ by adding full Induction.
\end{itemize}
\end{definition}

The theory $\ATR_0$, a strict subtheory of $\PCA_0$, is strong enough to carry out the unrolling construction. As such, theories of arithmetic which extend $\ATR_0$ are bi-interpretable with certain set theories. These theories are strong enough to carry out the construction of $\Lrm$. Restricting to the constructible sets gives fragments of the AC schema, so $\SnAC{k}$ does not exceed $\PnCA{k}$ in consistency strength. Unlike in class theory, however, weak enough fragments of the $\AC$ schema are outright provable, without any assumption of every set being constructible.

\begin{theorem}
The $\Sigma^1_1$-Choice schema is a consequence of $\ATR_0$, and over $\ATR_0$ the $\Sigma^1_2$-Choice schema is equivalent to $\Delta^1_2$-Comprehension. For $k > 2$, the $\Sigma^1_k$-Choice schema is a consequence of $\PnCA{k}_0$ $+$ ``there is a real from which every real is constructible''.
\end{theorem}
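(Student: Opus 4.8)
All three clauses are classical results of the reverse mathematics of second-order arithmetic, and the plan is to reduce each to a known theorem from Simpson's book \cite{simpson:book} (Chapters VII and VIII) rather than to redevelop the underlying descriptive set theory; below I indicate the shape of each argument and where the difficulty lies.

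For the first clause, the plan is to put an instance $\forall n\,\exists X\,\phi(n,X)$ of $\Sigma^1_1$-Choice into Kleene normal form, so that $\phi(n,X)$ asserts ill-foundedness of a tree $T_{n,X}$ recursive uniformly in $n,X$, and then to select witnesses uniformly. Working in $\ATR_0$, one uses arithmetical transfinite recursion---equivalently $\Sigma^1_1$-separation, to which $\ATR_0$ is equivalent---to build, uniformly in $n$, a canonical ``leftmost'' branch. The place where the naive construction fails over $\ACA_0$ is precisely that deciding whether a given subtree is ill-founded is a $\Sigma^1_1$, not arithmetical, question; arithmetical transfinite recursion along a pseudo-hierarchy is exactly what resolves this. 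This is Simpson's proof that $\ATR_0 \vdash \Sigma^1_1\text{-}\DC_0$, a fortiori $\Sigma^1_1\text{-}\AC_0$; I would cite it.

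For the second clause, the forward direction $\SnAC{2}_0 \vdash \Delta^1_2\text{-}\CA_0$ is the general fact that $\Sigma^1_k$-Choice proves $\Delta^1_k$-Comprehension: given a set defined by equivalent formulas $\exists X\,\alpha(n,X)$ and $\forall X\,\beta(n,X)$ with $\alpha\in\Pi^1_1$ and $\beta\in\Sigma^1_1$, apply $\SnAC{2}_0$ to the matrix $\alpha(n,X)\vee\neg\beta(n,X)$ (which is $\Pi^1_1$, hence $\Sigma^1_2$) to obtain a sequence $\langle X_n\rangle$, note that the set in question equals $\{n : \alpha(n,X_n)\}$, and form it by $\PCA_0$---which $\SnAC{2}_0$ also proves by a similar one-step choice argument. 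The reverse direction, $\ATR_0 + \Delta^1_2\text{-}\CA_0 \vdash \SnAC{2}_0$, is the substantive half: it is a formalized version of the Kond\^o--Addison $\Sigma^1_2$-uniformization theorem, proved via the tree representation of $\Pi^1_1$ sets (available in $\ATR_0$) together with Kond\^o's selection of the leftmost branch in the Kleene--Brouwer order. The main obstacle of this clause is verifying that this uniformization, applied across all $n$, can be carried out using only $\Delta^1_2$-Comprehension over $\ATR_0$; again this is in Simpson.

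For the third clause, fix $k > 2$ and work in $\PnCA{k}_0$ plus ``there is a real $z$ from which every real is constructible''. The plan has three steps. (i) Since $\PCA_0$, and hence $\PnCA{k}_0$, proves $\ATR_0$, the hierarchy $L[z]$ can be formalized, and the constructibility prewellordering $<_{L[z]}$ on reals is $\Sigma^1_2(z)$; it is therefore a genuine set by $\Sigma^1_2$-Comprehension, which is provable in $\PnCA{k}_0$, and since every real lies in $L[z]$ it well-orders all the reals. (ii) Given $\forall n\,\exists X\,\phi(n,X)$ with $\phi\in\Sigma^1_k$, write $\phi(n,X)\equiv\exists Y\,\alpha(n,X,Y)$ with $\alpha\in\Pi^1_{k-1}$ and let $\chi(n,p)$ say that $p$ codes the $<_{L[z]}$-least pair $(X,Y)$ with $\alpha(n,X,Y)$; expressing minimality as $\forall p'\,(p'\prec p \to \neg\alpha(n,\dots))$ with $\prec$ the set parameter $<_{L[z]}$ makes the body $\Sigma^1_{k-1}$ and the bounded part arithmetic, so $\chi$ is $\Pi^1_k$. (iii) By $\PnCA{k}_0$ the graph $\{\langle n,p\rangle : \chi(n,p)\}$ exists; it is a function assigning to each $n$ a witnessing pair, and the choice sequence $\langle X_n\rangle$ is then arithmetic in this graph, with $\phi(n,X_n)$ holding for all $n$. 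The one delicate point is the complexity bookkeeping in (ii): keeping $\chi$ at level $\Pi^1_k$ rather than $\Pi^1_{k+1}$ forces one both to have $<_{L[z]}$ available as an actual set and to phrase minimality with a universal quantifier over competitors---which is exactly why $\PnCA{k}_0$, giving $\Sigma^1_k$- but not $\Sigma^1_{k+1}$-Comprehension, is enough.
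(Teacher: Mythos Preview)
The paper does not give a proof of this theorem at all: it is one of the ``facts about models of second-order arithmetic'' that the authors explicitly ``state \ldots\ without proof,'' pointing to Simpson's book, Chapter~VII. So your sketch already goes well beyond what the paper does, and for the first two clauses your outline is accurate and matches Simpson's arguments.

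There is, however, a genuine gap in your treatment of the third clause, at step~(iii). Two related issues: First, $<_{L[z]}$ is a $\Delta^1_2(z)$-\emph{definable relation on reals}, not a subset of $\mathbb N$, so it cannot be ``a genuine set'' obtained by $\Sigma^1_2$-Comprehension; it functions only as a formula parameter in your complexity count (which, as it happens, still yields $\chi\in\Pi^1_k$). Second, and more seriously, ``the graph $\{\langle n,p\rangle : \chi(n,p)\}$'' is a collection of pairs (number, real) and is not an object of second-order arithmetic; what you actually need is the single real $W=\{\langle n,m\rangle : m\in X_n\}$. Extracting $W$ from your $\Pi^1_k$ predicate $\chi$ by either $\exists p\,[\chi(n,p)\wedge m\in X_p]$ or $\forall p\,[\chi(n,p)\to m\in X_p]$ gives only a $\Delta^1_{k+1}$ description, which $\PnCA{k}_0$ does not suffice to comprehend. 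Simpson's actual argument (Theorem~VII.6.9, proving the stronger conclusion from the weaker hypothesis $\Delta^1_k\text{-}\mathsf{CA}_0$) avoids this by exploiting more of the structure of $L[z]$---in particular, that every proper initial segment of $<_{L[z]}$ is uniformly enumerated by a real---so that the universal real quantifier ``$\forall p' <_{L[z]} p$'' can be replaced by a number quantifier over a coded enumeration, keeping the overall complexity down. Your sketch has the right architecture but is missing precisely this reduction.
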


For our purposes we are looking at models which satisfy that every real is constructible, or Cohen-extensions thereof. So we will only be looking at models of $\SnAC{k}$. (This will include $\SnAC{1} = \PCA$ and $\SnAC{2} = \PnCA{2}$, but for the sake of uniform notation we will use the former names.) Here are bi-interpretation results for these theories.

\begin{theorem}
The following pairs of theories are bi-interpretable.
\begin{itemize}
\item $\Zsf_2$ $+$ $\Sigma^1_\infty$-$\mathsf{CA}$ and $\ZFCm$ plus ``every set is countable''.
\item For $k \ge 1$, $\PnCA{k+1}_0$ and $\ZFCm_{k}$ plus ``every set is countable''.
\end{itemize}
\end{theorem}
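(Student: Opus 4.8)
The plan is to prove both bi-interpretations by the \emph{unrolling construction} of Scott~\cite{scott1960} --- the same device behind the class-theoretic bi-interpretations of Marek--Mostowski~\cite{marek-mostowski1975} and Ratajczyk~\cite{ratajczyk1979} recalled in Subsection~\ref{bi-interpretability-first-order}, of which the present statements are the second-order-arithmetic incarnations. Since every theory named in the statement extends $\ATR_0$, which already suffices to develop the theory of countable well-founded extensional relations, the construction is available throughout, and the full details can be found in Simpson~\cite[Ch.~VII]{simpson:book} and, for the graded versions, in the second author's dissertation~\cite{williams-diss}.

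To interpret set theory in arithmetic I would start from a model $(M,\Scal)$ of full second-order arithmetic (resp.\ of $\PnCA{k+1}_0$) and let the elements of the interpreted set model be equivalence classes of \emph{membership codes}: pairs $(e,t)$ where $e \in \Scal$ codes a binary relation $E_e \subseteq \omega \times \omega$ that $(M,\Scal)$ believes is extensional and well-founded and $t \in \omega$ is a designated top node, with equality given by isomorphism of the codes below their tops and $x \in y$ meaning ``the code of $x$ is isomorphic to the sub-code of $y$ below some $E_e$-predecessor of its top''. Then Extensionality, Pairing, Union, Infinity (take the code of $(\omega,\in)$), Foundation (the codes are internally well-founded) and the well-ordering theorem (each code's field is a subset of $\omega$) are verified by the familiar combinatorics on codes, as is ``every set is countable''. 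In the other direction, from a model $N$ of $\ZFCm$ (resp.\ $\ZFCm_k$) together with ``every set is countable'' --- hence, by the well-ordering theorem and Collection, a model in which every set is hereditarily countable --- I would interpret a structure for second-order arithmetic by taking $\omega^N$ for the numbers and $\{A \in N : N \models A \subseteq \omega\}$ for the sets; the $\PA$ axioms are immediate and the Induction axioms present in the theory follow from Separation and Foundation in $N$, since $\omega^N$ is genuinely well-ordered inside $N$. One then checks that the two interpretations compose to the identity up to a definable isomorphism: from the set side a set is sent to the membership code read off its (countable, hence arithmetically codable) transitive closure, and from the arithmetic side a set of naturals is sent to the code it directly determines, and these maps are definable by, respectively, Separation and Comprehension.

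The step that does the real work, and where I expect the main difficulty, is the Comprehension--Separation calibration. Reading a second-order arithmetic formula $\phi$ over the set model $N$ turns number quantifiers into quantifiers bounded by $\omega^N$ and set quantifiers into unbounded ones, so an arithmetical (resp.\ $\Pi^1_{k+1}$) formula translates to one of L\'evy complexity low enough to be handled by full Separation (resp.\ by $\Sigma_k$-Separation together with $\Sigma_k$-Collection, after the normalisation of formulas that Collection permits); conversely, reading a set-theoretic formula over the unrolled model turns each set quantifier into an existential over the class of (well-founded, hence $\Pi^1_1$) codes, which shifts a $\Sigma_k$ set-theoretic formula up to $\Sigma^1_{k+1}$ --- this is the source of the ``$k \mapsto k+1$'' discrepancy between the arithmetic and set-theoretic indices --- so that $\Pi^1_{k+1}$-Comprehension (equivalently $\Sigma^1_{k+1}$-Comprehension, by complementation) suffices to witness $\Sigma_k$-Separation, with $\Sigma_k$-Collection then following from the definable well-order of the interpreted model, and full Comprehension correspondingly yielding full Separation and Collection. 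The one remaining subtlety, exactly as with class forcing in Section~\ref{sec:class-theories}, is that the models in play may be $\omega$-nonstandard and ill-founded, so ``well-founded code'' and the verification of Foundation are to be read as internal assertions; once that is granted, nothing in the construction is sensitive to ill-foundedness.
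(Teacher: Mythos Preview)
The paper does not actually prove this theorem: it is stated as a known background fact, with the reader pointed to Simpson~\cite[Ch.~VII]{simpson:book} for proofs and references (and the unrolling construction is only sketched earlier, in Subsection~\ref{bi-interpretability-first-order}, for the class-theoretic analogues). Your sketch is the standard argument one finds in those references---membership codes for the unrolling, $\omega^N$ and its subsets for the reverse direction, and the $k \mapsto k+1$ shift coming from well-foundedness being $\Pi^1_1$---and it matches exactly the explanation the paper itself gives for the index discrepancy, so there is nothing to correct.
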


In the class theoretic case, the indexing was the same. Here they are off by one. The culprit is well-foundedness. In class theory this is a first-order property, whereas in arithmetic it is $\Pi^1_1$-universal. Because of this misaligned indexing, the situation with $\SnAC{1}$ is different from the stronger theories. We discuss it first.

\begin{theorem}
The minimum $\beta$-model of $\SnAC{1}$ consists of the reals in $\Lrm_{\omega_\omega^\mathrm{CK}}$, where $\omega_\omega^\mathrm{CK}$ is the supremum of the first $\omega$ many admissible ordinals.
\end{theorem}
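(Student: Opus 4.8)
The plan is to prove the two inclusions separately: that the reals of $\Lrm_{\omega_\omega^{\mathrm{CK}}}$ constitute a $\beta$-model of $\SnAC{1}$, and that these reals are contained in every $\beta$-model of $\SnAC{1}$. First, though, I would make explicit what theory is in play. By Definition~\ref{def:soas} and the remark that $\SnAC{1}=\PCA$, the theory $\SnAC{1}$ is exactly $\Pi^1_1$-comprehension: the $\Sigma^1_1$-Choice schema is already provable in $\ATR_0\subseteq\PnCA{1}_0$, so it contributes no additional strength, and the governing closure condition is closure under the hyperjump $X\mapsto\mathcal{O}^X$. This is precisely the point that pins the answer at $\omega_\omega^{\mathrm{CK}}$ rather than at the least admissible $\omega_1^{\mathrm{CK}}$: pure $\Sigma^1_1$-Choice has minimum $\beta$-model only the hyperarithmetic reals $\Lrm_{\omega_1^{\mathrm{CK}}}\cap\powerset(\omega)$, but the presence of $\Pi^1_1$-comprehension upgrades this to full hyperjump-closure, and iterating the hyperjump $\omega$ many times is what carries the model's ordinals through the first $\omega$ admissibles.

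For the minimality direction, let $(\omega,\mathcal{S})$ be an arbitrary $\beta$-model of $\SnAC{1}$. Since $\mathcal{S}\models\Pi^1_1$-comprehension it is closed under the hyperjump, so starting from $\emptyset\in\mathcal{S}$ all iterated hyperjumps $\mathcal{O}^{(n)}$ lie in $\mathcal{S}$; the quantification over $n$ is unproblematic because $\mathcal{S}$, being a $\beta$-model, has a standard $\omega$, so an external induction on $n$ suffices (and the full Induction schema holds automatically in any $\omega$-model). I would then invoke the classical identification of constructible levels with iterated hyperjumps: $\Lrm_{\omega_{n+1}^{\mathrm{CK}}}\cap\powerset(\omega)$ is exactly the collection of reals hyperarithmetic in $\mathcal{O}^{(n)}$, and the union over $n$ is $\Lrm_{\omega_\omega^{\mathrm{CK}}}\cap\powerset(\omega)$. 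Because $\mathcal{S}$ satisfies $\ACA_0$ and is closed under hyperjump, it contains every real Turing-below some $\mathcal{O}^{(n)}$; as a real hyperarithmetic in $\mathcal{O}^{(n)}$ is Turing-below $\mathcal{O}^{(n+1)}$, the model $\mathcal{S}$ contains all reals hyperarithmetic in some $\mathcal{O}^{(n)}$, hence all of $\Lrm_{\omega_\omega^{\mathrm{CK}}}\cap\powerset(\omega)$.

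For the sufficiency direction, set $W=\Lrm_{\omega_\omega^{\mathrm{CK}}}\cap\powerset(\omega)$ with its induced second-order structure, and check two things. First, $W$ is a genuine $\beta$-model: if $T\in W$ codes an ill-founded tree then $T\in\Lrm_{\omega_n^{\mathrm{CK}}}$ for some $n$, so $\omega_1^T\le\omega_{n+1}^{\mathrm{CK}}<\omega_\omega^{\mathrm{CK}}$, and the leftmost branch of $T$, being $\Delta^1_1(T)$, appears below $\omega_\omega^{\mathrm{CK}}$ and thus lies in $W$; conversely any branch in $W$ is a true branch. Here I use that $\omega_\omega^{\mathrm{CK}}$ is a limit of admissibles and hence closed under $T\mapsto\omega_1^T$. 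Second, $W\models\Pi^1_1$-comprehension: for $X\in W$ with $X\in\Lrm_{\omega_n^{\mathrm{CK}}}$, the hyperjump $\mathcal{O}^X$ is $\Sigma_1$-definable over $\Lrm_{\omega_1^X}$ and so appears at level $\omega_1^X+1<\omega_\omega^{\mathrm{CK}}$, placing it in $W$; since $W$ is a $\beta$-model closed under hyperjump and under arithmetic definability, every $\Pi^1_1(X)$ set, being computable from $\mathcal{O}^X$, lies in $W$. Finally $\Sigma^1_1$-Choice holds in $W$ because it is provable in $\ATR_0$, and full Induction holds because $W$ is an $\omega$-model, so $W\models\SnAC{1}$.

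The step I expect to be the main obstacle is the fine-structural bookkeeping behind the two identifications used above: that the reals of $\Lrm_{\omega_{n+1}^{\mathrm{CK}}}$ are exactly those hyperarithmetic in $\mathcal{O}^{(n)}$, equivalently that a single hyperjump advances the relevant admissible by exactly one, $\omega_1^{\mathcal{O}^{(n)}}=\omega_{n+1}^{\mathrm{CK}}$. These are classical (Spector--Gandy, Kreisel, Sacks), so I would cite them and relativize carefully rather than reprove them. The remaining subtlety is purely the off-by-one indexing flagged before the statement: since well-foundedness is $\Pi^1_1$ rather than first-order, the constructible level reached by $\SnAC{1}=\PCA$ is governed by admissibility ($\KP$) rather than by $\ZFCm_k$, which is exactly why this case produces a limit of admissibles and must be handled separately from the stronger $\PnCA{k+1}$.
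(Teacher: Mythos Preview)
The paper does not prove this theorem; it is quoted as background from the literature (Simpson's book, Chapter~VII). Your sketch follows the standard argument and is largely correct, but there is one genuine error in the verification that $W=\Lrm_{\omega_\omega^{\mathrm{CK}}}\cap\powerset(\omega)$ is a $\beta$-model.

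You assert that for an ill-founded tree $T\in W$ the leftmost infinite branch is $\Delta^1_1(T)$. This is false for trees on $\omega$, which is the relevant setting since infinite descending sequences through a linear order live in Baire space, not Cantor space. Computing the leftmost branch requires, at each node, deciding which child subtree is ill-founded---a $\Sigma^1_1(T)$ question---so the leftmost branch is in general only recursive in $\mathcal{O}^T$; indeed there are recursive ill-founded trees on $\omega$ whose leftmost branch is Turing-equivalent to $\mathcal{O}$ and hence not hyperarithmetic. (Perhaps you had binary trees in mind, where compactness makes the leftmost branch recursive in $0'$; that argument does not transfer to $\omega^\omega$.) The repair is already implicit elsewhere in your proof: first establish directly, via your constructibility computation, that $W$ is closed under the true hyperjump $X\mapsto\mathcal{O}^X$; then for ill-founded $T\in W$ the leftmost branch, being recursive in $\mathcal{O}^T\in W$, lies in $W$ by closure under Turing reducibility. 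Alternatively, invoke the Gandy basis theorem relativized to $T$. With this correction both directions go through as you outline.
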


Note that the inclusion of full Induction means that any model of $\SnAC{1}$ thinks the $n$-th admissible ordinal $\omega_n^\mathrm{CK}$ exists even for nonstandard $n$. This is because $\SnAC{1}_0$ is strong enough to prove that the admissible ordinals are unbounded and so the set of such $n$ is inductive. And over $\SnAC{1}$ we can define the sequence of the $\omega_n^\mathrm{CK}$. From this sequence we can extract a canonical code of all the reals in $\Lrm_{\omega_\omega^\mathrm{CK}}$, as in the similar argument for strong fragments of $\KM$. From this code we can define, via a second-order formula, a canonical choice for a Cohen real $\Crm$ which is generic over $\Lrm_{\omega_\omega^\mathrm{CK}}$. 

It is straightforward to formulate an axiom asserting over $\SnAC{1}$ that every set is in $\Lrm_{\omega_\omega^\mathrm{CK}}$. Namely, this axiom asserts that for every set $X$ there is an integer $n$ so that there is a length $n$ sequence of well-orders $\gamma_i$ so that each $\gamma_i$ is admissible, $\gamma_0 = \omega$, there are no admissibles between $\gamma_i$ and $\gamma_{i+1}$, and $X$ is in $\Lrm_{\gamma_n}$. Call this axiom $\Class = \Adm_\omega$. Similarly, we can formulate an axiom $\Class = \Adm_\omega[\Crm]$ which asserts that $\Crm$ exists and every set is in $\Lrm_{\omega_\omega^\mathrm{CK}}[\Crm]$, where $\Crm$ is definable Cohen generic over the reals in $\Lrm_{\omega_\omega^\mathrm{CK}}$. We follow the same strategy as before to define $\Crm$, using the canonical code of $\Lrm_{\omega_\omega^\mathrm{CK}}$.

Putting this all together, we get that the theories $\SnAC{1} + \Class = \Adm_\omega$ and $\SnAC{1} + \Class = \Adm_\omega[\Crm]$ are bi-interpretable. A key point is, a model of $\SnAC{1}$ and any Cohen-extension thereof will have the same well-orders and agree on which well-orders give admissible ordinals. So they will define the canonical code for the reals in $\Lrm_{\omega_\omega^\mathrm{CK}}$ the same, and thereby define $\Crm$ the same.

\begin{theorem}
The theory $\SnAC{1}$ ($= \PCA$) is not tight. Consequently, any weakening of $\SnAC{1}$ in the language of second-order arithmetic, such as $\PCA_0$, is also not tight. \qed
\end{theorem}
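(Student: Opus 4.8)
The plan is to reuse the non-tightness template already executed for $\GB$ in Section~\ref{NonTightnessGB} and for $\KMCC_k$ in Section~\ref{semantic-non-tightness-KMk}, taking as witnessing extensions the two theories identified just above, namely $D' = \SnAC{1} + (\Class = \Adm_\omega)$ and $U' = \SnAC{1} + (\Class = \Adm_\omega[\Crm])$, both in the language of second-order arithmetic, where $\Crm$ is the canonical Cohen real extracted from the canonical code of the reals of $\Lrm_{\omega_\omega^{\mathrm{CK}}}$. First I would record that both theories are consistent, witnessed respectively by the minimum $\beta$-model (the reals of $\Lrm_{\omega_\omega^{\mathrm{CK}}}$) and by its canonical Cohen extension. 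Next I would observe that $D'$ and $U'$ have distinct deductive closures: the axioms $\Class = \Adm_\omega$ and $\Class = \Adm_\omega[\Crm]$ are mutually inconsistent, since genericity makes $\Crm$ meet, for each real $r \in \Lrm_{\omega_\omega^{\mathrm{CK}}}$, the dense set of conditions forcing $\ne r$, so $\Crm \notin \Lrm_{\omega_\omega^{\mathrm{CK}}}$, contradicting the first axiom were $\Crm$ a set. Thus exhibiting a bi-interpretation between $D'$ and $U'$ suffices to conclude $\SnAC{1}$ is not tight.

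The bi-interpretation itself would be built exactly as in Theorem~\ref{thm:gb-not-sem-tight} and its uniform refinement, with $\Lrm_{\omega_\omega^{\mathrm{CK}}}$ playing the role of the minimum model; both interpretations fix the number part. Interpreting $D'$ inside a model of $U'$ is easy, as the reals of $\Lrm_{\omega_\omega^{\mathrm{CK}}}$ form a definable hyperclass there, read off the canonical code; the interpretation $\Ical$ is the identity on that domain with $\in^\Ical = \in$. The interpretation $\Jcal$ in the other direction would work inside an arbitrary model of $D'$ and use the canonical code for the reals of $\Lrm_{\omega_\omega^{\mathrm{CK}}}[\Crm]$ to represent each such real by the least index of a formula-with-parameter defining it, with set membership inherited and element-of-set membership reading off the appropriate slice. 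Checking that the two composites are definably isomorphic to the identity is the same computation as in the class-theoretic case, hinging on a model of $\SnAC{1}$ and its canonical Cohen extension computing $\Trm$, hence $\Trm(\Crm)$, the same.

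The real work is in the supporting lemmas, asserted above to carry over \textit{mutatis mutandis}, and the main obstacle is to make the canonical code uniformly and absolutely definable. Concretely I would need: (i) that full Induction proves the admissible ordinals are unbounded, so the sequence $\seq{\omega_n^{\mathrm{CK}} : n \in \omega}$ is definable even at nonstandard $n$, the arithmetic analogue of using $\SOR$ to get $\Sigma_k$-satisfaction predicates at nonstandard $k$; (ii) a fine-structural argument, in the unrolling of $\SnAC{1}$ into a weak set theory, showing this sequence is cofinal in the ordinals of the unrolled model and yields a code for precisely the reals of $\Lrm_{\omega_\omega^{\mathrm{CK}}}$, paralleling Lemma~\ref{rho-definitions} and Corollary~\ref{cor:define-trm-lcal}; (iii) that $\Add(\omega,1)$ preserves $\SnAC{1}$ with full Induction and leaves unchanged which well-orders code admissibles, so a model and its Cohen extension define the canonical code, hence $\Crm$, the same; and (iv) that relativizing the code to $\Crm$ via the forcing relation, as in Lemma~\ref{lem:crm-relative-truth}, still gives an absolutely definable code for the reals of $\Lrm_{\omega_\omega^{\mathrm{CK}}}[\Crm]$.

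Finally, the ``consequently'' clause follows because non-tightness passes downward to subtheories in the same language: the witnesses $D'$ and $U'$ extend any subtheory of $\SnAC{1}$ equally well. In particular $\PCA_0 \subseteq \SnAC{1}$ --- indeed $\SnAC{1}$ is $\PCA_0$ plus full Induction, since $\Sigma^1_1$-Choice is already a consequence of $\ATR_0$ --- so $D'$ and $U'$ are bi-interpretable extensions of $\PCA_0$ with distinct deductive closures, whence $\PCA_0$ is not tight. This is simply the contrapositive of the earlier remark that tightness is preserved under extension in the same language.
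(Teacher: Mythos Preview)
Your proposal is correct and follows essentially the same approach as the paper: exhibit the bi-interpretable pair $D' = \SnAC{1} + \Class = \Adm_\omega$ and $U' = \SnAC{1} + \Class = \Adm_\omega[\Crm]$, with the interpretations built exactly as in the $\GB$ case and absoluteness secured because Cohen forcing preserves well-orders and admissibility. One small remark: in your point (ii) you invoke a fine-structural argument paralleling Lemma~\ref{rho-definitions}, but the paper's point in treating $\SnAC{1}$ separately is precisely that no fine structure is needed here---the cofinal sequence $\seq{\omega_n^{\mathrm{CK}}}$ is available directly from the elementary theory of admissible ordinals, and the code is read off from it without appeal to Jensen's Skolem functions (those are reserved for $\SnAC{k}$ with $k \ge 2$).
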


We turn at last to $\SnAC{k}$ for $k \ge 2$. For these the characterization of the least $\beta$-model is more complex.

\begin{theorem}
For $k > 1$, the minimum $\beta$-model of $\SnAC{k}$ consists of the reals in $\Lrm_\alpha$ where $\alpha$ is the least ordinal so that $\Lrm_\alpha$ satisfies $\Pi_{k-1}$-Separation. Equivalently, $\alpha$ is the least ordinal so that $\Lrm_\alpha$ satisfies $\Sigma_{k-1}$-Replacement. Equivalently, $\alpha$ is the least ordinal whose $k$-th projectum $\rho^\alpha_k$ is itself.
\end{theorem}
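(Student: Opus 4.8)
The plan is to identify the minimum $\beta$-model of $\SnAC{k}$ with the reals of $\Lrm_\alpha$ by transferring everything to the first-order set theory side via the unrolling construction, which is available since $\ATR_0 \subseteq \PnCA{k}_0$. On the one hand, a $\beta$-model of $\SnAC{k}$ unrolls to a well-founded --- hence, up to isomorphism, transitive --- model of $\ZFCm_{k-1}$ plus ``every set is countable'', using the bi-interpretation of $\PnCA{k}_0$ with that theory; here full Induction is automatic for $\omega$-models, and $\Sigma^1_k$-Choice holds automatically in any $\beta$-model of $\PnCA{k}_0$ all of whose reals are constructible. On the other hand, $\Lrm_\alpha$, being transitive and hence well-founded, corresponds under the bi-interpretation to a $\beta$-model whose second-order part is exactly the set of reals of $\Lrm_\alpha$. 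So I would reduce the theorem to the two assertions: (i) $\Lrm_\alpha \models \ZFCm_{k-1}$ plus ``every set is countable''; and (ii) every transitive model of this theory has ordinal height at least $\alpha$. This runs parallel to Ratajczyk's description of the minimum model of $\KMCC_k$ over $\Vrm_\kappa$ from Section~\ref{semanticNonTightnessForStrongerTheories}, with the index shifted down by one because in second-order arithmetic well-foundedness is $\Pi^1_1$ rather than first-order.

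For (i), the upper bound, I would use the three-way characterization of $\alpha$ discussed below. The model $\Lrm_\alpha$ satisfies $\Sigma_{k-1}$-Separation and $\Sigma_{k-1}$-Replacement (two of the equivalent descriptions of $\alpha$), hence, using its definable global $\Lrm$-order, also $\Sigma_{k-1}$-Collection; the remaining axioms of $\ZFCm_{k-1}$ --- Pairing, Union, Infinity, Foundation, and Choice from $\Vrm = \Lrm$ --- hold because $\alpha$ is a limit ordinal above $\omega$; and ``every set is countable'' holds because $\alpha < \omega_1^{\Lrm}$, so $\Lrm_\alpha$ contains no uncountable set. Hence, by the bi-interpretation, the reals of $\Lrm_\alpha$ form a $\beta$-model of $\PnCA{k}_0$; every real there is constructible, so $\Sigma^1_k$-Choice holds, and being an $\omega$-model it satisfies full Induction, so it is a $\beta$-model of $\SnAC{k}$.

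For (ii), the lower bound, I would take an arbitrary $\beta$-model $\Scal \models \SnAC{k}$, unroll it to a transitive model $M$ of $\ZFCm_{k-1}$ plus ``every set is countable'', and show $\Ord^M \ge \alpha$. The key step is that $\Lrm^M = \Lrm_{\Ord^M}$ itself satisfies $\Sigma_{k-1}$-Collection: relativizing a hypothetical failure into $\Lrm$ keeps the formula $\Sigma_{k-1}$ (since ``$x$ is constructible'' is $\Sigma_1$), applying $\Sigma_{k-1}$-Collection in $M$ yields a bounding set, and one cuts that set down into $\Lrm^M$ using $\Sigma_1$-Separation together with the rank function --- this is the standard argument that the constructible universe of a model of $\Sigma_{k-1}$-Collection again models $\Sigma_{k-1}$-Collection. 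By the equivalence of the descriptions of $\alpha$ applied inside $M$, this forces $\Ord^M \ge \alpha$, hence $\Lrm_\alpha \subseteq M$, hence every real of $\Lrm_\alpha$ is a real of $M$ and so belongs to $\Scal$. Combining (i) and (ii), the reals of $\Lrm_\alpha$ form a $\beta$-model of $\SnAC{k}$ that is contained in every $\beta$-model of $\SnAC{k}$, which is the claim.

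Finally, the agreement of the three descriptions of $\alpha$ is a matter of standard fine structure theory, which I would cite rather than reprove: $\Pi_{k-1}$- and $\Sigma_{k-1}$-Separation coincide by taking complements within a set; $\Sigma_{k-1}$-Separation, $\Sigma_{k-1}$-Replacement, and $\Sigma_{k-1}$-Collection coincide over the weak fragment of set theory satisfied by every limit level of $\Lrm$, using the $\Lrm$-order to pass from Replacement to Collection; and $\rho^\alpha_k = \alpha$ expresses, in Jensen's indexing, precisely that no $\Sigma_{k-1}$-definable subset of a smaller ordinal is omitted from $\Lrm_\alpha$, which by soundness of $\Lrm$-levels is equivalent to $\Sigma_{k-1}$-Separation holding there. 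I expect the main obstacle to be the bookkeeping in the lower bound: keeping precise track of logical complexity after relativization to $\Lrm$, checking that the restricted Collection of $M$ descends to $\Lrm^M$ at exactly level $k-1$, and citing the fine-structure facts at the correct index --- the place where the shift away from the class-theoretic case, forced by well-foundedness being non-first-order, must be handled with care.
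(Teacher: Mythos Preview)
The paper does not give a proof of this theorem. It is one of the ``facts about models of second-order arithmetic'' that the authors explicitly ``state \dots\ without proof'', pointing the reader to Simpson's book (Chapter~VII) for details. So there is no proof in the paper to compare against.

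Your proposal is a correct outline of the standard argument, and it matches the route implicit in the paper's surrounding discussion: pass through the bi-interpretation with $\ZFCm_{k-1}$ plus ``every set is countable'', verify that $\Lrm_\alpha$ is a model of that theory (upper bound), and for the lower bound unroll an arbitrary $\beta$-model to a transitive $M$, relativize to $\Lrm^M$, and invoke minimality of $\alpha$. The observation that full Induction comes for free in $\omega$-models and that $\Sigma^1_k$-Choice follows from $\PnCA{k}_0$ once all reals are constructible is exactly what the paper records just before this theorem. Your handling of the index shift (well-foundedness being $\Pi^1_1$ in arithmetic but first-order in class theory) is also in line with the paper's own remarks. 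The one place to be careful is the projectum clause: make sure the indexing convention you adopt for $\rho^\alpha_k$ matches the one that equates $\rho^\alpha_k = \alpha$ with $\Sigma_{k-1}$-Separation rather than $\Sigma_k$-Separation, since conventions differ in the literature; but as you say, this is a citation rather than something to reprove.
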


Yet again, the sticking point is defining a cofinal $\omega$-sequence in $\alpha$. Here we can use the same fine structural facts as in Section~\ref{semantic-non-tightness-KMk}. If $(M,\Xcal) \models \SnAC{k}$ $+$ ``every set is constructible'' then it is bi-interpretable with its unrolling $U \models \ZFCm_{k-1} + \Vrm = \Lrm$. By Theorem~\ref{jensenTheorem} $U$ has a $\Sigma_n$ Skolem function. Using this Skolem function and full Induction we can define an $\omega$-sequence of ordinals $\alpha_n$ which are cofinal in what $U$ thinks is the minimum model of $\ZFCm_{k-1}$, as in the proof of Lemma~\ref{rho-definitions}. We can thus formulate an axiom expressing that $U$ is itself this minimum model, namely by asserting that every set is in $\Lrm_{\alpha_n}$ for some $n$. 

All this can be translated over to the model $(M,\Xcal)$ of second-order arithmetic. Let $\Class = \MinMod_k$ denote the formula in the language of second-order arithmetic asserting that every set is in this minimum model. Then any model of $\SnAC{k} + \Class = \MinMod_k$ can define a canonical code for itself, using the definable $\omega$-sequence $\seq{\alpha_n}$ of ordinals cofinal in the height of its unrolling. This code will be absolute between this model and its Cohen-extensions, because they will have the same well-orders and hence the same $\Lrm$. From this code we can define a canonical choice for a Cohen real $\Crm$ which is generic over the minimum model. And so we can formulate an axiom $\Class = \MinMod_k[\Crm]$ which asserts over $\SnAC{k}$ that $\Crm$ exists and every real is in the extension of the minumum model by $\Crm$.

Following the same interpretations as used in the previous theorems, we then conclude that $\SnAC{k} + \Class = \MinMod_k$ and $\SnAC{k} + \Class = \MinMod_k[\Crm]$ are bi-interpretable.

\begin{theorem}
Fix $k \ge 2$. Then, $\SnAC{k}$ is not tight. Consequently, any theory weaker than $\SnAC{k}$ in the language of arithmetic, such as $\PnCA{k}$ and $\PnCA{k}_0$, are also not tight. \qed
\end{theorem}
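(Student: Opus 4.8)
The plan is to mirror the $\KMCC_k$ argument of Section~\ref{semantic-non-tightness-KMk} and exhibit two bi-interpretable extensions of $\SnAC{k}$ in the language of second-order arithmetic with distinct deductive closures, namely
\[
D_k = \SnAC{k} + \Class = \MinMod_k \qquad \text{and} \qquad U_k = \SnAC{k} + \Class = \MinMod_k[\Crm].
\]
Working through the unrolling $U \models \ZFCm_{k-1} + \Vrm = \Lrm$ of a model of $\SnAC{k} + \Class = \Lrm$ and using the fine structural tools of Section~\ref{semantic-non-tightness-KMk}, the axiom $\Class = \MinMod_k$ asserts that every real appears in $\bigcup_n \Lrm_{\alpha_n}$, where $\seq{\alpha_n : n \in \omega}$ is the definable $\omega$-sequence supplied by the arithmetic version of Lemma~\ref{rho-definitions}, cofinal in the height of the least model of $\ZFCm_{k-1}$; while $\Class = \MinMod_k[\Crm]$ asserts instead that $\Crm$ is the canonically chosen Cohen real generic over that minimum model and that every real is constructible from $\Crm$ over it. Since $D_k$ and $U_k$ both extend $\SnAC{k} \supseteq \PnCA{k} \supseteq \PnCA{k}_0$ in the same language, proving them bi-interpretable with distinct closures yields the non-tightness of all three theories at once.

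First I would record consistency and distinctness. The minimum $\beta$-model of $\SnAC{k}$, i.e.\ the reals of $\Lrm_\alpha$ for $\alpha$ least with $\Lrm_\alpha \models \Pi_{k-1}$-Separation, is an $\omega$-model---hence satisfies full Induction---and satisfies $\Class = \MinMod_k$ by construction, so $D_k$ is consistent; and $U_k$ is consistent since it is interpreted in $D_k$ via the interpretation $\Jcal$ set up below. The preservation facts needed here---that Cohen forcing preserves $\SnAC{k}$ and full Induction, and that a model of $\SnAC{k}$ and its Cohen extension define $\Trm$, and hence $\seq{\alpha_n}$ and the minimum model, the same---are proved exactly as in the class-theoretic case (the arithmetic analogues of Theorem~\ref{thm:forcing-preserves-axioms} and Lemma~\ref{lem:forcing-preserve-sor}). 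The deductive closures differ because $D_k$ proves $\Class = \MinMod_k$ while $U_k$ refutes it, the generic $\Crm$ witnessing a real outside the minimum model.

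Next I would set up the interpretations exactly as in Theorem~\ref{thm:gb-not-sem-tight}. In a model of $U_k$ the minimum model is a definable collection of reals---its $\Lrm$ agrees with the minimum model's, so it can define the code $\Trm_\Mcal$---so restricting to those reals with $\in$ unchanged interprets $D_k$. Conversely, in a model of $D_k$ one defines in turn the code $\Trm_\Mcal$, then the canonical Cohen real $\Crm$ (extracting from $\Trm_\Mcal$ a canonical enumeration of the relevant dense sets and meeting them one at a time along the $\Lrm$-order, with full Induction playing the role $\SOR$ plays in Lemma~\ref{lem:cohen}), then the code $\Trm_\Mcal[\Crm]$; each real of the extension is interpreted by the index of the first slice of $\Trm_\Mcal[\Crm]$ equal to it. That the two round trips yield definable isomorphisms back to the starting models is, as in Theorem~\ref{thm:gb-not-sem-tight}, a consequence of the key absoluteness point: a model of $\SnAC{k}$ and any of its Cohen extensions have the same well-orders, hence the same $\Lrm$, hence compute $\Trm_\Mcal$ and $\Trm_\Mcal[\Crm]$ identically.

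I expect the main obstacle to be the arithmetic version of Lemma~\ref{rho-definitions}: defining the cofinal $\omega$-sequence $\seq{\alpha_n}$ uniformly and absolutely enough across all models of $\SnAC{k} + \Class = \Lrm$, including the $\omega$-nonstandard ones. This rests on Jensen's $\Sigma_n$-uniformization theorem (Theorem~\ref{jensenTheorem}) to obtain a definable $\Sigma_n$ Skolem function in the unrolling, with which one successively closes off under witnesses for instances of $\Sigma_{k-1}$-Replacement, and on full Induction to carry the definition of $\alpha_n$ through the model's entire $\omega$. One must also verify the resulting code is absolute to Cohen extensions, which again reduces to sameness of well-orders. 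Everything else is a transcription, \emph{mutatis mutandis}, of the class-theoretic constructions of Section~\ref{semantic-non-tightness-KMk}.
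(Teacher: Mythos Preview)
Your proposal is correct and follows essentially the same approach as the paper: you exhibit the bi-interpretable pair $D_k = \SnAC{k} + \Class = \MinMod_k$ and $U_k = \SnAC{k} + \Class = \MinMod_k[\Crm]$, obtain the definable cofinal $\omega$-sequence in the unrolling via Jensen's uniformization with full Induction in place of $\SOR$, and invoke sameness of well-orders (hence of $\Lrm$) for the required absoluteness of the codes $\Trm_\Mcal$ and $\Trm_\Mcal[\Crm]$. The interpretations and their inverses are set up exactly as in the class-theoretic case, which is also what the paper does.
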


%It\footnote{\alfredo{I think this paragraph from section introducing $XYZ^\maybeinfty$ can be used here.}} is well-known that $\ZFC^{\notinfty}$---namely, $\ZFC$ with the axiom of Infinity replaced with its negation---is bi-interpretable with Peano arithmetic.\footnote{This is sensitive to exactly how the axioms are formulated. In the absence of Infinity, the usual formulation of Foundation as the single axiom asserting each nonempty sets has an $\in$-minimal element is weaker than the axiom schema of $\in$-Induction. We need $\ZFC^{\notinfty}$ to be formulated with $\in$-Induction to get the bi-interpretability result. See \cite{kaye-wong2007} for details.}
%As such, we immediately get bi-interpretation results between theories of second-order arithmetic and class theories with the negation of infinity. Because (non)tightness is preserved by bi-interpretability we can use our techniques to show nontightness for class theories to get nontightness for theories of second-order arithmetic. 

\section{Final remarks} \label{sec:final}

The reader who thoroughly read the previous sections will have noticed that this article is about essentially one construction done over and over in different settings. We remark that it may be carried out in yet more settings. For example, \cite{williams_2019} proves that there is a minimum $\beta$-model of $\ETR$, where \emph{Elementary Transfinite Recursion} $\ETR$ is the class theoretic analogue of $\ATR_0$. One can take their construction of the minimum $\beta$-model of $\ETR$ and put the construction of this article in that setting, thereby showing that the minimum $\beta$-model of $\ETR$ is bi-interpretable with its extension by a canonical choice of Cohen generic.

In \cite{enayat2017variations}, Enayat conjectures that no (proper) subsystem of the tight theories $\PA$, $\Zsf_2$, $\ZF$, $\KM$ can be tight. In this article, we demonstrated the non-tightness of $\KM_k + \SOR$ for all natural numbers $k$. 
These results yield a natural and comprehensive collection of non-tight subtheories approximating $\KM$. 
Indeed, it shows that full comprehension is the minimal level of comprehension that produces a tight theory from $\GB$.

As we were writing this article, we learned from Ali Enayat that in forthcoming work \cite{enayat:in-progress} he had independently proved that all finitely axiomatizable fragments of $\PA$, $\Zsf_2$, $\ZF$, $\KM$ are not tight. His proof provides an alternate proof of the nontightness of $\GB$ and $\KM_k$, along with their arithmetical counterparts. But his argument does not apply to $\GB + \SOR$, $\KM_k + \SOR$ and their arithmetical counterparts, as the second-order Replacement schema (respectively, the second-order Induction schema) is not finitely axiomatizable.
Thus, putting together Enayat's results, our results in this aritcle, and those of Freire and Hamkins \cite{bwst2020}, we have a substantial basis for Enayat's conjecture. And while there still are other theories to consider in order to assert that all subtheories of KM are not tight, these would be quite unnatural subtheories.

What is missing to get the full result? With respect to class theories, we should consider proper subtheories of $\KM$ that have instances of Comprehension of ever growing formula complexity, but in such a way that full $\KM$ isn't provable from those instances. This type of subsystem is hardly considered in the literature and for this reason it may require some novel treatment.
The status of same kind of the fragment of arithmetic and first-order set theory is also unknown. 
Additionally, while one may consider levels of formula complexity in the single scheme (induction) in arithmetic and (comprehension) in class theory, the same do not apply to $\ZF$ as one should also consider fragments of Replacement together with the full scheme of Separation, or other natural systems like Zermelo set theory plus the assertion that every set is in a $\Vrm_\alpha$. Therefore, while the picture may be said to be nearly complete for $\KM$, $\PA$ and $\Zsf_2$, there is still a lot to be discovered with respect to $\ZF$.

\printbibliography

\end{document}